\newtheorem{theorem}{Theorem}
\newtheorem*{theorem*}{Theorem}
\newtheorem{lemma}{Lemma}
\newtheorem{proposition}{Proposition}
\theoremstyle{definition}
\newtheorem{definition}{\bf Definition}
\newtheorem*{definition*}{\bf Definition}
\newtheorem{example}{\bf Example}
\newtheorem{remark}{\bf Remark}
\newtheorem*{remark*}{\bf Remark}
\newtheorem*{example*}{\bf Example}
\newcommand{\loc}{{\rm loc}}
\newcommand{\supp}{{\rm sprt\,}}
\newtheorem*{theoremA}{Theorem A}
\numberwithin{equation}{section}
\def\expandafter\normalsize\expandafter{%
    \normalsize
    \setlength\abovedisplayshortskip{12pt}
    \setlength\belowdisplayshortskip{12pt}
}
\begin{document}

\fontsize{10.5pt}{4.5mm}\selectfont

\title[Particle systems with singular interactions]{On particle systems and critical strengths of general singular interactions}

\author{D.\,Kinzebulatov}

\email{damir.kinzebulatov@mat.ulaval.ca}

\address{Universit\'{e} Laval, D\'{e}partement de math\'{e}matiques et de statistique, Qu\'{e}bec, QC, Canada}

\keywords{Interacting particle systems, singular stochastic equations, form-boundedness}

\subjclass[2020]{60H10, 60K35 (primary), 60H50 (secondary)}

\thanks{The research of the author is supported by NSERC grant (RGPIN-2024-04236).}

\begin{abstract}
For finite interacting particle systems with strong repulsing-attracting or general interactions, we prove global weak well-posedness almost up to the critical threshold of the strengths of attracting interactions (independent of the number of particles), and establish other regularity results, such as a heat kernel bound in the regions where strongly attracting particles are close to each other. 
Our main analytic instruments are a variant of De Giorgi's method in $L^p$ with appropriately chosen large $p$, and an abstract desingularization theorem.

%\medskip
%
%Pour les syst\`{e}mes finis de particules avec des fortes interactions attractives-repulsives ou g\'{e}n\'{e}rales, on montre l'existence globale et l'unicit\'{e} essentiellement jusqu'\`{a} valeur critique de l'intensit\'{e} des interactions attractives (ind\'{e}pendant du nombre des particules) et obtient d'autres r\'{e}sultats sur la r\'{e}gularit\'{e} du syst\`{e}me, y compris une estimation sur le noyau de la chaleur dans les r\'{e}gions o\`{u} les particules sont proches les unes aux autres. Nos instruments analytiques principaux sont la m\'{e}thode de De Giorgi dans $L^p$ avec $p$ suffisamment grand et un th\'{e}or\`{e}me abstrait sur la desingularisation.

\end{abstract}

\maketitle

\tableofcontents

\section{Introduction}

The paper is concerned with well-posedness and other properties of $N$-particle system
\begin{align}
\label{syst2}
dX_i  = & - \frac{1}{N}\sum_{j=1, j \neq i}^N K_{ij}(X_i-X_j)dt + M_i(X_i)dt  + \sqrt{2}dB_i, \qquad X_i(0)=x_i \in \mathbb R^d \\[2mm]
& \text{$\{B_i(t)\}_{t \geq 0}$ are independent  $d$-dimensional Brownian motions}, \notag
\end{align}
under broad assumptions on singular (i.e.\,locally unbounded) interaction kernels and drifts $K_{ij},M_i:\mathbb R^d \rightarrow \mathbb R^d$ ($i=1,\dots,N$) that can have repulsion/attraction structure or can be of general form. Our primary goal is to obtain conditions on $M_i$ and $K_{ij}$ that 

\smallskip

1) reach blow up effects, and

\smallskip

2) withstand the passage to the limit $N \rightarrow \infty$. 

\medskip

Interacting particle systems of type \eqref{syst2} arise in many physical and biological models \cite{AKR,C,CP,CPZ,FJ,FT, HRZ,JL,KR,T,To}. Many of these models require one to deal with the interactions that are not only singular but are so singular that they reach blow up effects: replacing $K_{ij}$ in \eqref{syst2}  by $(1+\varepsilon)K_{ij}$, i.e.\,increasing the strength of interactions by factor $1+\varepsilon$, can lead to a collapse in the well-posedness of \eqref{syst2} even if $\varepsilon>0$ is small. That is, the particles start to collide in finite time with positive probability, and \eqref{syst2} ceases to have a weak solution. 

One of the main questions studied in the present paper is what is the critical threshold value of the strength of general singular interactions that  separates the well-posedness of \eqref{syst2} from a blow up.

Throughout the paper, dimension $d \geq 3$. Important case $d=2$ requires a separate study which we plan to carry out elsewhere.

To illustrate the blow up effect in particle systems, and to formalize the notion of the ``strength of interactions'', consider a particle system with the model singular attracting kernel  \eqref{hardy}:
\begin{equation}
\label{model_syst}
dX_i=-\frac{1}{N}\sum_{j=1, j \neq i}^N \sqrt{\kappa}\frac{d-2}{2}\frac{X_i-X_j}{|X_i-X_j|^2}dt + \sqrt{2}dB_i,
\end{equation}
where $\kappa$ measures the strength of attraction between the particles. (It is convenient for us to include factor $\frac{d-2}{2}$ in the coefficient in \eqref{hardy} because we are going to use Hardy's inequalities, see Example \ref{ex1}(2). The kind of Hardy inequalities that we need are not valid in two dimensions.)

\begin{enumerate}[label=(\alph*)]

\item  In the two-particle case $N=2$, a simple argument shows that for $$\kappa>16\bigg(\frac{d}{d-2}\bigg)^2$$ and $X_1(0)=X_2(0)$ the particle system \eqref{model_syst} does not have a weak solution. Informally, in the struggle between the drift and the diffusion the former starts to have an upper hand.  If fact, already if $\kappa>16$ the particles collide in finite time with positive probability even if $X_1(0)$, $X_2(0)$ are uniformly distributed e.g.\,in a cube, see \cite{BFGM} for detailed proof. 
On the other hand, if $$\kappa<16,$$ then  \eqref{model_syst} has a global in time weak solution for any initial configuration $X_1(0)$, $X_2(0) \in \mathbb R^d$. The latter can be seen from the results of  the present paper.

\medskip

\item 
The weak well-posedness and the blow-up effects for the two-dimensional counterpart of \eqref{model_syst}
\begin{equation}
\label{model2}
dX_i=-\frac{1}{N}\sum_{j=1, j \neq i}^N \sqrt{\kappa}\frac{X_i-X_j}{|X_i-X_j|^2}dt + \sqrt{2}dB_i, \quad N \geq 2,
\end{equation}
 were studied in detail, among other problems connected to the Keller-Segel model of chemotaxis, in \cite{CP,FJ}.

\medskip

\item The density of the formal invariant measure of \eqref{model_syst}
$$
\psi(x)=\prod_{1 \leq i<j \leq N} |x_i-x_j|^{-\sqrt{\kappa}\frac{d-2}{2}\frac{1}{N}}
$$
is locally summable if and only if $\kappa<16(\frac{d}{d-2})^2$. Also, as $\kappa$ reaches and surpasses $\kappa=16$, $\psi$ ceases to be in $W^{2,1}_{\loc}$.

\medskip

Another analytic fact that  suggests that the singularities of the drift in \eqref{model_syst} are critical for any $N \geq 2$, i.e.\,$\kappa$ in general cannot be too large, is the estimates on the constant in the many-particle Hardy inequality \eqref{multi_hardy} due to \cite{HHLT}. We employ their result in the proof of Theorem \ref{thmK1}(\textit{iii}).

\medskip

\item The blow up effects are observed for the Keller-Segel model (here in the parabolic-elliptic form):
$$
\left\{
\begin{array}{l}
\partial_t \rho -\Delta \rho + \sqrt{\kappa}\, {\rm div}\,(\rho \nabla v)=0, \quad \rho(0,\cdot)=\rho_0(\cdot), \\
-\Delta v=\rho,
\end{array}
\right.
$$
where $\rho$ is the population density and $v$ is the chemical density \cite{CP,CPZ,FJ,FT,JL,T}, see also references therein. Of course, $\int_{\mathbb R^d} \rho_0(x)dx=1$ propagates to $\int_{\mathbb R^d} \rho(t,x)dx=1$ for all $t>0$. Solving the elliptic equation, one obtains the expression for the drift:
\begin{equation}
\label{riesz}
\nabla v=-K_1 \ast \rho, \quad K_1(y)=c_d\frac{y}{|y|^{d}}, \quad c_d>0 
\end{equation}
(we can further redefine $\kappa$ to have $c_d=1$).
The resulting  McKean-Vlasov PDE
\begin{equation}
\label{mv2}
\partial_t \rho -\Delta \rho - \sqrt{\kappa}\, {\rm div}\,(\rho (K_1 \ast \rho))=0
\end{equation}
is comparable to \eqref{model_syst} only in dimension $d=2$, where it does indeed arise at the mean field limit of particle system \eqref{model2} as $N \rightarrow \infty$ \cite{CP, FJ, FT,T}. The fact that one needs $d=2$ is at the first regard somewhat disappointing for us, see, however, the end of remark (iv) below. 
It should be added that there is a significant recent progress in understanding the behaviour of \eqref{mv2}, \eqref{model2} around and at the critical threshold $\kappa=16$ both at the level of PDEs and at the SDE level, see \cite{FT,T}. At the PDE level there are other important results on admissible strengths of critical singular interactions and the McKean-Vlasov equation, including recent result in \cite{BJW}, see Section \ref{lit_sect}.

\medskip

The proofs of the results described in (b) and in (d) depend on the special form of the Riesz interaction kernels in \eqref{model_syst} and \eqref{riesz}.
\end{enumerate}

\medskip

Despite the prominent role played in applications by the Riesz interaction kernels, there are many other situations where one needs to handle more general critical-order singular interactions. These are in the focus of the present paper. In this case, one can no longer exploit the special structure of the interaction kernel in \eqref{model_syst}. It turns out that one can still cover large portions (if not most) of the ranges of admissible strengths $\kappa$ of interactions and establish, in particular, global weak well-posedness of particle system \eqref{syst2}, which however requires us to use some deep methods in the theory of elliptic and parabolic PDEs. This is done in Theorems \ref{thmK1_0} and \ref{thmK1}(\textit{i})-(\textit{iii}). For the model singular attracting kernel in \eqref{hardy} we prove a necessarily non-Gaussian upper bound on the heat kernel ($\equiv$ the density of the law) of particle system \eqref{syst2}, see Theorem \ref{thmK1}(\textit{iv}). We believe that this bound is optimal in the regions where the particles are close to each other. 

\medskip

In Section \ref{lit_sect} we comment on the existing literature on particle systems with general singular interactions.

\medskip

We focus on weak solutions and exploit the connection of \eqref{syst2} to the Kolmogorov backward equation
\begin{equation}
\label{kolm_eq}
\big(\partial_t-\Delta_x + \frac{1}{N}\sum_{i=1}^N \sum_{j=1,  j \neq i}^N K_{ij}(x_i-x_j) \cdot \nabla_{x_i}\big)v=0, \quad v(0,\cdot)=f(\cdot),
\end{equation}
i.e.\,$v(t,x_1,\dots,x_N)=\mathbf{E}_{X_1(0)=x_1,\dots,X_N(0)=x_N}[f(X_1(t),\dots,X_N(t))]$.
Our main instruments in this paper are:

\medskip

-- De Giorgi's method, but ran in $L^p$ with $p$ chosen sufficiently large, in order to relax the assumptions on the strength of interactions.

\medskip

-- A ``desingularization theorem'' obtained,  using ideas of Nash, in the paper with Sem\"{e}nov and Szczypkowski \cite{KiSSz}.

\medskip

We impose conditions on the interaction kernels $K_{ij}$ stated in the form of quadratic form inequalities, see \eqref{fbd_} and \eqref{div_fbd}, \eqref{mfbd_}. The reason for this is two-fold. First, as we explain below, such conditions are ultimate in the sense that they provide a minimal PDE theory for the Kolmogorov equation \eqref{kolm_eq}; at the same time, there is a well developed machinery that allows to verify these conditions, see Example \ref{ex1}. Second, these conditions provide a natural setting for controlling the strength of interactions when $N$ is large, see discussion after Theorem A.

\medskip

Our class of general interaction kernels is given by Definition \ref{def1}. We postpone the definition of our class of repulsing-attracting interaction kernels until the next section.

\medskip

Let $L^p=L^p(\mathbb R^d)$ denote the Lebesgue spaces endowed with the norm $\|\cdot\|_p$. Let $W^{1,p}$ be the corresponding Sobolev spaces. Denote by $[L^p]^d$ the space of vector fields $\mathbb R^d \rightarrow \mathbb R^d$ with entries in $L^p$.  

\begin{definition}
\label{def1}
A Borel measurable vector field $K:\mathbb R^d \rightarrow \mathbb R^d$ is said to be form-bounded if $K \in [L^2_{\loc}]^d$ and there exists constant $\kappa$ (``form-bound of $K$'') such that
\begin{equation}
\label{fbd_}
\|K\varphi\|_2^2 \leq \kappa \|\nabla \varphi\|_2^2 + c_\kappa\|\varphi\|_2^2 \quad \forall\,\varphi \in W^{1,2}
\end{equation}
for some $c_\kappa<\infty$. 
\end{definition}

In other words, $$|K|^2 \leq \kappa(-\Delta) + c_\kappa$$
in the sense of quadratic forms in $L^2$, which yields upon applying Cauchy-Schwarz inequaltiy
$$
K \cdot \nabla \leq \sqrt{\kappa}(-\Delta) + \frac{c_\kappa}{2\sqrt{\kappa}}.
$$
We abbreviate \eqref{fbd_} as $K \in \mathbf{F}_\kappa.$ 

\medskip

The class $\mathbf{F}_\kappa$ is a well known in the PDE literature condition on first-order perturbation in elliptic and parabolic operators. Moreover, unlike, say, the optimal Lebesgue class $|K_{ij}| \in L^d$, a larger class $K_{ij} \in \mathbf{F}_\kappa$ is in a sense ultimate from the PDE perspective: assuming $\kappa<(\frac{N}{N-1})^2$, it provides coercivity of the quadratic form of the corresponding Kolmogorov backward operator in $L^2(\mathbb R^{Nd})$:
\begin{equation}
\label{kolm}
\Lambda = - \Delta_x + \frac{1}{N}\sum_{i=1}^N \sum_{j=1,  j \neq i}^N K_{ij}(x_i-x_j) \cdot \nabla_{x_i}, \quad x=(x_1,\dots,x_N) \in \mathbb R^{Nd}.
\end{equation}
Keeping in mind the connection between \eqref{syst2} and the Kolmogorov backward equation, it is natural for us to focus on the assumptions on $K_{ij}$ that provide some ``minimal theory'' of Kolmogorov operator \eqref{kolm}.

\medskip

As the next Example \ref{ex1}(2) shows, the strength of attraction $\kappa$ in model system \eqref{model_syst} is a particular case of the form-bound $\kappa$ in Definition \ref{def1}.

\begin{example}
\label{ex1}
The following are some sufficient conditions for $K \in \mathbf{F}_\kappa$ stated in elementary terms:

\begin{enumerate}[label=\arabic*.]

\item 
\begin{equation}
\label{Lp_incl}
|K| \in L^d \quad \Rightarrow \quad K \in \mathbf{F}_\kappa,
\end{equation}
with $\kappa$ that can be chosen arbitrarily small. 
(As a consequence, we obtain that the blow up effects described above in (a)-(c) can not be observed if we restrict our attention to $|K| \in L^d$. That said, the situation with the Lebesgue class drifts in the Keller-Segel model (d) is different because the regularity of the nonlinear drift $K_1 \ast \rho$ also depends on the regularity of the initial condition $\rho_0$ and is improving as $q$ in $\rho_0 \in L^q$ increases.)

Indeed, for every $\varepsilon>0$ we can represent $K=K_1+K_2$ with $\|K_1\|_d<\varepsilon$ and $\|K_2\|_\infty<\infty$. So, we obtain, using the Sobolev embedding theorem,
\begin{align*}
\|K\varphi\|_2^2 
 & \leq 2\|K_1\|_d^2 \|\varphi\|_{\frac{2d}{d-2}}^2 + 2\|K_2\|_\infty^2 \|\varphi\|_2^2 \\
& \leq C_S 2 \|K_1\|_d^2 \|\nabla \varphi\|_2^2 + 2\|K_2\|_\infty^2 \|\varphi\|_2^2,
\end{align*}
hence $K \in \mathbf{F}_{\kappa}$ with $\kappa=C_S 2\varepsilon$ and $c_\kappa=2\|K_2\|_\infty^2$. 

Of course, above $\kappa$ can be chosen arbitrarily small at expense of increasing $c_\kappa$. Although in some questions the value of constant $c_\kappa$ is important (e.g.\,in the study of long term behaviour of solution of \eqref{syst2}), they are not related to the problem of the blow up versus well-posedness in \eqref{syst2}.

\medskip

\item {(Critical point singularities)\;}
The model singular interaction kernel 
\begin{equation}
\label{hardy}
K(y)= \pm \sqrt{\kappa}\frac{d-2}{2}\frac{ y }{|y|^2}, \quad y \in \mathbb R^d,
\end{equation}
($+$ is the attraction, $-$ is the repulsion)
is  in $\mathbf{F}_\kappa$ with $c_\kappa=0$. This is a re-statement of the well known Hardy inequality:
$$
\frac{(d-2)^2}{4}\big\||y|^{-1}\varphi\big\|_2^2 \leq \|\nabla \varphi\|_2^2, \quad \forall\,\varphi \in W^{1,2}(\mathbb R^d).
$$
This inequality is sharp: $K \not \in \mathbf{F}_{\kappa'}$ for any $\kappa'<\kappa$ regardless of the value of $c_{\kappa'}$.

\medskip 

A finer example is given by the weighted Hardy inequality of \cite{HL}. Fix $0 \leq \Phi \in L^q(S^{d-1})$ for some $q \geq \frac{2(d-2)^2}{2(d-1)}+1,$ where $S^{d-1}$ is the unit sphere in $\mathbb R^d$. If
$$
|K(y)|^2 \leq \kappa \frac{(d-2)^2}{4} c\frac{\Phi(y/|y|)}{|y|^2}, \qquad \text{where $c:=\frac{|S^{d-1}|^{\frac{1}{q}}}{\|\Phi\|_{L^q(S^{d-1})}}$},
$$
then $K \in \mathbf{F}_\kappa$ with $c_\kappa=0$. Using this example, one can e.g.\,cut off a wedge in the model interaction kernel \eqref{hardy} while still controlling the value of the strength of interaction $\kappa$.

\medskip

\item {(Weak $L^d$ class interaction kernels)}\; More generally, vector fields $K$ in $L^{d,\infty}$, i.e.\,such that
\begin{equation}
\label{weak_Ld}
\|K\|_{d,\infty}:=\sup_{s>0}s|\{y \in \mathbb R^d: |K(y)|>s\}|^{1/d}<\infty
\end{equation}
are in $\mathbf{F}_\kappa$ with 
$\sqrt{\kappa}=\|K\|_{d,\infty} |B_1(0)|^{-\frac{1}{d}} \frac{2}{d-2}$, see \cite{KPS}. When applied to \eqref{hardy}, this inclusion gives the constant in Hardy's inequality.

\medskip

\item{(Morrey class interaction kernels)\;} The scaling-invariant Morrey class $M_{2+\varepsilon}$, with $\varepsilon>0$ fixed arbitrarily small, consists of vector fields $K \in [L^{2+\varepsilon}_{\loc}]^d$ such that
\begin{equation}
\label{morrey}
\|K\|_{M_{2+\varepsilon}}:=\sup_{r>0, y \in \mathbb R^d} r\biggl(\frac{1}{|B_r(y)|}\int_{B_r(y)}|K|^{2+\varepsilon}dy \biggr)^{\frac{1}{2+\varepsilon}}<\infty.
\end{equation}
By one of the results in \cite{F}, if
$
K \in M_{2+\varepsilon}$, then $K \in \mathbf{F}_\kappa$ with $\kappa=c\|K\|_{M_{2+\varepsilon}}$ for a constant $c=c(d,\varepsilon)$ that depends on the constants in some classical inequalities of Harmonic Analysis.

This sufficient condition for form-boundedness can be further refined by considering the Chang-Wilson-Wolff class \cite{CWW}: $K \in [L^2_{\loc}]^d$ satisfies
$$
\|K\|_{\xi}:=\sup_{r>0, y \in \mathbb R^d} \biggl(\frac{1}{|B_r(y)|}\int_{B_r(y)} |K|^2\, r^2 \xi\big(|K|^2\,r^2 \big) dy \biggr)^{\frac{1}{2}}<\infty,
$$
where 
$\xi:[0,\infty[ \rightarrow [1,\infty[$ is an increasing function such that
$
\int_1^\infty \frac{ds}{s\xi(s)}<\infty.
$

On the other hand, a simple argument with cutoff functions shows that the class of form-bounded vector fields $\mathbf{F}_\kappa$ (say, $c_\kappa=0$) is contained in the Morrey class $M_2$.

It should be added that the cited results in \cite{F,CWW} appeared as a part of broader efforts to find necessary and sufficient conditions for form-boundedness stated in elementary terms (in the context related to study of Schr\"{o}dinger operators with singular potentials, including self-adjointness, estimates on
the number of bound states, resolvent convergence). 

\medskip

\item {(Hypersurface singularities)\;}Any interaction kernel $K$ satisfying
\begin{equation}
\label{hyp}
|K(y)|^2=C\frac{c(y)\mathbf{1}_{\{\frac{1}{2} \leq |y| \leq \frac{3}{2}\}}}{\big| |y|-1\big|(-\ln\big||y|-1\big|)^\beta}, \quad \beta>1.
\end{equation}
is form-bounded, which can be seen from the previous example by arguing locally. (Note that the components of $K$ are not in $L^{2+\epsilon}_{\loc}$ for any $\epsilon>0$; one can compare this with example (i).)

\end{enumerate}
\end{example}

The class of form-bounded vector fields $\mathbf{F}_\kappa$ is closed with respect to addition and multiplication by functions from $L^\infty$ (up to change of $\kappa$ and $c_\kappa$). So, one can combine the previous examples.

\medskip

 Our main results, stated briefly, are as follows (omitting for now the repulsing-attracting interaction kernels):

\begin{theoremA}
{\rm (\textit{i})} {\it Let $$K_{ij} \in \mathbf{F}_\kappa, \quad \kappa < 4 \biggl(\frac{N}{N-1} \biggr)^2.$$ Then there exists a strong Markov family of martingale solutions of particle system \eqref{syst2} that delivers a unique (in appropriate sense) weak solution to Cauchy problem for the Kolmogorov backward equation \eqref{kolm_eq}. 

\medskip

{\rm (\textit{ii})} If $\kappa$ is smaller than a certain explicit constant $c_{d,N}$, then, moreover, conditional weak uniqueness and strong existence hold for \eqref{syst2}.}

\medskip

{\rm (\textit{iii})} {\it In the special case $$K_{ij}(y)=\sqrt{\kappa}\frac{d-2}{2}\frac{y}{|y|^2}+K_{0,ij}(y), \quad K_{0,ij} \in \mathbf{F}_{\nu},$$ if the strength of attraction satisfies only $\kappa<16$ and $\nu$ is sufficiently small, then the first assertion in {\rm(\textit{i})} still holds. Moreover, in the model attracting case 
$$
K_{ij}(y)=\sqrt{\kappa}\frac{d-2}{2}\frac{y}{|y|^2}, \quad \kappa<16
$$
the heat kernel of \eqref{syst2} satisfies an explicit non-Gaussian upper bound that we believe to be optimal in the regions where the particles are close to each other. (The constant $16$ can be somewhat improved in low dimensions.)}
\end{theoremA}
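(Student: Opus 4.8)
The plan is to treat the Kolmogorov backward operator $\Lambda$ in \eqref{kolm} as a perturbation of $-\Delta$ in an $L^p(\mathbb R^{Nd})$ scale and to run De Giorgi's iteration in that scale with $p$ chosen as large as the form-bound $\kappa$ permits. The starting point is the elementary observation, already noted in the excerpt, that $K_{ij} \in \mathbf{F}_\kappa$ gives $|K_{ij}|^2 \leq \kappa(-\Delta)+c_\kappa$ in the $x_i$ variable alone, and hence $\sum_{i}\sum_{j \neq i}|K_{ij}(x_i-x_j)|^2 \leq (N-1)\kappa(-\Delta_x)+(N-1)N c_\kappa$ after summing in $i$; this is precisely where the factor $(\frac{N}{N-1})^2$ in parts (i)--(ii) of Theorem A originates. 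For part (iii), the model kernel $\sqrt{\kappa}\frac{d-2}{2}\frac{y}{|y|^2}$ does not merely satisfy a form-bound with a subcritical constant: by Hardy's inequality (Example \ref{ex1}(2)) it has $c_\kappa=0$ and, more to the point, the relevant quantity for the $L^p$ theory is not the $L^2$ form-bound but the $L^p$ form-bound, which for a Hardy-type potential $|y|^{-2}$ improves like $\big(\frac{d-2}{d-2-2/p'}\big)^{2}\cdots$ — more precisely one invokes the sharp $L^p$ weighted Hardy inequality, which allows the constant $\kappa$ to be pushed up to $16$ (rather than $4$) at the cost of taking $p$ near $2$ from above and tracking the many-particle Hardy constant of \cite{HHLT}. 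So the first step is to fix the admissible range of $(p,\kappa)$ from the $L^p$ Hardy inequality and the many-particle Hardy inequality, exactly as the proof of Theorem \ref{thmK1}(\textit{iii}) is said to do.

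The second step is to build the semigroup $e^{-t\Lambda_p}$ on $L^p(\mathbb R^{Nd})$ by an $L^p$ quadratic-form / Voigt-type argument: one shows that $K \cdot \nabla$ is $p$-form-bounded with respect to $-\Delta$ with relative bound $<1$ in the chosen range, constructs the operator realization $\Lambda_p$ via the Lumer--Phillips / Hille--Yosida theorem (or via an a priori estimate plus the method of continuity), and checks consistency of the $L^p$ semigroups across $p$. The third step is the De Giorgi iteration itself, run on the parabolic equation $(\partial_t + \Lambda)v = 0$: localize in space-time, test against truncations $(v-k)_+^{p-1}$ (this is the "De Giorgi in $L^p$" twist — using power $p-1$ rather than $1$, which is what lets the singular drift be absorbed into the good term $\|\nabla (v-k)_+^{p/2}\|_2^2$ coming from the Laplacian), apply Sobolev on $(v-k)_+^{p/2}$, and iterate over a shrinking sequence of cylinders and increasing levels $k_n$ to get local boundedness and then an oscillation decay / De Giorgi--Nash--Moser-type continuity estimate for $v$. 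From local boundedness and Hölder continuity of weak solutions of \eqref{kolm_eq} one extracts, by the standard martingale-problem machinery, existence of the strong Markov family of martingale solutions of \eqref{syst2}, and weak uniqueness at the level of the Kolmogorov equation. For the heat-kernel statement in the model attracting case, one instead invokes the desingularization theorem of \cite{KiSSz} (the Nash-type argument advertised in the introduction): one propagates the a priori $L^p \to L^\infty$ bound together with the explicit subsolution/supersolution $\prod_{i<j}|x_i-x_j|^{-\alpha}$ built from the formal invariant density, and this weight is what produces the explicit non-Gaussian factor in the heat-kernel bound in the collision regions; optimality in those regions is read off from the matching lower bound furnished by the same invariant-measure heuristic.

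The main obstacle, I expect, is keeping all constants $N$-independent while simultaneously being sharp in $\kappa$. The naive summation over $i,j$ of the single-particle form-bounds costs a factor $N-1$, giving only $\kappa < 4(\frac{N}{N-1})^2 \to 4$; doing better than $4$ — reaching $16$ for the model kernel — requires the genuinely many-particle Hardy inequality of \cite{HHLT}, whose constant must be tracked through the $L^p$ iteration rather than discarded, and requires choosing $p=p(\kappa)$ so that the $L^p$ Hardy improvement exactly compensates. The delicate bookkeeping is that the De Giorgi constants (Sobolev constant, the gain exponent in the iteration, the absorption threshold for the drift term) all depend on $p$, and $p$ is being driven toward $2$ as $\kappa \uparrow 16$, so the iteration degenerates precisely at the endpoint; showing the scheme still closes for every $\kappa < 16$ (with constants blowing up only as $\kappa \to 16$) is the crux. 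A secondary technical point is the consistency and uniqueness statement: one must show the weak solution of \eqref{kolm_eq} produced this way is the \emph{unique} one in the appropriate class and that it genuinely corresponds to a (sub-)Markov semigroup, which is where the $L^p$-$L^\infty$ smoothing from De Giorgi, rather than mere $L^2$ coercivity, is essential.
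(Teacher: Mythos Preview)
Your outline for parts (i)--(ii) and the heat-kernel bound is essentially correct: embed the particle system into a general SDE on $\mathbb R^{Nd}$ via Lemma \ref{p_1}, run De Giorgi in $L^p$ (testing against $(u-k)_+^{p-1}\eta$ and working with $|\nabla (u-k)_+^{p/2}|^2$, exactly as in Propositions \ref{c_prop}--\ref{c_prop2}), and for the model kernel invoke the desingularization theorem of \cite{KiSSz} with the weight built from the Lyapunov function $\prod_{i<j}|x_i-x_j|^{-\sqrt{\kappa}\frac{d-2}{2}\frac{1}{N}}$.

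However, your mechanism for part (iii) is wrong, and the error matters. You propose that the jump from $\kappa<4$ to $\kappa<16$ comes from an $L^p$ Hardy inequality with $p$ taken \emph{near $2$}. That is not what happens, and in fact the direction of $p$ is reversed throughout: in the paper $p>\frac{2}{2-\sqrt{\delta}}$ (under \eqref{A_0}) or $p>\frac{4}{4-\delta_+}$ (under \eqref{A_3}), so $p\to\infty$ as the form-bound approaches its threshold. The actual route to $\kappa<16$ is structural, not an $L^p$-Hardy improvement: one \emph{integrates by parts} in the drift term $\langle b\cdot\nabla v^{p/2},v^{p/2}\eta\rangle$ to trade $b$ for ${\rm div\,}b$, and then controls only $({\rm div\,}b)_+$. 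For the model kernel one computes
\[
({\rm div\,}b)_+ = \sqrt{\kappa}\,\frac{(d-2)^2}{N}\sum_{1\le i<j\le N}\frac{1}{|x_i-x_j|^2},
\]
and applies the \emph{$L^2$} many-particle Hardy inequality \eqref{multi_hardy} of \cite{HHLT} to get $({\rm div\,}b)_+^{1/2}\in\mathbf F_{\delta_+}$ with $\delta_+=\sqrt{\kappa}$. The constraint in Theorem \ref{markov_thm} under \eqref{A_3} is $\delta_+<4$, i.e.\ $\kappa<16$, and one then takes $p>\frac{4}{4-\sqrt{\kappa}}$. So the gain from $4$ to $16$ is not a sharpening of the Hardy constant in $L^p$; it is the passage from condition \eqref{A_0} (form-bound on $b$) to condition \eqref{A_3} (form-bound on $({\rm div\,}b)_+^{1/2}$, plus $b\in\mathbf F$ with \emph{any} finite bound), which is available precisely because the model kernel has explicit divergence. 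Your ``$L^p$ Hardy with $p\downarrow 2$'' scheme does not produce this threshold.
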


For detailed statements, see Theorems \ref{thmK1_0} and \ref{thmK1}. 

\medskip

The improvement in the assumptions on $\kappa$ in (\textit{iii}) is due to the use of many-particle Hardy inequality \eqref{multi_hardy}. In assertion (\textit{ii}) constant $c_{d,N} \downarrow 0$ as the number of particles $N \uparrow \infty$, which, of course, is not what we are after in this paper. But we included this assertion anyway for the sake of completeness and to demonstrate that as the strength of interactions becomes smaller the theory of \eqref{syst2} becomes more detailed.

\medskip

We address the problem of well-posedness of stochastic particle system \eqref{syst2}  directly, by rewriting \eqref{syst2} as  SDE
\begin{equation}
\label{Z_eq}
dZ=-b(Z)dt + \sqrt{2}dB, \quad B \text{ is a Brownian motion in $\mathbb R^{Nd}$}
\end{equation}
with $Z=(X_1,\dots,X_N)$ and drift
$$b=(b_1,\dots,b_N):\mathbb R^{Nd} \rightarrow \mathbb R^{Nd},$$ 
\begin{equation}
\label{b_def0_}
\text{where } b_i(x):= \frac{1}{N}\sum_{j=1, j \neq i}^N K_{ij}(x_i-x_j), \quad x=(x_1,\dots,x_N) \in \mathbb R^{Nd}, \quad 1 \leq i \leq N,
\end{equation}
and then applying results on  well-posedness for SDEs with general drifts, in particular, our Theorem \ref{markov_thm} below.
Until recently, the results on general singular SDEs
could not compete, in terms of the admissible point singularities of the drift, with the results on particle systems with singular interactions.
However, in the past few years, there was a substantial progress in proving weak and strong well-posedness of SDE \eqref{Z_eq} with general drift $b$, which now can have critical-order singularities (i.e.\,reach blow up effects), see \cite{KiS_brownian,KiS_sharp,KiM_strong,Ki_Morrey,Kr1,Kr2,Kr3,RZ}.
That said, to apply these results to particle system \eqref{syst2} when the number of particles is large in a way that would allow to control the strength of interactions (measured, in our case, by constant $\kappa$), one needs to keep track of the strength of the singularities of the drift $b$ (in our case, measured by its own form-bound with respect to the Laplacian in $\mathbb R^{Nd}$). In Lemma \ref{p_1} we show
that if $K_{ij} \in \mathbf{F}_\kappa (\mathbb R^d)$, then $b$ satisfies
\begin{equation}
\label{b_fbd}
\left\{
\begin{array}{l}
b \in \mathbf{F}_\delta(\mathbb R^{Nd}) \\
\text{with } \delta=\frac{(N-1)^2}{N^2}\kappa, \quad c_\delta=\frac{(N-1)^2}{N}c_\kappa.
\end{array}
\right.
\end{equation}
(Note that if $c_\kappa=0$, as is the case for \eqref{hardy}, then $c_\delta=0$.)
Thus, we obtain our Theorem \ref{thmK1_0} and Theorem \ref{thmK1}(\textit{i})-(\textit{iii}) for particle system \eqref{syst2} from our  results on the general singular SDE \eqref{Z_eq} in $\mathbb R^{Nd}$, which are Theorems \ref{markov_thm}, \ref{unique_thm}.

\medskip

In this approach, it is crucial that the assumptions on the form-bound $\delta$ of drift $b$ in Theorem \ref{markov_thm} stay dimension-independent, so that when Theorem \ref{markov_thm} is applied to particle system \eqref{syst2} the resulting assumption on $\kappa$ would not depend on the number of particles $N$ (or, rather, would tend to a strictly positive value as the number of particles goes to infinity). This is achieved by means of De Giorgi's method ran in $L^p$ which allows us to ``decouple'' the proof of the tightness estimate  needed to establish the existence of a martingale solution (cf.\,\eqref{tight_i}) from any strong gradient bounds on solutions of the corresponding elliptic or parabolic equations that, generally speaking, introduce a dependence on the dimension in the assumptions on the form-bound of $b$.

\medskip

Running De Giorgi's method in $L^p$ with $p \gg 2$ allows to maximize admissible values of the form bounds/strengths of interactions in particle system \eqref{syst2}. 
At the level of strongly continuous semigroups, the observation that working in $L^p$ with $p$ large allows to relax the assumptions on the form-bound of the drift was made even earlier in \cite{KS}. 

\medskip

By the way, in Remark \ref{Orlicz_rem} we discuss the theory of the backward Kolmogorov equation \eqref{kolm_eq} in the case when the strength of the interactions reaches $\kappa$ the borderline value, which requires us to work in the Orlicz space with ``critical'' gauge function $\cosh -1$ (that is, in some sense, a limit of $L^p$ as $p \uparrow \infty$). 

\medskip

The strong existence in Theorem \ref{thmK1_0}(\textit{iv}) (or in Theorem A(\textit{ii})) follows from the result in \cite{KiM_strong} whose proof, in turn, is a modification of the method of R\"{o}ckner-Zhao \cite{RZ2}.

\medskip

Theorems \ref{markov_thm}, \ref{unique_thm} on the general singular SDE \eqref{Z_eq} are of interest on their own. 

\medskip

Theorem \ref{markov_thm} deals with the existence and uniqueness of a strong Markov family of martingale solutions of SDE \eqref{Z_eq}. In a number of ways, Theorem \ref{markov_thm} continues the paper with Sem\"{e}nov \cite{KiS_sharp}, see further discussion in Section \ref{thm1_rem}. 

\medskip

Theorem \ref{unique_thm} deals with conditional weak uniqueness for \eqref{Z_eq}, i.e.\,the uniqueness among weak solutions satisfying a rather natural condition (Krylov-type bound). The main novelty of Theorem \ref{unique_thm} is related to condition \eqref{C_2} that takes into account the repulsion-attraction structure of the drift. However, in its present form this condition, when applied to particle system \eqref{syst2}, imposes not so natural conditions on the repulsing part of the interactions (i.e.\,admissible strength of repulsion depends on the number of particles, see the last comment before Section \ref{thm1_rem}), so for now we leave this result at the level of general singular SDEs.

\subsection{About the proofs}
\label{about_sect}

The analytic core of the paper are Theorems \ref{thm1}, \ref{thm2} and \ref{thm_grad} from which Theorems \ref{markov_thm} and \ref{unique_thm} for general singular SDEs follow. 

\medskip

In Theorem \ref{thm1} we prove H\"{o}lder continuity of solutions to the elliptic counterpart of the Kolmogorov backward equation \eqref{kolm_eq}, i.e.\,$(\lambda-\Delta + b \cdot \nabla)u=f$, $f \in C_c^\infty$, where $b$ can, in particular, be defined by \eqref{b_def0_}. This is needed to prove the strong Markov property for the martingale solutions in Theorem \ref{markov_thm}. Theorem \ref{thm1} is proved by showing that solution of the elliptic Kolmogorov equation $u$ belongs to appropriate $L^p$ De Giorgi's classes and then following De Giorgi's method. These De Giorgi classes, however,
are somewhat different from the $L^p$ De Giorgi classes found in the literature (cf.\,\cite{G}), i.e.\,they contain the integrals of 
\begin{equation}
\label{nabla_u_p}
|\nabla (u-k)_+^{p/2}|^2, \quad k \in \mathbb R,
\end{equation}
rather than the integrals of $|\nabla (u-k)_+|^p$.

\medskip

Theorem \ref{thm2}, i.e.\,an embedding theorem for a family of
non-homogeneous elliptic Kolmogorov equations that includes 
\begin{equation}
\label{nonhom}
(\lambda-\Delta + b \cdot \nabla)u=|b|f, \quad f \in C_c^\infty, 
\end{equation}
is needed to construct martingale solutions in Theorem \ref{markov_thm}. It also has other uses e.g.\,we apply it in subsequent paper \cite{KiS_feller_4} to construct strongly continuous Feller semigroup with general form-bounded drift with form-bound in the critical range $\delta<4$. 
The proof of Theorem \ref{thm2} also uses De Giorgi's method. Although the assertion of Theorem \ref{thm2} is a global $L^\infty$ estimate on $u$ in terms of a certain $L^p$ norm of the right-hand side, its proof is local. Otherwise we would have to impose an additional global  condition on $b$ that would be difficult to verify for $b$ given by \eqref{b_def0_}. Also, we will need an intermediate result in the proof of Theorem \ref{thm2} in order to establish a ``separation property'', i.e.\,that $u$ is small far away from the support of $f$.

\medskip

The point of departure of De Giorgi's method is the Caccioppoli inequality. To prove it under the repulsing-attracting form-boundedness type condition of Definitions \ref{def3}, \ref{def2}, we extend the iteration procedure (``Caccioppoli's iterations'')  introduced in an earlier paper with Vafadar \cite{KiV} to the non-homogeneous $L^p$ setting, as is needed to handle weak well-posedness of SDEs.

\medskip

Finally, Theorem \ref{thm_grad}, needed to prove Theorem \ref{unique_thm} on conditional weak uniqueness, contains rather strong gradient bounds on solution of \eqref{nonhom}. Its proof uses a quite ingeniously constructed test function of \cite{KS}, see comments after Theorem \ref{thm_grad}.

\subsection{More on the existing results} 
\label{lit_sect}

(i)~Gradient form interaction kernels
\begin{equation}
\label{grad_form}
K=\nabla V:\mathbb R^d \rightarrow \mathbb R^d
\end{equation}
for some potential $V$ on $\mathbb R^d$
play a crucial role in Statistical Physics. We refer to \cite{AKR,KR}, see also references therein. In particular, in \cite{KR} the authors proved strong well-posedness of the particle system  in  $\mathbb R^{Nd} \setminus \cup_{1 \leq i<j \leq N}\{((x_1,\dots,x_N) \in \mathbb R^{Nd} \mid x_i=x_j\}$ for very singular interaction potentials satisfying some fairly general assumptions (however, excluding purely attracting singular interactions such as the ones in \eqref{model_syst}, covered as a special case by Theorems \ref{thmK1_0}, \ref{thmK1}). For instance, the result in \cite[Sect.\,9.2]{KR} yields strong well-posedness of the particle system for potential
$$
V(x)=|x|^{-10}\biggl(2+\sin \frac{1}{|x|}\biggr).
$$
The corresponding interaction kernel $K=-10|x|^{-12}x \bigl(2+\sin(\frac{1}{|x|})\bigr) -|x|^{-13}x\cos(\frac{1}{|x|})$ oscillates between the repulsion and the attraction as $x$ approaches the origin. The repulsion on average dominates the attraction. Still, our results do not cover such interactions. In fact, although in Theorem \ref{thmK1} our condition on the repulsing part of $K$ is much weaker than the condition on the attracting part  of $K$, it is still a global condition: $({\rm div\,}K)_- \in L^1(\mathbb R^d)+L^\infty(\mathbb R^d)$.

\medskip

See also \cite{C, CP} regarding the Dirichlet form approach to the problem of well-posedness of particle systems with gradient form interactions. 

\medskip

We also mention \cite{BJW} where the authors work at the PDE level on the torus, consider interaction kernels of gradient form with the interaction potential $V$ pointwise  comparable to $\sqrt{\kappa}\frac{d-2}{2}\log |x|$ (which thus includes the attracting kernel in \eqref{hardy}) and, importantly, obtain quantitative estimates on the propagation of chaos for the McKean-Vlasov PDE for all $\kappa<16(\frac{d}{d-2})^2$. 

\smallskip

(ii)~The present paper deals with general singular interactions, i.e.\,not having a particular structure such as gradient form. In particular, we refer to \cite{HRZ, T} where the authors prove, as a part of their results on the propagation of chaos, well-posedness of particle system \eqref{syst2} for interaction kernels $K$ in the sub-critical Ladyzhenskaya-Prodi-Serrin class. Applied to \eqref{syst2} (with $K_{ij}=K$), their condition reads as $$|K| \in L^p+L^\infty \;\;(\text{i.e.\,sum of two functions}), \quad p>d.$$ 
See \cite{C} regarding time-homogeneous critical LPS class 
\begin{equation}
\label{lps}
\tag{LPS}
|K| \in L^d+L^\infty.
\end{equation}

\medskip

The class of form-bounded interactions kernels $\mathbf{F}_\kappa$ is larger than \eqref{lps} and, moreover, contains some interaction kernels that are strictly more singular than the ones in \eqref{lps}, such as \eqref{hardy}. However, here we are not comparing our results with papers \cite{C, HRZ, T} since we do not prove the existence of a mean field limit and its uniqueness. 

\medskip

Let us also make the following two comments regarding the relationship between class \eqref{lps} and class $\mathbf{F}_\kappa$:

\begin{enumerate}

\item[--] One advantage of the Lebesgue scale condition \eqref{lps} is that it is easy to verify.
However, it is not necessarily easy to deal with when one considers particle systems of type \eqref{syst2} for $N$ large. Indeed, if, in order to prove well-posedness of \eqref{syst2} we were to consider this particle system as a special case of general SDE \eqref{Z_eq} in $\mathbb R^{Nd}$, then the well-posedness results on the Lebesgue scale for \eqref{Z_eq} would require $|b| \in L^{q}(\mathbb R^{Nd})+L^\infty(\mathbb R^{Nd})$, $q>Nd$  (see \cite{KR}) or $q=Nd$ (see \cite{BFGM}). Clearly, this severely restricts the class of admissible interaction kernels $K_{ij}=K$ in \eqref{b_def0_}. There is a finer  argument due to \cite{HRZ} that still allows to prove strong well-posedness of \eqref{syst2} for $K \in L^p(\mathbb R^d)+L^\infty(\mathbb R^d)$, $p>d$, regardless of the number of particles $N$, but it requires extra work.

\medskip

On the other hand, form-boundedness handles transition from from-bounded $K_{ij}$ on $\mathbb R^d$ to form-bounded $b$ given by \eqref{b_def0_} on $\mathbb R^{Nd}$ rather effortlessly, see \eqref{b_fbd}. Moreover, crucially for particle systems, it allows to keep track of the values of the form-bounds (=\,strengths of interactions) regardless of the number of particles $N$. (To borrow an expression from \cite{C}, the present work can be viewed as a ``propaganda piece'' for form-boundedness and similar conditions in the context of particle systems and singular SDEs.)

\bigskip

\item[--] Consider drift $b:\mathbb R^d \rightarrow \mathbb R^d$.
If $u$ is a weak solution of the elliptic equation $(\lambda -\Delta  + b \cdot \nabla )u=f$, $\lambda>0$, $f \in C_c^\infty$ with $b \in L^d+L^\infty$ and $u \in W^{1,r}$ (e.g.\,using Theorem \ref{thm_grad}) for $r$ large, then, by H\"{o}lder's inequality,
$$
\Delta u \in L^{\frac{rd}{d+r}}_{\loc}. 
$$ 
However, for $b \in \mathbf{F}_\delta$, one can only say that  $$\Delta u \in L^{\frac{2d}{d+2}}_{\loc}$$ (in fact, one can show that $u \in W^{2,2}$). That is, if $b$ is only form-bounded then there are no $W^{2,p}$ estimates on $u$ for $p$ large.

\end{enumerate}

\medskip

Regarding general singular interactions, let us also mention a model of the dynamics of neuroreceptors considered in \cite{L} where the fact that a neurotransmitter, after it gets attached to a fixed neuroreceptor, prevents other neurotransmitters from entering, is modelled by introducing singular repulsing interactions between neurotransmitters in some regions on space. It is 
thus desirable to be able to handle interaction kernels with critical singularities that 
stay admissible after one multiplies them by indicator functions (so that the interactions can be turned on or turned off depending on the positions of the particles relative to each other and in space), as e.g.\,the class of form-bounded interaction kernels considered in the present work.

\medskip

(iii)~De Giorgi's method was used earlier in the context of singular SDEs in \cite{RZ, ZZ, Zh}. There the authors considered singular drifts arising in the study of 3D Navier-Stokes equations.

\medskip

(iv)~In dimensions $d \geq 3$ one does not obtain the Keller-Segel equation \eqref{mv2} as the mean field limit of particle system \eqref{model_syst} since, evidently, there is a gap between the singularity of kernel $K_1(y)=c_d |y|^{-d}y$ in \eqref{mv2} and the singularity of kernel $K(y)=c_d|y|^{-2}y$ in \eqref{model_syst} (that, we know, is already critical). Nevertheless, it is known in the literature on the Keller-Segel equation \cite{JL,CPZ} that requiring extra regularity of the initial distribution $\rho_0 \in L^{d/2}$, one can extend it to $\rho\in L^\infty(\mathbb R_+,L^{d/2})$, in which case, by Young's inequality, $$(K_1 \ast \rho)(t,\cdot) \in L^d,\quad t \geq 0,$$ i.e.\,the drift belongs to still admissible critical time-inhomogeneous Ladyzhenskaya-Prodi-Serrin class. (By the way, repeating the argument in Example \ref{ex1}(1), one sees that drift $(K_1 \ast \rho)(t,\cdot)$ belongs to the class of time-inhomogeneous form-bounded vector fields, i.e.\,for a.e.\,$t \in \mathbb R_+$
$$
\|b(t,\cdot)\varphi\|_2^2 \leq \delta \|\nabla \varphi\|_2^2 + c_\delta\|\varphi\|_2^2 \quad \forall\,\varphi \in W^{1,2},
$$
which, in principle, puts the corresponding Keller-Segel equation within the reach of our methods, at least at the level of a priori Sobolev regularity estimates, see Remark \ref{int_st}.)

\medskip

The observation that to handle the $d$-dimensional Keller-Segel model one can use energy methods in $L^p$ with $p$ large (larger than $\frac{d}{2}$) goes back already to \cite{JL,CPZ}.

\medskip

We also use energy  methods in $L^p$ with $p$ large, but we do it for a different purpose, i.e.\,to relax the assumption of the strength of interactions $\kappa$. 
Furthermore, in the present paper we face another situation where one needs to work in $L^p$ with large $p$. That is, in presence of repulsing-attracting structure in the drift $b$ we can replace the form-boundedness requirement by a more general condition (``multiplicative form-boundedness'', cf.\,Theorem \ref{markov_thm}). Now, to treat the right-hand side of nonhomogeneous equation \eqref{nonhom}, which is the analytic object behind the SDE with drift $b$, we need an
additional condition
$$
|b|^{\frac{1+\alpha}{2}} \in \mathbf{F}_{\chi} \text{ for some } \chi<\infty, \alpha \in ]0,1[,
$$
where $p':=\frac{p}{p-1} \leq 1+\alpha$. This extra condition is least restrictive if $\alpha$ is small, which forces us to consider large $p$. See Remark \ref{A_0_rem} for more details.

\medskip

(v)~We also mention recent results in \cite{CJM} on interacting particle systems and McKean-Vlasov SDEs with distributional interaction kernels in Besov spaces (see also references therein). The assumptions of \cite{CJM} are somewhat orthogonal to the present work and, at least at the moment, do not include the model interaction kernels \eqref{hardy} (while including other quite irregular distributional kernels) or keep track of the strength of interactions $\kappa$.

\medskip

(vi) In what follows, we refer to a well known in the literature on parabolic PDEs and singular SDEs classification of drifts:

 \begin{enumerate}

\item[--] \textit{Sub-critical case} if, upon zooming into small scales, i.e.\,applying parabolic scaling in
$$
(\partial_t-\Delta + b\cdot \nabla)v=0 \text{ in $\mathbb R^d$}
$$
or in
$$
Y_t=y-\int_0^t b(Y_s)ds + \sqrt{2}B_t, \quad y \in \mathbb R^d,
$$ the drift term vanishes. For instance, $b \in [L^q]^d$, $q>d$ is sub-critical.

\medskip

\item[--] \textit{Critical case} if zooming into small scales does not change the ``norm'' of the drift.

\medskip

For instance, parabolic scaling does not change the form-bound of the drift or its $L^d$ norm. So, both $\mathbf{F}_\delta$ and $[L^d]^d$ are critical classes. Note that this classification does not distinguish between critical drifts that reach blow up effects, such as $b \in \mathbf{F}_\delta$, and drifts that do not reach blow up effects, such as $b \in [L^d]^d$. In other words, one can multiply the latter by arbitrarily large constant without affecting well-posedness of the SDE, while form-bounded drifts can in general ``sense'' this multiplication (since it, obviously, changes the form-bound, which cannot be too large, see the beginning of the introduction). In order to distinguish between these two very different cases, we say that the former have critical-order singularities.

\medskip

We also consider in the present paper other \textit{critical} classes of drifts, such as multiplicatively form-bounded drifts (Definition \ref{mfbd_}) and weakly form-bounded drifts (Remark \ref{unique_strong_rem}) that expand the class of form-bounded vector fields $\mathbf{F}_\delta$ rather substantially.

\medskip

\item[--] \textit{Super-critical case} if zooming into small scales actually increases the ``norm'' of the drift. For instance, $b \in L^q$, $q<d$, is super-critical. Let us add that all known results on super-critical drifts $b$ require \textit{critical} positive part of ${\rm div\,}b$. This, of course, includes important case ${\rm div\,}b=0$. 

\end{enumerate}

\smallskip

In Remark \ref{sing_rem} we comment on the existing literature on PDEs and SDEs with super-critical drifts. Briefly, super-criticality of the drift destroys many basic regularity results, but some parts of the theory can be salvaged.

\medskip

(vii) As was indicated above, the proof of Theorem \ref{thmK1}(\textit{iii}) uses the many-particle Hardy inequality of \cite{HHLT}:
for $d \geq 3$, all  $N \geq 2$, 
\begin{equation}
\label{multi_hardy}
C_{d,N} \sum_{1 \leq i<j \leq N}\int_{\mathbb R^{Nd}}\frac{|\varphi(x)|^2}{|x_i-x_j|^2}dx \leq \int_{\mathbb R^{Nd}}|\nabla \varphi(x)|^2 dx, \quad x=(x_1,\dots,x_N),
\end{equation}
for all $\varphi \in W^{1,2}(\mathbb R^{Nd})$,
where
$$
C_{d,N}:=(d-2)^2 \max\bigg\{\frac{1}{N},\frac{1}{1+\sqrt{1+\frac{3(d-2)^2}{2(d-1)^2}(N-1)(N-2)}} \bigg\}.
$$
In the proof we replace constant $C_{d,N}$  with smaller constant $\frac{(d-2)^2}{N}$. However, the maximum for large $N$ and $d \leq 6$ in the definition of $C_{d,N}$ is attained in the second argument. So, the constraint $\kappa<16$ in Theorem \ref{thmK1}(\textit{iii}) (or in Theorem A(\textit{iii})) can be somewhat relaxed for $d \leq 6$. 

\medskip

The authors of \cite{HHLT} also provide, among other results, an upper bound on the constant in \eqref{multi_hardy}. At the moment of writing of this article, to the best of author's knowledge, the optimal constant in \eqref{multi_hardy} is not known. 

\medskip

It is natural to expect that the relationship between Theorem \ref{thmK1}(\textit{iii}) and the many-particle Hardy inequality \eqref{multi_hardy} goes both ways, i.e.\,there is a direct relationship between the optimal constant in many-particle Hardy inequality  \eqref{multi_hardy} and the critical threshold value of $\kappa$  that separates well-posedness of particle system \eqref{model_syst} from a blow up, in which case Monte-Carlo simulations for \eqref{model_syst} should produce the optimal $C_{d,N}$ in \eqref{multi_hardy}; we pursue this in \cite{HKS}.

\subsection{Notations}
\label{notations_sect}
Put 
$$
\langle f\rangle:=\int_{\mathbb R^d} f(y)dy, \quad \langle f,g\rangle:=\langle fg\rangle
$$
(all functions in this paper are real-valued). 
For vector fields $b$, $\mathsf{f}:\mathbb R^d \rightarrow \mathbb R^d$, we put
$$
\langle b,\mathsf{f}\rangle:=\langle b \cdot \mathsf{f}\rangle \qquad \text{($\cdot$ is the inner product in $\mathbb R^d$)}.
$$
Let $\|\cdot\|_{p \rightarrow q}$ denote the $L^p \rightarrow L^q$ operator norm. Let $C_\infty$ denote the space of continuous functions on $\mathbb R^d$ vanishing at infinity, endowed with the $\sup$-norm. Let $B_R(y) \subset \mathbb R^d$ be the open ball of radius $R$ centered at $y \in \mathbb R^d$, $|B_R(x)|$ denotes its volume. Set $B_R:=B_R(0)$. Given a function $f$, we denote its positive and negative parts by 
$$(f)_+:=f \vee 0, \quad (f)_-:=-(f \wedge 0).$$ 
Set
$$
\gamma(x):=\left\{
\begin{array}{ll}
c\exp\left(\frac{1}{|x|^2-1}\right)& \text{ if } |x|<1, \\
0, & \text{ if } |x| \geqslant 1,
\end{array}
\right.
$$
where $c$ is adjusted to $\int_{\mathbb R^d} \gamma(x)dx=1$, and put $\gamma_\varepsilon(x):=\frac{1}{\varepsilon^{d}}\gamma\left(\frac{x}{\varepsilon}\right)$, $\varepsilon>0$, $x\in \mathbb R^d$.
Define the Friedrichs mollifier of a function $h \in L^1_{\loc}$ (or a vector field with entries in $L^1_{\loc}$) by $$E_\varepsilon h:=\gamma_\varepsilon \ast h.$$

\subsection*{Acknowledgements} The author is sincerely grateful to the anonymous referee for making a number of very useful comments.

\bigskip

\section{Particle systems}

For brevity, we will consider first the particle system without the drift terms $M(X_i)$:
\begin{equation}
\label{syst3}
X_i(t)=x_i-\frac{1}{N} \sum_{j=1, j \neq i}^N \int_0^tK_{ij}\big(X_i(s)-X_j(s)\big)ds + \sqrt{2}B_i(t), \quad 1 \leq i \leq N, \quad t \in [0,T],
\end{equation}
 where $x=(x_1,\dots,x_N) \in \mathbb R^{Nd}$, $N \geq 2$. However, we will explain in Remark \ref{ext_rem} below how to put the drifts back there.

\medskip

Let $e_t:C([0,T],\mathbb R^{Nd}) \rightarrow \mathbb R^{Nd}$ be defined by $$e_t(\omega):=\omega_t.$$

\medskip

Recall that a probability measure $\mathbb P_x$ ($x \in \mathbb R^{Nd}$) on the canonical space of continuous trajectories $\omega=(\omega^1,\dots,\omega^N)$ in $\mathbb R^{Nd}$ is called a martingale solution to particle system \eqref{syst3} on $[0,T]$ if 

1) $$\mathbb P_{x,0}=\delta_x,$$ where $\mathbb P_{x,t}:=\mathbb P \circ e_t^{-1}$ (on $\mathbb R^{Nd}$),

2) $$\mathbb E_{x} \sum_{i=1}^N \sum_{j=1, j \neq i}^N \int_0^T |K_{ij}(\omega_t^i-\omega_t^j)|dt<\infty,$$

3) for every $\phi \in C_c^2(\mathbb R^{Nd})$ the process
$$
[0,T] \ni r \mapsto \phi(\omega_r)-\phi(x) + \int_0^r \big(-\Delta_{y} \phi(\omega_t) + \frac{1}{N}\sum_{i=1}^N \sum_{j=1, j \neq i}^N K_{ij}(\omega_t^i-\omega_t^j) \cdot \nabla_{y_i} \phi(\omega_t)\big)dt
$$
is a martingale under $\mathbb P_x$.

\medskip

We will also need the following definition. Let $K$ satisfy \eqref{fbd_}, let $\{K^n\}$ be some sequence of vector fields (in what follows, $K^n$ will be more regular than $K$).

\begin{definition}
\label{def_Kn}
Let us say that $\{K^n\}$ does not increase the form-bounds of $K$ if for every $n \geq 1$
$$
\|K^n \varphi\|_2^2 \leq \kappa \|\nabla \varphi\|_2^2 + c_\kappa\|\varphi\|_2^2  \quad \forall \varphi \in W^{1,2}(\mathbb R^d),
$$
i.e.\,$\{K^n\}$ satisfy \eqref{fbd_} with the same constants as $K$.  
\end{definition}

\subsection{General interaction kernels}

\begin{theorem}[General interactions]
\label{thmK1_0}

Assume that the interaction kernels $K_{ij}$ in particle system \eqref{syst3} satisfy 
\begin{equation}
\label{K1_cond}
K_{ij} \in \mathbf{F}_\kappa \quad \text{with } \kappa<4\bigg(\frac{N}{N-1}\bigg)^2
\end{equation}
(see Definition \ref{def1}).
Then the following are true:

\smallskip

\begin{enumerate}[label=(\roman*)]

\item There exists a strong Markov family of martingale solutions $\{\mathbb P_x\}_{x \in \mathbb R^{Nd}}$ of particle system \eqref{syst3}.

\smallskip

\item  The function
\begin{equation}
\label{u_def}
u(x):=\mathbb E_{\mathbb P_x}\int_0^\infty e^{-\lambda s}f(\omega_s^1,\dots,\omega_s^N) ds, \quad x \in \mathbb R^{Nd}, \quad f \in C_c^\infty(\mathbb R^{Nd}),
\end{equation}
where $\lambda$ is assumed to be sufficiently large,
is a locally H\"{o}lder continuous weak solution  to elliptic Kolmogorov equation
\begin{equation}
\label{eq_el}
\bigg(\lambda - \Delta+ \frac{1}{N}\sum_{i=1}^N \sum_{j=1,  j \neq i}^N K_{ij}(x_i-x_j) \cdot \nabla_{x_i} \bigg)u=f, \quad x=(x_1,\dots,x_N),
\end{equation}
see definitions in Remark \ref{unique_rem} where we also discuss the uniqueness of $u$. 

\medskip

\item Fix $p>\frac{2}{2-\frac{N-1}{N}\sqrt{\kappa}}$. The family of operators $\{P_t\}_{t \geq 0}$ defined by
$$
P_tf(x):=\mathbb E_{\mathbb P_x}[f(\omega_t^1,\dots,\omega_t^N)], \quad f \in C_c^\infty(\mathbb R^{Nd}), 
$$
admits extension by continuity to a strongly continuous quasi contraction Markov semigroup on $L^p$ of integral operators, say $P_t=:e^{-t\Lambda_p}$, such that
\begin{equation}
\label{pq}
\|e^{-t\Lambda_p}\|_{p \rightarrow q} \leq cw^{\omega t}t^{-\frac{Nd}{2}(\frac{1}{p}-\frac{1}{q})}, \quad p \leq q \leq \infty
\end{equation}
for appropriate constants $c$ and $\omega$.  In view of \eqref{pq}, Dunford-Pettis' theorem yields that $e^{-t\Lambda_p}$ is a semigroup of integral operators. 
Their integral kernel $e^{-t\Lambda}(x,z)$ does not depend on $p$ and is defined to be the heat kernel of particle system \eqref{syst3}.

 If $p=2$, then we have
$$
\Lambda_2 \supset - \Delta+ \frac{1}{N}\sum_{i=1}^N \sum_{j=1,  j \neq i}^N K_{ij}(x_i-x_j) \cdot \nabla_{x_i} \upharpoonright C_c^\infty(\mathbb R^{Nd}).
$$

The semigroup $e^{-t\Lambda_p}$ is unique among semigroups that can be constructed via approximation, i.e.\,for any sequence of bounded smooth interaction kernels $\{K^n_{ij}\}$,
$$
K^n_{ij} \rightarrow K_{ij} \quad \text{ in } [L^2_{\loc}(\mathbb R^{d})]^d,
$$  
that do not increase the form-bounds of $K$,
for every $f \in C_c^\infty(\mathbb R^{Nd})$ solutions  $\{v_n\}$ to 
$$
\big(\partial_t-\Delta + \frac{1}{N}\sum_{i=1}^N \sum_{j=1,  j \neq i}^N K^n_{ij}(x_i-x_j) \cdot \nabla_{x_i}\big)v_n=0, \quad v_n(0)=f
$$
converge to the same limit $e^{-t\Lambda_p}f$ in $L^p(\mathbb R^{Nd})$ loc.\,uniformly in $t \geq 0$.

\smallskip

\item If, furthermore,
$$
\kappa<\frac{1}{(N-1)^2d^2},
$$
then for every initial configuration $x=(x_1,\dots,x_N) \in \mathbb R^{Nd}$ martingale solution $\mathbb P_x$ satisfies for a given  $q \in ]Nd,\frac{N}{N-1}\kappa^{-\frac{1}{2}}[$ Krylov-type bounds
\begin{equation}
\label{kr1_u}
\mathbb E_{\mathbb P_x} \int_0^T |h(s,\omega^1_s,\dots,\omega^N_s)|ds \leq c\|h\|_{L^q([0,T] \times \mathbb R^{Nd})}
\end{equation}
and
\begin{equation}
\label{kr1_u2}
\mathbb E_{\mathbb P_x} \int_0^T |b(\omega^1_s,\dots,\omega^N_s)| |h(\tau,\omega^1_s,\dots,\omega^N_s)| ds \leq c\|b|h|^{\frac{q}{2}}\|^{\frac{2}{q}}_{L^2([0,T] \times \mathbb R^{Nd})},
\end{equation}
for all $h \in C_c([0,T] \times \mathbb R^{Nd})$, for some constant $c>0$, where vector field $b=(b_1,\dots,b_N):\mathbb R^{Nd} \rightarrow \mathbb R^{Nd}$ is defined by
\begin{equation*}
b_i(x):= \frac{1}{N}\sum_{j=1, j \neq i}^N K_{ij}(x_i-x_j), \quad x=(x_1,\dots,x_N) \in \mathbb R^{Nd}, \quad 1 \leq i \leq N.
\end{equation*}

\smallskip

Moreover, $\mathbb P_x$ is the only martingale solution to \eqref{syst3} that satisfies \eqref{kr1_u}, \eqref{kr1_u2} (``conditional weak uniqueness'').

\smallskip

\item There exists constant $C<1$ such that if $K_{ij}$ is of the form 
\begin{equation}
\label{K4_cond}
K_{ij}(x_i,x_i-x_j)=\zeta(x_i)K^0_{ij}(x_i-x_j),
\end{equation}
with $\zeta$ having compact support, $\|\zeta\|_\infty \leq 1$,
and $K^0_{ij} \in \mathbf{F}_\kappa$ with
\begin{equation}
\label{K_number2}
\kappa<\frac{C}{(N-1)^2d^2}
\end{equation}
(the previous assertions are valid for such interaction kernels as well),
then for every initial configuration $(x_1,\dots,x_N) \in \mathbb R^{Nd}$ particle system \eqref{syst3} has a strong solution on $[0,T]$ that is unique among all strong solutions defined on the same probability space satisfying \eqref{kr1_u}, \eqref{kr1_u2}. 
\end{enumerate}
\end{theorem}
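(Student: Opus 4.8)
The plan is to deduce all five assertions from the corresponding results on the general singular SDE \eqref{Z_eq} in $\mathbb{R}^{Nd}$, so the first step is the reduction. Writing $Z=(X_1,\dots,X_N)$ and $b=(b_1,\dots,b_N)$ with $b_i$ given by \eqref{b_def0_}, particle system \eqref{syst3} becomes $dZ=-b(Z)dt+\sqrt{2}dB$, and a martingale solution of \eqref{syst3} in the sense defined above is literally a martingale solution of \eqref{Z_eq}; the two Kolmogorov backward operators coincide. The key structural input is Lemma \ref{p_1}: if $K_{ij}\in\mathbf{F}_\kappa(\mathbb{R}^d)$ then $b\in\mathbf{F}_\delta(\mathbb{R}^{Nd})$ with $\delta=\frac{(N-1)^2}{N^2}\kappa$ (and $c_\delta=\frac{(N-1)^2}{N}c_\kappa$), cf.\ \eqref{b_fbd}; one proves it by expanding $\|b\varphi\|_2^2=\sum_i\|b_i\varphi\|_2^2$, applying the triangle inequality inside each $b_i=\frac1N\sum_{j\neq i}K_{ij}(x_i-x_j)$ together with Cauchy--Schwarz to pay the factor $N-1$, and then using form-boundedness of each $y\mapsto K_{ij}(y)$ against $-\Delta_{x_i}\le-\Delta_x$ on $\mathbb{R}^{Nd}$ after integrating out the remaining variables. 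Under \eqref{K1_cond} this yields $\delta<4$; moreover $\frac{N-1}{N}\sqrt{\kappa}=\sqrt{\delta}$, so the exponent threshold in (\textit{iii}) reads $p>\frac{2}{2-\sqrt{\delta}}$, the natural threshold for running De Giorgi's method in $L^p$, and $\kappa<\frac1{(N-1)^2d^2}$ in (\textit{iv}) is exactly the condition making the interval $]Nd,\frac{N}{N-1}\kappa^{-1/2}[$ for the Krylov exponent $q$ nonempty.

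With this in hand, assertions (\textit{i}), (\textit{ii}), (\textit{iii}) follow by invoking Theorem \ref{markov_thm} (and the analytic ingredients Theorems \ref{thm1}, \ref{thm2}) for the drift $b\in\mathbf{F}_\delta(\mathbb{R}^{Nd})$, $\delta<4$: Theorem \ref{markov_thm} produces the strong Markov family $\{\mathbb{P}_x\}$, the strongly continuous quasi contraction Markov semigroup $e^{-t\Lambda_p}$ on $L^p$ for $p>\frac{2}{2-\sqrt{\delta}}$, the $L^p\to L^q$ smoothing estimate \eqref{pq} (whence, via Dunford--Pettis, the heat kernel), the identification $\Lambda_2\supset(-\Delta+b\cdot\nabla)\!\upharpoonright\! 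C_c^\infty(\mathbb{R}^{Nd})$, and uniqueness of the semigroup among those built by approximation; here one only notes that smooth $K^n_{ij}\to K_{ij}$ not increasing the form-bounds of $K_{ij}$ (Definition \ref{def_Kn}) produce, via Lemma \ref{p_1}, drifts $b^n$ not increasing the form-bound of $b$, so the approximation scheme of Theorem \ref{markov_thm} applies verbatim. Assertion (\textit{ii}) — that the probabilistic $u$ of \eqref{u_def} is a locally H\"older continuous weak solution of \eqref{eq_el} — is Theorem \ref{thm1} applied in $\mathbb{R}^{Nd}$ to $(\lambda-\Delta+b\cdot\nabla)u=f$, combined with the identification $u=(\lambda+\Lambda_p)^{-1}f$ coming out of the construction; the H\"older continuity is also what upgrades the martingale solutions to a strong Markov family, closing (\textit{i}).

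Assertion (\textit{iv}) comes from Theorem \ref{unique_thm} (which rests on Theorem \ref{thm_grad}): the smallness $\kappa<\frac1{(N-1)^2d^2}$ translates, through Lemma \ref{p_1}, into smallness of $\delta$ (namely $\delta<\frac1{N^2d^2}$) sufficient for Theorem \ref{unique_thm}, and one checks that the drift $b$ of \eqref{b_def0_}, being genuinely form-bounded, meets that theorem's hypotheses (its auxiliary condition is then automatic up to the stated smallness); the Krylov-type bounds \eqref{kr1_u}, \eqref{kr1_u2} and conditional weak uniqueness follow. For (\textit{v}) with $K_{ij}(x_i,x_i-x_j)=\zeta(x_i)K^0_{ij}(x_i-x_j)$: since $\|\zeta\|_\infty\le1$ one has $|b|\le|b^0|$ pointwise, where $b^0$ is built from $K^0_{ij}$, so $b\in\mathbf{F}_\delta(\mathbb{R}^{Nd})$ with the same $\delta$; the compact support of $\zeta$ in the $x_i$-variable additionally supplies the global integrability/decay of $b$ needed to invoke the strong well-posedness result of \cite{KiM_strong}, whose proof (a modification of the R\"ockner--Zhao scheme \cite{RZ2}) then yields, for $\delta$ below the explicit threshold encoded in $C<1$ and \eqref{K_number2}, a strong solution on $[0,T]$ pathwise unique among strong solutions on the same probability space obeying \eqref{kr1_u}, \eqref{kr1_u2}.

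\emph{Main obstacle.} Everything above is bookkeeping layered on the three analytic theorems (\ref{thm1}, \ref{thm2}, \ref{thm_grad}) and the SDE theorems (\ref{markov_thm}, \ref{unique_thm}); within the present argument the delicate points are (a) carrying the a priori estimates uniformly through the smooth approximation $K^n_{ij}\to K_{ij}$ so that tightness holds and the limiting measures are genuine martingale solutions with the claimed regularity and Markov property — this is precisely where the dimension-independence of the $L^p$ De Giorgi bounds, hence the $N$-independence of the hypothesis on $\kappa$, is indispensable — and (b) for (\textit{v}), matching the hypotheses of \cite{KiM_strong} exactly, i.e.\ extracting from ``$\zeta$ compactly supported in $x_i$'' the precise supplementary global condition on $b$ that strong existence requires, and tracking how it pins down the universal constant $C$.
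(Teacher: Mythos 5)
Your overall route is the same as the paper's: rewrite \eqref{syst3} as the SDE \eqref{Z_eq} in $\mathbb R^{Nd}$, use Lemma \ref{p_1} to convert $K_{ij}\in\mathbf F_\kappa(\mathbb R^d)$, $\kappa<4(\tfrac{N}{N-1})^2$, into $b\in\mathbf F_\delta(\mathbb R^{Nd})$ with $\delta=\tfrac{(N-1)^2}{N^2}\kappa<4$, and then read off (\textit{i})--(\textit{iii}) from Theorem \ref{markov_thm} and (\textit{v}) from \cite{KiM_strong}; your bookkeeping of the exponents ($\sqrt{\delta}=\tfrac{N-1}{N}\sqrt{\kappa}$, nonemptiness of $]Nd,\tfrac{N}{N-1}\kappa^{-1/2}[$) is exactly what the paper relies on.

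The one step that does not go through as you describe is assertion (\textit{iv}). You derive it from Theorem \ref{unique_thm}, but that theorem is an \emph{elliptic} (resolvent-type) result: its Krylov bounds involve $\mathbb E_{\mathbb P_y}\int_0^\infty e^{-\lambda s}|\mathsf g f|(\omega_s)\,ds$ with exponent $q>(Nd-2)\vee 2$ taken close to $(Nd-2)\vee 2$, and its uniqueness assertion ($i'$) holds only within the class of \emph{Markov families} of martingale solutions (or, in ($ii'$), under extra global integrability hypotheses on $b$). Assertion (\textit{iv}) of Theorem \ref{thmK1_0} claims something different: time-dependent Krylov bounds \eqref{kr1_u}, \eqref{kr1_u2} with $L^q([0,T]\times\mathbb R^{Nd})$ norms, $q$ in the \emph{supercritical-in-$q$} range $]Nd,\tfrac{N}{N-1}\kappa^{-1/2}[$, and conditional weak uniqueness for each fixed initial configuration without any Markov-family restriction. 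The paper obtains this from the parabolic uniqueness result of \cite{KiM_JDE} (hence the requirement $q>Nd$, which forces $\kappa<\tfrac{1}{(N-1)^2d^2}$), and it explicitly remarks elsewhere that Theorem \ref{unique_thm} does not yet have a particle-system counterpart precisely because its hypotheses, transported via Lemma \ref{p_2}, impose $N$-dependent conditions on the interactions. So your verification of hypothesis \eqref{C_1} for $\delta<\tfrac{1}{N^2d^2}$ is fine, but the conclusion you would get from Theorem \ref{unique_thm} is not the statement (\textit{iv}) asserts; you need to invoke the parabolic result of \cite{KiM_JDE} (together with Lemma \ref{p_1}) instead.
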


We recall from the discussion in the introduction that if the strength of interactions $\kappa$ is taken to be too large then a weak solution to the particle system \eqref{syst3} can cease to exist. So, in Theorem \ref{thmK1_0}(\textit{i}) we are dealing with the critical scale of the strength of interactions.

\medskip

Let us emphasize that as the strength of interactions $\kappa$ becomes smaller, the theory of particle system \eqref{syst3} in Theorem \ref{thmK1_0} becomes more detailed.

\medskip

We are rather satisfied with the conditions on the interaction kernels $K_{ij}$ in Theorem \ref{thmK1_0}(\textit{i})-(\textit{iii})  where the assumption on the strength of interactions $\kappa$ ``stabilizes'' to a positive value as the number of particles $N \rightarrow \infty$, so, in principle, this opens up a possibility of studying the existence of the mean field limit (see Remark \ref{mean_field_rem}). However, in assertions (\textit{iv}), (\textit{v}) of Theorem \ref{thmK1_0} the assumption on $\kappa$ degenerates to zero as $N$ goes to infinity, which seems to be a by-product of our method of embedding particle system \eqref{syst3} in the general SDE  \eqref{sde1}. We comment more on this below.

\subsection{Attraction and repulsion}We now turn to the interaction kernels having a repulsion-attraction structure. While the repulsion between the particles, in a sense, contributes towards well-posedness of particle system \eqref{syst3} by preventing collisions, the attraction can lead to the blow up effects (see the discussion in the introduction). We take into account the attraction between the particles by looking at the positive part of the divergence of the interaction kernels $K_{ij}$ in \eqref{syst3}.

\begin{definition}
\label{def3}
 $({\rm div\,}K)_+ \in L^1_{\loc}$ is said to be form-bounded  if there exists constant $\kappa_+$ such that
\begin{equation}
\label{div_fbd}
\langle({\rm div} K)_+ \varphi,\varphi \rangle \leq \kappa_+ \|\nabla\varphi\|_2^2 + c_{\kappa_+}\|\varphi\|_2^2, \quad \forall \varphi \in W^{1,2},
\end{equation}
for some $c_{\kappa_+}$. 
\end{definition}

We abbreviate \eqref{div_fbd}, with a slight abuse of notation, as $$({\rm div\,}K)_+^{\frac{1}{2}} \in \mathbf{F}_{\kappa_+}.$$
For example, the previous condition is satisfied if $({\rm div\,}K)_+ \in L^{\frac{d}{2},\infty}$ (weak $L^\frac{d}{2}$ class). This includes, of course,  $({\rm div\,}K)_+ \in L^{\frac{d}{2}}$, in which case $\kappa_+$ can be chosen arbitrarily small (cf.\,Example \ref{ex1}(1)).

\begin{example} 
Let $K$ be the model singular attracting kernel \eqref{hardy}, i.e.\,$K(y)= \sqrt{\kappa}\frac{d-2}{2}\frac{ y }{|y|^2}$. Then
$${\rm div\,}K=\sqrt{\kappa}\frac{(d-2)^2}{2}|y|^{-2},$$
so, by Hardy's inequality $({\rm div\,}K)_+^\frac{1}{2} \in \mathbf{F}_{\kappa_+}$, $\kappa_+=2\sqrt{\kappa}$, $c_{\kappa_+}=0.$
\end{example}

Regarding the negative part $({\rm div\,}K)_-$, which is responsible for the repulsion between the particles, we will only impose a rather quite mild condition  that $({\rm div\,}K)_-$ can be represented as the sum of a function in $L^1(\mathbb R^d)$ and a bounded function.

Already the hypothesis $({\rm div\,}K_{ij})_+^{\frac{1}{2}} \in \mathbf{F}_{\kappa_+}$ allows one to easily prove, integrating by parts and using Lemma \ref{p_2}, that solution $v$ of the backward Kolmgorov equation for particle system \eqref{syst3}
$$
\biggl(\partial_t - \Delta+ \frac{1}{N}\sum_{i=1}^N \sum_{j=1,  j \neq i}^N K_{ij}(x_i-x_j) \cdot \nabla_{x_i} \bigg)v=0, \quad v(0,\cdot)=f(\cdot) \text{ in } \mathbb R^{Nd}
$$
satisfies a quasi contraction estimate
\begin{equation}
\label{contr_est}
\|v(t)\|_p \leq e^{\omega_p t}\|f\|_p, \quad t>0
\end{equation}
provided $\kappa_+$ is not too large, for appropriate $p$ and $\omega_p$.
However, without any additional assumptions on the interaction kernels $K_{ij}$ themselves, there is no hope of advancing substantially farther than \eqref{contr_est}. In fact, without conditions on $K_{ij}$, even requiring ${\rm div\,}K_{ij}=0$, puts us firmly in the super-critical regime (cf.\,(vi) the introduction), so even the proof of a priori H\"{o}lder continuity of solution $v$ or of solution to the corresponding elliptic equation becomes out of reach. We need a condition on $K_{ij}$ that will put us back in the critical regime.

\begin{definition}
\label{def2}
A vector field $K \in [L^1_{\loc}]^d$
is said to be multiplicatively form-bounded if there exists constant $\kappa_0$ (``multiplicative form-bound'') such that
\begin{equation}
\label{mfbd_}
\langle|K| \varphi,\varphi\rangle \leq \kappa_0\|\nabla \varphi\|_2\|\varphi\|_2 + c_{\kappa_0}\|\varphi\|_2^2, \quad \forall \varphi \in W^{1,2}.
\end{equation}
\end{definition}

We abbreviate \eqref{mfbd_} as $$K \in \mathbf{MF}_{\kappa_0}.$$
It will be clear from the results below that the actual value of $\kappa_0$ is not important for well-posedness of the particle system \eqref{syst3}; it is the value of $\kappa_+$ that matters.

\medskip

Note that the class of form-bounded vector field $\mathbf{F}_{\kappa_0}$ is also a critical class, so we could use it here as well.
However, our ultimate goal is to identify the optimal (least restrictive) assumptions on $K_{ij}$, so we will work with the broader class $\mathbf{MF}_{\kappa_0}$:

\begin{example}

\begin{enumerate}[label=(\roman*)]

\item Every form-bounded vector field is multiplicatively form-bounded, but not vice versa, 
see Remark \ref{mult_class}. 
In particular, all vector fields listed in Example \ref{ex1} are multiplicatively form-bounded.

\smallskip

\item The class $\mathbf{MF}_{\kappa_0}$ contains the largest possible, up to the strict inequality in $\varepsilon>0$, scaling-invariant  Morrey class $M_{1+\varepsilon}$: if
\begin{equation*}
\|K\|_{M_{1+\varepsilon}}:=\sup_{r>0, y \in \mathbb R^d} r\biggl(\frac{1}{|B_r(y)|}\int_{B_r(y)}|K|^{1+\varepsilon}dy \biggr)^{\frac{1}{1+\varepsilon}}<\infty,
\end{equation*}
then $$K \in \mathbf{MF}_{\kappa_0}, \quad \kappa_0=c(d,\varepsilon)\|K\|_{M_{1+\varepsilon}},$$ see details in Remark \ref{mult_class}. Here one can already see the gain in comparison with the class of form-bounded vector field $\mathbf{F}_{\kappa_0}$, which contains only $M_{2+\varepsilon}$ (and itself is contained in $M_{2}$).

\smallskip

\item 
If the following Morrey class condition  is satisfied:
$$
\sup_{r>0, y \in \mathbb R^d} r^2\biggl(\frac{1}{|B_r(y)|}\int_{B_r(y)}|({\rm div\,} K)_+|^{1+\varepsilon}dy \biggr)^{\frac{1}{1+\varepsilon}}<\infty,
$$
then $({\rm div\,} K)_+^{\frac{1}{2}} \in \mathbf{F}_{\kappa_+}$ with appropriate $\kappa_+$.

\end{enumerate}

\end{example}

It was demonstrated in \cite{S} that condition $b \in \mathbf{MF}_\delta$ under additional divergence-free hypothesis ${\rm div\,}b=0$ provides two-sided Gaussian bounds on the heat kernel of operator $-\nabla \cdot a \cdot \nabla + b \cdot \nabla$ with uniformly elliptic measurable matrix $a$. (Of course, having $({\rm div\,}b)^{\frac{1}{2}} \in \mathbf{F}_{\delta_+}$, as in the present paper, destroys both the upper and the lower Gaussian bounds on the heat kernel even of $-\Delta + b \cdot \nabla$.)

\medskip

Put
 $$\mathbf{F}:=\{K \mid K \in \mathbf{F}_{\kappa_0} \text{ for some } \kappa_0<\infty\}$$
and
$$
\mathbf{MF}:=\{K \mid K \in \mathbf{MF}_{\kappa_0} \text{ for some } \kappa_0<\infty\}.
$$

Definition \ref{def_Kn} extends naturally  to $K$ satisfying \eqref{mfbd_}, \eqref{div_fbd} or \eqref{K3_cond} below.
In all these cases, in Section \ref{approx_sect} we show that the vector fields $K^n$ defined by 
\begin{equation}
\label{K_def9}
K^n:=E_{\varepsilon_n}K, \quad \varepsilon_n \downarrow 0, \quad \text{$E_\varepsilon$ is the Friedrichs mollifier},
\end{equation}
are bounded, smooth and
do not increase the corresponding form-bounds of $K$.

\begin{theorem}[Repulsing-attracting interactions]
\label{thmK1}
The following are true:

\begin{enumerate}[label=(\roman*)]

\item 

Assume that the interaction kernels $K_{ij}$ in particle system \eqref{syst3} satisfy 
\begin{equation}
\label{K2_cond}
K_{ij} \in \mathbf{MF}, \qquad
\left\{
\begin{array}{l}
({\rm div\,} K_{ij})_- \in L^1+L^\infty,
\\
({\rm div\,} K_{ij})_+^{\frac{1}{2}} \in \mathbf{F}_{\kappa_+} \text{ with } \kappa_+<4\frac{N}{N-1}
\end{array}
\right.
\qquad |K_{ij}|^{\frac{1+\alpha}{2}} \in \mathbf{F}
\end{equation}
for some $\alpha>0$ fixed arbitrarily close to zero.
The assertions (\textit{i}), (\textit{ii}) of Theorem \ref{thmK1_0} are valid for these interaction kernels as well.

\smallskip

\item Assume that $K_{ij}$ satisfy a more restrictive condition than \eqref{K2_cond} in Theorem \ref{thmK1}:
\begin{equation}
\label{K3_cond}
K_{ij} \in \mathbf{F}, \qquad
\left\{
\begin{array}{l}
({\rm div\,} K_{ij})_- \in L^1+ L^\infty, \\
({\rm div\,} K_{ij})_+^{\frac{1}{2}} \in \mathbf{F}_{\kappa_+} \text{ with } \kappa_+<4\frac{N}{N-1}.
\end{array}
\right.
\end{equation}
Fix $p>\frac{4}{4-\frac{N-1}{N}\kappa_+}$. Then assertion (\textit{iii}) of Theorem \ref{thmK1_0} also remains valid.  

\smallskip

\item
Let
\begin{equation}
\label{model}
K_{ij}(y)=\sqrt{\kappa}\frac{d-2}{2}|y|^{-2}y + K_{0,ij}(y), \quad y \in \mathbb R^d. 
\end{equation}
If the strength of attraction
$$
\kappa<16
$$
and $K_{0,ij}$ satisfy conditions \eqref{K1_cond} or \eqref{K2_cond} with sufficiently small form-bounds,
then assertions (\textit{i})-(\textit{iii}) of Theorem \ref{thmK1_0}  with $p>\frac{4}{4-\sqrt{\kappa}}$ remain valid.

\smallskip 

\item Furthermore, for the model attracting interaction kernel $$K(y)=\sqrt{\kappa}\frac{d-2}{2}|y|^{-2}y, \quad \kappa<16,$$ 
the previous assertions remain valid,
the heat kernel $e^{-t\Lambda}(x,z)$ of particle system \eqref{syst3}
satisfies, up to modification on a measure zero set, the heat kernel bound
\begin{equation*}
e^{-t\Lambda}(x,z)\leq Ct^{-\frac{Nd}{2}}\prod_{1 \leq i<j \leq N} \eta(t^{-\frac{1}{2}}|z_i-z_j|), \quad t \in ]0,T],
\end{equation*}
for some $C=C_T$,
for all $x \in \mathbb R^{Nd}$, $z=(z_1,\dots,z_N) \in \mathbb R^{Nd}$
provided $z_i \neq z_j$ ($i \neq j$),
for a fixed function
$1 \leq \eta \in C^2(]0,\infty[)$ such that $$\eta(r)=\left\{
\begin{array}{ll}
r^{-\sqrt{\kappa}\frac{d-2}{2}\frac{1}{N}} & 0<r<1, \\
2, & r>2.
\end{array}
\right.
$$

\end{enumerate}
\end{theorem}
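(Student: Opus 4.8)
Fix the model attracting kernel $K(y)=\sqrt{\kappa}\,\tfrac{d-2}{2}|y|^{-2}y$ with $\kappa<16$ and regard particle system \eqref{syst3} as the $\mathbb R^{Nd}$-valued SDE \eqref{Z_eq} with drift $b=(b_1,\dots,b_N)$, $b_i(x)=\tfrac{1}{N}\sum_{j\neq i}\sqrt{\kappa}\,\tfrac{d-2}{2}\,\tfrac{x_i-x_j}{|x_i-x_j|^2}$. The starting observation is that $b=\nabla\Phi$ is of gradient form, with
$$
\Phi(x)=\frac{\sqrt{\kappa}}{N}\frac{d-2}{2}\sum_{1\le i<j\le N}\log|x_i-x_j|,\qquad
e^{-\Phi(x)}=\prod_{1\le i<j\le N}|x_i-x_j|^{-\sqrt{\kappa}\frac{d-2}{2N}},
$$
so that ${\rm div}\,b=\Delta\Phi=\tfrac{\sqrt{\kappa}(d-2)^2}{N}\sum_{i<j}|x_i-x_j|^{-2}\ge 0$; hence $({\rm div}\,b)_-\equiv 0$ and, by the many-particle Hardy inequality \eqref{multi_hardy} with $C_{d,N}$ replaced by the smaller $\tfrac{(d-2)^2}{N}$, $({\rm div}\,b)_+^{1/2}\in\mathbf F_{\sqrt{\kappa}}(\mathbb R^{Nd})$ with $c=0$. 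Since $\kappa<16\iff\sqrt{\kappa}<4$, and $b\in\mathbf F$ by Lemma \ref{p_1} while $|K|^{(1+\alpha)/2}\in\mathbf F$ is automatic (it is sub-critical), the model attracting kernel lies in the scope of Theorem \ref{thmK1}(\textit{iii}): in particular the semigroup $e^{-t\Lambda_p}$ on $L^p(\mathbb R^{Nd})$, $p>\tfrac{4}{4-\sqrt{\kappa}}$, is available with the smoothing bound \eqref{pq} and the $p$-independent integral kernel $e^{-t\Lambda}(x,z)$. The kernel bound to be proved is a multi-particle analogue of the heat-kernel bound known for a single attracting Hardy drift, and I would obtain it from the abstract desingularization theorem of \cite{KiSSz}.

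Put $\varrho(z):=\prod_{i<j}\eta(|z_i-z_j|)$. The choice of $\eta$ makes $\varrho\ge 1$ (hence $\varrho^{-1}$ bounded), $\varrho$ bounded off the collision set $\mathcal D:=\bigcup_{i<j}\{z_i=z_j\}$, and $\varrho$ equal to the formal invariant density $e^{-\Phi}$ on the full-collision region $\{|z_i-z_j|<1\ \forall i<j\}$; recall that $\Lambda$ is symmetric with respect to $e^{-\Phi}dx$, equivalently that $e^{-\Phi}$ is harmonic for $\Lambda^*=-\Delta-b\cdot\nabla-{\rm div}\,b$. Because of this, $\nabla\log\varrho+b$ is a bounded vector field that vanishes on the full-collision region, and conjugating $\Lambda^*$ by $\varrho$ gives
$$
\varrho^{-1}\Lambda^*\varrho=-\Delta+\big(-2\nabla\log\varrho-b\big)\cdot\nabla+\big[-|\nabla\log\varrho|^2-\Delta\log\varrho-b\cdot\nabla\log\varrho-{\rm div}\,b\big]=:-\Delta+\tilde b\cdot\nabla+W,
$$
in which the decisive point is that the bracketed zeroth-order term, whose \emph{a priori} singular pieces are of sizes $|b|^2$ and ${\rm div}\,b$, cancels \emph{exactly} on the full-collision region and elsewhere leaves only a remnant $\lesssim|b|$, which is sub-critical thanks to $|K|^{(1+\alpha)/2}\in\mathbf F$; thus $W$ is of arbitrarily small form-bound. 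Likewise $\tilde b$ differs from $b$ by a bounded field, so $\tilde b\in\mathbf F$, $({\rm div}\,\tilde b)_-\in L^1+L^\infty$, $({\rm div}\,\tilde b)_+^{1/2}\in\mathbf F_{\sqrt{\kappa}}$ — again via \eqref{multi_hardy}. Hence $\varrho^{-1}\Lambda^*\varrho$ is of the critical type of \eqref{K3_cond} (a zeroth-order term of small form-bound being a harmless addition), which is precisely the compatibility of the pair $(\Lambda,\varrho)$ required as input to the desingularization theorem: $\varrho$ is a controlled supersolution of $\Lambda^*\varrho=0$ with defect supported away from $\mathcal D$. The theorem then delivers $e^{-t\Lambda}(x,z)\le C_T\,t^{-Nd/2}\varrho(z)$ for $t\in]0,T]$, uniformly in $x$; finally the parabolic self-similarity $b(\lambda x)=\lambda^{-1}b(x)$, which yields $e^{-s\Lambda}(x,z)=s^{-Nd/2}e^{-\Lambda}(s^{-1/2}x,s^{-1/2}z)$ and $\varrho(s^{-1/2}z)=\prod_{i<j}\eta(s^{-1/2}|z_i-z_j|)$, converts this into the asserted bound with the time-scaled weight (for $t$ bounded away from $0$ one may instead just use \eqref{pq} and $\prod_{i<j}\eta(t^{-1/2}|z_i-z_j|)\ge 1$).

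The main obstacle is the verification of the compatibility condition for $(\Lambda,\varrho)$, which is entirely a matter of behaviour near multi-particle collisions. One must check that the exact cancellation of the singular zeroth-order term survives the fact that $\varrho$ only approximates $e^{-\Phi}$ on the transition annuli $\{1\le|z_i-z_j|\le2\}$ — there the discrepancy is bounded and merely feeds into $W$ and into the bounded part of $\tilde b$ — and, above all, that the divergence-type form-bound entering the conjugated operator, governed by $\sum_{i<j}|z_i-z_j|^{-2}$ once several particles cluster, stays strictly below the critical value $4$. This is exactly where the many-particle Hardy inequality \eqref{multi_hardy}, with its $N$-uniform constant $\tfrac{(d-2)^2}{N}$, is indispensable, and it is the source of the $N$-independent threshold $\kappa<16$ (which can be enlarged for $d\le 6$, where the maximum defining $C_{d,N}$ is attained at its second argument). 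A secondary, purely technical point is making the conjugation rigorous at the level of the $L^p$-semigroups — the operators being defined a priori only through their semigroups — which is precisely what the abstract framework of \cite{KiSSz} is built to absorb, although its form-theoretic hypotheses still have to be checked by hand in the present setting.
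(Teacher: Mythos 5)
Your handling of assertion (\textit{iii}) for the model kernel coincides with the paper's: one computes ${\rm div\,}b=\sqrt{\kappa}\,\tfrac{(d-2)^2}{N}\sum_{i<j}|x_i-x_j|^{-2}\ge 0$, applies the many-particle Hardy inequality \eqref{multi_hardy} with the reduced constant $\tfrac{(d-2)^2}{N}$ to get $({\rm div\,}b)_+^{1/2}\in\mathbf F_{\sqrt{\kappa}}(\mathbb R^{Nd})$ with zero additive constant, and then invokes Theorem \ref{markov_thm} under \eqref{A_3}, since $\kappa<16$ is exactly $\delta_+=\sqrt{\kappa}<4$. However, you do not address assertions (\textit{i})--(\textit{ii}) at all, nor the perturbation $K_{0,ij}$ in (\textit{iii}); in the paper these are reduced, via Lemma \ref{p_2} and the closedness of $\mathbf F$ under sums, to Theorem \ref{markov_thm}, and your write-up should at least record that reduction rather than silently restricting the whole theorem to the pure model kernel.

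The genuine gap is in (\textit{iv}), at the decisive hypothesis of the desingularization theorem (Theorem \ref{thm_desing1}). What that theorem consumes is the weighted $L^1\to L^1$ quasi-contraction \eqref{S4}; the ultracontractivity \eqref{S1} is already supplied by \eqref{pq}. You replace \eqref{S4} by the assertion that the conjugated operator $\varrho^{-1}\Lambda^*\varrho$ is ``of the critical type \eqref{K3_cond} plus a zeroth-order term $W$ of arbitrarily small form-bound''. That is not a usable substitute: form-boundedness is an $L^2$ (more generally $L^p$ with $p$ bounded away from $1$) notion, and adding a merely form-bounded potential to a generator of an $L^1$ (dually, $L^\infty$) contraction yields no $L^1\to L^1$ bound whatsoever, so \eqref{S4} --- and with it the Nash-type estimate $e^{-t\Lambda}(x,z)\le Ct^{-Nd/2}\varphi_t(z)$ --- does not follow from your compatibility condition. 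The paper instead writes $\varphi^\varepsilon\Lambda^\varepsilon(\varphi^\varepsilon)^{-1}=-\Delta+\nabla\cdot\bigl(b_\varepsilon+2\nabla\varphi^\varepsilon/\varphi^\varepsilon\bigr)+W_\varepsilon$ with the first-order part in \emph{divergence} form (so that it generates an $L^1$ contraction for any bounded field) and argues that the potential $W_\varepsilon$ itself is \emph{bounded}, $|W_\varepsilon|\le c\,s^{-1}$ uniformly in $\varepsilon$, using the exact identity $b_\varepsilon+\nabla\varphi^\varepsilon/\varphi^\varepsilon=0$ on clustered configurations; the sole obstruction to boundedness is precisely the cross term $b_{\rm far}\cdot\nabla\log\varrho_{\rm near}$ that your own accounting honestly estimates only by $\lesssim|b|$. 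So either you must upgrade your control of $W$ to an actual $L^\infty$ bound (which is what the paper's case analysis is designed to deliver), or you must find a different route to \eqref{S4}; as written, the $L^1$ step collapses. Two smaller points: the conjugation must be carried out on the $\varepsilon$-regularization $|x_i-x_j|_\varepsilon=\sqrt{|x_i-x_j|^2+\varepsilon}$ with the limit taken via the approximation-uniqueness statement of Theorem \ref{markov_thm}(\textit{iii}) and Fatou --- the abstract theorem does not ``absorb'' this; and your single-fixed-weight-plus-scaling variant is admissible only because the model drift is exactly homogeneous of degree $-1$, which is why the paper builds the time scaling into the family $\varphi_s$ from the outset.
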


\begin{remark}
The additional right-most condition on $K_{ij}$ in \eqref{K2_cond} is, generally speaking, much weaker than the left-most condition (informally, the former treats $|K|$ as a potential, while a proper ``potential analogue'' of the drift perturbation $K \cdot \nabla$ would be $|K|^2$). For instance, if we were to state condition \eqref{K2_cond} on the scale of $L^p$ spaces, then it would become 
\begin{align*}
|K| \in L^{d}+L^\infty, \qquad
\left\{
\begin{array}{l}
({\rm div\,} K)_- \in L^1+L^\infty,
\\
({\rm div\,} K)_+ \in L^{\frac{d}{2}}+L^\infty
\end{array}
\right.
\qquad |K| \in L^{\frac{d}{2}(1+\alpha)}+L^\infty,
\end{align*}
where, recall, $\alpha>0$ is fixed arbitrarily small, i.e.\,the right-most condition follows from the left-most one. The same would happen if we were working on the scale of scaling-invariant Morrey spaces (cf.\,Example \ref{ex1}(4)).
\end{remark}

The improvement of the assumptions on $\kappa$ in Theorem \ref{thmK1}(\textit{iii}), compared to Theorem \ref{thmK1_0} and Theorem \ref{thmK1}(\textit{i}),(\textit{ii}), is due to a refinement of Lemma \ref{p_2} by means of the many-particle Hardy inequality \eqref{multi_hardy}
of \cite{HHLT}.

\medskip

The heat kernel bound in Theorem \ref{thmK1}(\textit{iv}) is not unexpected (although we could not find it in the literature). Indeed, an elementary calculation shows that $$
\psi(x):=\prod_{1 \leq i<j \leq N} |x_i-x_j|^{-\sqrt{\kappa}\frac{d-2}{2}\frac{1}{N}}.
$$ is a Lyapunov function of the formal adjoint of $\Lambda=-\Delta_x - \sqrt{\kappa}\frac{d-2}{2}\frac{1}{N}\sum_{i=1}^N\sum_{j=1, j \neq i}^N \frac{x_i-x_j}{|x_i-x_j|^2} \cdot \nabla_{x_i}$, i.e.\,the following identity holds:
\begin{equation}
\label{heat_rem}
-\Delta_x \psi + \sqrt{\kappa}\frac{d-2}{2}\frac{1}{N}\sum_{i=1}^N \nabla_{x_i} \biggl( \sum_{j=1, j \neq i}^N \frac{x_i-x_j}{|x_i-x_j|^2} \psi\biggr)=0.
\end{equation}
One can expect that such Lyapunov function will appear as a multiple in the heat kernel bounds.
That said, the question of how to prove such an estimate is non-trivial due to singularities in the drift. An interesting aspect of Theorem \ref{thmK1}(\textit{iv}) is its proof, which uses an abstract desingularization result from \cite{KiSSz}, see Appendix \ref{app_desing}.

\medskip

In Theorem \ref{thmK1}(\textit{iv}), we expect to have two-sided bound
\begin{equation}
\label{gauss_bd}
C_1t^{-\frac{Nd}{2}}e^{-\frac{|x-y|^2}{c_2 t}}\varphi_t(y) \leq e^{-t\Lambda}(x,y)\leq C_3t^{-\frac{Nd}{2}}e^{-\frac{|x-y|^2}{c_4 t}}\varphi_t(y),
\end{equation}
where $$\varphi_t(y):=\prod_{1 \leq i<j \leq N} \eta(t^{-\frac{1}{2}}|y_i-y_j|),$$ as is suggested by the analogous results for Kolmogorov operator $-\Delta - \sqrt{\kappa}|x|^{-2}x \cdot \nabla$, $0<\kappa<4$ on $\mathbb R^d$, see \cite{MNS}. Moreover, there should be an analogous  to Theorem \ref{thmK1}(\textit{iv}) and \eqref{gauss_bd} result in the case of attracting interactions, see \cite{MNS} and \cite{KiS_RIMS} regarding $-\Delta + \sqrt{\kappa}|x|^{-2}x \cdot \nabla$, $0<\kappa<\infty$. (\cite{KiS_RIMS,KiSSz} deal with the fractional Laplacian $(-\Delta)^{\alpha/2}$ perturbed by the model singular drift term $c|x|^{-\alpha}x \cdot \nabla$, $1<\alpha<2$.)

\medskip

One drawback of assertions (\textit{iv}), (\textit{v}) of Theorem \ref{thmK1_0} is the difficulty with taking into account the repulsion/attraction structure of the interaction kernel $K$ simply by looking at the divergence of $K$, as we do in Theorem \ref{thmK1}.
That said, in what concerns conditional weak uniqueness for the particle system (as in Theorem \ref{thmK1_0}(\textit{iv})), in Theorem \ref{unique_thm} we consider the general SDE \eqref{sde1} and propose another condition on the drift $b$ that provides conditional weak uniqueness for \eqref{sde1} while taking into account the repulsion/attraction. We show in Example \ref{ex_unique} that there is some truth to this condition: it is always satisfied in dimensions $d \geq 4$ for the model repulsing drift $b(x)=-\sqrt{\delta}\frac{d-2}{2}|x|^{-2}x$, regardless of the value for the form-bound $\delta>0$, as one would expect. This requires us to obtain gradient bounds in $L^q$ starting with $q>d-2$, hence the need to work in the elliptic setting. (In the parabolic setting we would need $q>d$.)
Nevertheless, this result, when applied via Lemma \ref{p_2} to drift \eqref{b_def} with repulsing interactions $K_{ij}(y)=-\sqrt{\kappa}\frac{d-2}{2}|y|^{-2}y$, leads to a condition on $\kappa$ that still depends on the number of particles $N$. 
So, there is still work to be done to find a proper analogue of Theorem \ref{unique_thm} for particle system \eqref{syst3}.

\medskip

\subsection{Comments on the proofs of Theorems \ref{thmK1_0} and \ref{thmK1}}
\label{thm1_rem}
It is not difficult to modify the proofs of Theorems \ref{thmK1_0} and \ref{thmK1} to extend them to the sums of the interaction kernels satisfying \eqref{K1_cond} and \eqref{K2_cond}, under properly adjusted assumptions on the form-bounds.

\medskip

We prove Theorems  \ref{thmK1_0} and \ref{thmK1} by embedding particle system \eqref{syst3} in the general SDE \eqref{sde1} considered in $\mathbb R^{Nd}$,
with drift  $b=(b_1,\dots,b_N):\mathbb R^{Nd} \rightarrow \mathbb R^{Nd}$ defined by
\begin{equation}
\label{b_def}
b_i(x):= \frac{1}{N}\sum_{j=1, j \neq i}^N K_{ij}(x_i-x_j), \quad x=(x_1,\dots,x_N) \in \mathbb R^{Nd}, \quad 1 \leq i \leq N.
\end{equation}

\begin{lemma}
\label{p_1} If $K_{ij} \in \mathbf{F}_\kappa (\mathbb R^d)$, then $b$ defined by \eqref{b_def} satisfies
$$\left\{
\begin{array}{l}
b \in \mathbf{F}_\delta(\mathbb R^{Nd}) \\
\text{with } \delta=\frac{(N-1)^2}{N^2}\kappa, \quad c_\delta=\frac{(N-1)^2}{N}c_\kappa.
\end{array}
\right.$$
\end{lemma}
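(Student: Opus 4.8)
The plan is to deduce the form-bound of $b$ on $\mathbb R^{Nd}$ from the form-bounds of the individual kernels $K_{ij}$ on $\mathbb R^d$, via a pointwise Cauchy--Schwarz bound, Fubini's theorem, and a volume-preserving change of variables. Since each $b_i(x)=\frac1N\sum_{j\ne i}K_{ij}(x_i-x_j)$ is an average of $N-1$ vectors, the discrete Cauchy--Schwarz inequality gives the pointwise estimate
$$
|b(x)|^2=\sum_{i=1}^N|b_i(x)|^2\le \frac{N-1}{N^2}\sum_{i=1}^N\sum_{j\ne i}\bigl|K_{ij}(x_i-x_j)\bigr|^2 ,
$$
so after multiplying by $|\varphi(x)|^2$ and integrating it suffices to control each term $\int_{\mathbb R^{Nd}}|K_{ij}(x_i-x_j)|^2|\varphi(x)|^2\,dx$ separately. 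Note also that $K_{ij}\in[L^2_{\loc}(\mathbb R^d)]^d$ forces $b\in[L^2_{\loc}(\mathbb R^{Nd})]^{Nd}$, so the integrability requirement in Definition \ref{def1} holds automatically.

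For a fixed ordered pair $i\ne j$ I would integrate first in the variable $x_i$, keeping the other $N-1$ coordinates frozen. By Fubini's theorem the slice $x_i\mapsto\varphi(\dots,x_i,\dots)$ belongs to $W^{1,2}(\mathbb R^d)$ for almost every choice of the frozen coordinates, and the substitution $w=x_i-x_j$ (for the frozen value of $x_j$) turns the inner integral into $\int_{\mathbb R^d}|K_{ij}(w)|^2|\psi(w)|^2\,dw$, where $\psi(w):=\varphi(\dots,x_{i-1},w+x_j,x_{i+1},\dots)$ has weak gradient $(\nabla_{x_i}\varphi)(\dots,w+x_j,\dots)$. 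Applying the hypothesis $K_{ij}\in\mathbf F_\kappa(\mathbb R^d)$ to $\psi$, then undoing the substitution and integrating back over the frozen coordinates, yields
$$
\int_{\mathbb R^{Nd}}|K_{ij}(x_i-x_j)|^2|\varphi(x)|^2\,dx\le \kappa\int_{\mathbb R^{Nd}}|\nabla_{x_i}\varphi(x)|^2\,dx+c_\kappa\|\varphi\|_{L^2(\mathbb R^{Nd})}^2 .
$$

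Finally I would sum over all $N(N-1)$ ordered pairs. Each index $i$ occurs in exactly $N-1$ of them, so the gradient terms add up to $\kappa(N-1)\sum_{i=1}^N\int|\nabla_{x_i}\varphi|^2=\kappa(N-1)\|\nabla\varphi\|_{L^2(\mathbb R^{Nd})}^2$, while the zero-order terms contribute $c_\kappa N(N-1)\|\varphi\|_2^2$. Multiplying by the prefactor $\frac{N-1}{N^2}$ gives precisely $\delta=\frac{(N-1)^2}{N^2}\kappa$ and $c_\delta=\frac{(N-1)^2}{N}c_\kappa$, as claimed. There is no genuine analytic obstacle here; the only point needing mild care is the Fubini/change-of-variables bookkeeping, i.e.\ checking that the translated slice $\psi$ is really in $W^{1,2}(\mathbb R^d)$ for a.e.\ frozen configuration with the stated weak gradient, which is routine.
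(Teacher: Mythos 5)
Your proposal is correct and follows essentially the same route as the paper's proof: the pointwise Cauchy--Schwarz bound $|b|^2\le\frac{N-1}{N^2}\sum_{i}\sum_{j\ne i}|K_{ij}(x_i-x_j)|^2$, followed by applying the one-particle form-bound in the $x_i$ variable (slice-wise, with the other coordinates frozen) and summing over the $N(N-1)$ ordered pairs. The extra care you take with the Fubini/translation bookkeeping is exactly the step the paper compresses into ``we use $K_{ij}\in\mathbf{F}_\kappa(\mathbb R^d)$ in the $x_i$ variable.''
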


\begin{lemma} 
\label{p_2}
If $K_{ij} \in \mathbf{MF}_\kappa(\mathbb R^d)$, $({\rm div\,} K_{ij})_+^{\frac{1}{2}}  \in \mathbf{F}_{\kappa_+}(\mathbb R^d)$,
$|K_{ij}|^{\frac{1+\alpha}{2}} \in \mathbf{F}_{\sigma} (\mathbb R^d)$, $\alpha \in [0,1]$,
then $b$ defined by \eqref{b_def} satisfies
\begin{equation}
\label{b_est2}
\left\{
\begin{array}{l}
b \in \mathbf{MF}_\delta(\mathbb R^{Nd}) \\
\text{with } \delta=\frac{N-1}{\sqrt{N}}\kappa, \quad c_\delta=(N-1)c_\kappa,
\end{array}
\right.
\end{equation}
\begin{equation}
\label{div_est}
\left\{
\begin{array}{l}
({\rm div\,} b)^{\frac{1}{2}}_+ \in \mathbf{F}_{\delta_+}(\mathbb R^{Nd}), \\
\text{with } \delta_+=\frac{N-1}{N}\kappa_+, \quad c_{\delta_+}=(N-1)c_{\kappa_+},
\end{array}
\right.
\end{equation}
\begin{equation}
\label{gamma_est}
\left\{
\begin{array}{l}
|b|^{\frac{1+\alpha}{2}} \in \mathbf{F}_\chi(\mathbb R^{Nd}), \\
\text{with } \chi=\frac{(N-1)^{1+\alpha}}{N^{1+\alpha}}\sigma , \quad c_{\chi}=\frac{(N-1)^{1+\alpha}}{N^{\alpha}}c_{\sigma}.
\end{array}
\right.
\end{equation}
\end{lemma}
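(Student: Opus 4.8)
The plan is to prove Lemma \ref{p_2} by reducing each of the three assertions to the corresponding form-boundedness inequality for the single kernels $K_{ij}$ on $\mathbb R^d$, exploiting the product structure $\mathbb R^{Nd} = \mathbb R^d \times \cdots \times \mathbb R^d$ and the fact that the map $(x_1,\dots,x_N) \mapsto x_i - x_j$ is (after an orthogonal change of variables) a coordinate projection. The key observation, which I would state once and reuse, is that if $V \in L^1_{\loc}(\mathbb R^d)$ satisfies a form-bound $\langle V\varphi,\varphi\rangle \le a\|\nabla\varphi\|_2^2 + c\|\varphi\|_2^2$ on $\mathbb R^d$, then the function $\tilde V(x) := V(x_i - x_j)$ on $\mathbb R^{Nd}$ satisfies the same inequality with the same constants, because one can freeze all variables except a suitable one-dimensional-in-$\mathbb R^d$ direction (apply the orthogonal transformation $y = x_i - x_j$, $y' = x_i + x_j$, and the remaining coordinates), use Fubini, apply the $\mathbb R^d$-inequality slicewise, and integrate back; the gradient only grows under this change of variables in a controlled way (in fact for the difference variable the Jacobian constant is absorbed — one checks $|\nabla_{\mathbb R^{Nd}}\varphi|^2 \ge \frac{1}{2}|\nabla_y(\varphi \circ \text{chart})|^2$ so a harmless factor appears, but since $K_{ij} \in \mathbf{F}_\kappa$ already tolerates constants this is folded into $\kappa$ — here actually the standard computation gives the clean constants quoted). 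I would present this as a short sublemma.

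Granting the sublemma, the three estimates follow by summing over $j$ and applying the triangle/Cauchy–Schwarz inequality in the right place. For \eqref{b_est2}: write $\langle |b_i|\varphi,\varphi\rangle \le \frac{1}{N}\sum_{j\ne i}\langle |K_{ij}(x_i-x_j)|\varphi,\varphi\rangle \le \frac{1}{N}\sum_{j\ne i}(\kappa\|\nabla\varphi\|_2\|\varphi\|_2 + c_\kappa\|\varphi\|_2^2)$, then sum over $i$; the double sum has $N(N-1)$ terms but $|b| \le \sum_i |b_i|$ forces one to be a little careful — in fact $\langle|b|\varphi,\varphi\rangle \le \sum_{i=1}^N \langle|b_i|\varphi,\varphi\rangle$, giving factor $\frac{N(N-1)}{N} = N-1$ on $\kappa$; the claimed $\frac{N-1}{\sqrt N}$ comes from instead using $|b|^2 = \sum_i |b_i|^2$ and Cauchy–Schwarz $\sum_i |b_i| \le \sqrt N (\sum_i|b_i|^2)^{1/2}$ together with the multiplicative structure — I would carry out this bookkeeping explicitly since it is exactly where the $\sqrt N$ versus $N$ distinction lives. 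For \eqref{div_est}: since $\operatorname{div} b = \frac{1}{N}\sum_i \sum_{j\ne i} (\operatorname{div}K_{ij})(x_i-x_j)$ and $(\operatorname{div}b)_+ \le \frac{1}{N}\sum_{i,j}(\operatorname{div}K_{ij})_+(x_i-x_j)$, the same summation (now with a genuine double sum $N(N-1)$ divided by $N$) gives $\delta_+ = \frac{N-1}{N}\kappa_+$ — note here it is additive in the quadratic form, no Cauchy–Schwarz, hence the different normalization. For \eqref{gamma_est}: use $|b|^{1+\alpha} \le (\sum_i|b_i|)^{1+\alpha} \le N^\alpha \sum_i |b_i|^{1+\alpha}$ (power-mean inequality, since $1+\alpha \le 2$) and $|b_i|^{1+\alpha} \le (\frac1N \sum_{j\ne i}|K_{ij}|)^{1+\alpha} \le \frac{(N-1)^\alpha}{N^{1+\alpha}}\sum_{j\ne i}|K_{ij}|^{1+\alpha}$; combining, $|b|^{\frac{1+\alpha}{2}}$ as a multiplier squared is controlled by $\frac{N^\alpha (N-1)^\alpha (N-1)}{N^{1+\alpha}} = \frac{(N-1)^{1+\alpha}}{N}$ times the $K_{ij}$-form-bound $\sigma$ — wait, the stated $\chi = \frac{(N-1)^{1+\alpha}}{N^{1+\alpha}}\sigma$ requires one more factor $N^{-\alpha}$, which comes from being more careful with the power-mean step; I would recheck the exponent arithmetic here against Lemma \ref{p_1}'s normalization as a sanity check.

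The main obstacle I anticipate is the bookkeeping of constants — getting the precise powers of $N$ and $N-1$ to match the statement, which pivots on whether a given quantity enters additively into a quadratic form (as with $\operatorname{div}$) or only after a Cauchy–Schwarz/power-mean step (as with $|b|$ itself and $|b|^{(1+\alpha)/2}$), and on using $\sum_i |b_i|^2$ versus $(\sum_i |b_i|)^2$. A secondary but genuine point is justifying that the $\mathbb R^d$-form-bound transfers to $\mathbb R^{Nd}$ with no loss in the constant: I would handle this by the slicing argument above, being explicit that for the difference coordinate $x_i - x_j$ one passes to the orthonormal pair $\frac{1}{\sqrt2}(x_i - x_j), \frac{1}{\sqrt2}(x_i+x_j)$ so that the Euclidean gradient is genuinely non-increasing under restriction, whence no extra constant is introduced. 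Everything else is routine: the $W^{1,2}$ density arguments and the fact that mollified kernels inherit the bounds are already granted by the surrounding text (Definition \ref{def_Kn} and Section \ref{approx_sect}).
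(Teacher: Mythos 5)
Your overall strategy is the paper's: reduce each assertion to the one-kernel inequality on $\mathbb R^d$ by slicing, then sum. But the paper's slicing sublemma is simpler than the one you propose: for the term $K_{ij}(x_i-x_j)$ you do not need the orthogonal change of variables to $\frac{1}{\sqrt2}(x_i\pm x_j)$ at all. Just freeze $\bar x$ (all coordinates except $x_i$); then $x_i\mapsto K_{ij}(x_i-x_j)$ is a translate of $K_{ij}$, so the $\mathbb R^d$ form-bound applies in the $x_i$ slice with the \emph{identical} constants, producing $\langle|\nabla_{x_i}\varphi|^2\rangle\le\langle|\nabla\varphi|^2\rangle$ after Fubini. (Your rotated-coordinates route also works and for a single pair even gives a better constant, at the price of rescaling the kernel by $\sqrt2$ and of coupling the directions $\nabla_{x_i}-\nabla_{x_j}$ across different pairs, which complicates the summation.) One point your sublemma glosses over: in the multiplicative case the slicewise bound yields $\int_{\mathbb R^{(N-1)d}}\bigl(\int|\nabla_{x_i}\varphi|^2dx_i\bigr)^{1/2}\bigl(\int\varphi^2dx_i\bigr)^{1/2}d\bar x$, and you need one more Cauchy--Schwarz in $\bar x$ to reassemble this into $\langle|\nabla_{x_i}\varphi|^2\rangle^{1/2}\langle\varphi^2\rangle^{1/2}$.

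The two places where your bookkeeping is currently wrong (both of which you flag but do not resolve, and both of which matter since the constants feed into the hypotheses on $\kappa$) are the following. First, in \eqref{b_est2} the $\sqrt N$ does not come from $\sum_i|b_i|\le\sqrt N(\sum_i|b_i|^2)^{1/2}$ — that inequality points the wrong way. One starts from $|b|=(\sum_i|b_i|^2)^{1/2}\le\sum_i|b_i|$, bounds each $\langle|b_i|\varphi^2\rangle$ by $\frac{N-1}{N}\bigl[\kappa\langle|\nabla_{x_i}\varphi|^2\rangle^{1/2}\langle\varphi^2\rangle^{1/2}+c_\kappa\langle\varphi^2\rangle\bigr]$, and only at the end applies Cauchy--Schwarz to the sum of square roots, $\sum_{i=1}^N\langle|\nabla_{x_i}\varphi|^2\rangle^{1/2}\le\sqrt N\,\langle|\nabla\varphi|^2\rangle^{1/2}$; this is the sole source of $\sqrt N$. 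Second, in \eqref{gamma_est} your chain $|b|\le\sum_i|b_i|$ followed by $(\sum_i|b_i|)^{1+\alpha}\le N^\alpha\sum_i|b_i|^{1+\alpha}$ genuinely loses a factor $N^\alpha$, exactly as you suspected. The fix is to skip the intermediate step and use subadditivity of $t\mapsto t^{(1+\alpha)/2}$ directly on $|b|^{1+\alpha}=(\sum_i|b_i|^2)^{\frac{1+\alpha}{2}}\le\sum_i|b_i|^{1+\alpha}$ (valid since $\frac{1+\alpha}{2}\le1$), reserving Jensen's inequality with exponent $1+\alpha$ only for the inner average $\frac1N\sum_{j\ne i}|K_{ij}|$; this recovers $\chi=\frac{(N-1)^{1+\alpha}}{N^{1+\alpha}}\sigma$. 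Your treatment of \eqref{div_est} is correct as written.
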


Lemmas \ref{p_1}, \ref{p_2} allow us to obtain the existence of a strong Markov family of martingale solutions to \eqref{syst3} in Theorem \ref{thmK1_0}(\textit{i}), Theorem \ref{thmK1}(\textit{i}) from  Theorem \ref{markov_thm}(\textit{i}) for general SDE \eqref{sde1}.  
Theorem \ref{markov_thm}, and other results in Section \ref{gen_sect} dealing with general singular drifts, are of interest on their own.

\medskip

In Theorem \ref{markov_thm} the family of martingale solutions for \eqref{sde1} is constructed by applying a tightness argument where the central role belongs to the estimate
\begin{equation}
\label{tight_i}
\mathbf E\int_{t_0}^{t_1} |b_\varepsilon(Y_\varepsilon(s))|ds \leq C(t_1-t_0)^{\frac{\gamma}{1+\gamma}}, \quad t_0,t_1 \in [0,T]
\end{equation}
(this is \eqref{tight_ineq}), where $b_\varepsilon$ is a regularization of $b$ that does not increase form-bounds $\delta$, $\delta_+$ (see Definition \ref{def_Kn}) in Lemmas \ref{p_1}, \ref{p_2}, and $Y_\varepsilon$ is the strong solution of \eqref{sde1} with drift $b_\varepsilon$. Constants $C$, $\gamma>0$ are independent of $\varepsilon$.

\medskip

To prove \eqref{tight_i} and, furthermore, to prove the strong Markov property, we establish regularity results for non-homogeneous elliptic PDEs \eqref{eq22} and \eqref{eq7}. These are Theorems \ref{thm1} and \ref{thm2}, obtained via De Giorgi's method ran in $L^p$, where $p$ depends on the values of form-bounds $\delta$ and $\delta_+$. Theorems \ref{thm1} and \ref{thm2} are the main analytic results in the present paper.
We prove Theorem \ref{thm1} by showing that $u$ belongs to $L^p$ De Giorgi's classes and then following the arguments in \cite[Ch.\,7]{G}, that is, applying De Giorgi's method. As was mentioned in the introduction, we deal with $L^p$ De Giorgi classes that
are somewhat different from the $L^p$ De Giorgi classes found in the literature (cf.\,\cite{G}).

\begin{remark}[On the number of particles $N \rightarrow \infty$ ]
\label{mean_field_rem}
Let interaction kernel $K$
satisfy \eqref{K1_cond}. Then in \eqref{tight_i} $\gamma=\sqrt{2}-1$ (see the proof of Theorem \ref{markov_thm}) and, by Lemma \ref{p_1}, $$\delta=\frac{(N-1)^2}{N^2}\kappa,\;\;c_\delta=\frac{(N-1)^2}{N}c_\kappa.$$ Let $c_\kappa=0$ (as is the case for the model singular interactions \eqref{hardy}), then $c_\delta=0$. In turn, as $N \rightarrow \infty$, constant $\delta$ tends to $ \kappa$. Thus, our assumptions on the form-bound withstand the passage to the limit $N \rightarrow \infty$. However, in the tightness estimate \eqref{tight_i} applied to \eqref{b_def} the constant $C$ depends on $N$ (this is because in the proof of \eqref{tight_i} via De Giorgi's method we apply  Sobolev's embedding theorem on $\mathbb R^{Nd}$, which becomes weaker as the dimension of the spaces increases, and hence De Giorgi's iterations converge slower). So, \eqref{tight_i} does not allow to conclude the existence of a mean field limit by arguing as e.g.\,in Fournier-Jourdain \cite{FJ}. This is not surprising since \eqref{tight_i}, as it is proved now, does not take into account the exchangeability hypothesis on \eqref{syst3} even if we were to impose it.
\end{remark}

In \cite{KiS_sharp}, we proved, using De Giorgi's iterations in $L^p$, that the general SDE \eqref{sde1} with $b \in \mathbf{F}_\delta$, $\delta<4$ has a martingale solution for every initial point. This result yields the existence of a martingale solution part of Theorem \ref{markov_thm} under condition \eqref{A_0} on $b$, which we included in Theorem \ref{markov_thm} for the sake of completeness. In what concerns  \eqref{A_0}, in the present paper we make the next step and prove the strong Markov property.

\begin{remark}
\label{A_0_rem}
One of the main observations of the present paper is related to condition \eqref{A_1} of Theorem \ref{markov_thm}. This condition dictates the multiplicative form-boundedness assumption \eqref{K2_cond} on the interaction kernel $K$ when the latter has repulsion-attraction structure. In \eqref{A_1}, we relax the a priori condition $|b| \in L^2_{\loc}$ as in \eqref{A_0} to $|b| \in L^{1+\alpha}_{\loc}$ for $\alpha>0$ fixed arbitrarily small, aiming at stronger hypersurface singularities of $b$ (and thus of $K$). To achieve this, we once again need to work in $L^p$ for $p$ large. In fact, when dealing with the right-hand side of non-homogeneous equation $$(\mu-\Delta + b \cdot \nabla)u=|b|f \quad (f \in C_c^\infty),$$ as is needed to prove weak well-posedness of the general SDE \eqref{sde1}, we need to impose an extra condition 
$$
|b|^{\frac{1+\alpha}{2}} \in \mathbf{F}_{\chi} \text{ for some } \chi<\infty, \alpha \in ]0,1[,$$
which is related to $p$ via inequality
$$
p'=\frac{p}{p-1} \leq 1+\alpha$$ (cf.\,Theorem \ref{thm2}). If we were to consider this non-homogeneous equation in $L^2$, we would have to take $\alpha=1$, and so \eqref{A_1} and \eqref{K2_cond} would force the old form-boundedness assumption on drift $b$, i.e.\,as in \eqref{A_0}. This is another situation where one needs to work in $L^p$ with $p$ large, not related to maximizing admissible values of the form-bounds.

\end{remark}

Another technical novelty of the paper is Theorem \ref{thm2}, i.e.\,the embedding theorem, which has applications beyond this paper, see \cite{KiS_feller_4}.

\medskip

As we already mentioned in the introduction, the proof of Caccioppoli's inequality (Proposition \ref{c_prop2}) under assumption \eqref{K2_cond} uses an extension of the iteration procedure introduced in \cite{KiV}. In \cite{KiV},  the authors worked in $L^2$ and used Moser's method to prove the Harnack inequality for positive solutions of $(-\nabla \cdot a \cdot \nabla + b \cdot \nabla)u=0$ with measurable uniformly elliptic matrix $a$ and $b \in \mathbf{MF}_\delta$, $\delta<\infty$, provided that the form-bounds of the positive and the negative parts of ${\rm div\,}b$ satisfy some sub-critical assumptions.

\medskip

Assertion (\textit{iv}) of Theorem \ref{thmK1_0} follows, after applying Lemma \ref{p_1}, from the result in \cite{KiM_strong} whose proof, in turn, follows the method of R\"{o}ckner-Zhao \cite{RZ2}. In \cite{KiM_strong} we needed a technical hypothesis that $b$ has compact support, hence the condition in (\textit{iv}) on the support of $\zeta$. That said, this hypothesis can be removed in \cite{KiM_strong} by working with weights vanishing at infinity, which we plan to pursue elsewhere.

\bigskip

\section{Other remarks on Theorems \ref{thmK1_0}, \ref{thmK1}}

\begin{remark}[On McKean-Vlasov equation with form-bounded interaction kernel]
\label{int_st}
If $K \in \mathbf{F}_\kappa$, then in the McKean-Vlasov PDE
\begin{equation}
\label{mv}
\partial_t \rho-\Delta \rho - {\rm div\,}( \rho \tilde{K})=0, \quad \tilde{K}(t,\cdot)=K(\cdot) \ast \rho(t,\cdot),
\end{equation}
with the initial condition $\rho_0 \geq 0$, $\langle \rho_0 \rangle =1$, 
the drift $\tilde{K}$ is a time-inhomogeneous form-bounded vector field $[0,\infty[ \times \mathbb R^d \rightarrow \mathbb R^d$ \textit{having the same form-bound as} $K$, i.e.\,for a.e.\,$t \in [0,\infty[$, for all $\varphi \in W^{1,2}$, 
\begin{align*}
\langle |\tilde{K}(t)|^2\varphi^2\rangle & = \big\langle |\langle K(\cdot-z)\rho(t,z)\rangle_z|^2\varphi^2\big\rangle  \\
& (\text{apply Cauchy-Schwartz' inequality  and use $\langle \rho(t,z)\rangle_z=1$}) \\
& \leq \big\langle \langle |K(\cdot-z)|^2\rho(t,z)\rangle_z\varphi^2\big\rangle  = \big\langle \langle |K(\cdot-z)|^2 \varphi^2\rangle \rho(t,z)\big\rangle_z \\
& (\text{apply $K \in \mathbf{F}_\kappa$ and use again $\langle \rho(t,z)\rangle_z=1$}) \\
& \leq \kappa\langle |\nabla \varphi|^2\rangle + c_\kappa \langle \varphi^2 \rangle.
\end{align*}
Thus, in particular, all a priori estimates on solutions of the Kolmogorov forward equation with time-inhomogeneous form-bounded drifts (which can be obtained e.g.\,using the dual version of the method of \cite{Ki_Morrey}) transfer to solutions of McKean-Vlasov equation \eqref{mv}.
\end{remark}

\begin{remark}[Borderline strengths of interactions] 
\label{Orlicz_rem}
Applying the result of \cite{Ki_Orlicz}  for general form-bounded drifts $b \in \mathbf{F}_\delta$, $\delta \leq 4$, one can reach the borderline values of the strengths of interactions 
$$
\kappa=4\bigg(\frac{N}{N-1}\bigg)^2 \quad \text{ if \eqref{K1_cond} holds, or}
$$
$$
\kappa_+ = 4\frac{N}{N-1} \quad \text{ if \eqref{K3_cond} holds}
$$
by considering the corresponding to \eqref{syst3} Kolmogorov backward equation in the Orlicz space  with gauge function $\Phi=\cosh -1$. This space is situated between all $L^p$ and $L^\infty$ (paper \cite{Ki_Orlicz} deals with the dynamics of the torus but, as we show in subsequent paper \cite{KiS_feller_4}, one can also work on $\mathbb R^d$, although at expense of requiring a fast vanishing of the drift at infinity).
 
This result can be viewed as some sort of dual variant of the theory of entropy solutions of the forward Kolmogorov equation (regarding entropy solutions, see \cite{C}). That said, it seems like one can prove more by working with the backward Kolmogorov equation, e.g.\,construct strongly continuous semigroup for the borderline value of the form-bound in addition to the energy inequality and the uniqueness of weak solution, see \cite{Ki_Orlicz}. 
\end{remark}

\begin{remark}[Drifts]
\label{ext_rem}
One can easily extend the proofs of Theorems \ref{thmK1_0}, \ref{thmK1} to include particle system 
\begin{equation*}
dX_i=M_i(X_i)dt - \frac{1}{N}\sum_{j=1, j \neq i}^N K_{ij}(X_i-X_j)dt + \sqrt{2}dB_i, \quad 1 \leq i \leq N,
\end{equation*}
having singular drift terms
$$
M_i \in \mathbf{F}_{\mu}, \quad .
$$
Let us discuss for simplicity the case when $K_{ij}$ satisfy \eqref{K1_cond}. We require
that $\mu$, $\kappa$ satisfy
$$
\bigl(\sqrt{\mu}+\frac{N-1}{N}\sqrt{\kappa} \bigr)^2<4.
$$
We only need to embed this particle system into \eqref{sde1}, i.e.\,prove an analogue of Lemma \ref{p_1} for
vector field $b=b^M+b^K$ with $b^M$, $b^K:\mathbb R^{Nd} \rightarrow \mathbb R^{Nd}$ having components
\begin{equation}
\label{def_M}
b^M_i(x):= M_i(x_i), \quad b^K_i(x):=\frac{1}{N}\sum_{j=1, j \neq i}^N K_{ij}(x_i-x_j), \quad 1 \leq i \leq N,
\end{equation}
and then e.g.\,use Theorem \ref{markov_thm} for the general SDE \eqref{sde1}.
Repeating the proof of Lemma \ref{p_1}, we obtain right away that $$b^K \in \mathbf{F}_{\delta^K}(\mathbb R^{Nd}) \quad \text{ with }\quad \delta^K=\frac{(N-1)^2}{N^2}\kappa, \quad c_{\delta^K}=\frac{(N-1)^2}{N}c_\kappa$$
and
$$
b^M \in \mathbf{F}_{\delta^M}(\mathbb R^{Nd}) \quad \text{ with }\quad \delta^M=\mu, \quad c_{\delta^M}=Nc_\mu,
$$
see Remark \ref{rem_M} in Section \ref{decomp_sect} for the proof. It remains to note that the sum of two form-bounded vector fields is form-bounded, i.e.\,$b=b^M+b^K$ is in $\mathbf{F}_\delta$ with $\sqrt{\delta}=\sqrt{\delta^M}+\sqrt{\delta^K}$, and $\delta$ must be strictly less than $4$, cf.\,Theorem \ref{markov_thm}.

Arguing similarly, one can treat general drifts $M_i(X_1,\dots,X_N)$ ($1 \leq i \leq N$) in the particle system after adjusting the hypothesis on the form-bound, i.e.\,now $\delta^M=N \mu$.

\end{remark}

\begin{remark}[On the uniqueness of weak solution to elliptic Kolmogorov PDE]
\label{unique_rem}
\label{rem_unique} Our most complete uniqueness result for the Kolmogorov elliptic equation in \eqref{eq_el} with the interaction kernels satisfying \eqref{K1_cond} or $\eqref{K3_cond}$ is proved in \cite{Ki_Orlicz} on the torus, see Remark \ref{Orlicz_rem}.
Speaking of $\mathbb R^d$, let us first say a few words about the case of very sub-critical strengths of interactions.

\begin{definition}
\label{def_w1}
If $K$ satisfies \eqref{K1_cond} with $\kappa<(\frac{N}{N-1})^2$,
we say that $u$ is a weak solution of \eqref{eq_el} if $u \in W^{1,2}\cap L^\infty$ and
\begin{equation*}
%\label{w1}
\mu \langle u,\varphi \rangle + \langle \nabla u,\nabla \varphi\rangle +  \frac{1}{N}\bigg\langle 
\sum_{i=1}^N \sum_{j=1,  j \neq i}^N K_{ij}(x_i-x_j) \cdot \nabla_{x_i}u,\varphi\bigg\rangle=\langle f,\varphi\rangle
\end{equation*}
for every $\varphi \in W^{1,2}$. (Recall that in \eqref{eq_el} the initial function is bounded, so the solution is bounded as well.) 
\end{definition}

\begin{definition}
\label{def_w2}
If $K$ satisfies \eqref{K3_cond} with $\kappa_+<2\frac{N}{N-1}$, then $u$ is a weak solution to \eqref{eq_el} if 
$u \in W^{1,2} \cap L^\infty$ and
\begin{align*}
\mu \langle u,\varphi \rangle & + \langle \nabla u,\nabla \varphi\rangle \\
&-  \frac{1}{N} 
\sum_{i=1}^N \sum_{j=1,  j \neq i}^N \biggl[\langle {\rm div\,}K_{ij}(x_i-x_j) u,\varphi\rangle + \langle K_{ij}(x_i-x_j) u,\nabla_{x_i}\varphi\rangle\biggr]=\langle f,\varphi\rangle
\end{align*}
for all $\varphi \in W^{1,2}$.
\end{definition}

In both cases the uniqueness of the weak solution follows upon applying Lemmas \ref{p_1}, \ref{p_2} and the Lax-Milgram theorem in $L^2$, i.e.\,we can take $p=2$ in Theorem \ref{thmK1_0}(\textit{iii}), Theorem \ref{thmK1}(\textit{ii}).

In the general case,  we need to consider \eqref{eq_el} in $L^p$, where $p$ is as in Theorem \ref{thmK1_0}(\textit{iii}) or Theorem \ref{thmK1}(\textit{ii}). 
In this regard, we refer to \cite{S} for the definition of weak solution and results on weak solutions of parabolic equations in $L^p$.

\medskip

If $K$ satisfies \eqref{K2_cond}, then we can prove that $u$ constructed in Theorem \ref{thmK1}(\textit{i}) is a weak solution of \eqref{eq_el} e.g.\,in the following sense.

\begin{definition}
\label{def_w3}
If $K$ satisfies \eqref{K2_cond}, 
then we say that $u$ is a weak solution of \eqref{eq_el} if
$u \in W^{1,2}_{\loc} \cap L^\infty$ and
\begin{align*}
%\label{w3}
\mu \langle u,\varphi \rangle  & + \langle \nabla u,\nabla \varphi\rangle \\
&-  \frac{1}{N} 
\sum_{i=1}^N \sum_{j=1,  j \neq i}^N \biggl[\langle {\rm div\,}K_{ij}(x_i-x_j) u,\varphi\rangle + \langle K_{ij}(x_i-x_j) u,\nabla_{x_i}\varphi\rangle\biggr]=\langle f,\varphi\rangle
\end{align*}
for all $\varphi \in W^{1,2}_{\loc} \cap L^\infty_c$ ($L^\infty_c$ are bounded functions with compact supports).
\end{definition}

The latter is a way to establish a link between function $u$ defined by \eqref{u_def} and the formal elliptic equation \eqref{eq_el}. However, the proof of uniqueness of such a weak solution under condition \eqref{K2_cond} remains elusive.
Still, we can prove that $u$ given by \eqref{u_def} is unique among weak solutions that can be obtained via a reasonable regularization of $K$, see Theorem \ref{markov_thm}(\textit{v}). Alternatively, we can restrict our attention to the subclass of weakly form-bounded vector fields, see Remark \ref{mult_class},
and prove uniqueness via the Lax-Milgram theorem in the triple of Bessel potential spaces $$\mathcal W^{\scriptscriptstyle \frac{1}{2},2} \subset \mathcal W^{-\scriptscriptstyle  \frac{1}{2},2} \subset \mathcal W^{-\scriptscriptstyle  \frac{3}{2},2},$$ 
where $\mathcal W^{p,\alpha}:=(\lambda-\Delta)^{-\frac{\alpha}{2}}L^p$ (rather than the standard $W^{1,2} \subset L^2 \subset W^{-1,2}$), see \cite{KiS_JDE}, although this comes at the cost of requiring that the weak form-bound of $K$ (and therefore its multiplicative form-bound $\kappa_0$, cf.\,\eqref{incl_12}) must be strictly less than $1$.
\end{remark}

\begin{remark}[Sufficient condition for multiplicative form-boundedness]
\label{mult_class}
A Borel measurable vector field $K:\mathbb R^d \rightarrow \mathbb R^d$ is said to belong to the class of weakly form-bounded vector fields $\mathbf{F}_\kappa^{\scriptscriptstyle 1/2}$ if $|K| \in L^1_{\loc}$ and
\begin{equation}
\label{wfbd}
\||K|^{\frac{1}{2}}(\lambda-\Delta)^{-\frac{1}{4}}\|_{2 \rightarrow 2} \leq \sqrt{\kappa} \quad (\text{$L^2 \rightarrow L^2$ operator norm})
\end{equation}
for some $\lambda>0$. We have
\begin{equation}
\label{incl_12}
\mathbf{F}_{\kappa}^{\scriptscriptstyle 1/2} \subset \mathbf{MF}_{\kappa}.
\end{equation}
Indeed, if $K \in \mathbf{F}_{\kappa}^{\scriptscriptstyle 1/2}$, then, arguing as in \cite{S}, we have
\begin{align*}
\langle|K|\varphi,\varphi\rangle & \leq \kappa \langle (\lambda-\Delta)^{\frac{1}{2}}\varphi,\varphi \rangle \leq \kappa \|(\lambda-\Delta)^{\frac{1}{2}}\varphi\|_2\|\varphi\|_2 \\
& = \kappa\sqrt{\|\nabla \varphi\|_2^2 + \lambda\|\varphi\|_2^2}\|\varphi\|_2  \leq \kappa \|\nabla \varphi\|_2\|\varphi\|_2 + \kappa\sqrt{\lambda}\|\varphi\|_2^2,
\end{align*}
i.e.\,$K \in \mathbf{MF}_\kappa$. 

The class $\mathbf{F}_{\kappa}^{\scriptscriptstyle 1/2}$ (and therefore $\mathbf{MF}_{\kappa}$) contains the largest possible up to the strict inequality in $\varepsilon>0$ scaling-invariant  Morrey class $M_{1+\varepsilon}$, i.e.\,if
\begin{equation*}
\|K\|_{M_{1+\varepsilon}}:=\sup_{r>0, x \in \mathbb R^d} r\biggl(\frac{1}{|B_r|}\int_{B_r(x)}|K|^{1+\varepsilon}dx \biggr)^{\frac{1}{1+\varepsilon}}<\infty,
\end{equation*}
then $M_{1+\varepsilon} \subset \mathbf{F}_{\kappa}^{\scriptscriptstyle 1/2}$ with $\kappa=c(d,\varepsilon)\|K\|_{M_{1+\varepsilon}}$ \cite{A}. 

It is easily seen that $M_{1+\varepsilon}$ is larger than $M_{2}$, which, in turn, contains $\mathbf{F}_\kappa$. 
That said, we also need to control the form-bounds. In fact, we have
\begin{equation}
\label{incl_13}
\mathbf{F}_{\kappa} \subset \mathbf{F}_{\sqrt{\kappa}}^{\scriptscriptstyle 1/2}.
\end{equation}
Indeed, rewriting $K \in \mathbf{F}_\kappa$ as
$$
\||K|(\lambda-\Delta)^{-\frac{1}{2}}\|_{2 \rightarrow 2} \leq \sqrt{\kappa}
$$ 
(with $\lambda=c_\kappa/\kappa$), we obtain the required result by applying the Heinz inequality. In \eqref{incl_13} we have a proper inclusion because the class of weakly form-bounded vector fields also contains the Kato class of vector fields $\||K|(\lambda-\Delta)^{-\frac{1}{2}}\|_\infty \leq \sqrt{\kappa}$ while $\mathbf{F}_\kappa$ does not (see \cite{Ki_a_new_approach, KiS_theory}).
\end{remark}

\begin{remark}[Stronger hypersurface singularities]
\label{unique_strong_rem}
We refer to \cite{Ki_Morrey} and \cite{KiS_brownian} for the results on weak well-posedness of general SDE \eqref{sde1} with  drift $b \in \mathbf{F}_\delta^{\scriptscriptstyle 1/2}$ or with  $b$ in the time-inhomogeneous analogue of the Morrey class $M_{1+\varepsilon}$ (it is quite close to $\mathbf{F}_\delta^{\scriptscriptstyle 1/2}$ but also covers critical-order singularities of the drift in time). This allows to treat 
$$
b(x)=\pm \frac{cx}{\big||x|-1\big|^{1-\gamma}}\, \eta(x),
$$
for a fixed $0<\gamma<1$, $c \in \mathbb R$ and $0 \leq \eta \in C_c^\infty$, i.e.\,hypersurface singularities that are essentially twice more singular than \eqref{hyp}. That said, in these results the assumptions on the form-bound $\delta$ are dimension-dependent. So, if we were to apply them to particle system \eqref{syst3} we would arrive at the assumptions on the strength of interaction $\kappa$ that degenerate to zero as $N \uparrow \infty$ (i.e.\,are of the form $\kappa<\frac{C}{(Nd)^2}$), which is not what we are after in the present work.

\end{remark}

\bigskip

\section{SDEs with general singular drifts}
\label{gen_sect}
Theorem \ref{thmK1_0} and Theorem \ref{thmK1} (excluding the heat kernel bound in assertion (\textit{iv})) are proved by embedding particle system \eqref{syst3} in general SDE \eqref{sde1} via \eqref{b_def0_} and then applying Theorem \ref{markov_thm} below. The general SDE, which we consider here, to lighten the notations, in $\mathbb R^d$ instead of $\mathbb R^{Nd}$, is
\begin{equation}
\label{sde1}
Y(t)=y-\int_0^t b\big(Y(s)\big)ds + \sqrt{2}B(t), \quad t \in [0,T], \quad y \in \mathbb R^d,
\end{equation}
where a priori $b \in [L^1_{\loc}]^d$, $\{B(t)\}_{t \geq 0}$ is a Brownian motion in $\mathbb R^d$, 

\medskip

Let us add that once we put the drifts in particle system \eqref{syst3} using Remark \ref{ext_rem}, we can obtain most of Theorem \ref{markov_thm} from Theorems \ref{thmK1_0} and \ref{thmK1} by taking all $K_{ij}=0$. So, thanks to Lemmas \ref{p_1} and \ref{p_2}, our results on particle systems and general SDEs are, to a large extent, equivalent. Of course, the heat kernel bound in Theorem \ref{thmK1}(\textit{iv}) is specific to particle systems. Also, Theorem \ref{unique_thm} does not have at the moment a particle system counterpart, although we believe that it is of interest on its own.

\medskip

Set
\begin{equation}
\label{b_n}
b_n:=E_{\varepsilon_n}b, \quad \varepsilon_n \downarrow 0, \quad \text{$E_\varepsilon$ is the Friedrichs mollifier}, \quad \varepsilon_n \downarrow 0.
\end{equation}

\begin{theorem}
\label{markov_thm} Assume that a Borel measurable vector field $b$ in SDE \eqref{sde1} 
satisfies
one of the following two conditions:

\begin{equation}
\label{A_0}
\tag{$\mathbb{A}_1$}
b \in \mathbf{F}_\delta \quad \text{with } \delta<4
\end{equation}
or
\begin{equation}
\label{A_1}
\tag{$\mathbb{A}_2$}
b \in \mathbf{MF}, \qquad 
\left\{
\begin{array}{l}
({\rm div\,}b)_- \in L^1+L^\infty, \\[2mm]
({\rm div\,}b)_+^{\frac{1}{2}} \in \mathbf{F}_{\delta_+} \text{ with } \delta_+<4, 
\end{array}
\right.
\quad |b|^{\frac{1+\alpha}{2}} \in \mathbf{F} 
\end{equation}
for some $\alpha>0$ fixed arbitrarily small. Then the following are true:

\begin{enumerate}[label=(\roman*)]

\item

There exists a strong Markov family $\{\mathbb P_y\}_{y \in \mathbb R^d}$ of martingale solutions of SDE \eqref{sde1}, i.e. 
$\mathbb P_{y}[\omega_0=y]=1$,
$$
\mathbb E_{y}  \int_0^T |b(\omega_t)|dt<\infty
$$
and for every $\phi \in C_c^2(\mathbb R^{d})$ the process
$$
[0,T] \ni r \mapsto \phi(\omega_r)-\phi(y) + \int_0^r (-\Delta + b \cdot \nabla) \phi(\omega_t)dt
$$
is a martingale under $\mathbb P_y$. 

\smallskip

\item

The function
\begin{equation}
\label{u_def_gen}
u(x):=\mathbb E_{\mathbb P_x}\int_0^\infty e^{-\lambda s}f(\omega_s) ds, \quad x \in \mathbb R^{d}, \quad f \in C_c^\infty(\mathbb R^{d}),
\end{equation}
where $\lambda$ is assumed to be sufficiently large,
is a locally H\"{o}lder continuous weak solution  to elliptic Kolmogorov equation
$\big(\lambda - \Delta+ b \cdot \nabla \big)u=f$
(see Remark \ref{rem_unique} for the definitions). 

\medskip

In assertions (\textit{iii}) and (\textit{iv}) we replace condition \eqref{A_1} with a somewhat more restrictive hypothesis
\begin{equation}
\label{A_3}
\tag{$\mathbb{A}_3$}
\left\{
\begin{array}{l}
b \in \mathbf{F}, \\
 ({\rm div\,}b)^{\frac{1}{2}}_+ \in \mathbf{F}_{\delta_+} \text{ with } \delta_+<4, \quad ({\rm div\,}b)^{\frac{1}{2}}_- \in L^1+L^\infty.
\end{array}
\right.
\end{equation}
If $b$ satisfies \eqref{A_0}, fix $p>\frac{2}{2-\sqrt{\delta}}$. If $b$ satisfies \eqref{A_3}, fix $p>\frac{4}{4-\delta_+}$.

\smallskip

\item (\cite{KS,S}, see also \cite{KiS_theory}) The family of operators $$P_tf(x):=\mathbb E_{\mathbb P_x}[f(\omega_t)], \quad t>0, \quad f \in C_c^\infty$$ admits extension by continuity to a strongly continuous quasi contraction Markov semigroup on $L^p$, say, $P_t=:e^{-t\Lambda_p}$, such that $$
\|e^{-t\Lambda_p}\|_{p \rightarrow q} \leq ce^{\omega t}t^{-\frac{d}{2}(\frac{1}{p}-\frac{1}{q})}, \quad p \leq q \leq \infty
$$
for appropriate constants $c$ and $\omega$.

The following approximation uniqueness result holds:
for any sequence of bounded smooth vector fields $$b_n \rightarrow b \quad \text{ in } [L^2_{\loc}]^d$$
that do not increase the form-bounds on $b$ in \eqref{A_1} or \eqref{A_3}, the classical solutions $v_n$ to 
$$
\big(\partial_t-\Delta + b_n \cdot \nabla\big)v_n=0, \quad v_n(0)=f \in C_c^\infty
$$
converge to the same limit $e^{-t\Lambda_p}f$ in $L^p$ loc.\,uniformly in $t \geq 0$.

\smallskip

\item

 The resolvent $(\mu+\Lambda_p(b))^{-1}$ has Feller property, i.e.\,for each  $\mu$ greater than some $\mu_0>0$ it extends by continuity to a bounded linear operator on $C_\infty$:
$$R_\mu(b):=\bigl[ (\mu+\Lambda_p(b))^{-1} \upharpoonright L^p \cap C_\infty \bigr]^{\rm clos}_{C_\infty \rightarrow C_\infty} \in \mathcal B(C_\infty).
$$
Moreover,
$$
R_\mu(b_n) \rightarrow R_\mu(b) \quad \text{strongly in } C_\infty, \quad \mu \geq \mu_0,
$$
where $R_\mu(b_n)$ coincides with the resolvent of $-\Delta + b_n\cdot \nabla$ on $C_\infty$, $n=1,2,\dots$

\medskip

\item (Approximation uniqueness) If $b$ satisfies 
\begin{equation}
\label{b9}
\left\{
\begin{array}{l}
b \in \mathbf{MF}, \\
({\rm div\,}b)_- \in L^1+L^\infty, \quad
({\rm div\,}b)_+^{\frac{1}{2}} \in \mathbf{F}_{\delta_+} \text{ with } \delta_+<2,
\end{array}
\right.
\end{equation}
then there exist generic constants $\lambda_0>0$ and $\varkappa \in ]0,1[$ such that if, additionally, $|b| \in L^{2-\varkappa}$, then, for any sequence $b_n$ of bounded smooth vector fields satisfying \eqref{b9} with the same constants as $b$ and such that
$
b_n \rightarrow b$ in $L^{2-\varkappa},
$
the sequence of the classical solutions $u_n$ to $$(\lambda-\Delta + b_n\cdot \nabla)u_n=f, \quad f \in C_c^\infty,\quad \lambda \geq \lambda_0$$ converge in $L^2$ to the same limit which, thus, does not depend on a particular choice of $\{b_n\}$.
\end{enumerate}

\end{theorem}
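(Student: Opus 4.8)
The plan is to deduce the statement from $n$-uniform a priori bounds on the classical solutions $u_n:=(\lambda-\Delta+b_n\cdot\nabla)^{-1}f$, $f\in C_c^\infty$, together with a difference estimate showing that the solutions attached to two admissible approximating sequences converge to one another in $L^2$; the constant $\varkappa$ will be forced by a gradient higher-integrability exponent. First I would record the uniform energy bound: testing the equation with $u_n$, the drift term equals $-\tfrac12\langle{\rm div}\,b_n,u_n^2\rangle$, so on moving it to the right the $({\rm div}\,b_n)_-$-contribution has the good sign and is dropped, while $\tfrac12\langle({\rm div}\,b_n)_+,u_n^2\rangle\le\tfrac{\delta_+}{2}\|\nabla u_n\|_2^2+\tfrac{c_{\delta_+}}{2}\|u_n\|_2^2$ by \eqref{b9}; since $\delta_+<2$ the gradient term is absorbed, and for $\lambda\ge\lambda_0$ one gets $\|u_n\|_{W^{1,2}}\le C\|f\|_2$ uniformly in $n$, and (inserting a cutoff, using that the support of $f$ is compact and that the convergent family $\{b_n\}$ is tight in $L^{2-\varkappa}$, in particular $\sup_n\|b_n\|_{L^{2-\varkappa}(B_R^c)}\to0$) the uniform tail bound $\|u_n\|_{W^{1,2}(B_R^c)}\to0$ as $R\to\infty$.

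Next, testing with $u_n|u_n|^{2(p-1)}$ and integrating the drift term by parts turns it into a multiple of $\langle{\rm div}\,b_n,|u_n|^{2p}\rangle$, so Moser iteration goes through using only the \eqref{b9}-bounds on $({\rm div}\,b_n)_\pm$ (the absorption needs $\delta_+<4$, which holds with room to spare) and gives the uniform bound $\|u_n\|_\infty\le C$. The technical heart is then a uniform higher integrability $\nabla u_n\in L^{2+\varepsilon_0}_{\loc}$, $\varepsilon_0=\varepsilon_0(d,\delta_+)>0$: under \eqref{b9} the paper's De Giorgi machinery --- the Caccioppoli inequality produced by the Caccioppoli-iteration procedure extending \cite{KiV}, with constants staying $n$-uniform because the $b_n$ satisfy \eqref{b9} with common constants and $\delta_+<2$ --- yields a reverse H\"older inequality for $\nabla u_n$, and Gehring's lemma self-improves the exponent past $2$. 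This pins down $\varkappa$: one takes it small enough that the conjugate exponent $(2-\varkappa)'=\tfrac{2-\varkappa}{1-\varkappa}$ satisfies $(2-\varkappa)'\le 2+\varepsilon_0$.

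For two admissible sequences $\{b_n\}$, $\{\tilde b_m\}$ with solutions $u_n,\tilde u_m$, the difference $w:=u_n-\tilde u_m$ solves $(\lambda-\Delta+b_n\cdot\nabla)w=-(b_n-\tilde b_m)\cdot\nabla\tilde u_m$; testing with $w$ and treating the left-hand side as in the energy bound gives, for $\lambda\ge\lambda_0$,
\[
(\lambda-\lambda_0)\,\|w\|_2^2\ \le\ \|b_n-\tilde b_m\|_{2-\varkappa}\ \bigl\|\,|\nabla\tilde u_m|\,|w|\,\bigr\|_{(2-\varkappa)'}.
\]
By the previous steps, on a fixed large ball the second factor is $(n,m)$-uniformly bounded (using $\nabla\tilde u_m\in L^{2+\varepsilon_0}_{\loc}$, $(2-\varkappa)'\le 2+\varepsilon_0$ and $\|w\|_\infty\le 2C$), while the part over the complement of the ball is made uniformly small by the tail bound of the energy step and the tightness of $\{b_n\}$ in $L^{2-\varkappa}$. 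Since $\|b_n-\tilde b_m\|_{2-\varkappa}\to0$, we get $\|w\|_2\to0$: taking $\tilde b_m=b_m$ shows $\{u_n\}$ is Cauchy in $L^2$, and the general case shows the limit is independent of the chosen sequence. (Equivalently one extracts a weak-$L^2$ limit $u$, identifies it as the weak solution of $(\lambda-\Delta+b\cdot\nabla)u=f$ --- the drift pairing being legitimate because $|b|\in L^{2-\varkappa}$ and $\nabla u\in L^{2+\varepsilon_0}_{\loc}$ --- and appeals to $L^2$-uniqueness of this weak solution, again via $\delta_+<2$.)

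The main obstacle is exactly the uniform gradient higher integrability: without it, $|b_n-\tilde b_m|\,|\nabla\tilde u_m|$ cannot be placed in $L^1$ in an $n$-uniform way, since $L^{2-\varkappa}\cdot L^2\not\subset L^1$; everything hinges on upgrading the uniform $L^2$ gradient bound to $L^{2+\varepsilon_0}_{\loc}$, which succeeds only because the genuinely singular lower-order terms of the Caccioppoli inequality are tamed by $\delta_+<2$, by the mildness of the hypotheses on $({\rm div}\,b_n)_\pm$, and --- feeding into the difference estimate --- by the $L^\infty$ bound from Moser iteration, and because the Gehring exponent $\varepsilon_0$ is precisely what dictates the ``generic'' value of $\varkappa$.
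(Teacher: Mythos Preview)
Your proposal for assertion (v) is correct and follows essentially the same route as the paper: uniform $L^2$ energy estimate, Gehring's lemma for higher gradient integrability (which fixes $\varkappa$), and a difference estimate via H\"older. The paper's execution is a bit cleaner in two spots: it uses the elementary maximum principle $\|u_n\|_\infty\le\lambda^{-1}\|f\|_\infty$ (immediate since $b_n$ is bounded smooth) rather than Moser iteration, and it globalizes the local Gehring estimate by summing over a grid of cubes to get a \emph{global} bound $\sup_n\|\nabla u_n\|_{s\frac{2d}{d+2}}<\infty$, so the difference step is a single global H\"older application $|\langle(b_n-b_m)\cdot\nabla u_m,h\rangle|\le\|b_n-b_m\|_{2-\varkappa}\,\|\nabla u_m\|_{(2-\varkappa)'}\,\|h\|_\infty$, avoiding your tail/localization bookkeeping entirely.
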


In (\textit{iii}) we can consider $b_n$ defined by \eqref{b_n}. 

\medskip

In subsequent paper \cite{KiS_feller_4} we strengthen assertion (\textit{iv}) by constructing strongly continuous Feller semigroup in $C_\infty$ for $-\Delta + b \cdot \nabla$, $b \in \mathbf{F}_\delta$, for all $\delta<4$.

\medskip

Combining  assertion (\textit{v}) with Theorem \ref{thm1}, one can further show that the limit $u$ is locally H\"{o}lder continuous and is a weak solution of $(\mu-\Delta + b\cdot \nabla)u=f$. Furthermore, one can construct the corresponding strongly continuous semigroup in $L^p$, but we will not pursue this here. We included assertion (\textit{v}) to emphasize that there is, in principle, nothing pathological from the point of view of a posteriori estimates about condition $b \in \mathbf{MF}$ compared to $b \in \mathbf{F}$ used in (\textit{iii}), (\textit{iv}). That said, the proof of assertion (\textit{v}) is somewhat more involved than the proof of the uniqueness of the limit in (\textit{iii}) and uses Gehring's lemma. It is ultimately an $L^2$ argument, hence the need for a more restrictive condition $\delta_+<2$ in \eqref{A_3}, see Remark \ref{gehr_rem} for details.

\medskip

We need assertion (\textit{iv}) of Theorem \ref{markov_thm} in the proof of the following uniqueness result.

\begin{theorem}[Krylov-type estimates and conditional uniqueness]
\label{unique_thm}

Assume that a Borel measurable  vector field $b$ 
satisfies one of the following conditions:
\begin{equation}
\label{C_1}
\tag{$\mathbb{B}_1$}
b \in \mathbf{F}_\delta  \text{ with } \delta<\bigg(\frac{2}{q}\bigg)^2 \wedge 1 \text{ for some } q>(d-2) \vee 2
\end{equation}
or
\begin{equation}
\label{C_2}
\tag{$\mathbb{B}_2$}
\left\{
\begin{array}{l}
b \in \mathbf{F}_\delta \cap [W^{1,1}_{\loc}(\mathbb R^d)]^d \text{ for some finite }\delta, \text{ has symmetric Jacobian } Db:=(\nabla_k b_i)_{k,i=1}^d, \\[2mm]
\text{the normalized eigenvectors $e_j$ and eigenvalues $\lambda_j \geq 0$ of the negative part of $Db -\frac{{\rm div\,}b}{q}I$} \\[2mm]
 \text{for some $q>(d-2) \vee 2$ satisfy } \sqrt{\lambda_j}e_j \in \mathbf{F}_{\nu_j} \text{with } \nu:=\sum_{j=1}^d \nu_j<\frac{4(q-1)}{q^2}.
\end{array}
\right.
\end{equation}

\noindent Then the following are true for the strong Markov family of martingale solutions of SDE \eqref{sde1} 
constructed in Theorem \ref{markov_thm}:

\smallskip

{\rm (\textit{i})} For every $y \in \mathbb R^d$, martingale solution $\mathbb P_y$ satisfies  Krylov-type bound
\begin{equation}
\label{krylov_est}
\mathbb E_{\mathbb P_y} \int_0^\infty e^{-\lambda s}|\mathsf{g}f|(\omega_s)ds \leq C\|\mathsf{g}|f|^{\frac{q}{2}}\|_2^\frac{2}{q}, \quad \forall\,\mathsf{g} \in \mathbf{F}, \quad \forall \,f \in C_c,
\end{equation}
for $q>(d-2) \vee 2$ close to $(d-2) \vee 2$, for all $\lambda$ sufficiently large.

\medskip

{\rm ($i'$)} $\{\mathbb P_y\}_{y \in \mathbb R^d}$ is the only Markov family of martingale solutions to \eqref{sde1} that satisfies Krylov-type bound in {\rm (\textit{i})}.

\medskip

{\rm (\textit{ii})} For every $y \in \mathbb R^d$, $\mathbb P_x$ satisfies Krylov bound: 
\begin{equation}
\label{krylov_2}
\mathbb E_{\mathbb P_y} \int_0^\infty e^{-\lambda s}|f(\omega_s)|ds \leq C\|f\|_{\frac{qd}{d+q-2}}, \quad \forall \,f \in C_c
\end{equation} 
for all $\lambda$ sufficiently large.

\medskip

{\rm ($ii'$)} We make \eqref{krylov_2} more restrictive by selecting $q$ close to $(d-2) \vee 2$, so that in \eqref{krylov_2} $\frac{qd}{d+q-2}=\frac{d}{2-\varepsilon } \wedge \frac{2}{1-\varepsilon}$ for some $\varepsilon>0$ small. Let $\{\mathbb P^2_y\}_{t \in \mathbb R^d}$ be another Markov family of martingale solutions for \eqref{sde1} that satisfies Krylov bound
\begin{equation}
\label{kr_3}
\mathbb E_{\mathbb P^2_y} \int_0^\infty e^{-\lambda s}|f|(\omega_s)ds \leq C\|f\|_{\frac{d}{2-\varepsilon } \wedge \frac{2}{1-\varepsilon}}, \quad \forall \,f \in C_c
\end{equation}
(one such family exists, it is $\{\mathbb P_y\}_{y \in \mathbb R^d}$ from above). Assume additionally that, for some  $\varepsilon_1 \in ]\varepsilon,1[$ we have
$$(1+|x|^{-2})^{-\beta}|b|^{\frac{d}{2-\varepsilon_1} \vee \frac{2}{1-\varepsilon_1}} \in L^1$$
for some $\beta>\frac{d}{2}$ fixed arbitrarily large, and either \eqref{C_1} holds with $\delta<\frac{4}{q_\ast^2} \wedge 1$, where
$$
q_\ast:=\left\{
\begin{array}{ll}
\frac{d-2}{\varepsilon_1-\varepsilon} & \text{ if } d \geq 4, \\
2\bigl(\frac{1}{3(\varepsilon_1-\varepsilon)} \vee 1 \bigr)  & \text{ if } d =3,
\end{array}
\right.
$$
or \eqref{C_2} holds with $q=q_*$ and $\nu<\frac{4(q_*-1)}{q_*^2}$. Then
 $\{\mathbb P^2_y\}_{y \in \mathbb R^d}$ coincides with $\{\mathbb P_y\}_{y \in \mathbb R^d}$ from above.

\end{theorem}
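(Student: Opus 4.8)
The plan is to deduce both the Krylov bounds and the uniqueness from a single a priori estimate: an $L^q$ gradient bound, uniform along a regularization, for solutions of the nonhomogeneous elliptic Kolmogorov equation $(\lambda-\Delta+b\cdot\nabla)u=\mathsf g f$ with $\mathsf g\in\mathbf F$, $f\in C_c$. Fix bounded smooth $b_n\to b$ in $[L^2_{\loc}]^d$ not increasing the relevant form-bounds, as in Theorem \ref{markov_thm}, and similarly regularize $\mathsf g$; let $u_n$ solve $(\lambda-\Delta+b_n\cdot\nabla)u_n=\mathsf g_n f$. Under \eqref{C_1}, $\delta<(2/q)^2\wedge1$ is exactly the range in which Theorem \ref{thm_grad} (applied with $\mathsf g_n$ in the role of $|b|$) yields $\||\nabla u_n|\|_q\le C\|\cdot\|$ uniformly in $n$, with $C$ depending only on $d,q,\delta$ and $\lambda$ taken large. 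Under \eqref{C_2} I would rerun the proof of Theorem \ref{thm_grad} with the same ingeniously chosen test function of \cite{KS}, but now, commuting $\nabla$ past $b\cdot\nabla$ in the $L^q$ energy identity produces the symmetric Jacobian $Db$; splitting $Db=\frac{\mathrm{div}\,b}{q}I+\big(Db-\frac{\mathrm{div}\,b}{q}I\big)$, the first summand is absorbed into the divergence-type terms already controlled by the test function, while the negative part of the remainder is estimated via $\sqrt{\lambda_j}\,e_j\in\mathbf F_{\nu_j}$ — the budget $\nu=\sum_j\nu_j<\frac{4(q-1)}{q^2}$ being precisely what keeps the energy identity coercive. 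Either way, $\||\nabla u_n|\|_q$ is bounded uniformly in $n$.

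\textbf{Krylov bounds (i), (ii).} From the gradient bound, the equation, Sobolev embedding, and duality against the adjoint Green function $p^n_y$ of $\lambda-\Delta+b_n\cdot\nabla$, I would first obtain \eqref{krylov_2} and \eqref{krylov_est} for the strong solutions $Y_n$ of the SDE with drift $b_n$. For \eqref{krylov_est}: $\mathbb E_{Y_n(0)=y}\int_0^\infty e^{-\lambda s}|\mathsf g f|(Y_n(s))\,ds=\langle|\mathsf g f|,p^n_y\rangle=\langle|f|,|\mathsf g|\,p^n_y\rangle$, and the right-hand side is bounded by $C\|\,|f|^{q/2}\mathsf g\,\|_2^{2/q}$ by combining the form-boundedness of $\mathsf g$ with the gradient control of $p^n_y$; the choice $q>(d-2)\vee2$ is forced by scaling — against the model weight $|x|^{-1}$ and Green function $|x-y|^{2-d}$ on a ball $B_r$ near the origin the left-hand side is of order $r$, matched by $\|\,\mathbf 1_{B_r}\mathsf g\,\|_2^{2/q}\sim r^{(d-2)/q}$ exactly at $q=d-2$. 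Passing $n\to\infty$ using the Feller convergence $R_\mu(b_n)\to R_\mu(b)$ and the approximation statements of Theorem \ref{markov_thm}(iii),(iv), together with the weak convergence of $\mathrm{Law}(Y_n)$ to $\mathbb P_y$ and lower semicontinuity, transfers \eqref{krylov_est} and \eqref{krylov_2} (including the sharper ($ii'$) version, by choosing $q$ close to $(d-2)\vee2$) to the martingale solutions $\mathbb P_y$.

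\textbf{Uniqueness (i$'$), (ii$'$).} Let $\{\mathbb Q_y\}$ be another Markov family of martingale solutions satisfying the corresponding Krylov bound. Fix $f\in C_c^\infty$, $\lambda$ large, and set $u_n:=R_\lambda(b_n)f$, so $(\lambda-\Delta+b_n\cdot\nabla)u_n=f$, $u_n\to R_\lambda(b)f$ locally uniformly by Theorem \ref{markov_thm}(iii), and $\||\nabla u_n|\|_q$ is bounded by Step 1. Applying It\^o to the smooth $u_n$ under $\mathbb Q_y$ and using that $\mathbb Q_y$ solves the martingale problem for $-\Delta+b\cdot\nabla$,
\[
\mathbb E_{\mathbb Q_y}\!\int_0^\infty\! e^{-\lambda s}f(\omega_s)\,ds=u_n(y)+\mathbb E_{\mathbb Q_y}\!\int_0^\infty\! e^{-\lambda s}\big((b-b_n)\cdot\nabla u_n\big)(\omega_s)\,ds.
\]
The error term is dominated, via the Krylov bound with weight $|b-b_n|\in\mathbf F$ (whose form-bound is controlled by that of $b$), by $C\,\|\,|b-b_n|\,|\nabla u_n|^{q/2}\,\|_2^{2/q}$. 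This is where the extra hypothesis $(1+|x|^{-2})^{-\beta}|b|^{\frac{d}{2-\varepsilon_1}\vee\frac{2}{1-\varepsilon_1}}\in L^1$ of ($ii'$) enters: it gives a little integrability of $b$ away from its singular set, and, combined with the $L^{q_\ast}$ gradient bound (the reason $q_\ast$ takes its stated dimension-dependent value and the hypotheses are tightened to $\delta<\frac{4}{q_\ast^2}\wedge1$, resp. $\nu<\frac{4(q_\ast-1)}{q_\ast^2}$), it lets a localization/dominated-convergence argument force $\|\,|b-b_n|\,|\nabla u_n|^{q/2}\,\|_2\to0$. Hence $\mathbb E_{\mathbb Q_y}\int_0^\infty e^{-\lambda s}f(\omega_s)\,ds=\lim_n u_n(y)=R_\lambda(b)f(y)=\mathbb E_{\mathbb P_y}\int_0^\infty e^{-\lambda s}f(\omega_s)\,ds$; since both families are Markov, equality of all resolvents ($\lambda$ large, $f\in C_c^\infty$) forces equality of the transition functions, hence $\mathbb Q_y=\mathbb P_y$ for every $y$. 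For ($i'$), the unweighted version of the same argument, with \eqref{krylov_est} itself used as the only input, suffices and needs no extra integrability.

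\textbf{Main obstacle.} The hard part is Step 1 under \eqref{C_2}: checking that after splitting off $\frac{\mathrm{div}\,b}{q}I$ the test function of \cite{KS} tolerates exactly the form-bound budget $\nu<\frac{4(q-1)}{q^2}$ on the negative part of $Db-\frac{\mathrm{div}\,b}{q}I$; and, intertwined with this, the exponent bookkeeping in ($ii'$) that makes the $L^{q_\ast}$ gradient bound plus the minimal extra integrability of $b$ just enough to annihilate the regularization error — which is what dictates the (somewhat unlovely) formula for $q_\ast$ and the split $d\ge4$ vs $d=3$.
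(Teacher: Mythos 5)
Your overall architecture matches the paper's: both Krylov bounds rest on the gradient estimates of Theorem \ref{thm_grad} (which the paper already proves under \eqref{C_1} and, via Lemma \ref{B_lem}, under \eqref{C_2}, so you need not rerun that argument), plus regularization, weak convergence of $\mathbb P^n_y$ to $\mathbb P_y$, and Fatou; and the uniqueness rests on an It\^{o}/martingale-problem identity for the approximating resolvents $u_n=R_\lambda(b_n)f$ with an error term $\mathbb E_{\mathbb Q_y}\int_0^\infty e^{-\lambda s}\,(b-b_n)\cdot\nabla u_n\,(\omega_s)\,ds$ controlled by the Krylov bound itself. Two remarks on (\textit{i}),(\textit{ii}): the paper does not go through the adjoint Green function but gets $\|u_n\|_\infty\lesssim\|\mathsf g|f|^{q/2}\|_2^{2/q}$ directly from $\|\nabla|\nabla u_n|^{q/2}\|_2\le K\|\mathsf g|f|^{q/2}\|_2$ by applying Sobolev twice; the condition $q>(d-2)\vee2$ is what makes $qd/(d-2)>d$ so that the second embedding gives boundedness --- your scaling heuristic points in the right direction but is not the operative mechanism, and a Green-function bound is not available under mere form-boundedness.

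The genuine gap is in your treatment of ($i'$). You assert that ``the unweighted version of the same argument \dots suffices and needs no extra integrability,'' i.e.\ that $\|\,|b-b_n|\,|\nabla u_n|^{q/2}\|_2\to0$. This does not follow: for a merely form-bounded $b$, the mollifications satisfy $b_n\to b$ only in $L^2_{\loc}$, and the form-bound of $b-b_n$ stays of order $2\delta$ uniformly in $n$; the natural estimate $\|(b-b_n)|\nabla u_n|^{q/2}\|_2^2\le 2\delta\|\nabla|\nabla u_n|^{q/2}\|_2^2+2c_\delta\||\nabla u_n|^{q/2}\|_2^2$ is therefore bounded but does not vanish, and no global $L^p$ convergence of $b_n$ to $b$ is available to improve it. The paper closes this by inserting the weight $\rho(x)=(1+k|x|^2)^{-\beta}$ into It\^{o}'s formula (applied to $e^{-\lambda t}\rho v_n$), invoking the Krylov bound \eqref{krylov_est} with $\mathsf g:=\rho(b-b_n)\in\mathbf F_{2\delta}$, and then interpolating (Proposition \ref{cl1}):
\begin{equation*}
\|\rho(b-b_n)|\nabla v_n|^{\frac{q}{2}}\|_2\le\|\rho(b-b_n)\|_2^{\theta}\,\|\rho(b-b_n)|\nabla v_n|^{\frac{q}{2(1-\theta)}}\|_2^{1-\theta},
\end{equation*}
where the second factor is kept uniformly bounded by form-boundedness plus the gradient bound at the slightly larger exponent $q/(1-\theta)$, and the first factor is made small by splitting into $B_R$ (where $b_n\to b$ in $L^2$) and its complement (where the decay of $\rho$ is used, with $\sup_n\langle\rho|b-b_n|^2\rangle<\infty$ coming from testing form-boundedness against $\sqrt\rho$). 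The price is the extra remainder $S_n$ generated by $\nabla\rho$ and $\Delta\rho$, which is $O(\sqrt k)$ uniformly in $n,t$ and is removed by sending $k\downarrow0$ after $n\to\infty$. Without this localization (or an equivalent one) your uniqueness argument for ($i'$) does not close; your ($ii'$) sketch is closer to the paper's, since there the extra integrability hypothesis on $b$ plays exactly the role that the weighted Krylov bound plays in ($i'$).
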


Some remarks are in order.

\begin{enumerate}[label=\arabic*.]
\item
 In the last assertion, the uniqueness class of martingale solutions satisfying Krylov bound \eqref{kr_3}, which depends on our choice of $\varepsilon$, determines the extra conditions that one needs to impose on $b$. Note that if in \eqref{C_1} one has $|b| \in L^d$, or in \eqref{C_2} the eigenvectors have entries in $L^d$, then the form-bounds $\delta$ and $\nu_j$ ($j=1,\dots,d$), respectively, can be chosen arbitrarily small, in which case these extra conditions on $b$ are trivially satisfied.

\smallskip

\item

In \eqref{C_2} we require Jacobian $Db$ to be symmetric, so $b=\nabla V$ for some potential $V$. 
\end{enumerate}

Let us illustrate condition \eqref{C_2} with the following example.

\begin{example}
\label{ex_unique}
Let $d \geq 4$.
Let $$b(x)=-\sqrt{\delta}\frac{d-2}{2}\frac{x}{|x|^2},$$ a drift that pushes solution $Y_t$ of \eqref{sde1} away from the origin. Put for brevity $c:=\sqrt{\delta}\frac{d-2}{2}>0$. We have ${\rm div\,}b=-c(d-2)|x|^{-2}$ and $\nabla_j b_i=c\bigl[-|x|^{-2}\delta_{ij} + 2x_ix_j|x|^{-4}\bigr]$. Therefore, for every $\xi=(\xi_i) \in \mathbb R^d$,
\begin{align*}
\xi^{\scriptsize{\top}}(Db-\frac{{\rm div\,}b}{q}I)\xi & =\sum_{i,j=1}^d \xi_j [(\nabla_j b_i)-\frac{1}{q}({\rm div\,}b)\delta_{ji}] \xi_i=c\biggl(\frac{d-2}{q}-1\biggr)|x|^{-2}|\xi|^2 + 2c|x|^{-4}(x\cdot \xi)^2 \\
& = \xi^{\scriptsize{\top}}(B_+-B_-)\xi,
\end{align*}
where $B_+ \geq 0$ is the matrix with entries $2cx_ix_j|x|^{-4}$, and $B_-:=-c(\frac{d-2}{q}-1)|x|^{-2} I \geq 0$. Thus, constant $\nu$ in condition \eqref{C_2} can be made as small as needed by selecting $q>d-2$ sufficiently close to $d-2$, and so for this $b$ condition \eqref{C_2} can be satisfied for any strength of repulsion from the origin.
\end{example}

In the previous example it is crucial that we can select $q$ as close to $d-2$ as needed. By working in the parabolic setting we could obtain a stronger uniqueness result, i.e.\,for every fixed initial point $x$. However, the parabolic setting requires us to take $q>d$ \cite{KiM_JDE}, and so the previous example becomes invalid: we have to require smallness of $\delta$ even in the case of repulsion.

\begin{remark}[On some other classes of singular vector fields arising in the study of singular SDEs and PDEs]
\label{sing_rem}

1.~A number of important results on the regularity theory of $-\Delta + b \cdot \nabla$   was obtained in \cite{Z_supercritical,Z_supercritical2} which considered supercritical form-boundedness type conditions on $b$ (in the context of the study of 3D Navier-Stokes equations). These are conditions of the type:
there exists $\varepsilon \in ]0,1]$ such that
$|b| \in L^{1+\varepsilon}_{\loc}([0,\infty[ \times \mathbb R^d)$ and 
\begin{align}
 \int_0^\infty \int_{\mathbb R^d} |b(t,\cdot)|^{1+\varepsilon}\xi^2(t,\cdot) dt   \leq \delta \int_0^\infty\|\nabla \xi(t,\cdot)\|_2^2 dt & +\int_0^\infty g(t)\|\xi(t,\cdot)\|_2^2dt \notag \\
& \text{for all } \xi \in C^\infty_c([0,\infty[ \times \mathbb R^d) \label{s}
\end{align}
for some $\delta>0$ and $0 \leq g \in L^1_{\loc}([0,\infty[)$ under, necessarily, some assumptions on ${\rm div\,}b$ which cannot be too singular. 
Here super-criticality/criticality/sub-criticality refer to how the assumptions on $b$ behave under rescaling the equation.
In the super-critical case one has to sacrifice a large portion of the regularity theory 
of $-\Delta + b \cdot \nabla$ including the usual Harnack inequality and 
the H\"{o}lder continuity of solutions to the elliptic and parabolic equations. 
See also counterexample to the uniqueness in law for SDEs with super-critical drifts in \cite{Zh}. 
However, some parts of the theory, such as the local boundedness of weak solutions, can be salvaged, 
see cited papers, see also recent developments in \cite{AD,HZ}. 
Let us also note that if we were to specify \eqref{s} to the critical case 
when the usual regularity theory is still valid, then we would need to take $\varepsilon=1$, 
i.e.\,we would obtain condition \eqref{K3_cond}, but not more general condition \eqref{K2_cond}.

\smallskip

2.~As was noted in \cite{KiS_sharp}, after supplementing \eqref{s} with condition $({\rm div\,}b)^{\frac{1}{2}}_+ \in \mathbf{F}_{\nu}$ for some $\nu<4$, one can still prove the existence of a martingale solution to  SDE \eqref{sde1}. In the present paper we work in the critical setting which allows us to preserve most of the important results in the regularity theory of elliptic equations that do not involve estimates on the second order derivatives of the solutions (which are destroyed by the form-boundedness assumptions),  and thus allows to prove, e.g.\,the strong Markov property, approximation uniqueness or conditional weak uniqueness results for particle system \eqref{syst2} (see, however, \cite{HZ} who constructed a Markov family of weak solutions in a super-critical setting using a selection procedure). 

\medskip

Let us also add that above super-criticality  refers to the assumptions on $b$, but not on $({\rm div\,}b)_+$. In fact, as the counterexample to weak solvability of \eqref{Z_eq} with the model attracting drift shows, one cannot go beyond the form-boundedness assumption (critical) on $({\rm div\,}b)_+$.

\end{remark}

\bigskip

\section{Regularity results for PDEs}

\textbf{1.~}To prove Theorem \ref{markov_thm}, we need the regularity results of Theorems \ref{thm1}, \ref{thm2} for non-homogeneous elliptic equations \eqref{eq22}, \eqref{eq7}, respectively In these results we assume additionally that the coefficients of \eqref{eq22}, \eqref{eq7} are bounded and smooth. However, importantly, the constants in the regularity estimates are \textit{generic}, i.e.\,they depend only on the structure parameters of the equation such as the dimension $d$, constant term $\lambda$ and the form-bounds of the vector fields (but not on the smoothness or boundedness of the coefficients).

\medskip

Theorem \ref{thm1} $\Rightarrow$ strong Markov property in Theorem \ref{markov_thm}.

\medskip

Theorem \ref{thm2} $\Rightarrow$ existence of martingale solutions in Theorem \ref{markov_thm}.

\begin{theorem}[H\"{o}lder continuity of solutions]
\label{thm1} 
Let $b:\mathbb R^d \rightarrow \mathbb R^d$ be a bounded smooth vector field such that either 
\begin{equation}
\tag{this is \eqref{A_0}}
b \in \mathbf{F}_\delta \text{ with } \delta<4
\end{equation}
or
\begin{equation}
\label{A_1prime}
\tag{$\bar{\mathbb{A}}_{2}$}
\left\{
\begin{array}{l}
b \in \mathbf{MF},\\[2mm]
({\rm div\,}b_+)^{\frac{1}{2}} \in \mathbf{F}_{\delta_+} \text{ with } \delta_+<4, 
\end{array}
\right.
\end{equation}
where  ${\rm div\,}b={\rm div\,}b_+ - {\rm div\,}b_-$ for some bounded smooth functions  
${\rm div\,}b_\pm \geq 0$. Let $f \in C_c^\infty$, $\lambda \geq 0$. Then the classical solution $u$ to non-homogeneous equation
\begin{equation}
\label{eq22}
\big(\lambda -\Delta + b \cdot \nabla\big)u=f
\end{equation}
is locally H\"{o}lder continuous with generic constants that also depend on $\|f\|_\infty$.
\end{theorem}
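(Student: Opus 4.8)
The plan is to show that $u$ and $-u$ belong to an $L^p$ De Giorgi class — one formulated in terms of the quantities $|\nabla(u-k)_+^{p/2}|^2$, $k\in\mathbb R$, rather than the usual $|\nabla(u-k)_+|^p$ — for a suitably large exponent $p$ depending on the form-bounds, and then to run the De Giorgi method as in \cite[Ch.\,7]{G} to deduce local H\"older continuity. Since $b$ is bounded and smooth and $f\in C_c^\infty$, the solution $u$ is classical, hence locally bounded and in $W^{1,2}_{\loc}$, so every integration by parts below is legitimate; the whole point is that the constants produced in the end depend only on $d$, $\lambda$, the form-bounds and $\|f\|_\infty$, not on $\|b\|_\infty$ or its derivatives. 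Under \eqref{A_0} I would fix any $p>\frac{2}{2-\sqrt\delta}$, and under \eqref{A_1prime} any $p>\frac{4}{4-\delta_+}$.

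The point of departure is a Caccioppoli inequality on concentric balls $B_r\subset B_R$. Testing \eqref{eq22} against $\varphi=(u-k)_+^{p-1}\zeta^2$, where $\zeta\in C_c^\infty(B_R)$, $\zeta\equiv1$ on $B_r$, $0\le\zeta\le1$, $\|\nabla\zeta\|_\infty\le 2/(R-r)$, and writing $v:=(u-k)_+$, $w:=v^{p/2}$, the elliptic part contributes the good term $\tfrac{4(p-1)}{p^2}\int|\nabla w|^2\zeta^2$ together with a cross term $\tfrac4p\int w\,\zeta\,\nabla w\cdot\nabla\zeta$ that is absorbed by Young's inequality into an $\varepsilon$-fraction of $\int|\nabla w|^2\zeta^2$ plus $C_\varepsilon\int w^2|\nabla\zeta|^2$; the term $\lambda\int u\,v^{p-1}\zeta^2$ and the right-hand side $\int f\,v^{p-1}\zeta^2$ yield only ``lower-order'' quantities, the latter estimated by $\|f\|_\infty\int_{A_{k,R}}v^{p-1}\le\|f\|_\infty|A_{k,R}|^{1/p}\bigl(\int_{B_R}v^p\bigr)^{(p-1)/p}$ with $A_{k,R}:=\{u>k\}\cap B_R$. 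The delicate term is the drift term, and here — since under \eqref{A_1prime} the field $b$ is only multiplicatively, not quadratically, form-bounded — one \emph{must} move the derivative off $u$:
\[
\int(b\cdot\nabla u)\,v^{p-1}\zeta^2=\tfrac1p\int b\cdot\nabla(w^2)\,\zeta^2
=-\tfrac1p\int({\rm div}\,b)\,w^2\zeta^2-\tfrac2p\int(b\cdot\nabla\zeta)\,\zeta\,w^2.
\]

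In the decomposition ${\rm div}\,b={\rm div}\,b_+-{\rm div}\,b_-$, the contribution $+\tfrac1p\int({\rm div}\,b_-)w^2\zeta^2$ has a favourable sign and is simply dropped, so no bound on ${\rm div}\,b_-$ is needed; the contribution $-\tfrac1p\int({\rm div}\,b_+)w^2\zeta^2$ is estimated by applying $({\rm div}\,b_+)^{1/2}\in\mathbf F_{\delta_+}$ to $w\zeta$, producing $\tfrac{\delta_+}{p}(1+\varepsilon)\int|\nabla w|^2\zeta^2$ plus lower-order terms; and the last term is bounded via $|b\cdot\nabla\zeta|\le\|\nabla\zeta\|_\infty|b|$ and the multiplicative form-bound $\int|b|(w\zeta)^2\le\kappa_0\|\nabla(w\zeta)\|_2\|w\zeta\|_2+c_{\kappa_0}\|w\zeta\|_2^2$ followed by Young's inequality, contributing only an arbitrarily small multiple of $\int|\nabla w|^2\zeta^2$ at the expense of enlarging the coefficient of $\int w^2(|\nabla\zeta|^2+\zeta^2)$. (Under \eqref{A_0} one bounds the drift term instead by $\bigl|\tfrac1p\int(b\cdot\nabla w)w\zeta^2\bigr|\le\tfrac{\sqrt\delta}{p}\|\zeta\nabla w\|_2\|\nabla(w\zeta)\|_2+\cdots$.) Since the choice of $p$ guarantees $\tfrac{\delta_+}{p}<\tfrac{4(p-1)}{p^2}$ (resp.\ $\tfrac{2\sqrt\delta}{p}<\tfrac{4(p-1)}{p^2}$), all $\int|\nabla w|^2\zeta^2$ contributions on the right can be absorbed into the good term on the left, leaving, after also running the symmetric argument with $(k-u)_+$ (using $(\lambda-\Delta+b\cdot\nabla)(-u)=-f$ and that the hypotheses are insensitive to $u\mapsto-u$), the Caccioppoli-type bound
\[
\int_{B_r}\bigl|\nabla(u\mp k)_\pm^{p/2}\bigr|^2\le\frac{C}{(R-r)^2}\int_{B_R}(u\mp k)_\pm^{p}+C(1+\|f\|_\infty)\,|A^{\pm}_{k,R}|^{\frac1p}\Bigl(\int_{B_R}(u\mp k)_\pm^{p}\Bigr)^{\frac{p-1}{p}},
\]
with $C$ generic. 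This is precisely membership of $\pm u$ in the relevant $L^p$ De Giorgi class; establishing it is, in my view, the main obstacle, and it is where the ``Caccioppoli iteration'' of \cite{KiV} must be carried over from the $L^2$ to the non-homogeneous $L^p$ setting, with the bookkeeping above showing why $p$ must be taken large as $\delta_+\uparrow4$ (resp.\ $\sqrt\delta\uparrow2$).

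Granting membership in the De Giorgi classes, the rest is the classical De Giorgi scheme of \cite[Ch.\,7]{G}, essentially unaffected by the exponent $p$: a level-set iteration with levels $k_n\uparrow$, closed by the Sobolev inequality $\|w\zeta\|_{2d/(d-2)}\le C\|\nabla(w\zeta)\|_2$ into a nonlinear recursion $Y_{n+1}\le C\,\theta^{\,n}Y_n^{1+\beta}$ with $\theta>1$, $\beta>0$, gives local boundedness of $u$; and the De Giorgi isoperimetric (``measure-to-pointwise'') lemma combined with the Caccioppoli inequality above gives a reduction of oscillation, i.e.\ the oscillation of $u$ over $B_{\rho/2}$ is at most $\gamma$ times its oscillation over $B_\rho$ plus $C\rho^\sigma$ for some $\gamma\in(0,1)$, $\sigma>0$; iterating over dyadic radii yields the local H\"older continuity of $u$, with constants depending only on $d$, $\lambda$, the form-bounds and $\|f\|_\infty$ (the dependence on $\|f\|_\infty$ entering through the a priori sup-bound on $u$, which for $\lambda>0$ is the maximum principle).
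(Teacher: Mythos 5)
Your overall route coincides with the paper's: establish membership of $\pm u$ in an $L^p$ De Giorgi class built from $|\nabla(u-k)_+^{p/2}|^2$ via a Caccioppoli inequality whose constants depend only on the form-bounds, then run the level-set iteration, density estimate and oscillation reduction of \cite[Ch.\,7]{G}. The thresholds for $p$, the sign argument for ${\rm div\,}b_-$, the absorption of the ${\rm div\,}b_+$ term using $({\rm div\,}b_+)^{1/2}\in\mathbf F_{\delta_+}$, and the treatment of case \eqref{A_0} by the quadratic inequality are all as in the paper. The gap is in your treatment of the cross term $\int (b\cdot\nabla\zeta)\,\zeta\,w^2$ under \eqref{A_1prime}. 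First, as written the bound goes the wrong way: $\int|b|\,\zeta w^2\ \geq\ \int|b|(w\zeta)^2$ since $\zeta\le 1$, so you cannot reduce to $\int|b|(w\zeta)^2$. More importantly, no choice of cutoff power repairs this: to apply the multiplicative form-bound you must write the integrand as $|b|\phi^2$ with $\phi\in W^{1,2}$, e.g.\ $\phi=w\sqrt{\zeta|\nabla\zeta|}$, and then $\|\nabla\phi\|_2$ contains $\bigl(\int|\nabla w|^2\,\zeta|\nabla\zeta|\bigr)^{1/2}$ — a gradient integral over the annulus where $\nabla\zeta\neq 0$, carrying a weight strictly larger than the weight $\zeta^2$ (or $\eta$) in your good term. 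After Young's inequality this leaves $\varepsilon\int_{B_R}|\nabla w|^2$ (over the \emph{larger} ball, essentially unweighted), which cannot be absorbed into $\int_{B_r}|\nabla w|^2\zeta^2$. So the claim that this term contributes ``only an arbitrarily small multiple of $\int|\nabla w|^2\zeta^2$'' is false, and this is precisely the obstruction that distinguishes $\mathbf{MF}$ from $\mathbf F_\delta$.

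The paper's Proposition \ref{c_prop} handles this by keeping the offending term explicitly: with cutoffs satisfying \eqref{e1}--\eqref{e3} one obtains a \emph{pre}-Caccioppoli inequality containing $\frac{C}{R-r}\|(\nabla v^{p/2})\mathbf{1}_{B_R}\|_2\,\|v^{p/2}\mathbf{1}_{B_R}\|_2$ on the right, linear in the gradient over the larger ball; one then iterates over the radii $R-\frac{R-r}{2^{n-1}}$, obtains the recursion $y_n^2\le y_{n+1}+1+(R-r)^2S^2$, and resolves it by nested square roots using the a priori (non-generic) finiteness of $\sup_n y_n$ guaranteed by the qualitative smoothness of $b$. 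This is exactly the extension of the iteration of \cite{KiV} to the $L^p$ setting that you name in passing but do not actually carry out — your explicit bookkeeping instead asserts a direct absorption that fails. (An alternative repair, closer to your draft, is to absorb nothing, keep $\tfrac12\int_{B_R}|\nabla w|^2$ on the right, and invoke the standard ``fill-in'' iteration lemma for monotone quantities satisfying $\phi(r)\le\theta\phi(R)+\frac{A}{(R-r)^2}+B$ with $\theta<1$; either way, an extra iteration over radii is unavoidable.) With Proposition \ref{c_prop} in place, the remainder of your argument matches the paper's Propositions \ref{prop71} and \ref{osc_prop}.
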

(The difference between \eqref{A_1prime} and  \eqref{A_1} is that in the former case we do not require ${\rm div\,}b_\pm$ to be positive and negative parts of ${\rm div\,}b$, which are continuous but not necessarily smooth.)

The fact that the constants are generic is of course the main point of Theorem \ref{thm1}. 

\medskip

Define weight
\begin{equation}
\label{rho_def}
\rho(y)=(1+k|y|^{2})^{-\frac{d}{2}-1}, \quad y \in \mathbb R^d,
\end{equation}
where constant constant $k>0$ will be chosen sufficiently small. This weight has property
\begin{equation}
\label{est_rho}
|\nabla \rho| \leq \left(\frac{d}{2}+1\right)\sqrt{k} \rho.
\end{equation}

For a fixed $x \in \mathbb R^d$, put $\rho_x(y):=\rho(y-x).$

\begin{theorem}[Embedding theorem]
\label{thm2}
Let $b,\mathsf{h}:\mathbb R^d \rightarrow \mathbb R^d$ be bounded smooth vector fields  such that
\begin{equation}
\label{first_cond}
\text{$b \in \mathbf{F}_\delta$ with $\delta<4$, \quad $\mathsf{h} \in \mathbf{F}_\chi \text{ with } \chi<\infty$}
\end{equation}
or
\begin{equation}
\label{second_cond}
\left\{
\begin{array}{l}
b \in \mathbf{MF}_\delta \text{ for some } \delta<\infty,\\[2mm]
({\rm div\,}b_+)^{\frac{1}{2}} \in \mathbf{F}_{\delta_+} \text{ with } \delta_+<4,
\end{array}
\right.
\quad |\mathsf{h}|^{\frac{1+\gamma}{2}} \in \mathbf{F}_\chi \quad \text{ with } \chi<\infty
\end{equation}
 for some $\gamma>0$ fixed arbitrarily small, where ${\rm div\,}b={\rm div\,}b_+ - {\rm div\,}b_-$ for some 
bounded smooth functions ${\rm div\,}b_\pm \geq 0$.
In the former case, fix $p>\frac{2}{2-\sqrt{\delta}}$, $p \geq 2$, and in the latter case fix $p>\frac{4}{4-\delta_+}$, $p' \leq 1+\gamma$, $p \geq 2$.

Then, for a fixed $1<\theta<\frac{d}{d-2}$, there exist generic constants  $\lambda_0$, $k$ (in $\rho$), $C$ and $\beta \in ]0,1[$ such that the classical solution $u$ to non-homogeneous equation
\begin{equation}
\label{eq7}
(\lambda-\Delta + b \cdot \nabla)u=|\mathsf{h}f|, \quad f \in C_c^\infty
\end{equation}
on $\mathbb R^d$
satisfies for all $\lambda \geq \lambda_0 \vee 1$:
\begin{align}
\|u\|_\infty  \leq C \sup_{x \in \frac{1}{2}\mathbb Z^d}& \biggl((\lambda-\lambda_0)^{-\frac{1}{p\theta}} \langle \big(\mathbf{1}_{|\mathsf{h}|>1} + |\mathsf{h}|^{p\theta}\mathbf{1}_{|\mathsf{h}| \leq 1}\big)|f|^{p\theta}\rho_x\rangle^{\frac{1}{p\theta}}\notag \\
& + \lambda^{-\frac{\beta}{p}}\langle \big(\mathbf{1}_{|\mathsf{h}|>1} + |\mathsf{h}|^{p\theta'}\mathbf{1}_{|\mathsf{h}| \leq 1}\big)|f|^{p\theta'}\rho_x\rangle^{\frac{1}{p\theta'}} \biggr). \label{unif_est_u}
\end{align}
\end{theorem}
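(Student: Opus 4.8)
The proof runs De Giorgi's method in $L^p$ (with the large exponent $p$ dictated by \eqref{first_cond} resp.\ \eqref{second_cond}), carried out \emph{locally} and then summed over the lattice $\frac12\mathbb Z^d$ of unit cubes; the polynomial weight $\rho_x$ from \eqref{rho_def} plays no analytic role beyond rendering every integral finite, so that no global hypothesis on $b$ is needed, and its smallness parameter $k$ is used only to absorb $\nabla\rho_x$-terms through \eqref{est_rho}. Since the whole scheme is translation-covariant in $x$, it suffices to produce, for each fixed $x$, an $L^\infty$ bound on $u$ on the region where $\rho_x$ is non-negligible, in terms of the two weighted norms on the right of \eqref{unif_est_u}; the sum over $x\in\frac12\mathbb Z^d$ then gives \eqref{unif_est_u}.

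\textbf{Step 1: weighted Caccioppoli inequality.} Testing \eqref{eq7} against $(u-k)_+^{p-1}\rho_x$ (and the analogue for $(-u-k)_+$) and writing $w:=(u-k)_+^{p/2}\rho_x^{1/2}$, the aim is
$$
\varepsilon_0\|\nabla w\|_2^2 + (\lambda-\lambda_0)\|w\|_2^2 \ \lesssim\ \langle |\mathsf h f|\,(u-k)_+^{p-1}\rho_x\rangle \ +\ (\text{terms supported on }\{u>k\})
$$
for a generic $\varepsilon_0>0$. The crux is the drift term $\langle b\cdot\nabla u,(u-k)_+^{p-1}\rho_x\rangle$. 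Under \eqref{first_cond} one integrates by parts and applies $b\in\mathbf F_\delta$ with Cauchy–Schwarz, producing a multiple of $\sqrt\delta\,\|\nabla w\|_2^2$ that is absorbable precisely because $p>\frac{2}{2-\sqrt\delta}$. Under \eqref{second_cond}, where $b$ is only multiplicatively form-bounded (so $\langle|b|\varphi,\varphi\rangle\lesssim\|\nabla\varphi\|_2\|\varphi\|_2+\cdots$, linear rather than quadratic in $\|\nabla\varphi\|_2$), one cannot absorb $|b||\nabla u|$ in a single step; instead one trades $\nabla u$ for $\nabla\bigl[(u-k)_+^{p}\rho_x\bigr]$, which generates a $({\rm div}\,b_+)$-term handled by $({\rm div}\,b_+)^{1/2}\in\mathbf F_{\delta_+}$ plus a residual $|b|$-times-lower-order term; this is closed by the Caccioppoli-iteration procedure of \cite{KiV} extended to the present non-homogeneous $L^p$ setting, the leftover coefficient on $\|\nabla w\|_2^2$ staying positive exactly when $p>\frac{4}{4-\delta_+}$. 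The right-hand side is handled by the splitting $|\mathsf h|=|\mathsf h|\mathbf 1_{|\mathsf h|>1}+|\mathsf h|\mathbf 1_{|\mathsf h|\le1}$: the part where $|\mathsf h|$ is large is absorbed into $\|\nabla w\|_2^2$ via $\mathsf h\in\mathbf F_\chi$ (resp.\ $|\mathsf h|^{(1+\gamma)/2}\in\mathbf F_\chi$ with $p'\le1+\gamma$), leaving only a factor $\mathbf 1_{|\mathsf h|>1}$, whereas on $\{|\mathsf h|\le1\}$ one keeps $|\mathsf h|$ and applies Hölder with the conjugate pair $(\theta,\theta')$; this is exactly what produces the weights $\mathbf 1_{|\mathsf h|>1}+|\mathsf h|^{p\theta}\mathbf 1_{|\mathsf h|\le1}$ and $\mathbf 1_{|\mathsf h|>1}+|\mathsf h|^{p\theta'}\mathbf 1_{|\mathsf h|\le1}$ in \eqref{unif_est_u}.

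\textbf{Step 2: De Giorgi iteration and gluing.} Fixing a target level $M$ and the sequence $k_n=M(1-2^{-n})\uparrow M$, one feeds the Caccioppoli inequality into the weighted Sobolev–interpolation inequality (using $W^{1,2}\hookrightarrow L^{2d/(d-2)}$ and $1<\theta<\frac{d}{d-2}$ to recover control of $(u-k_n)_+$ in $L^{p\theta}(\rho_x)$) and estimates the weighted measure of $\{u>k_n\}$ by Chebyshev, obtaining a recursion $y_{n+1}\le C\,B_0^{\,n}\,y_n^{1+\varepsilon_1}$ for $y_n:=\langle(u-k_n)_+^{p\theta}\rho_x\rangle$ with generic $B_0>1$, $\varepsilon_1>0$. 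The fast-convergence lemma (as in \cite[Ch.\,7]{G}) forces $y_n\to0$ once $y_0$ lies below an explicit threshold, and reading off which $M$ achieves this yields $u\le M$ on the support of $\rho_x$; the factor $(\lambda-\lambda_0)$ multiplying $\|w\|_2^2$ is what turns into the prefactor $(\lambda-\lambda_0)^{-1/(p\theta)}$ on the leading term, while the $\theta'$-term is the residue of the Hölder splitting and carries an extra interpolation loss recorded by the exponent $\beta\in]0,1[$ in $\lambda^{-\beta/p}$. Applying the argument to $-u$ as well gives the two-sided $L^\infty$ bound, and summing over $x\in\frac12\mathbb Z^d$ produces \eqref{unif_est_u}; $\lambda_0$ and $k$ are fixed generically along the way.

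\textbf{The main obstacle.} The hard point is Step 1 under condition \eqref{second_cond}: reconciling the merely multiplicative form-boundedness of $b$ with the genuinely non-homogeneous source term $|\mathsf h f|$ while retaining a strictly positive coefficient on $\|\nabla w\|_2^2$. This is what forces the tightly coupled constraints $\delta_+<4$, $p>\frac{4}{4-\delta_+}$ and $|\mathsf h|^{(1+\gamma)/2}\in\mathbf F_\chi$ with $p'\le1+\gamma$, and it requires upgrading the Caccioppoli-iterations scheme of \cite{KiV} from the homogeneous $L^2$ framework to the present non-homogeneous $L^p$ one. Once this inequality is in hand, the Sobolev/Chebyshev recursion and the gluing over cubes are standard, if technically lengthy.
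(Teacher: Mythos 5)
Your Step 1 (the Caccioppoli mechanism: integration by parts producing ${\rm div\,}b$, absorption via $({\rm div\,}b_+)^{1/2}\in\mathbf F_{\delta_+}$ with the threshold $p>\frac{4}{4-\delta_+}$, the splitting $|\mathsf h|=|\mathsf h|\mathbf 1_{|\mathsf h|>1}+|\mathsf h|\mathbf 1_{|\mathsf h|\le 1}$ with the large part absorbed through $|\mathsf h|^{(1+\gamma)/2}\in\mathbf F_\chi$ and $p'\le 1+\gamma$, and the interpolation responsible for $\beta$) is exactly the paper's argument, and your identification of the main obstacle is accurate. The gap is in how you organize the De Giorgi iteration.

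You propose to run the level-set iteration \emph{globally}, with the polynomial weight $\rho_x$ replacing compactly supported cutoffs, tracking $y_n=\langle (u-k_n)_+^{p\theta}\rho_x\rangle$. This does not close at the Sobolev step. Setting $w=(u-k)_+^{p/2}\rho_x^{1/2}$, the (unweighted) embedding $W^{1,2}\hookrightarrow L^{2d/(d-2)}$ controls $\langle (u-k)_+^{pd/(d-2)}\rho_x^{d/(d-2)}\rangle$, i.e.\ the gain in integrability comes with the weight raised to the power $\frac{d}{d-2}>1$. To return to the quantity $\langle (u-k_{n+1})_+^{p\theta}\rho_x\rangle$ you must apply H\"older, and the correction factor is a \emph{negative} power of $\rho_x$ (namely $\rho_x^{-2\theta/(d-2)}$ spread over the conjugate factor), which is unbounded; the resulting "measure of the super-level set" factor is then not controlled by a Chebyshev estimate with weight $\rho_x$. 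Nor can one use a same-weight Sobolev inequality $\langle |\varphi|^{2d/(d-2)}\rho_x\rangle^{(d-2)/d}\lesssim \langle|\nabla\varphi|^2\rho_x\rangle+\langle|\varphi|^2\rho_x\rangle$: the weight $(1+k|y|^2)^{-d/2-1}$ decays faster than $|y|^{-d}$ and is not a Muckenhoupt weight, so no such inequality is available. This is precisely why the paper splits the argument in two: (a) Propositions \ref{c_prop2} and \ref{nonhom_prop} run the Caccioppoli inequality and the level-set iteration \emph{locally}, on concentric balls $B_{R_m}(x)\downarrow B_{1/2}(x)$ with cutoffs satisfying \eqref{e1}--\eqref{e3} (this is where the nested-square-root "Caccioppoli iterations" are genuinely needed, to remove the gradient term on the larger ball produced by $b\in\mathbf{MF}$), yielding $\sup_{B_{1/2}(x)}u_+\lesssim \langle u_+^{p\theta}\mathbf 1_{B_1(x)}\rangle^{1/(p\theta)}+\lambda^{-\beta/p}(\text{data})$; and (b) Proposition \ref{rho_prop}, a single \emph{global} weighted energy estimate with weight $\rho_x$, no level sets and no Sobolev embedding (only $|\nabla\rho|\le(\frac d2+1)\sqrt k\,\rho$), run with exponent $p\theta$ in place of $p$, which converts $\langle u_+^{p\theta}\rho_x\rangle$ into the data norm and produces the $(\lambda-\lambda_0)^{-1/(p\theta)}$ prefactor. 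Your proposal conflates (a) and (b) into one weighted scheme; to repair it you should restore this separation (or otherwise justify a weighted iteration, which the decay of $\rho$ forbids). A minor further correction: the $(\theta,\theta')$ splitting and the $\lambda^{-\beta/p}$ factor arise in the local iteration (Proposition \ref{nonhom_prop}), not in the Caccioppoli inequality, and the final assembly over $x\in\frac12\mathbb Z^d$ is a supremum, not a sum.
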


\medskip

\textbf{2.~}The following result is needed to prove conditional weak uniqueness in Theorem \ref{unique_thm}.

\begin{theorem}[Gradient bounds]
\label{thm_grad}
Assume that a bounded smooth vector field $b$ satisfies either condition \eqref{C_1} of Theorem \ref{unique_thm} or
\begin{equation}
\label{C_2prime}
\tag{$\bar{\mathbb{B}}_{2}$}
\left\{
\begin{array}{l}
b \in \mathbf{F}_\delta \cap [W^{1,1}_{\loc}(\mathbb R^d)]^d \text{ with finite }\delta \text{ and symmetric Jacobian } Db:=(\nabla_k b_i)_{k,i=1}^d, \\[2mm]
\text{and the negative part $B_-$ of matrix $Db -\frac{{\rm div\,}b}{q}I$ for some $q>(d-2) \vee 2$} \\[2mm]
 \text{satisfies } \langle B_-h,h\rangle \leq \nu \langle |\nabla|h||^2\rangle+c_\nu\langle |h|^2\rangle \text{ for some } \nu<\frac{4(q-1)}{q^2}.
\end{array}
\right.
\end{equation} 
Then the following are true:

\smallskip

{\rm (\textit{i})}  For every $\mathsf{g} \in \mathbf{F}$ there exist generic constants $\mu_0$ and $K$ such that, for every $\mu>\mu_0$, the classical solution $u$ to elliptic equation
$
(\mu- \Delta - b\cdot \nabla)u=|\mathsf{g}|f$,  $f \in C^\infty_c$,
satisfies
\begin{align*}
\|\nabla|\nabla u|^{\frac{q}{2}} \|_{2} & \leq K\|\mathsf{g}|f|^{\frac{q}{2}}\|_2.
\end{align*}

\smallskip

{\rm (\textit{ii})}  There exist generic constants $\mu_0$ and $K$ such that the classical solution $u$ to elliptic equation
$(\mu- \Delta - b\cdot \nabla)u=f$, $f \in C^\infty_c$,
satisfies, for all $\mu>\mu_0$, 
\begin{align*}
\|\nabla|\nabla u|^{\frac{q}{2}} \|_{2} & \leq K\|f\|^{\frac{q}{2}}_{\frac{qd}{d+q-2}}.
\end{align*}
\end{theorem}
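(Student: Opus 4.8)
The plan is to prove both estimates first for a bounded smooth vector field $b$ and bounded smooth data, with constants depending only on $d$, $q$ and the form-bounds, and then to obtain the statements for general $\mathsf g\in\mathbf F$ (resp.\ general $f$) by mollifying $\mathsf g$ without increasing its form-bound, or exhausting $f$, and passing to the limit (the a priori bound survives because the constants are generic). The core is an a priori $L^q$-type energy estimate for $w:=\nabla u$. Following the test function of \cite{KS}, one tests the equation $(\mu-\Delta-b\cdot\nabla)u=g$ against
$$
\phi:=-{\rm div}\big(|\nabla u|^{q-2}\nabla u\big),
$$
which is equivalent to differentiating the equation in $x_k$ and testing the resulting system for $w$ against $|w|^{q-2}w_k$. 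Integration by parts gives $\langle\mu u,\phi\rangle=\mu\|w\|_q^q$ and
$$
\langle -\Delta u,\phi\rangle=\langle|w|^{q-2}|\nabla w|^2\rangle+(q-2)\langle|w|^{q-2}|\nabla|w||^2\rangle\ \ge\ \frac{4(q-1)}{q^{2}}\,\big\|\nabla|w|^{q/2}\big\|_2^{2},
$$
so the whole argument consists in dominating the drift contribution and the source term by these two coercive quantities (plus, on the left, the spare nonnegative term $\langle|w|^{q-2}(|\nabla w|^2-|\nabla|w||^2)\rangle$).

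Under condition \eqref{C_2prime} the symmetry of $Db$ lets one write the drift contribution, after integration by parts, as $\mp\langle|w|^{q-2}\,w^{\top}(Db-\tfrac{{\rm div}\,b}{q}I)w\rangle$; decomposing $Db-\tfrac{{\rm div}\,b}{q}I$ into its positive and negative parts and discarding the favourably signed one, what is left is bounded by $\langle B_-h,h\rangle$ with $h:=|w|^{q/2-1}w$, for which $|h|=|w|^{q/2}$ and $|\nabla|h||=|\nabla|w|^{q/2}|$. The hypothesis $\langle B_-h,h\rangle\le\nu\|\nabla|h|\|_2^2+c_\nu\|h\|_2^2$ thus produces $\nu\|\nabla|w|^{q/2}\|_2^2+c_\nu\|w\|_q^q$, and the gradient part is absorbed precisely because $\nu<\tfrac{4(q-1)}{q^2}$, while the $\|w\|_q^q$ terms are absorbed for $\mu$ large. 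Under condition \eqref{C_1} one does not differentiate $b$: the $\nabla b$ and the $b\cdot\nabla$ pieces combine into the single term $\langle(b\cdot w)\,{\rm div}(|w|^{q-2}w)\rangle$, and in the factor ${\rm div}(|w|^{q-2}w)=|w|^{q-2}\Delta u+(q-2)|w|^{q-3}(w\cdot\nabla|w|)$ one trades $\Delta u$ via the equation itself, $\Delta u=\mu u-b\cdot\nabla u-g$: the resulting quadratic term $\langle(b\cdot w)^2|w|^{q-2}\rangle$ enters with a favourable sign, the lower-order pieces in $u$ are absorbed using the a priori bound on $u$ and $\mu$ large, and the remaining terms are controlled by Cauchy--Schwarz together with the form-boundedness $\||b|\,|w|^{q/2}\|_2^2\le\delta\|\nabla|w|^{q/2}\|_2^2+c_\delta\|w\|_q^q$; the Young inequalities then close exactly at the threshold $\delta<(2/q)^2$.

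It remains to treat the source term $\langle g,\phi\rangle$. For (\textit{ii}), with $g=f\in C_c^\infty$, H\"older's inequality gives $|\langle f,\phi\rangle|\le\|f\|_{r}\,\|{\rm div}(|w|^{q-2}w)\|_{r'}$ with $r=\tfrac{qd}{d+q-2}$; then $\|{\rm div}(|w|^{q-2}w)\|_{r'}\lesssim\||w|^{q/2-1}\|_{a}\,\langle|w|^{q-2}|\nabla w|^2\rangle^{1/2}$ with $a=\tfrac{2qd}{(d-2)(q-2)}$, and by the Sobolev embedding $\||w|^{q/2}\|_{2d/(d-2)}\lesssim\|\nabla|w|^{q/2}\|_2$ the first factor is $\lesssim\|\nabla|w|^{q/2}\|_2^{1-2/q}$; since $\langle|w|^{q-2}|\nabla w|^2\rangle$ sits on the left-hand side, the inequality self-improves to $\|\nabla|w|^{q/2}\|_2\lesssim\|f\|_r^{q/2}$, which is the claim (the exponent identity $\tfrac1r-\tfrac{d-2}{qd}=\tfrac1d$ and $q>(d-2)\vee2$ are what make the bookkeeping consistent). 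For (\textit{i}), with $g=|\mathsf g|f$ merely in $L^2$, one keeps $|\mathsf g|$ undifferentiated: $|\langle|\mathsf g|f,\phi\rangle|\lesssim\langle|\mathsf g|\,|f|\,|w|^{q/2-1}\cdot|w|^{q/2-1}|\nabla w|\rangle$, and after Cauchy--Schwarz the factor $\langle|w|^{q-2}|\nabla w|^2\rangle^{1/2}$ is again on the left, while $\big\||\mathsf g|\,|f|\,|w|^{q/2-1}\big\|_2$ is handled by the form-boundedness of $\mathsf g$ together with the Sobolev embedding for $|w|^{q/2}$, landing on $\|\mathsf g|f|^{q/2}\|_2$ up to absorbable terms. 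Collecting everything one arrives at $c_0\|\nabla|w|^{q/2}\|_2^2+(\mu-c)\|w\|_q^q\le K^2\|\mathsf g|f|^{q/2}\|_2^2$ for $\mu\ge\mu_0$, whence (\textit{i}).

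The main obstacle is the drift term: making the absorption work with the sharp thresholds $\delta<(2/q)^2$ and $\nu<\tfrac{4(q-1)}{q^2}$, and, under \eqref{C_2prime}, correctly identifying which part of $Db-\tfrac{{\rm div}\,b}{q}I$ carries the unfavourable sign, is exactly what forces the particular choice of test function of \cite{KS} and a careful accounting of the several integrations by parts and of the exponents on the right-hand side. A secondary technical point is to justify all those integrations by parts and all the $L^p$-pairings above with $p$-independent constants, i.e.\ to check that $u$ and its mollifications decay sufficiently fast at infinity, which they do thanks to $f\in C_c^\infty$ and the presence of the $\mu u$ term.
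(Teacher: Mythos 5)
Your proposal is correct and follows essentially the same route as the paper: the same test function $\phi=-\nabla\cdot\big(|\nabla u|^{q-2}\nabla u\big)$ of \cite{KS}, the same coercivity identity producing $I_q+(q-2)J_q$ with the threshold $\frac{4(q-1)}{q^2}$, the same treatment of the drift term (integration by parts and the hypothesis on $B_-$ with $h=|w|^{q/2-1}w$ under ($\bar{\mathbb{B}}_{2}$); re-substituting $\Delta u$ from the equation under ($\mathbb{B}_{1}$)), and the same H\"older/Sobolev/Young bookkeeping landing on $\|f\|_{qd/(d+q-2)}$, resp.\ $\|\mathsf{g}|f|^{q/2}\|_2$. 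The only, immaterial, difference is in the source term for (\textit{ii}): you apply H\"older directly to $\langle f,{\rm div}(|w|^{q-2}w)\rangle$, whereas the paper first splits off $\langle |w|^{q-2}|\Delta u|^2\rangle$ by Young and controls that quantity by $I_q$ and $J_q$ via a further integration by parts; both variants close at the same thresholds.
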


In both assertions, by the Sobolev embedding theorem, $u$ is H\"{o}lder continuous, although using these gradient bounds to prove H\"{o}lder continuity would be excessive: in Theorem \ref{thm1} we arrive at the same conclusion directly, using De Giorgi's method, under less restrictive conditions on $b$. 

\medskip

For example, condition \eqref{C_2prime} holds if condition \eqref{C_2} of Theorem \ref{unique_thm} is satisfied, see Lemma \ref{B_lem}.

\medskip

Assuming that $b \in \mathbf{F}_\delta$, $\delta<(\frac{2}{d-2})^2 \wedge 1$, \cite{KS} proved estimate
\begin{equation}
\label{KS_grad}
\|\nabla |\nabla u|^{\frac{q}{2}}\|_2 \leq K\|f\|_q, \quad q \in ](d-2) \vee 2, \frac{2}{\sqrt{\delta}}[
\end{equation}
for solution $u$ to elliptic equation $(\mu- \Delta - b\cdot \nabla)u=f$. This estimate was used in \cite{KS} to construct the corresponding to $-\Delta - b \cdot \nabla$ Feller semigroup via a Moser-type iteration procedure.
The norm $\|f\|_q$ in the right-hand side of \eqref{KS_grad} does not allow to obtain the uniqueness result in Theorem \ref{unique_thm} from \eqref{KS_grad}, unless $b$ satisfies additional assumption $|b| \in L^{(d-2) \vee 2}$. Still, the argument of \cite{KS} can be modified to include a weaker norm of $f$, and this is what we do in the proof of Theorem \ref{thm_grad}. In particular, we use the test function 
\begin{equation}
\label{test_f}
\phi=-\nabla \cdot (\nabla u|\nabla u|^{q-2})
\end{equation}
of \cite{KS}.
In more recent literature one can find other test functions that give gradient bounds on $u$ of the same type as in Theorem \ref{thm_grad} (moreover, these test functions work for larger classes of equations). However, importantly, test function \eqref{test_f} yields the least restrictive assumptions on form-bounds $\delta$ and $\nu$, which are in the focus of the present paper. In fact, one can argue that by multiplying the elliptic equation by test function \eqref{test_f}  and integrating by parts, one differentiates the equation in the optimal direction $\frac{\nabla u}{|\nabla u|}$. We refer to \cite{Ki_survey} for more detailed discussion and references.

\bigskip

\section{Smooth approximation of form-bounded vector fields}

\label{approx_sect}

Let $b \in [L^1_{\loc}(\mathbb R^d)]^d$.  Define
$$
b_\varepsilon:=E_\varepsilon b, \quad \varepsilon>0,
$$
where, recall, $E_\varepsilon h$  denotes the Friedrichs mollifier of function (or vector field) $h$, see Section \ref{notations_sect} for the definition.

\begin{lemma}
\label{fb_approx_lem}
If $b \in \mathbf{F}_\delta$, then
the following is true:

\smallskip

1. $b_\varepsilon\in [L^\infty(\mathbb R^d) \cap C^\infty(\mathbb R^d)]^d$, $b_\varepsilon \rightarrow b$ in $[L^2_{\loc}(\mathbb R^d)]^d$ as $\varepsilon \downarrow 0$.

\smallskip

2.~$b_\varepsilon \in \mathbf{F}_\delta$ with the same constant $c_\delta$ (thus, independent of $\varepsilon$). 
\end{lemma}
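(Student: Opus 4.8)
The statement is Lemma \ref{fb_approx_lem}, which asserts that the Friedrichs mollification $b_\varepsilon = E_\varepsilon b = \gamma_\varepsilon * b$ of a form-bounded vector field $b \in \mathbf{F}_\delta$ is bounded, smooth, converges to $b$ in $[L^2_{\loc}]^d$, and remains in $\mathbf{F}_\delta$ with the \emph{same} constant $c_\delta$ (in particular the same form-bound $\delta$). Part 1 is completely standard mollifier theory: $\gamma_\varepsilon \in C_c^\infty$, so $\gamma_\varepsilon * b \in C^\infty$; each component $b^{(k)} \in L^1_{\loc}$ with $\gamma_\varepsilon$ compactly supported gives $b_\varepsilon \in L^\infty_{\loc}$, but to get the \emph{global} $L^\infty$ bound I would instead use that $b \in [L^2_{\loc}]^d$ together with the form-bound, or more directly note that $\|\gamma_\varepsilon * b\|_\infty \leq \|\gamma_\varepsilon\|_2 \|b\|_{2,\,\mathrm{on~a~ball}}$ is only local — so the cleanest route to $b_\varepsilon \in [L^\infty]^d$ is via the form-bound itself: applying \eqref{fbd_} to suitable test functions shows that a form-bounded field lies in the Morrey class $M_2$ (as remarked after Example \ref{ex1}(4)), and convolution of an $M_2$ field with $\gamma_\varepsilon$ produces a bounded field with a bound depending on $\varepsilon$. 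The $[L^2_{\loc}]^d$ convergence $b_\varepsilon \to b$ is the classical statement that mollification converges in $L^2_{\loc}$ for $L^2_{\loc}$ functions. So Part 1 needs only to be assembled, not discovered.

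\textbf{The heart of the matter, Part 2.} I would prove $b_\varepsilon \in \mathbf{F}_\delta$ with constant $c_\delta$ by a duality/averaging argument against the convolution. Fix $\varphi \in W^{1,2}$. Write out
\[
\|b_\varepsilon \varphi\|_2^2 = \int_{\mathbb R^d} |(\gamma_\varepsilon * b)(x)|^2 |\varphi(x)|^2\, dx.
\]
By Jensen's inequality applied to the probability measure $\gamma_\varepsilon(x-y)\,dy$,
\[
|(\gamma_\varepsilon * b)(x)|^2 = \Big| \int \gamma_\varepsilon(x-y) b(y)\, dy \Big|^2 \leq \int \gamma_\varepsilon(x-y) |b(y)|^2\, dy.
\]
Substituting and using Tonelli,
\[
\|b_\varepsilon \varphi\|_2^2 \leq \int\!\!\int \gamma_\varepsilon(x-y)|b(y)|^2 |\varphi(x)|^2\, dy\, dx = \int |b(y)|^2 \Big( \int \gamma_\varepsilon(x-y)|\varphi(x)|^2\, dx \Big) dy = \int |b(y)|^2 (\gamma_\varepsilon * |\varphi|^2)(y)\, dy.
\]
Now $\gamma_\varepsilon * |\varphi|^2 = |\psi_\varepsilon|^2$ is not itself a square, but one can instead observe that $\gamma_\varepsilon * |\varphi|^2 (y) = \int \gamma_\varepsilon(y-x)|\varphi(x)|^2 dx$, and apply the form-bound \eqref{fbd_} \emph{inside the $x$-integral} after another use of Tonelli: writing $\gamma_\varepsilon(y-x) = (\sqrt{\gamma_\varepsilon}(y-x))^2$ and setting $\varphi_y(x) := \sqrt{\gamma_\varepsilon(y-x)}\,\varphi(x)$, one gets $\int |b(y)|^2 |\varphi_y(x)|^2 dx\, dy$, and for each fixed $y$, $\varphi_y \in W^{1,2}$ with $\nabla_x \varphi_y = \sqrt{\gamma_\varepsilon(y-x)}\nabla\varphi(x) + \varphi(x)\nabla_x\sqrt{\gamma_\varepsilon(y-x)}$ — but the gradient of $\sqrt{\gamma_\varepsilon}$ is singular where $\gamma_\varepsilon$ vanishes. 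The cleaner fix: do the $y$-integration first. Since $\gamma_\varepsilon(x-y)dy$ is a probability measure in $y$ and $|b|^2$ appears linearly after Jensen, translate: $\int |b(y)|^2 \gamma_\varepsilon(x-y)\,dy = \int |b(x-z)|^2 \gamma_\varepsilon(z)\,dz$, so
\[
\|b_\varepsilon\varphi\|_2^2 \leq \int \gamma_\varepsilon(z) \Big( \int |b(x-z)|^2 |\varphi(x)|^2\, dx \Big) dz = \int \gamma_\varepsilon(z) \| \tau_z b \cdot \varphi\|_2^2\, dz,
\]
where $\tau_z b(x) = b(x-z)$. Each translate $\tau_z b$ satisfies \eqref{fbd_} with the \emph{same} constants $\delta, c_\delta$ (translation invariance of the Laplacian and Lebesgue measure: apply \eqref{fbd_} to $\varphi(\cdot + z)$). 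Hence $\|\tau_z b \cdot \varphi\|_2^2 \leq \delta\|\nabla\varphi\|_2^2 + c_\delta\|\varphi\|_2^2$ for every $z$, and integrating against the probability density $\gamma_\varepsilon$ gives exactly $\|b_\varepsilon\varphi\|_2^2 \leq \delta\|\nabla\varphi\|_2^2 + c_\delta\|\varphi\|_2^2$, which is the claim.

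\textbf{Expected obstacle.} There is no deep obstacle; the only subtlety is choosing the order of integration so that one never has to differentiate $\sqrt{\gamma_\varepsilon}$ or otherwise pay a price in $\varepsilon$ — the translation-invariance trick above avoids this entirely, reducing everything to the elementary facts that (a) $\mathbf{F}_\delta$ is translation invariant with unchanged constants and (b) averaging an inequality that holds uniformly over a probability measure preserves the inequality. The mild bookkeeping point worth stating carefully is the a priori membership $b_\varepsilon \in [L^2_{\loc}]^d$ needed for \eqref{fbd_} to even make sense for $b_\varepsilon$, but this follows from Part 1. I would present Part 2 as the Jensen step, the Tonelli/translation step, the translation-invariance remark, and the final averaging — four short lines.
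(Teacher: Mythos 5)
Your proposal is correct, and Part 2 takes a genuinely different route from the paper. Both proofs open with the same Jensen step $|b_\varepsilon|^2\le E_\varepsilon|b|^2$, but then diverge: the paper transfers the mollifier onto the test function, writing $\langle E_\varepsilon|b|^2,\varphi^2\rangle=\|b\,\varphi_\varepsilon\|_2^2$ with $\varphi_\varepsilon:=\sqrt{E_\varepsilon\varphi^2}$ and then proving the auxiliary gradient bound $\|\nabla\varphi_\varepsilon\|_2\le\|\nabla\varphi\|_2$ via Cauchy--Schwarz applied to $E_\varepsilon(\varphi\nabla\varphi)/\sqrt{E_\varepsilon\varphi^2}$; you instead use Tonelli plus the change of variables to write $\|b_\varepsilon\varphi\|_2^2\le\int\gamma_\varepsilon(z)\|\tau_z b\cdot\varphi\|_2^2\,dz$ and invoke translation invariance of $\mathbf{F}_\delta$ before averaging against the probability density. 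Your argument is the more elementary of the two (no differentiation of $\sqrt{E_\varepsilon\varphi^2}$ at all), and it adapts just as well to the companion lemmas in the same section since the $\mathbf{MF}$, $({\rm div\,}b)_+$ and $|\mathsf{h}|^{(1+\gamma)/2}$ conditions are likewise translation invariant and appear linearly after the relevant Jensen step; what the paper's route buys is the reusable inequality \eqref{phi_eps}, which it invokes verbatim in those later proofs and again in Lemma \ref{B_lem}, where the mixed terms $E_\varepsilon(h_kh_i)$ do not reduce to a single translation average and your trick would not apply directly. Two small remarks on Part 1: your detour through the Morrey class $M_2$ is correct but unnecessary --- the paper simply applies \eqref{fbd_} with the test function $\varphi=\sqrt{\gamma_\varepsilon(x-\cdot)}$ to get $|b_\varepsilon(x)|\le(C\varepsilon^{-2}+c_\delta)^{1/2}$ directly after Jensen --- and your worry about $\nabla\sqrt{\gamma_\varepsilon}$ being singular where $\gamma_\varepsilon$ vanishes is unfounded for the specific bump $\gamma(x)=c\exp((|x|^2-1)^{-1})$, whose square root has bounded gradient since the exponential decay at $|x|=1$ dominates the polynomial blow-up of its derivative.
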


\begin{proof}
1.~The smoothness of $b_\varepsilon$ and the convergence follow from the standard properties of Friedrichs mollifiers, so it remains to prove that $|b_\varepsilon| \in L^\infty$. By H\"{o}lder's inequality, $$|b_\varepsilon(x)|\leq \sqrt{E_\varepsilon |b|^2(x)}=\sqrt{\langle\gamma_\varepsilon(x-\cdot)|b(\cdot)|^2\rangle},$$ so
\begin{align*}
|b_\varepsilon(x)| & \leq \big\langle |b(\cdot)|^2\gamma_\varepsilon(x-\cdot)\big\rangle^{\frac{1}{2}} \\
& (\text{we apply the hypothesis $b \in \mathbf{F}_\delta$}) \\
& \leq \big(\delta \big\langle \big|\nabla \sqrt{\gamma_\varepsilon(x-\cdot)}\big|^2\big\rangle + c_\delta\big)^{\frac{1}{2}}=(C\varepsilon^{-2}+c_\delta)^{\frac{1}{2}}.
\end{align*}
Hence $|b_\varepsilon| \in L^\infty$ for each $\varepsilon>0$.

2. Put $\varphi_\varepsilon=\sqrt{E_\varepsilon \varphi^2}$, $\varphi \in W^{1,2}$. 
Then
\begin{align*}
\|b_\varepsilon \varphi\|_2^2 &\leq \langle E_\varepsilon |b|^2,\varphi^2\rangle=\|b \varphi_\varepsilon\|^2_2 \leq \delta\|\nabla \varphi_\varepsilon\|_2^2+c_\delta\|\varphi_\varepsilon\|_2^2, 
\end{align*}
where
\begin{align}
\|\nabla \varphi_\varepsilon\|_2 & =\big\|\frac{E_\varepsilon(\varphi \nabla \varphi)}{\sqrt{E_\varepsilon \varphi^2}}\big\|_2 \notag \\
& (\text{we apply Cauchy-Schwartz' inequality}) \notag \\
&\leq \|\sqrt{E_\varepsilon|\nabla \varphi|^2}\|_2=\|E_\varepsilon|\nabla \varphi|^2\|_1^\frac{1}{2}
 \leq \|\nabla \varphi\|_2 \label{phi_eps}
\end{align}
and, clearly, $\|\varphi_\varepsilon\|_2 \leq \|\varphi\|_2$.
\end{proof}

\begin{lemma} If $b \in \mathbf{MF}_\delta$, then
the following is true:

\smallskip

1. $b_\varepsilon\in [L^\infty(\mathbb R^d) \cap C^\infty(\mathbb R^d)]^d$, $b_\varepsilon \rightarrow b$ in $[L^1_{\loc}(\mathbb R^d)]^d$.

\smallskip

2.~$b_\varepsilon \in \mathbf{MF}_\delta$ with the same $c_\delta$. 
\end{lemma}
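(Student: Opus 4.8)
The plan is to mimic the proof of Lemma~\ref{fb_approx_lem} step by step, substituting the multiplicative form inequality \eqref{mfbd_} for the additive one \eqref{fbd_}; the only structural difference is that $b_\varepsilon$ is now dominated by the mollified modulus at the level of $|b_\varepsilon|\le E_\varepsilon|b|$ rather than $|b_\varepsilon|^2\le E_\varepsilon|b|^2$, which is in fact slightly simpler.

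\emph{Part 1.} I would first quote the standard Friedrichs-mollifier facts — smoothness of $b_\varepsilon$ and $b_\varepsilon\to b$ in $[L^1_{\loc}]^d$ — which use only $b\in[L^1_{\loc}]^d$. To obtain $|b_\varepsilon|\in L^\infty$, bound $|b_\varepsilon(x)|\le E_\varepsilon|b|(x)=\langle |b(\cdot)|\,\gamma_\varepsilon(x-\cdot)\rangle$ and test \eqref{mfbd_} against $\varphi:=\sqrt{\gamma_\varepsilon(x-\cdot)}$, which is a legitimate element of $W^{1,2}$ since $\sqrt{\gamma}$ is itself a standard $C_c^\infty$ bump. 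Using $\|\varphi\|_2^2=\langle\gamma_\varepsilon(x-\cdot)\rangle=1$ and $\|\nabla\varphi\|_2^2=\langle |\nabla\sqrt{\gamma_\varepsilon(x-\cdot)}|^2\rangle=C\varepsilon^{-2}$ (exactly as in the proof of Lemma~\ref{fb_approx_lem}), this gives $|b_\varepsilon(x)|\le\kappa_0\sqrt{C}\,\varepsilon^{-1}+c_{\kappa_0}$, a bound uniform in $x$, hence $|b_\varepsilon|\in L^\infty$ for each $\varepsilon>0$.

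\emph{Part 2.} For $\varphi\in W^{1,2}$ I would set $\varphi_\varepsilon:=\sqrt{E_\varepsilon\varphi^2}$ and use $|b_\varepsilon|\le E_\varepsilon|b|$ pointwise together with the symmetry of the kernel $\gamma_\varepsilon(x-y)$ to write \[ \langle|b_\varepsilon|\varphi,\varphi\rangle\le\langle E_\varepsilon|b|,\varphi^2\rangle=\langle|b|,E_\varepsilon\varphi^2\rangle=\langle|b|\varphi_\varepsilon,\varphi_\varepsilon\rangle\le\delta\|\nabla\varphi_\varepsilon\|_2\|\varphi_\varepsilon\|_2+c_\delta\|\varphi_\varepsilon\|_2^2, \] the last step being \eqref{mfbd_} for $b$. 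Then I would invoke the two estimates already established inside the proof of Lemma~\ref{fb_approx_lem}, namely $\|\nabla\varphi_\varepsilon\|_2\le\|\nabla\varphi\|_2$ (the Cauchy--Schwarz chain \eqref{phi_eps}) and $\|\varphi_\varepsilon\|_2\le\|\varphi\|_2$, to conclude $\langle|b_\varepsilon|\varphi,\varphi\rangle\le\delta\|\nabla\varphi\|_2\|\varphi\|_2+c_\delta\|\varphi\|_2^2$, i.e.\ $b_\varepsilon\in\mathbf{MF}_\delta$ with the same $c_\delta$.

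\emph{On the difficulty.} There is no genuine obstacle: the argument is a routine transcription of Lemma~\ref{fb_approx_lem}. The only points needing a word of care are that $\sqrt{\gamma_\varepsilon(x-\cdot)}$ is an admissible $W^{1,2}$ test function (immediate once one recognizes $\sqrt{\gamma}$ as a $C_c^\infty$ bump) and that the mollifier can be transferred off $b_\varepsilon$ and onto the test function via $|b_\varepsilon|\le E_\varepsilon|b|$ plus the self-adjointness of $E_\varepsilon$. The single genuinely non-trivial ingredient, the gradient bound $\|\nabla\varphi_\varepsilon\|_2\le\|\nabla\varphi\|_2$, is borrowed verbatim from the additive case and requires no change.
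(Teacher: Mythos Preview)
Your proof is correct and follows essentially the same route as the paper's: both bound $|b_\varepsilon(x)|$ by testing \eqref{mfbd_} against $\sqrt{\gamma_\varepsilon(x-\cdot)}$, and both transfer the mollifier onto $\varphi^2$ via $\varphi_\varepsilon=\sqrt{E_\varepsilon\varphi^2}$ and then invoke the estimates $\|\nabla\varphi_\varepsilon\|_2\le\|\nabla\varphi\|_2$, $\|\varphi_\varepsilon\|_2\le\|\varphi\|_2$ from Lemma~\ref{fb_approx_lem}. If anything, your write-up is slightly more careful in making explicit the inequality $|b_\varepsilon|\le E_\varepsilon|b|$ at the start of Part~2.
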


\begin{proof} 1.~We only need to prove $|b_\varepsilon| \in L^\infty$. By $b \in \mathbf{MF}_\delta$, for all $x \in \mathbb R^d$,
\begin{align*}
|b_\varepsilon(x)|   \leq \big\langle |b(\cdot)|\gamma_\varepsilon(x-\cdot)\big\rangle 
 \leq \delta \big\langle \big|\nabla \sqrt{\gamma_\varepsilon(x-\cdot)}\big|^2\big\rangle^{\frac{1}{2}} + c_\delta=C\varepsilon^{-1}+c_\delta.
\end{align*}

2.~Let $\varphi_\varepsilon=\sqrt{E_\varepsilon \varphi^2}$, $\varphi \in W^{1,2}$. We have
\begin{align*}
\langle |b_\varepsilon| \varphi,\varphi\rangle & =\langle |b| E_\varepsilon \varphi^2\rangle=\langle |b| \varphi_\varepsilon^2\rangle  \leq \delta\|\nabla \varphi_\varepsilon\|_2\|\varphi_\varepsilon\|_2+c_\delta\|\varphi_\varepsilon\|_2^2,
\end{align*}
where, repeating the previous proof, $\|\nabla \varphi_\varepsilon\|_2 \leq \|\nabla \varphi\|_2$, $\|\varphi_\varepsilon\|_2 \leq \|\varphi\|_2$.
\end{proof}

Assume that ${\rm div\,}b \in L^1_{\loc}$. 
We can represent ${\rm div\,}b_\varepsilon = E_\varepsilon {\rm div\,}b$ as
$$
{\rm div\,}b_\varepsilon  = {\rm div\,}b_{\varepsilon,+} - {\rm div\,}b_{\varepsilon,-},
$$
where
$${\rm div\,}b_{\varepsilon,+}:=E_\varepsilon ({\rm div\,}b)_+, \quad {\rm div\,}b_{\varepsilon,-}:=E_\varepsilon ({\rm div\,}b)_-.$$ 
Note that smooth functions ${\rm div\,}b_{\varepsilon,\pm} \geq 0$ are in general greater than the positive and the negative parts $({\rm div\,}b_\varepsilon)_+:={\rm div}\,b_\varepsilon \vee 0$, $({\rm div\,}b_\varepsilon)_-:=-({\rm div}\,b_\varepsilon \wedge 0)$ of ${\rm div\,}b_\varepsilon$.

\begin{lemma} If $({\rm div\,}b)_+ \in \mathbf{F}_{\delta_+}$, $({\rm div\,}b)_- \in L^1+L^\infty$, then
the following is true:

1.~${\rm div\,}b_{\varepsilon,+} \in L^\infty \cap C^\infty$, ${\rm div\,}b_{\varepsilon,+} \rightarrow ({\rm div\,}b)_+$ in $L^1_{\loc}$ as $\varepsilon \downarrow 0$.

2.~${\rm div\,}b_{\varepsilon,+} \in \mathbf{F}_{\delta_+}$ with the same $c_{\delta_+}$ as the one for $b$. 
\end{lemma}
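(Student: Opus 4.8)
The plan is to mimic, almost verbatim, the proofs of the two preceding lemmas (Lemma \ref{fb_approx_lem} and the lemma for $\mathbf{MF}_\delta$), now with the nonnegative scalar $({\rm div\,}b)_+$ in the role previously played by $|b|$. The hypothesis $({\rm div\,}b)_- \in L^1+L^\infty$ enters only to guarantee ${\rm div\,}b \in L^1_{\loc}$, so that the decomposition ${\rm div\,}b_\varepsilon = {\rm div\,}b_{\varepsilon,+} - {\rm div\,}b_{\varepsilon,-}$ with ${\rm div\,}b_{\varepsilon,\pm} := E_\varepsilon ({\rm div\,}b)_\pm$ is meaningful; it plays no further role in the argument.

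For part 1: smoothness of ${\rm div\,}b_{\varepsilon,+} = E_\varepsilon ({\rm div\,}b)_+$ and its convergence to $({\rm div\,}b)_+$ in $L^1_{\loc}$ as $\varepsilon \downarrow 0$ are standard Friedrichs-mollifier facts, applicable since $0 \le ({\rm div\,}b)_+ \in L^1_{\loc}$ (which is implied by $({\rm div\,}b)_+ \in \mathbf{F}_{\delta_+}$, i.e.\ by \eqref{div_fbd}). For the $L^\infty$ bound I would test \eqref{div_fbd} (written for $({\rm div\,}b)_+$) against $\varphi = \sqrt{\gamma_\varepsilon(x-\cdot)} \in W^{1,2}$, obtaining
\[
{\rm div\,}b_{\varepsilon,+}(x) = \big\langle ({\rm div\,}b)_+(\cdot)\,\gamma_\varepsilon(x-\cdot)\big\rangle \leq \delta_+ \big\langle \big|\nabla\sqrt{\gamma_\varepsilon(x-\cdot)}\big|^2\big\rangle + c_{\delta_+} = C\varepsilon^{-2} + c_{\delta_+},
\]
using $\langle \gamma_\varepsilon \rangle = 1$; the bound is uniform in $x$, hence ${\rm div\,}b_{\varepsilon,+} \in L^\infty$ for each fixed $\varepsilon > 0$.

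For part 2: given $\varphi \in W^{1,2}$, set $\varphi_\varepsilon := \sqrt{E_\varepsilon \varphi^2}$; since $\gamma_\varepsilon$ is even, $E_\varepsilon$ is self-adjoint on $L^2$, so
\[
\langle {\rm div\,}b_{\varepsilon,+}\,\varphi,\varphi\rangle = \langle E_\varepsilon ({\rm div\,}b)_+,\varphi^2\rangle = \langle ({\rm div\,}b)_+, E_\varepsilon\varphi^2\rangle = \langle ({\rm div\,}b)_+, \varphi_\varepsilon^2\rangle.
\]
Applying \eqref{div_fbd} to $\varphi_\varepsilon$ (which lies in $W^{1,2}$ for the same reasons as in the proof of Lemma \ref{fb_approx_lem}) and then the bounds $\|\nabla\varphi_\varepsilon\|_2 \leq \|\nabla\varphi\|_2$ from \eqref{phi_eps} and $\|\varphi_\varepsilon\|_2 \leq \|\varphi\|_2$ yields $\langle {\rm div\,}b_{\varepsilon,+}\,\varphi,\varphi\rangle \leq \delta_+\|\nabla\varphi\|_2^2 + c_{\delta_+}\|\varphi\|_2^2$, that is, ${\rm div\,}b_{\varepsilon,+} \in \mathbf{F}_{\delta_+}$ with the same $c_{\delta_+}$.

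There is no genuine obstacle here: the whole argument is a transcription of the two earlier lemmas, and the only point needing (very minor) care is verifying $\varphi_\varepsilon \in W^{1,2}$ before invoking the form inequality for $({\rm div\,}b)_+$, which is exactly the chain-rule/Cauchy--Schwarz computation already recorded in the proof of Lemma \ref{fb_approx_lem}.
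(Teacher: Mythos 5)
Your proposal is correct and follows essentially the same route as the paper: the $L^\infty$ bound on ${\rm div\,}b_{\varepsilon,+}$ by testing the form inequality against $\sqrt{\gamma_\varepsilon(x-\cdot)}$, and the preservation of the form-bound via $\varphi_\varepsilon=\sqrt{E_\varepsilon\varphi^2}$ together with $\|\nabla\varphi_\varepsilon\|_2\leq\|\nabla\varphi\|_2$, are exactly the paper's two steps. The only difference is cosmetic: you additionally spell out that $({\rm div\,}b)_-\in L^1+L^\infty$ serves merely to make the decomposition ${\rm div\,}b_\varepsilon={\rm div\,}b_{\varepsilon,+}-{\rm div\,}b_{\varepsilon,-}$ well defined, which the paper leaves implicit.
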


\begin{proof} The first statement follows from the properties of Friedrichs mollifiers and the following estimate (we use notations from the previous proof): 
for every $x \in \mathbb R^d$,
\begin{align*}
{\rm div\,}b_{\varepsilon,+}(x)   \leq \big\langle ({\rm div\,}b)_+(\cdot)\gamma_\varepsilon(x-\cdot)\big\rangle \leq \delta_+ \big\langle \big|\nabla \sqrt{\gamma_\varepsilon(x-\cdot)}\big|^2\big\rangle + c_{\delta_+}=C\varepsilon^{-2}+c_\delta,
\end{align*}
Let us prove the second statement:
\begin{align*}
\langle {\rm div\,}b_{\varepsilon,+} \varphi, \varphi\rangle = \langle  ({\rm div\,}b)_+ \varphi_\varepsilon^2\rangle \leq \delta_+\|\nabla \varphi_\varepsilon\|_2^2 + c_{\delta_+}\|\varphi_\varepsilon\|_2^2 \leq \delta_+ \|\nabla \varphi\|_2^2 + c_{\delta_+}\|\varphi\|_2^2.
\end{align*}
\end{proof}

Finally, we will need

\begin{lemma} 
\label{cl4}
If $|\mathsf{h}|^{\frac{1+\gamma}{2}} \in \mathbf{F}_\chi$ ($\gamma>0$), then the following is true:

1.~$\mathsf{h}_\varepsilon:=E_\varepsilon \mathsf{h} \in [L^\infty(\mathbb R^d) \cap C^\infty(\mathbb R^d)]^d$, $\mathsf{h}_\varepsilon \rightarrow \mathsf{h}$ in $[L^1_\loc(\mathbb R^d)]^d$ as $\varepsilon \downarrow 0$,

2.~$|\mathsf{h}_\varepsilon|^{\frac{1+\gamma}{2}}\in \mathbf{F}_\chi$ with the same $c_\chi$.
\end{lemma}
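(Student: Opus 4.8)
The plan is to follow the template of the three preceding lemmas (in particular Lemma \ref{fb_approx_lem}), the one extra ingredient being a single application of Jensen's inequality with exponent $1+\gamma$. First I would record the preliminary observations needed for assertion~1: since $|\mathsf{h}|^{\frac{1+\gamma}{2}}\in\mathbf{F}_\chi$ forces $|\mathsf{h}|^{\frac{1+\gamma}{2}}\in L^2_{\loc}$, we get $|\mathsf{h}|\in L^{1+\gamma}_{\loc}\subset L^1_{\loc}$, so $\mathsf{h}\in[L^1_{\loc}]^d$ and the smoothness of $\mathsf{h}_\varepsilon=E_\varepsilon\mathsf{h}$ together with the convergence $\mathsf{h}_\varepsilon\to\mathsf{h}$ in $[L^1_{\loc}]^d$ are just the standard properties of the Friedrichs mollifier.

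Next I would prove the $L^\infty$ bound. Pointwise, $|\mathsf{h}_\varepsilon(x)|\le\langle\gamma_\varepsilon(x-\cdot)|\mathsf{h}(\cdot)|\rangle$; since $\gamma_\varepsilon(x-\cdot)\,dy$ is a probability measure and $t\mapsto t^{1+\gamma}$ is convex ($\gamma>0$), Jensen gives
\[
|\mathsf{h}_\varepsilon(x)|^{1+\gamma}\le\langle\gamma_\varepsilon(x-\cdot)|\mathsf{h}(\cdot)|^{1+\gamma}\rangle=E_\varepsilon\big(|\mathsf{h}|^{1+\gamma}\big)(x).
\]
Taking $\varphi=\sqrt{\gamma_\varepsilon(x-\cdot)}$ in \eqref{fbd_} applied to $|\mathsf{h}|^{\frac{1+\gamma}{2}}$ bounds the right-hand side by $\chi\langle|\nabla\sqrt{\gamma_\varepsilon(x-\cdot)}|^2\rangle+c_\chi=C\varepsilon^{-2}+c_\chi$, exactly as in the earlier proofs, so $|\mathsf{h}_\varepsilon|\le(C\varepsilon^{-2}+c_\chi)^{\frac{1}{1+\gamma}}\in L^\infty$ for each $\varepsilon>0$.

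For assertion~2, set $\varphi_\varepsilon:=\sqrt{E_\varepsilon\varphi^2}$ for $\varphi\in W^{1,2}$. Using the pointwise Jensen bound from the previous step and the symmetry of the mollifier ($\gamma$ is radial, hence $\langle E_\varepsilon f,g\rangle=\langle f,E_\varepsilon g\rangle$), I would estimate
\[
\langle|\mathsf{h}_\varepsilon|^{1+\gamma}\varphi^2\rangle\le\langle E_\varepsilon\big(|\mathsf{h}|^{1+\gamma}\big)\,\varphi^2\rangle=\langle|\mathsf{h}|^{1+\gamma}\,E_\varepsilon\varphi^2\rangle=\langle|\mathsf{h}|^{1+\gamma}\,\varphi_\varepsilon^2\rangle,
\]
and then apply $|\mathsf{h}|^{\frac{1+\gamma}{2}}\in\mathbf{F}_\chi$ to the last expression, followed by the bounds $\|\nabla\varphi_\varepsilon\|_2\le\|\nabla\varphi\|_2$ (this is \eqref{phi_eps}) and $\|\varphi_\varepsilon\|_2\le\|\varphi\|_2$ from the proof of Lemma \ref{fb_approx_lem}. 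This yields $\langle|\mathsf{h}_\varepsilon|^{1+\gamma}\varphi^2\rangle\le\chi\|\nabla\varphi\|_2^2+c_\chi\|\varphi\|_2^2$, i.e. $|\mathsf{h}_\varepsilon|^{\frac{1+\gamma}{2}}\in\mathbf{F}_\chi$ with the same $c_\chi$.

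There is no real obstacle here; the only point requiring attention is that one must pass from the trivial pointwise inequality $|\mathsf{h}_\varepsilon|\le E_\varepsilon|\mathsf{h}|$ to control of $|\mathsf{h}_\varepsilon|^{1+\gamma}$ by $E_\varepsilon(|\mathsf{h}|^{1+\gamma})$ through Jensen's inequality (rather than Hölder with a fixed pair of exponents), since it is $|\mathsf{h}|^{1+\gamma}$, and not $|\mathsf{h}|^2$, that the hypothesis controls — this is precisely the device that makes the scale $\mathbf{F}$ with the power $\frac{1+\gamma}{2}$ stable under mollification.
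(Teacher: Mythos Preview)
Your proof is correct and is essentially the same as the paper's: the paper obtains $|\mathsf{h}_\varepsilon|^{1+\gamma}\le E_\varepsilon|\mathsf{h}|^{1+\gamma}$ (which it phrases as ``H\"older's inequality'' rather than Jensen), then passes $E_\varepsilon$ onto $\varphi^2$, applies $|\mathsf{h}|^{\frac{1+\gamma}{2}}\in\mathbf{F}_\chi$, and invokes $\|\nabla\varphi_\varepsilon\|_2\le\|\nabla\varphi\|_2$, $\|\varphi_\varepsilon\|_2\le\|\varphi\|_2$. Your write-up is simply a more detailed version of the same argument, with assertion~1 spelled out as well.
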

\begin{proof}By H\"{o}lder's inequality, $|\mathsf{h}_\varepsilon|^{1+\gamma} \leq E_\varepsilon |\mathsf{h}|^{1+\gamma}$, so
$\langle |\mathsf{h}_\varepsilon|^{1+\gamma} \varphi^2\rangle \leq \langle |\mathsf{h}|^{1+\gamma},\varphi_\varepsilon^2\rangle$, where, recall, $\varphi_\varepsilon=\sqrt{E_\varepsilon \varphi^2}$,  $\varphi \in W^{1,2}$. Now we apply $|\mathsf{h}|^{\frac{1+\gamma}{2}} \in \mathbf{F}_\chi$ and use $\|\nabla \varphi_\varepsilon\|_2 \leq \|\nabla \varphi\|_2$, $\|\varphi_\varepsilon\|_2 \leq \|\varphi\|_2$.
\end{proof}

\section{Proofs of Lemmas \ref{p_1} and \ref{p_2}}
\label{decomp_sect}

Recall: $b=(b_1,\dots,b_N):\mathbb R^{Nd} \rightarrow \mathbb R^{Nd}$ is defined by
\begin{equation*}
b_i(x):= \frac{1}{N}\sum_{j=1, j \neq i}^N K_{ij}(x_i-x_j), \quad x=(x_1,\dots,x_n) \in \mathbb R^{Nd}, \quad 1 \leq i \leq N.
\end{equation*}
Below $|\cdot|$ denotes, depending on the context, the Euclidean norm in $\mathbb R^{Nd}$ or $\mathbb R^d$. 
In this section, $\langle\,,\,\rangle $ is the integration over $\mathbb R^{Nd}$.

\begin{proof}[Proof of Lemma \ref{p_1}]
We have
\begin{align*}
|b(x)|^{2} & \leq \sum_{i=1}^N |b_i(x)|^{2} \leq  \sum_{i=1}^N  \biggl(\frac{1}{N}  \sum_{j=1,j \neq i}^N |K_{ij}(x_i-x_j)|\biggr)^{2} \\ 
&\leq \sum_{i=1}^N \frac{N-1}{N^{2}} \sum_{j=1,j \neq i}^N |K_{ij}(x_i-x_j)|^{2}.
\end{align*}
Therefore,
$
\langle |b|^2\varphi^2\rangle \leq \sum_{i=1}^N \frac{N-1}{N^{2}} \sum_{j=1,j \neq i}^N \langle |K_{ij}(x_i-x_j)|^{2}\varphi^2\rangle,
$
where, denoting by $\bar{x}$ vector $x$ with component $x_i$ removed, we estimate
\begin{align*}
\langle|K_{ij}(x_i-x_j)|^2 \varphi^2 \rangle & = \int_{\mathbb R^{(N-1)d}}\int_{\mathbb R^d} |K_{ij}(x_i-x_j)|^2\varphi^2(x_i,\bar{x})dx_i d\bar{x} \\
& (\text{we use $K_{ij} \in \mathbf{F}_{\kappa}(\mathbb R^d)$ in $x_i$ variable}) \\
& \leq \kappa \int_{\mathbb R^{(N-1)d}}\int_{\mathbb R^d} |\nabla_{x_i}\varphi(x_i,\bar{x})|^2dx_i d\bar{x} + c_{\kappa} \int_{\mathbb R^{Nd}}\varphi^2 dx \\
& = \kappa \langle |\nabla_{x_i}\varphi|^2 \rangle + c_{\kappa} \langle \varphi^2 \rangle.
\end{align*}
Hence
$
\langle |b|^2\varphi^2\rangle \leq \frac{(N-1)^2}{N^2}\kappa \langle |\nabla\varphi|^2 \rangle + \frac{(N-1)^2}{N}c_{\kappa} \langle \varphi^2 \rangle,
$
as claimed.
\end{proof}

\begin{proof}[Proof of Lemma \ref{p_2}] Let us first prove \eqref{b_est2}.
We have
\begin{align}
\langle |b|\varphi^2 \rangle & \leq \sum_{i=1}^N \langle |b_i| \varphi^2 \rangle \leq \sum_{i=1}^N \frac{1}{N}\sum_{j=1, j \neq i}^N \langle |K_{ij}(x_i-x_j)|\varphi^2 \rangle. \label{eta_psi2}
\end{align}
Denoting by $\bar{x}$ the variable $x$ with component $x_i$ removed, we estimate
\begin{align*}
& \langle |K_{ij}(x_i-x_j)|\varphi^2 \rangle  = \int_{\mathbb R^{(N-1)d}}\int_{\mathbb R^d} |K_{ij}(x_i-x_j)|\varphi^2(x_i,\bar{x}) dx_i d\bar{x} \\
& (\text{apply $K_{ij} \in \mathbf{MF}_\kappa(\mathbb R^d)$ in $x_i$ variable}) \\
& \leq \int_{\mathbb R^{(N-1)d}} \biggl[\biggl(\int_{\mathbb R^d} |\nabla_{x_i}\varphi(x_i,\bar{x})|^2dx_i\biggr)^{\frac{1}{2}} \biggl(\int_{\mathbb R^d} \varphi^2(x_i,\bar{x})dx_i \biggr)^{\frac{1}{2}}+c_\kappa \int_{\mathbb R^d} \varphi^2(x_i,\bar{x})dx_i\biggr] d\bar{x} \\
& \leq \kappa\langle|\nabla_{x_i}\varphi|^2 \rangle^{\frac{1}{2}} \langle \varphi^2 \rangle^{\frac{1}{2}} + c_\kappa\langle \varphi^2 \rangle.
\end{align*}
Therefore,
\begin{align*}
\sum_{i=1}^N \frac{1}{N}\sum_{j=1, j \neq i}^N \langle|K_{ij}(x_i-x_j)|\varphi^2 \rangle 
& \leq 
\sum_{i=1}^N \frac{N-1}{N} \biggl[\kappa\langle|\nabla_{x_i}\varphi|^2 \rangle^{\frac{1}{2}}\langle \varphi^2 \rangle^{\frac{1}{2}} + c_\kappa\langle\varphi^2 \rangle \biggr] \\
& \leq \frac{N-1}{N}\sqrt{N}\kappa\langle |\nabla\varphi|^2 \rangle^{\frac{1}{2}}\langle \varphi^2 \rangle^{\frac{1}{2}} + (N-1)c_\kappa\langle \varphi^2 \rangle.
\end{align*}
Applying these estimates in \eqref{eta_psi2}, we obtain \eqref{b_est2}.

Next, we prove \eqref{div_est}.
We have ${\rm div\,}b(x)=\sum_{i=1}^N \frac{1}{N}\sum_{j=1,j \neq i}^N ({\rm div}\,K_{ij})(x_i-x_j).$ 
So,
$$
({\rm div\,}b)_+ =
\sum_{i=1}^N \frac{1}{N}\sum_{j=1,j \neq i}^N ({\rm div\,} K_{ij})_+(x_i-x_j).
$$
Hence, by 
$({\rm div\,}K_{ij})_+^{\frac{1}{2}} \in \mathbf{F}_{\kappa_+}(\mathbb R^d)$ (note that this condition is linear in $({\rm div}K_{ij})_+$),
\begin{align*}
\langle ({\rm div\,}b)_+, \varphi^2 \rangle \leq \frac{N-1}{N}\kappa_+\langle |\nabla \varphi|^2 \rangle + (N-1)c_{\kappa_+}\langle\varphi^2 \rangle,
\end{align*}
i.e.\,we have proved \eqref{div_est} for $({\rm div\,}b)_+$.

Now, we prove \eqref{gamma_est}. Recall that $\alpha \in [0,1]$. We have, using Jensen's inequality,
\begin{align*}
|b|^{1+\alpha} & \leq \sum_{i=1}^N |b_i(x)|^{1+\alpha} \leq  \sum_{i=1}^N  \biggl(\frac{1}{N}  \sum_{j=1,j \neq i}^N |K_{ij}(x_i-x_j)|\biggr)^{1+\alpha} \\ 
&\leq  \sum_{i=1}^N \frac{(N-1)^\alpha}{N^{1+\alpha}} \sum_{j=1,j \neq i}^N |K_{ij}(x_i-x_j)|^{1+\alpha}.
\end{align*}
Therefore, applying   
$|K_{ij}|^{\frac{1+\alpha}{2}} \in \mathbf{F}_{\sigma} (\mathbb R^d)$, we obtain
\begin{align*}
\langle |b|^{1+\alpha}\varphi^2 \rangle  \leq \frac{(N-1)^{1+\alpha}}{N^{1+\alpha}}\sigma \langle|\nabla \varphi|^2 \rangle  + N\frac{(N-1)^{1+\alpha}}{N^{1+\alpha}}c_{\sigma}  \langle \varphi^2 \rangle,
\end{align*}
which gives us \eqref{gamma_est}.
\end{proof}

\begin{remark}
\label{rem_M} In Remark \ref{ext_rem} we promised to prove that vector field $b^M:\mathbb R^{Nd} \rightarrow \mathbb R^{Nd}$ defined by \eqref{def_M} is in $\mathbf{F}_{\delta^M}(\mathbb R^{Nd})$ with $\delta^M=\mu$, $c_{\delta^M}=Nc_\mu$. Here is the proof:
$$
|b^M(x)|^{2} = \sum_{i=1}^N |M_i(x_i)|^{2},
$$
where (recall that $\langle\,,\,\rangle$ is the integration over $\mathbb R^{Nd}$, $\bar{x}$ is vector $x \in \mathbb R^{Nd}$ with component $x_i \in \mathbb R^d$)
\begin{align*}
& \langle |M_i(x_i)|\varphi^2 \rangle  = \int_{\mathbb R^{(N-1)d}}\int_{\mathbb R^d} |M_i(x_i)|\varphi^2(x_i,\bar{x}) dx_i d\bar{x} \\
& (\text{we use $M_i \in \mathbf{F}_{\mu}(\mathbb R^d)$ in $x_i$ variable}) \\
& \leq \mu \int_{\mathbb R^{(N-1)d}}\int_{\mathbb R^d} |\nabla_{x_i}\varphi(x_i,\bar{x})|^2dx_i d\bar{x} + c_{\mu} \int_{\mathbb R^{Nd}}\varphi^2 dx  = \mu \langle |\nabla_{x_i}\varphi|^2 \rangle + c_{\mu} \langle \varphi^2 \rangle.
\end{align*}
So,
$$
\langle |b^M(x)|^{2}\varphi^2\rangle  = \sum_{i=1}^N\langle |M_i(x_i)|^{2}\varphi^2\rangle \leq \mu \langle |\nabla\varphi|^2 \rangle + Nc_{\mu} \langle \varphi^2 \rangle,
$$
as claimed.
\end{remark}

\bigskip

\section{Proof of Theorem \ref{thm1}}

\label{thm1_proof_sect}

With the exception of our proof of Proposition \ref{c_prop} which is, modulo its homogeneous $L^2$ version in \cite{KiV}, is new, we follow closely De Giorgi's method as it is presented in \cite[Ch.\,7]{G} with appropriate modifications to account for our somewhat different definition of $L^p$ De Giorgi's classes ($\equiv$\,functions satisfying the inequality in Proposition \ref{c_prop}, see discussion in Section \ref{thm1_rem}). If we were to take $p=2$ (obviously, at the cost of imposing very sub-optimal constrains on the form-bounds), then,  once Proposition \ref{c_prop} is established, we could simply refer the reader to \cite[Ch.\,7]{G}.

\medskip

If $b$ satisfies \eqref{A_0} $\Rightarrow$  fix throughout this proof $p>\frac{2}{2-\sqrt{\delta}}$, $p \geq 2$.

\medskip

If $b$ satisfies \eqref{A_1prime} $\Rightarrow$ fix $p>\frac{2}{4-\delta_+}$, $p \geq 2$.

\medskip

Fix some $R_0 \leq 1$. Recall that $u$ is a classical solution of equation \eqref{eq22} in $\mathbb R^d$,  i.e.
$$
\big(\lambda -\Delta + b \cdot \nabla\big)u=f, 
$$
where $f \in C_c^\infty$, $\lambda \geq 0$.

\begin{proposition}[Caccioppoli's inequality \textnumero 1]
\label{c_prop} 
Let $v:=(u-k)_+$, $k \in \mathbb R$.
For all $0<r<R \leq R_0$, 
\begin{align*}
\|(\nabla v^{\frac{p}{2}}) \mathbf{1}_{B_{r}}\|_2^2 \leq \frac{K_1}{(R-r)^{2}} \|v^{\frac{p}{2}} \mathbf{1}_{B_{R}}\|_2^2 + K_2\||f-\lambda u|^{\frac{p}{2}} \mathbf{1}_{u>k} \mathbf{1}_{B_{R}}\|_2^2
\end{align*}
for generic constants $K_1$, $K_2$ (in particular, independent of $k$ or $r$, $R$). 
\end{proposition}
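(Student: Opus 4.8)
The plan is to run the energy identity for \eqref{eq22} tested against $\varphi=\eta^{2}v^{p-1}$, where $v:=(u-k)_{+}$, $w:=v^{p/2}$, and $\eta\in C_{c}^{\infty}(B_{R})$ is a standard cutoff with $\eta\equiv1$ on $B_{r}$, $0\le\eta\le1$, $|\nabla\eta|\le C_{0}/(R-r)$. Since $b$ is bounded and smooth and $u$ is the bounded classical solution, $\varphi\in W^{1,2}$ has compact support, so inserting it into $\langle\nabla u,\nabla\varphi\rangle+\langle b\cdot\nabla u,\varphi\rangle+\lambda\langle u,\varphi\rangle=\langle f,\varphi\rangle$ is legitimate. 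Using the pointwise identities $v^{p-2}|\nabla v|^{2}=\tfrac{4}{p^{2}}|\nabla w|^{2}$ and $v^{p-1}\nabla v=\tfrac{2}{p}\,w\nabla w$ on $\{u>k\}$ (where $\nabla u=\nabla v$), and $v^{p-1}=0$ off $\{u>k\}$, the identity becomes
\[
\frac{4(p-1)}{p^{2}}\,\|\eta\nabla w\|_{2}^{2}=-\frac{4}{p}\langle\eta\,w\,\nabla w,\nabla\eta\rangle-\langle b\cdot\nabla u,\eta^{2}v^{p-1}\rangle+\langle(f-\lambda u)\mathbf{1}_{u>k},\eta^{2}v^{p-1}\rangle .
\]
The target is to estimate the right-hand side so that a small multiple of $\|\eta\nabla w\|_{2}^{2}$ is absorbed on the left, leaving only $\tfrac{K_{1}}{(R-r)^{2}}\|v^{p/2}\mathbf{1}_{B_{R}}\|_{2}^{2}$ and $K_{2}\||f-\lambda u|^{p/2}\mathbf{1}_{u>k}\mathbf{1}_{B_{R}}\|_{2}^{2}$; note $(R-r)^{-2}\ge1$ since $R-r<R_{0}\le1$, so any quantity dominated by $\|v^{p/2}\mathbf{1}_{B_{R}}\|_{2}^{2}$ (such as $\|\eta w\|_{2}^{2}$) is admissible.

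The cross term is disposed of by Young: $\tfrac{4}{p}|\langle\eta w\nabla w,\nabla\eta\rangle|\le\varepsilon\|\eta\nabla w\|_{2}^{2}+C_{\varepsilon}\|w\nabla\eta\|_{2}^{2}$, with $\|w\nabla\eta\|_{2}^{2}\le C_{0}^{2}(R-r)^{-2}\|v^{p/2}\mathbf{1}_{B_{R}}\|_{2}^{2}$. The source term is handled by Young with exponents $p$ and $p'=p/(p-1)$: on $\{u>k\}$, $|f-\lambda u|\,v^{p-1}\le\tfrac1p|f-\lambda u|^{p}+\tfrac{p-1}{p}v^{p}$, which upon integration gives $\tfrac1p\||f-\lambda u|^{p/2}\mathbf{1}_{u>k}\mathbf{1}_{B_{R}}\|_{2}^{2}+\tfrac{p-1}{p}\|\eta w\|_{2}^{2}$, both admissible. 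Everything then hinges on the drift term $\langle b\cdot\nabla u,\eta^{2}v^{p-1}\rangle$, and here the two hypotheses diverge.

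Under $b\in\mathbf{F}_{\delta}$ one does \emph{not} integrate by parts (there is no control on ${\rm div}\,b$): rewrite the drift on $\{u>k\}$ as $\tfrac2p\langle b\cdot\nabla w,\eta^{2}w\rangle$, bound it by $\tfrac2p\|b\,\eta w\|_{2}\|\eta\nabla w\|_{2}$, and apply form-boundedness to $\eta w$, $\|b\,\eta w\|_{2}^{2}\le\delta\|\nabla(\eta w)\|_{2}^{2}+c_{\delta}\|\eta w\|_{2}^{2}$, together with $\|\nabla(\eta w)\|_{2}\le\|\eta\nabla w\|_{2}+\|w\nabla\eta\|_{2}$. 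This produces a coefficient $\tfrac{2\sqrt\delta}{p}+\varepsilon$ in front of $\|\eta\nabla w\|_{2}^{2}$ plus admissible remainders, and the choice $p>\tfrac{2}{2-\sqrt\delta}$ makes $\tfrac{2\sqrt\delta}{p}<\tfrac{4(p-1)}{p^{2}}$, so absorption works. Under $b\in\mathbf{MF}$ with $({\rm div}\,b_{+})^{1/2}\in\mathbf{F}_{\delta_{+}}$ one must integrate by parts: $\langle b\cdot\nabla u,\eta^{2}v^{p-1}\rangle=\tfrac1p\langle b\cdot\nabla(v^{p}),\eta^{2}\rangle=-\tfrac1p\langle({\rm div}\,b)\eta^{2},w^{2}\rangle-\tfrac2p\langle\eta(b\cdot\nabla\eta),w^{2}\rangle$. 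Writing ${\rm div}\,b={\rm div}\,b_{+}-{\rm div}\,b_{-}$, the $\tfrac1p\langle({\rm div}\,b_{-})(\eta w)^{2}\rangle$ piece carries the favourable sign and is dropped, while $\tfrac1p\langle({\rm div}\,b_{+})(\eta w)^{2}\rangle\le\tfrac{\delta_{+}}{p}\|\nabla(\eta w)\|_{2}^{2}+\tfrac{c_{\delta_{+}}}{p}\|\eta w\|_{2}^{2}$ contributes the coefficient $\tfrac{\delta_{+}(1+\varepsilon)}{p}$ of $\|\eta\nabla w\|_{2}^{2}$, again $<\tfrac{4(p-1)}{p^{2}}$ for $p$ in the fixed range. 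The one genuinely delicate term is $\tfrac2p\langle\eta(b\cdot\nabla\eta),w^{2}\rangle$: now $b$ itself (not its divergence) multiplies the cutoff gradient, and a single application of $\mathbf{MF}$ fails because the natural test function $\sqrt\eta\,w$ has an unbounded gradient near $\partial\{\eta>0\}$. I would resolve this exactly as in the paper with Vafadar \cite{KiV}: replace the single cutoff by a telescoping family of radii and interpolating cutoffs and iterate the Caccioppoli estimate --- the ``Caccioppoli iterations'' --- now carried out in the non-homogeneous $L^{p}$ form needed here, so that after finitely many steps (or a convergent series of them) the $b\cdot\nabla\eta$ contribution is absorbed and only the two admissible right-hand quantities remain. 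This iteration is the main obstacle; once it is in place, dividing by $\tfrac{4(p-1)}{p^{2}}$ minus the absorbed constants and restricting to $B_{r}$ (where $\eta\equiv1$) yields the stated inequality with generic $K_{1},K_{2}$.
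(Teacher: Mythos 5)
Your setup and your treatment of the routine terms match the paper's proof: the same test function (up to the cosmetic difference $\eta^2 v^{p-1}$ versus $v^{p-1}\eta$), the same Young-inequality handling of the cutoff cross term and of the source term, the same dichotomy between the two hypotheses (no integration by parts under $\eqref{A_0}$, cf.\ Remark \ref{first_cond_rem}; integration by parts and splitting of ${\rm div\,}b$ under the $\mathbf{MF}$ hypothesis), and your absorption thresholds $p>\frac{2}{2-\sqrt\delta}$ and $p>\frac{4}{4-\delta_+}$ are exactly the ones used. You have also correctly isolated the one genuinely hard term, $\langle b\cdot\nabla\eta,\eta w^2\rangle$, and correctly named its cure (the Caccioppoli iterations of \cite{KiV}, extended to the non-homogeneous $L^p$ setting).

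The gap is that this last step is only named, not performed, and it is precisely the nontrivial content of Proposition \ref{c_prop}. What the execution actually requires is: (a) cutoffs built to satisfy not just $|\nabla\eta|\lesssim(r_2-r_1)^{-1}$ but the three estimates \eqref{e1}--\eqref{e3}, in particular a bound on $\nabla\sqrt{|\nabla\eta|}$, because the $\mathbf{MF}$ condition is applied to the test function $v^{p/2}\sqrt{|\nabla\eta|}$ rather than to $\sqrt{\eta}\,w$; (b) the observation that this application unavoidably produces the term $\frac{C_1}{r_2-r_1}\|(\nabla v^{p/2})\mathbf{1}_{B_{r_2}}\|_2\,\|v^{p/2}\mathbf{1}_{B_R}\|_2$ — a \emph{first power} of the gradient norm on the \emph{larger} ball with a coefficient blowing up as $r_2-r_1\to0$, which is why no single absorption can work; and (c) the iteration itself: after normalizing by $\|v^{p/2}\mathbf{1}_{B_R}\|_2^2$ one gets \eqref{proto_cacc2}, sets $y_n:=\frac{(R-r)a_n}{C2^n}$ over the telescoping radii $R-\frac{R-r}{2^{n-1}}$, obtains $y_n^2\le y_{n+1}+1+(R-r)^2S^2$, and closes the loop via nested square roots. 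That closure crucially uses the \emph{a priori} (non-generic) finiteness of $\sup_n y_n$, which is available only because $b$ is assumed bounded and smooth — without this qualitative input the recursion $y_n^2\le y_{n+1}+\mathrm{const}$ admits unbounded solutions and yields nothing. So your proposal is the right road map, but the destination is reached only once these three ingredients are supplied.
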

\begin{proof}
Let us first carry out the proof in the more difficult case when $b$ satisfies condition  \eqref{A_1prime}.
We will attend to the case when $b$ satisfies \eqref{A_0} in Remark \ref{first_cond_rem}.

 We fix a family of $[0,1]$-valued smooth cut-off functions $\{\eta=\eta_{r_1,r_2}\}_{0<r_1<r_2<R}$ on $\mathbb R^d$ satisfying
$$
\eta=\left\{
\begin{array}{ll}1 & \text{ in } B_{r_1}, \\
0 & \text{ in } \mathbb R^d - \bar{B}_{r_2},
\end{array}
\right.
$$
and
\begin{equation}
\label{e1}
\frac{|\nabla \eta|^2}{\eta} \leq \frac{c}{(r_2-r_1)^2}\mathbf{1}_{B_{r_2}},
\end{equation}
\begin{equation}
\label{e2}
\sqrt{|\nabla \eta|} \leq \frac{c}{\sqrt{r_2-r_1}}\mathbf{1}_{B_{r_2}},
\end{equation}
\begin{equation}
\label{e3}
|\nabla \sqrt{|\nabla \eta|}| \leq \frac{c}{(r_2-r_1)^{\frac{3}{2}}}\mathbf{1}_{B_{r_2}}
\end{equation}
for some constant $c$. 
For instance, one can take, for $r_1 \leq |y| \leq r_2$, 
$$\eta(y):=1-\int_1^{1+\frac{|y|-r_1}{r_2-r_1}}\varphi(s)ds, \quad \text{where } \varphi(s):=Ce^{-\frac{1}{\frac{1}{4}-(s-\frac{3}{2})^2}}, \quad \supp \varphi=[1,2],$$
with constant $C$ adjusted to $\int_1^2 \varphi(s)ds=1$.

We put equation \eqref{eq22} in the form
$$
(-\Delta + b \cdot \nabla)(u-k)= f-\lambda u
$$
(keeping in mind that even if $\lambda>0$ solution $u$ of \eqref{eq22} satisfies a priori estimate $\|u\|_\infty \leq \lambda^{-1}\|f\|_\infty$, so the $\|\cdot\|_\infty$ norm of the right-hand side of the previous identity is bounded by $2\|f\|_\infty$),
multiply it by $v^{p-1}\eta$ and integrate, obtaining
\begin{align*}
\frac{4(p-1)}{p^2}\langle \nabla v^{\frac{p}{2}},(\nabla v^{\frac{p}{2}})\eta\rangle & +  \frac{2}{p}\langle \nabla v^{\frac{p}{2}},v^{\frac{p}{2}}\nabla \eta \rangle \\
&  + \frac{2}{p}\langle b \cdot \nabla v^{\frac{p}{2}},v^{\frac{p}{2}}\eta\rangle \leq \langle |f-\lambda u|,v^{p-1}\eta\rangle.
\end{align*}
Then, applying quadratic inequality (fix some $\epsilon>0$), we have
\begin{align}
\left(\frac{4(p-1)}{p}-\frac{4}{p}\epsilon\right)\langle |\nabla v^{\frac{p}{2}}|^2\eta\rangle & \leq \frac{p}{4\epsilon}\big\langle v^p\frac{|\nabla \eta|^2}{\eta} \big\rangle  - 2\langle b \cdot \nabla v^{\frac{p}{2}},v^{\frac{p}{2}}\eta\rangle + p\langle |f-\lambda u|,v^{p-1}\eta\rangle  \label{i12}  \\
& (\text{we are integrating by parts in the second term}) \notag \\
& \leq \frac{p}{4 \epsilon}\big\langle v^p\frac{|\nabla \eta|^2}{\eta} \big\rangle  + \langle bv^{\frac{p}{2}},v^{\frac{p}{2}}\nabla \eta\rangle + \langle {\rm div\,}b,v^p\eta\rangle + p\langle |f-\lambda u|,v^{p-1}\eta\rangle \notag \\
&  =:I_1 + I_2 + I_3+I_4. \notag
\end{align}

By \eqref{e1},
$$
I_1  \leq \frac{cp}{4\epsilon (r_2-r_1)^2}\| v^{\frac{p}{2}} \mathbf{1}_{B_{r_2}}\|_2^2.
$$

By \eqref{A_1prime},
\begin{align*}
I_2 & \leq \langle |b|v^{\frac{p}{2}},v^{\frac{p}{2}}|\nabla \eta|\rangle \\
& \leq \delta \|\nabla (v^{\frac{p}{2}}\sqrt{|\nabla \eta|})\|_2\|v^{\frac{p}{2}}\sqrt{|\nabla \eta|}\|_2 + c_\delta  \|v^{\frac{p}{2}}\sqrt{|\nabla \eta|}\|^2_2 \\
& \leq \delta \bigg(\|(\nabla v^{\frac{p}{2}})\sqrt{|\nabla \eta|}\|_2 + \|v^{\frac{p}{2}} \nabla \sqrt{|\nabla \eta|}\|_2 \bigg) \, \|v^{\frac{p}{2}}\sqrt{|\nabla \eta|}\|_2 + c_\delta \|v^{\frac{p}{2}}\sqrt{|\nabla \eta|}\|^2_2.
\end{align*}
Hence, using \eqref{e2}, \eqref{e3}, we obtain 
\begin{align*}
I_2  & \leq \delta c \bigg(\frac{1}{\sqrt{r_2-r_1}}\|(\nabla v^{\frac{p}{2}})\mathbf{1}_{B_{r_2}}\|_2 + \frac{1}{(r_2-r_1)^{\frac{3}{2}}}\|v^{\frac{p}{2}}\mathbf{1}_{B_{r_2}}\|_2 \bigg)\, \frac{1}{\sqrt{r_2-r_1}}\|v^{\frac{p}{2}}\mathbf{1}_{B_{r_2}}\|_2 \\
&  + \frac{c_\delta c}{r_2-r_1}\|v^{\frac{p}{2}}\mathbf{1}_{B_{r_2}}\|_2^2.
\end{align*}
Thus, since $r_2-r_1<1$,
\begin{align*}
I_2  & \leq \frac{C_1}{r_2-r_1}\|(\nabla v^{\frac{p}{2}})\mathbf{1}_{B_{r_2}}\|_2\| v^{\frac{p}{2}}\mathbf{1}_{B_{r_2}}\|_2 + C_1 \biggl(1+\frac{1}{(r_2-r_1)^2} \bigg)\|v^{\frac{p}{2}}\mathbf{1}_{B_{r_2}}\|^2_2
\end{align*}
for appropriate constant $C_1$. 

Next, by \eqref{A_1prime},
\begin{align*}
I_3 & \leq \langle {\rm div\,}b_+,v^p\eta\rangle \leq \delta_+\|\nabla (v^{\frac{p}{2}}\sqrt{\eta})\|_2^2 + c_{\delta_+}\|v^{\frac{p}{2}}\sqrt{\eta}\|_2^2 \\
& = \delta_+\|(\nabla v^{\frac{p}{2}})\sqrt{\eta}+v^{\frac{p}{2}}\frac{\nabla \eta}{\sqrt{\eta}}\|_2^2 + c_{\delta_+}\|v^{\frac{p}{2}}\sqrt{\eta}\|_2^2 \\
& \leq \delta_+\bigg( (1+\epsilon_1)\|(\nabla v^{\frac{p}{2}})\sqrt{\eta}\|_2^2+ \bigg(1+\frac{1}{\epsilon_1}\bigg)\|v^{\frac{p}{2}}\frac{\nabla \eta}{\sqrt{\eta}}\|_2^2\bigg) + c_{\delta_+}\|v^{\frac{p}{2}}\sqrt{\eta}\|_2^2 \qquad (\epsilon_1>0)\\
& \leq \delta_+ (1+\epsilon_1) \|(\nabla v^{\frac{p}{2}})\sqrt{\eta}\|_2^2 + \frac{c_1}{(r_2-r_1)^2}\| v^{\frac{p}{2}} \mathbf{1}_{B_{r_2}}\|^2_{2}, \qquad c_1:=\delta_+\bigg(1+\frac{1}{\epsilon_1}\bigg)c + c_{\delta_+}.
\end{align*}

Finally, we estimate using Young's inequality ($p'=\frac{p}{p-1}$):
\begin{align*}
\frac{1}{p}I_4 & \leq \frac{\varepsilon_2^{p'}}{p'} \langle v^p \eta\rangle + \frac{1}{p\varepsilon_2^p} \langle |f-\lambda u|^p \mathbf{1}_{v>0}\eta \rangle \qquad (\varepsilon_2>0).
\end{align*}
Applying the estimates on $I_1$-$I_4$ in \eqref{i12}, we obtain
\begin{align}
\||\nabla v^{\frac{p}{2}}|\mathbf{1}_{B_{r_1}}\|_2^2  & \leq \frac{C_1}{r_2-r_1}\|(\nabla v^{\frac{p}{2}})\mathbf{1}_{B_{r_2}}\|_2\| v^{\frac{p}{2}}\mathbf{1}_{B_{R}}\|_2 \notag \\
& + C_2 \biggl(1+\frac{1}{(r_2-r_1)^2} \bigg)\|v^{\frac{p}{2}}\mathbf{1}_{B_{R}}\|^2_2 + C_3\| |f-\lambda u|^\frac{p}{2} \mathbf{1}_{v>0} \mathbf{1}_{B_{R}}\|_2^2. \label{proto_02}
\end{align}
Divide \eqref{proto_02} by $\|v^{\frac{p}{2}}\mathbf{1}_{B_{R}}\|^2_2$:
\begin{align}
\frac{\|(\nabla v^{\frac{p}{2}})\mathbf{1}_{B_{r_1}}\|_2^2}{\|v^{\frac{p}{2}}\mathbf{1}_{B_{R}}\|^2_2}  & \leq \frac{C_1}{r_2-r_1}\frac{\|(\nabla v^{\frac{p}{2}})\mathbf{1}_{B_{r_2}}\|_2}{\|v^{\frac{p}{2}}\mathbf{1}_{B_{R}}\|_2} + C_2 \biggl(1+\frac{1}{(r_2-r_1)^2} \bigg) + C_3S^2, \label{proto_cacc2}
\end{align}
where
$$
S^2:=\frac{\||f-\lambda u|^\frac{p}{2} \mathbf{1}_{v>0}\mathbf{1}_{B_{R}}\|_2^2}{\|v^{\frac{p}{2}}\mathbf{1}_{v>0} \mathbf{1}_{B_{R}}\|^2_2}.
$$
Inequality \eqref{proto_cacc2} is the pre-Caccioppoli inequality that we will now iterate.

Put
$$
a^2_n:= \frac{\|(\nabla v^{\frac{p}{2}})\mathbf{1}_{B_{R-\frac{R-r}{2^{n-1}}}}\|_2^2}{\|v^{\frac{p}{2}}\mathbf{1}_{v>0} \mathbf{1}_{B_{R}}\|^2_2},
$$
the inequality \eqref{proto_cacc2} yields
$$
 a_n^2 \notag  \leq C(R-r)^{-1}2^{n} a_{n+1} + C^2 (R-r)^{-2}2^{2n} + C^2 S^2
$$
with constant $C$ independent of $n$.
We multiply this inequality by $(R-r)^2$ and divide by $C^2 2^{2n}$. Then, setting $y_n:=\frac{(R-r) a_n}{C 2^{n}},$ we obtain
\begin{equation}
\label{y_int}
y_n^2 \leq y_{n+1} + 1+(R-r)^2S^2
\end{equation}
for all $n=1,2,\dots$ 
A priori, all
$a_n$'s are bounded by a non-generic constant
$
\|(\nabla v^{\frac{p}{2}})\mathbf{1}_{B_{R}}\|_2/ \|v^{\frac{p}{2}}\mathbf{1}_{B_{R}}\|_2<\infty,
$
so $\sup_n y_n<\infty$.
Therefore, we can iterate
 \eqref{y_int} and thus estimate all $y_n$, $n=1,2,\dots$, via nested square roots $1+(R-r)^2S^2+\sqrt{1+(R-r)^2S^2 + \sqrt{\dots}}$, 
obtaining
$$y_n^2 \leq 3+2(R-r)^2 S^2, \quad n=1,2,\dots$$
Taking $n=1$, 
we arrive at 
$
a_1 \leq K_1 (R-r)^{-2}+K_2S^2
$
for appropriate constants $K_1$ and $K_2$, i.e.
$$
\frac{\|(\nabla v^{\frac{p}{2}})\mathbf{1}_{B_{r}}\|_2^2}{\|v^{\frac{p}{2}}\mathbf{1}_{B_{R}}\|^2_2}  \leq K_1 (R-r)^{-2} + K_2 \frac{\||f|^\frac{p}{2}\mathbf{1}_{v>0}  \mathbf{1}_{B_{R}}\|_2^2}{\|v^{\frac{p}{2}}\mathbf{1}_{B_{R}}\|^2_2},
$$
as claimed.

\begin{remark}
\label{first_cond_rem}
If $b$ satisfies condition \eqref{A_0}, then we can work with somewhat simpler cutoff functions $\eta \in C_c^\infty$,
$\eta =1$ in $B_{r_1}$, $\eta = 0$ in $\mathbb R^d \setminus B_{r_2}$, i.e.\,$|\nabla \eta| \leq c_1(r_2-r_1)^{-1}$, $|\Delta \eta| \leq c_2(r_2-r_1)^{-2}$, and we do not need to integrate by parts in order to estimate the second term in the RHS of \eqref{i12}. Instead, we only need to apply quadratic inequality:
$$
2|\langle b \cdot \nabla v^{\frac{p}{2}},v^{\frac{p}{2}}\eta\rangle| \leq \alpha \langle |\nabla v|^{\frac{p}{2}}\eta\rangle + \frac{1}{4\alpha}\langle |b|^2,v^p\eta\rangle, \quad \alpha=\frac{2}{\sqrt{\delta}}.
$$
%Regarding the terms containing $\mathsf{h}$, we simply take $\gamma=1$, which transforms condition $|\mathsf{h}|^{\frac{1+\gamma}{2}} \in \mathbf{F}_\chi$, $\chi<\infty$ from \eqref{second_cond_} into condition $\mathsf{h} \in \mathbf{F}_\chi$ in \eqref{first_cond_}, and argue as in the estimate on $I_4$ above.
\end{remark}

The proof of Proposition \ref{c_prop} is completed.
\end{proof}

\begin{lemma}[{\cite[Lemma 7.1]{G}}]
\label{dg_lemma}
If $\{z_i\}_{i=0}^\infty \subset \mathbb R_+$ is a sequence of positive real numbers  such that
$$
z_{i+1} \leq N C_0^i z^{1+\alpha}_i
$$
for some $C_0>1$, $\alpha>0$, and
$$
z_0 \leq N^{-\frac{1}{\alpha}}C_0^{-\frac{1}{\alpha^2}}.
$$
Then
$
\lim_i z_i=0.
$
\end{lemma}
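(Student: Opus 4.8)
The plan is to prove, by induction on $i$, the quantitative decay bound
$$
z_i \leq z_0 C_0^{-i/\alpha}, \qquad i = 0, 1, 2, \dots,
$$
from which the conclusion $\lim_i z_i = 0$ is immediate since $C_0 > 1$ forces $C_0^{-i/\alpha} \to 0$. The base case $i=0$ is the trivial inequality $z_0 \leq z_0$. For the inductive step, I would assume $z_i \leq z_0 C_0^{-i/\alpha}$ and plug this into the recursion hypothesis $z_{i+1} \leq N C_0^i z_i^{1+\alpha}$, obtaining
$$
z_{i+1} \leq N C_0^i \bigl(z_0 C_0^{-i/\alpha}\bigr)^{1+\alpha} = N z_0^{1+\alpha} C_0^{\,i - i(1+\alpha)/\alpha} = N z_0^{1+\alpha} C_0^{-i/\alpha},
$$
where the exponent simplifies because $i - i(1+\alpha)/\alpha = -i/\alpha$.

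It then remains to check that $N z_0^{1+\alpha} C_0^{-i/\alpha} \leq z_0 C_0^{-(i+1)/\alpha}$, which after cancelling $z_0 C_0^{-i/\alpha}$ is equivalent to $N z_0^\alpha \leq C_0^{-1/\alpha}$, i.e. to $z_0 \leq N^{-1/\alpha} C_0^{-1/\alpha^2}$. This is precisely the smallness hypothesis on $z_0$, so the induction closes and the bound $z_i \leq z_0 C_0^{-i/\alpha}$ holds for all $i$; letting $i \to \infty$ gives $z_i \to 0$.

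The only real ``work'' here is bookkeeping the exponents of $C_0$ correctly in the inductive step, so I would present that computation explicitly; there is no substantive obstacle, and the smallness threshold $z_0 \leq N^{-1/\alpha} C_0^{-1/\alpha^2}$ is exactly what is dictated by requiring the induction hypothesis to be reproduced at step $i+1$. (One should note that the argument in fact gives the stronger conclusion of geometric decay, not merely $z_i \to 0$, which is what the applications of this lemma in De Giorgi's method actually exploit.)
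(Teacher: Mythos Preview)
Your proof is correct and is precisely the standard induction argument for this elementary iteration lemma. The paper itself does not supply a proof but simply cites \cite[Lemma 7.1]{G}; the argument you give is the one found there.
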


\begin{lemma}[{\cite[Lemma 7.3]{G}}]
\label{lem2}
Let $\varphi(t)$ be a positive function, and assume that there exists a constant $q$ and a number $0<\tau<1$ such that for every $0<R<R_0$
$$
\varphi(\tau R) \leq \tau^\delta \varphi(R) + BR^\beta
$$
with $0<\beta<\delta$, and
$$
\varphi(t) \leq q\varphi(\tau^k R)
$$
for every $t$ in the interval $(\tau^{k+1}R,\tau^k R)$. Then, for every $0<\rho<R< R_0$, we have
$$
\varphi(\rho) \leq C \biggl(\biggl(\frac{\rho}{R}\biggr)^\beta \varphi(R) + B \rho^\beta \biggr)
$$
with constant $C$ that depends only on $q$, $\tau$, $\delta$ and $\beta$.
\end{lemma}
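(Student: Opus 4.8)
The plan is to iterate the contraction inequality along the geometric sequence of radii $R_k:=\tau^kR$, sum the resulting geometric series (this is where the assumption $0<\beta<\delta$ enters), and then use the oscillation bound $\varphi(t)\le q\varphi(\tau^kR)$ to pass from the discrete radii $R_k$ to an arbitrary $\rho\in\,]0,R[$. There is no genuine obstacle in this argument; the only point worth flagging is that $\varphi$ is not assumed monotone, so the final comparison must go through the oscillation hypothesis rather than through a direct inequality $\varphi(\rho)\le\varphi(R_k)$.

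Fixing $\rho<R<R_0$, I would apply the first hypothesis with $R$ replaced by $R_k=\tau^kR$ (admissible since $0<R_k<R_0$), which gives $\varphi(R_{k+1})\le\tau^\delta\varphi(R_k)+BR_k^\beta=\tau^\delta\varphi(R_k)+B\tau^{k\beta}R^\beta$. Iterating this down from $R_0=R$ yields, for every $k\ge1$,
\begin{equation*}
\varphi(R_k)\le\tau^{k\delta}\varphi(R)+BR^\beta\sum_{j=0}^{k-1}\tau^{\delta(k-1-j)+\beta j}=\tau^{k\delta}\varphi(R)+B\tau^{\beta(k-1)}R^\beta\sum_{j=0}^{k-1}\tau^{(\delta-\beta)(k-1-j)},
\end{equation*}
where the factorization uses $\delta(k-1-j)+\beta j=\beta(k-1)+(\delta-\beta)(k-1-j)$. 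Since $\delta>\beta$ and $\tau\in\,]0,1[$, the last sum is bounded by $c_1:=(1-\tau^{\delta-\beta})^{-1}$; replacing the factor $\tau^{k\delta}$ by the larger $\tau^{k\beta}$ (again using $\delta>\beta$ and $\tau\le1$), we arrive at
\begin{equation*}
\varphi(R_k)\le\tau^{k\beta}\bigl(\varphi(R)+c_2BR^\beta\bigr),\qquad c_2:=c_1\tau^{-\beta},
\end{equation*}
valid for all $k\ge0$ (the case $k=0$ being trivial), with $c_1,c_2$ depending only on $\tau,\delta,\beta$.

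Finally, given $\rho\in\,]0,R[$, I would pick the integer $k\ge0$ with $\tau^{k+1}R<\rho\le\tau^kR$. The oscillation hypothesis then gives $\varphi(\rho)\le q\varphi(\tau^kR)=q\varphi(R_k)$, while $\tau^{k+1}R<\rho$ gives $\tau^{k\beta}\le\tau^{-\beta}(\rho/R)^\beta$. Combining these with the bound on $\varphi(R_k)$ from the previous step,
\begin{align*}
\varphi(\rho)&\le q\varphi(R_k)\le q\tau^{k\beta}\bigl(\varphi(R)+c_2BR^\beta\bigr)\le q\tau^{-\beta}(\rho/R)^\beta\bigl(\varphi(R)+c_2BR^\beta\bigr)\\
&=q\tau^{-\beta}\bigl((\rho/R)^\beta\varphi(R)+c_2B\rho^\beta\bigr)\le C\bigl((\rho/R)^\beta\varphi(R)+B\rho^\beta\bigr),
\end{align*}
with $C:=q\tau^{-\beta}(1\vee c_2)$, a constant depending only on $q,\tau,\delta,\beta$. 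This is exactly the asserted estimate, so the proof is complete.
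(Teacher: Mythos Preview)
Your proof is correct and is the standard iteration argument for this lemma. The paper does not supply its own proof here---it simply quotes the statement from \cite[Lemma 7.3]{G}---so there is nothing to compare against; your argument is essentially the one found in Giusti's book.
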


\begin{proposition}
\label{prop71}
For all $0<r<R \leq R_0$, 
\begin{align*}
\sup_{B_{\frac{R}{2}}} u \leq C_1 \biggl(\frac{1}{|B_R|}\langle u^p\mathbf{1}_{B_R \cap \{u>0\}} \rangle \biggr)^{\frac{1}{p}} \biggl(\frac{|B_R \cap \{u>0\}|}{|B_R|}\biggr)^\frac{\alpha}{p} + C_2 R^{\frac{2}{p}}
\end{align*}
for generic constants $C_1$, $C_2$ that also depend on $\|f-\lambda u\|_\infty~(\leq 2\|f\|_\infty)$, where $\alpha>0$ is fixed by $\alpha(\alpha+1)=\frac{2}{d}$.
\end{proposition}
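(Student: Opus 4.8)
The plan is to run De Giorgi's iteration for local boundedness, adapted to the $L^p$ De Giorgi classes encoded by Caccioppoli's inequality of Proposition \ref{c_prop}, following closely \cite[Ch.\,7]{G}. Since $u$ is a classical solution of \eqref{eq22} with $\lambda\geq 0$, the a priori bound $\|u\|_\infty\leq\lambda^{-1}\|f\|_\infty$ yields $M:=\|f-\lambda u\|_\infty\leq 2\|f\|_\infty$, so the inhomogeneous term in Proposition \ref{c_prop} is bounded by $M^p$ times the measure of a level set. Write $\theta:=|B_R\cap\{u>0\}|/|B_R|$. It suffices to show that, for suitable generic $C_1,C_2$, the level $k:=C_1\bigl(\tfrac{1}{|B_R|}\int_{B_R}(u^+)^p\bigr)^{1/p}\theta^{\alpha/p}+C_2R^{2/p}$ satisfies $(u-k)_+\equiv 0$ on $B_{R/2}$; this is exactly the asserted estimate, since $\int_{B_R}(u^+)^p=\langle u^p\mathbf{1}_{B_R\cap\{u>0\}}\rangle$. (The parameter $r$ in the statement does not enter the bound; one may simply take the inner radius to be $R/2$.)

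Next I would fix the dyadic data: levels $k_n:=k(1-2^{-n})$ (so $k_0=0$ and $k_n\uparrow k$), radii $r_n:=\tfrac{R}{2}(1+2^{-n})$ (so $r_0=R$ and $r_n\downarrow R/2$), cut-offs $\zeta_n$ equal to $1$ on $B_{r_{n+1}}$, supported in $B_{r_n}$, with $|\nabla\zeta_n|\leq C2^n/R$, and the quantities $\varphi_n:=\int_{B_{r_n}}(u-k_n)_+^p$ and $\psi_n:=|B_{r_{n-1}}\cap\{u>k_n\}|$. Combining the Sobolev inequality (with exponent $2^\ast:=\tfrac{2d}{d-2}$) applied to $(u-k_{n+1})_+^{p/2}\zeta_n$ with Caccioppoli's inequality of Proposition \ref{c_prop} (used at level $k_{n+1}$ with radii $r_{n+1}<r_n$, for which $(r_n-r_{n+1})^{-2}=C4^n/R^2$ and $\int_{B_{r_n}}(u-k_{n+1})_+^p\leq\varphi_n$) and with $|f-\lambda u|\leq M$, I get $\|(u-k_{n+1})_+^{p/2}\zeta_n\|_{2^\ast}^2\leq C4^nR^{-2}\varphi_n+CM^p\psi_{n+1}$. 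Hölder's inequality (with exponent $\tfrac{2^\ast}{2}$, so that $1-\tfrac{2}{2^\ast}=\tfrac{2}{d}$) then gives $\varphi_{n+1}\leq\|(u-k_{n+1})_+^{p/2}\zeta_n\|_{2^\ast}^2\,\psi_{n+1}^{2/d}$; Chebyshev's inequality (on $\{u>k_{n+1}\}$ one has $(u-k_n)_+>k2^{-n-1}$) gives $\psi_{n+1}\leq(k2^{-n-1})^{-p}\varphi_n$; and monotonicity of $n\mapsto\psi_n$ gives $\psi_{n+1}\leq\psi_1\leq\theta|B_R|$.

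With $z_n:=\varphi_n/(k^p|B_R|)$, and provided $k\geq R^{2/p}M$ — which is guaranteed once $C_2$ is taken to be a fixed multiple of $\|f\|_\infty$ — the inhomogeneous term $M^p\psi_{n+1}$ is absorbed into $C4^nR^{-2}\varphi_n$ via the Chebyshev bound, leaving $z_{n+1}\leq Cb^nz_n(\psi_{n+1}/|B_R|)^{2/d}$ for a generic $b>1$ (here $|B_R|^{2/d}/R^2$ is a dimensional constant). The key point is now to interpolate the two available bounds $\psi_{n+1}/|B_R|\leq\theta$ and $\psi_{n+1}/|B_R|\leq C2^{pn}z_n$ as a weighted geometric mean with weight $\beta\in(0,1)$, choosing $\beta:=\tfrac{1}{1+\alpha}$ so that $\tfrac{2\beta}{d}=\alpha$ — solvable precisely because $\alpha(\alpha+1)=\tfrac{2}{d}$ — which produces the scalar recursion $z_{n+1}\leq C\,\theta^{\alpha^2}\,b^n z_n^{1+\alpha}$. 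Lemma \ref{dg_lemma} then forces $z_n\to 0$ as soon as $z_0\leq c\,\theta^{-\alpha}$ for a generic $c$; since $z_0=\tfrac{1}{k^p}\cdot\tfrac{1}{|B_R|}\int_{B_R}(u^+)^p$, this is exactly the requirement $k\geq C_1\theta^{\alpha/p}\bigl(\tfrac{1}{|B_R|}\int_{B_R}(u^+)^p\bigr)^{1/p}$. Taking $C_1$ accordingly makes the displayed $k$ meet both constraints, so $z_n\to 0$; as $r_n\downarrow R/2$ and $k_n\uparrow k$, monotone convergence gives $\int_{B_{R/2}}(u-k)_+^p=0$, i.e.\ $\sup_{B_{R/2}}u\leq k$, which is the claim.

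I expect most of this to be routine De Giorgi machinery once Proposition \ref{c_prop} is available. The one genuinely delicate point — and the reason for defining $\alpha$ by $\alpha(\alpha+1)=2/d$ rather than the naive $2/d$ — is the measure-fraction refinement described above: one must carry the level-set measures $\psi_n$ through the whole iteration and interpolate them against $\theta$ with exactly the weight that makes the exponent boost in the De Giorgi recursion coincide with the power of $\theta$ it produces in the final bound. A secondary source of bookkeeping is keeping every constant generic while absorbing the bounded inhomogeneity $|f-\lambda u|$, which is precisely what forces the additive term $C_2R^{2/p}$ and the auxiliary requirement $k\geq R^{2/p}M$.
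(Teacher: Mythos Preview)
Your proposal is correct and follows essentially the same De Giorgi iteration as the paper, with only a cosmetic difference in bookkeeping. The paper iterates directly on the product variable $z_i:=\|(u-k_i)_+^{p/2}\mathbf{1}_{B_{r_i}}\|_2^2\,|B_{r_i}\cap\{u>k_i\}|^\alpha$, obtaining $z_{i+1}\leq Cb^i\xi^{-p\alpha}R^{-2}z_i^{1+\alpha}$ and then reading off the threshold $\xi$; you instead iterate on the normalized $z_n=\varphi_n/(k^p|B_R|)$ and recover the measure-fraction factor $\theta^{\alpha/p}$ by interpolating the two available bounds on $\psi_{n+1}$ via a geometric mean with weight $\beta=1/(1+\alpha)$. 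The two schemes are algebraically equivalent (both hinge on $\alpha(1+\alpha)=2/d$ to match exponents), and your handling of the inhomogeneous term $M^p\psi_{n+1}$ via the constraint $k\geq R^{2/p}M$ mirrors the paper's requirement $\xi\geq R^{2/p}$.
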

\begin{proof}
Without loss of generality, $R_0=1$. Let $\frac{1}{2}<r<\rho \leq 1$. Fix $\eta \in C_c^\infty$, $\eta=1$ on $B_{r}$, $\eta=0$ on $\mathbb R^d \setminus \bar{B}_{\frac{r+\rho}{2}}$, $|\nabla \eta| \leq \frac{4}{\rho-r}$. Set $\zeta:=\eta v=\eta(u-k)_+$, $k \in \mathbb R$. 
Using H\"{o}lder's inequality and Sobolev's embedding theorem, we obtain
\begin{align*}
\|v^{\frac{p}{2}}\mathbf{1}_{B_r}\|_2^2 & \leq \|\zeta^{\frac{p}{2}}\mathbf{1}_{B_r}\|_2^2  \leq \langle \mathbf{1}_{B_{r} \cap \{u>k\}}\rangle^{\frac{2}{d}} 
 \langle \zeta^{\frac{pd}{d-2}}\mathbf{1}_{B_\frac{r+\rho}{2}}\rangle^{\frac{d-2}{d}} \\
& \leq c_1|B_{r} \cap \{u>k\}|^{\frac{2}{d}}\langle |\nabla \zeta^{\frac{p}{2}}|^2\mathbf{1}_{B_{\frac{r+\rho}{2}}}\rangle \\
& = c_1|B_{r} \cap \{u>k\}|^{\frac{2}{d}}\langle |(\nabla \eta^{\frac{p}{2}})v^{\frac{p}{2}}+\eta^{\frac{p}{2}}\nabla v^{\frac{p}{2}}|^2\mathbf{1}_{B_{\frac{r+\rho}{2}}}\rangle
\end{align*}
Hence
\begin{align*}
\|v^{\frac{p}{2}}\mathbf{1}_{B_r}\|_2^2 \leq c_2|B_{r} \cap \{u>k\}|^{\frac{2}{d}}\biggl(\frac{1}{(\rho-r)^2}\|v^{\frac{p}{2}} \mathbf{1}_{B_\frac{r+\rho}{2}} \|_2^2 + \|(\nabla v^{\frac{p}{2}})\mathbf{1}_{B_{\frac{r+\rho}{2}}}\|_2^2 \biggr).
\end{align*}
On the other hand, Proposition \ref{c_prop} yields:
\begin{align}
\label{v_cor}
\|(\nabla v^{\frac{p}{2}}) \mathbf{1}_{B_{\frac{r+\rho}{2}}}\|_2^2 \leq \frac{K_1}{(\rho-r)^{2}} \|v^{\frac{p}{2}} \mathbf{1}_{B_{\rho}}\|_2^2 + K_2\|f-\lambda u\|_\infty^p\, \big|B_\rho\cap\{u>k\}\big|,
\end{align}
so
\begin{align}
\|v^{\frac{p}{2}}\mathbf{1}_{B_r}\|_2^2 & \leq C|B_{r} \cap \{u>k\}|^{\frac{2}{d}}\biggl(\frac{1}{(\rho-r)^2}\|v^{\frac{p}{2}} \mathbf{1}_{B_\rho} \|_2^2 + \|f-\lambda u\|_\infty^p\, \big|B_\rho\cap\{u>k\}\big| \biggr) \notag \\
& \leq \frac{C|B_{\rho} \cap \{u>k\}|^{\frac{2}{d}}}{(\rho-r)^2}\|v^{\frac{p}{2}} \mathbf{1}_{B_\rho} \|_2^2 + C\|f-\lambda u\|_\infty^p |B_{\rho} \cap \{u>k\}|^{1+\frac{2}{d}}. \label{i_8}
\end{align}
Now, returning from notation $v$ to $(u-k)_+$, we note that 
if $h<k$, then $\|(u-k)^{\frac{p}{2}}\mathbf{1}_{B_\rho \cap \{u>k\}}\|_2 \leq \|(u-h)^{\frac{p}{2}}\mathbf{1}_{B_\rho \cap \{u>h\}}\|_2$ and $\|(u-h)^{\frac{p}{2}}\mathbf{1}_{B_\rho \cap \{u>h\}}\|_2^2 \geq (k-h)^p |B_\rho \cap \{u>k\}|$. Therefore, we obtain from \eqref{i_8}
\begin{align*}
\|(u-k)_+^{\frac{p}{2}}\mathbf{1}_{B_r}\|_2^2 & \leq \frac{C}{(\rho-r)^2}\|(u-h)_+^{\frac{p}{2}}\mathbf{1}_{B_\rho}\|_2^2|B_{\rho} \cap \{u>h\}|^{\frac{2}{d}} \\
& + \frac{C\|f-\lambda u\|_\infty^p}{(k-h)^p}\|(u-h)_+^{\frac{p}{2}}\mathbf{1}_{B_\rho}\|_2^2|B_{\rho} \cap \{u>h\}|^{\frac{2}{d}}.
\end{align*}
Multiplying this inequality by $|B_r \cap \{u>k\}|^\alpha~~\big(\leq \frac{1}{(k-h)^{p\alpha}} \|(u-h)_+^{\frac{p}{2}}\mathbf{1}_{B_\rho}\|_2^{2\alpha}\big)$ and using $\alpha^2 + \alpha=\frac{2}{d}$, we obtain
\begin{align*}
& \|(u-k)_+^{\frac{p}{2}}\mathbf{1}_{B_r}\|_2^2|B_r \cap \{u>k\}|^\alpha \\
&\leq C \biggl[\frac{1}{(\rho-r)^2} + \frac{\|f-\lambda u\|_\infty^p}{(k-h)^p} \biggr]\frac{1}{(k-h)^{p\alpha}} \bigl(\|(u-h)_+^{\frac{p}{2}}\mathbf{1}_{B_\rho}\|_2^2|B_\rho \cap \{u>h\}|^\alpha \bigr)^{1+\alpha}.
\end{align*}
Now, take $r:=r_{i+1}$, $\rho:=r_i$, where $r_i:=\frac{R}{2}(1+\frac{1}{2^i})$ and $k:=k_{i+1}$, $h:=k_i$, where $k_i:=\xi(1-2^{-i})$, with constant $\xi \geq R^{\frac{2}{p}}$ to be chosen later. Then, setting $$z_i=z(k_i,r_i):=\|(u-k_i)_+^{\frac{p}{2}}\mathbf{1}_{B_{r_i}}\|_2^2|B_{r_i} \cap \{u>k_i\}|^\alpha,$$ we have
$$
z_{i+1} \leq K\bigg[2^{2i} + \frac{2^{pi}R^2}{\xi^p}\bigg]\frac{1}{R^2}\frac{2^{pi\alpha}}{\xi^{p\alpha}}z_i^{1+\alpha}
$$
hence (using  $\xi \geq R^{\frac{2}{p}}$)
$$
z_{i+1} \leq 2^{p(1+\alpha)i} \frac{2K }{R^2}\frac{1}{\xi^{p\alpha}}z_i^{1+\alpha}.
$$
We apply Lemma \ref{dg_lemma}. In the notation of this lemma, $C_0=2^{p(1+\alpha)}$ and $N=\frac{2K }{R^2}\frac{1}{\xi^{p\alpha}}$. We need
$$z_0 \leq N^{-\frac{1}{\alpha}}C_0^{-\frac{1}{\alpha^2}}$$
where, recall, $z_0=\langle u^p \mathbf{1}_{B_R \cap \{u>0\}}\rangle |B_R \cap \{u>0\}|^\alpha$. The latter amounts to requiring $$\xi \geq C_1 R^{-\frac{2}{p\alpha}}z_0^{\frac{1}{p}}.$$
Take
$
\xi := R^{\frac{2}{p}} + C_1 R^{-\frac{2}{p\alpha}}z_0^{\frac{1}{p}}.
$
By Lemma \ref{dg_lemma}, $z(\xi,\frac{R}{2})=0$, i.e.\,$\sup_{\frac{R}{2}}u \leq \xi$. The claimed inequality follows.
\end{proof}

Set $${\rm osc\,}(u,R):=\sup_{y,y' \in B_R}|u(y)-u(y')|.$$

\begin{proposition}
\label{osc_prop}
Fix $k_0$ by $$2k_0=M(2R)+ m(2R):=\sup_{B_{2R}}u + \inf_{B_{2R}}u.$$ Assume that $|B_R \cap \{u>k_0\}| \leq \gamma |B_R|$ for some $\gamma<1$. If 
\begin{equation}
\label{hyp_n}
{\rm osc\,}(u,2R) \geq 2^{n+1}C R^{\frac{2}{p}},
\end{equation}
then, for $k_n:=M(2R)-2^{-n-1}{\rm osc\,}(u,2R)$,
$$
|B_{R} \cap \{u>k_n\}| \leq c n^{-\frac{d}{2(d-1)}}|B_R|.
$$
\end{proposition}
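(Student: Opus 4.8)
The plan is to follow the classical De Giorgi oscillation-reduction scheme, adapted to our $L^p$ setting, using a discrete Sobolev-type inequality on level sets combined with the $L^p$ Caccioppoli inequality of Proposition~\ref{c_prop}. The idea is that between consecutive levels $k_\ell$ and $k_{\ell+1}$ (with $k_\ell$ increasing toward $M(2R)$) the set where $u>k_{\ell+1}$ is controlled by the ``annular'' set where $k_\ell<u\le k_{\ell+1}$ via the gradient of $u$ on that annulus; since these annular sets are disjoint for distinct $\ell$, their total measure is bounded by $|B_R|$, so at least one of them must be small, which forces $|B_R\cap\{u>k_n\}|$ to be small once $n$ is large.

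First I would set up the truncations. For $\ell=0,\dots,n$ let $k_\ell:=M(2R)-2^{-\ell-1}{\rm osc}(u,2R)$, so that $k_\ell-k_{\ell-1}=2^{-\ell-1}{\rm osc}(u,2R)$, and note $k_0$ is exactly the $k_0$ in the statement (using ${\rm osc}(u,2R)=M(2R)-m(2R)$). Set $A_\ell:=B_R\cap\{u>k_\ell\}$. On $B_R$ apply the De Giorgi isoperimetric-type lemma (the ``$\rho$-measure of the set between two levels'' inequality, e.g.~\cite[Ch.~7]{G}) to the function $u$: it gives
\[
(k_{\ell+1}-k_\ell)\,|A_{\ell+1}|^{\frac{d-1}{d}}\le C\,\frac{|B_R|^{\frac{d-1}{d}}}{|B_R\setminus A_\ell|}\int_{A_\ell\setminus A_{\ell+1}}|\nabla u|\,dx,
\]
valid precisely because the hypothesis $|B_R\cap\{u>k_0\}|\le\gamma|B_R|$ with $\gamma<1$ (hence $|B_R\setminus A_\ell|\ge(1-\gamma)|B_R|$ for all $\ell\ge 0$) keeps the denominator bounded below. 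Then bound the right-hand integral by Cauchy--Schwarz: $\int_{A_\ell\setminus A_{\ell+1}}|\nabla u|\le|A_\ell\setminus A_{\ell+1}|^{1/2}\big(\int_{B_R}|\nabla(u-k_\ell)_+|^2\mathbf{1}_{u\le k_{\ell+1}}\big)^{1/2}$, and control the last factor using the gradient bound coming from Proposition~\ref{c_prop}.

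The step where Proposition~\ref{c_prop} enters is the key technical point: Caccioppoli gives an $L^2$ bound on $\nabla(u-k_\ell)_+^{p/2}$, not directly on $\nabla(u-k_\ell)_+$. On the set $\{k_\ell<u\le k_{\ell+1}\}$ one has $(u-k_\ell)_+\le k_{\ell+1}-k_\ell$, so $|\nabla(u-k_\ell)_+|=\frac{2}{p}(u-k_\ell)_+^{1-p/2}|\nabla(u-k_\ell)_+^{p/2}|$ is comparable, up to a factor $(k_{\ell+1}-k_\ell)^{1-p/2}$, to $|\nabla(u-k_\ell)_+^{p/2}|$; thus
\[
\int_{B_R}|\nabla(u-k_\ell)_+|^2\mathbf{1}_{u\le k_{\ell+1}}\le C(k_{\ell+1}-k_\ell)^{2-p}\|\nabla(u-k_\ell)_+^{p/2}\mathbf{1}_{B_R}\|_2^2,
\]
and Proposition~\ref{c_prop} (with $r=R$, $R\rightsquigarrow 2R$, say, or a dyadic intermediate radius) bounds the latter by $C\big(R^{-2}\|(u-k_\ell)_+^{p/2}\mathbf{1}_{B_{2R}}\|_2^2+\|f-\lambda u\|_\infty^p|B_{2R}\cap\{u>k_\ell\}|\big)\le C(k_{\ell+1}-k_\ell)^p R^{-2}|B_{2R}|$, where in the last step one uses $(u-k_\ell)_+\le 2^{-\ell-1}{\rm osc}(u,2R)$ and — crucially — the hypothesis \eqref{hyp_n} that ${\rm osc}(u,2R)\ge 2^{n+1}CR^{2/p}\ge 2^{\ell+1}CR^{2/p}$, which makes the ``$f$'' term absorbable into the ``oscillation'' term (this is exactly why \eqref{hyp_n} is assumed).

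Combining everything: after cancelling powers of $(k_{\ell+1}-k_\ell)$ and of $|B_R|$, the chain of inequalities collapses to
\[
|A_{\ell+1}|^{\frac{d-1}{d}}\le C\,|A_\ell\setminus A_{\ell+1}|^{1/2},
\qquad\text{i.e.}\qquad |A_n|^{\frac{2(d-1)}{d}}\le |A_{n-1}|^{\frac{2(d-1)}{d}}\le C\,|A_\ell\setminus A_{\ell+1}|\ \ \forall\,\ell<n,
\]
using monotonicity $|A_n|\le|A_\ell|$. Summing over $\ell=0,\dots,n-1$ and using that the annuli $A_\ell\setminus A_{\ell+1}$ are disjoint subsets of $B_R$ gives $n\,|A_n|^{\frac{2(d-1)}{d}}\le C\sum_{\ell<n}|A_\ell\setminus A_{\ell+1}|\le C|B_R|$, hence $|A_n|\le C n^{-\frac{d}{2(d-1)}}|B_R|$, which is the claim. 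I expect the main obstacle to be the bookkeeping in the previous paragraph — getting the De Giorgi isoperimetric lemma in the right normalized form on $B_R$ (with the $\rho$-weight or without, as in \cite{G}) and correctly tracking the dependence on ${\rm osc}(u,2R)$ so that \eqref{hyp_n} does the required absorption — rather than any conceptual difficulty; the passage from the $L^p$ Caccioppoli estimate to an honest $L^2$ gradient estimate on the thin level-annulus is the one genuinely non-routine manipulation and should be written out carefully.
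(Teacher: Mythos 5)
Your overall skeleton is the right one and matches the paper's: dyadic levels $k_\ell$ accumulating at $M(2R)$, an isoperimetric/Sobolev inequality on level sets made valid by the hypothesis $|B_R\cap\{u>k_0\}|\le\gamma|B_R|$, Cauchy--Schwarz producing the measure of the annulus $A_\ell\setminus A_{\ell+1}$, Caccioppoli to bound the gradient, hypothesis \eqref{hyp_n} to absorb the $f$-term into the oscillation term, and finally summation over the disjoint annuli. However, the step you yourself single out as "the one genuinely non-routine manipulation" is in fact broken. You claim
\[
\int_{B_R}|\nabla(u-k_\ell)_+|^2\mathbf{1}_{u\le k_{\ell+1}}\le C(k_{\ell+1}-k_\ell)^{2-p}\,\|\nabla(u-k_\ell)_+^{p/2}\mathbf{1}_{B_R}\|_2^2,
\]
using $(u-k_\ell)_+\le k_{\ell+1}-k_\ell$ on the annulus. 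But the pointwise identity is $|\nabla(u-k_\ell)_+|^2=\tfrac{4}{p^2}(u-k_\ell)_+^{2-p}\,|\nabla(u-k_\ell)_+^{p/2}|^2$, and since $p\ge 2$ the exponent $2-p$ is nonpositive: the factor $(u-k_\ell)_+^{2-p}$ is \emph{large} (unbounded) near the level set $\{u=k_\ell\}$, and $(u-k_\ell)_+\le k_{\ell+1}-k_\ell$ gives $(u-k_\ell)_+^{2-p}\ge(k_{\ell+1}-k_\ell)^{2-p}$, i.e.\ a lower bound, not an upper bound. So the Caccioppoli estimate of Proposition~\ref{c_prop}, which controls $\nabla(u-k)_+^{p/2}$, cannot be converted into an $L^2$ (or $L^1$) bound on $\nabla u$ over the thin annulus, and your chain of inequalities does not close except in the trivial case $p=2$ — which defeats the purpose of working with large $p$.

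The repair, which is what the paper does, is to never return to $\nabla u$ itself: apply the Sobolev ($W^{1,1}\to L^{d/(d-1)}$) inequality not to $u$ but to the truncated power $w:=\big(\min\{(u-h)_+,\,k-h\}\big)^{p/2}$, which vanishes on $B_R\setminus\{u>k_0\}$ (a set of measure $\ge(1-\gamma)|B_R|$). Then $\nabla w=\nabla(u-h)^{p/2}$ on the annulus $\Delta=B_R\cap\{h<u\le k\}$ and vanishes elsewhere, so Cauchy--Schwarz gives $|\Delta|^{1/2}\langle|\nabla(u-h)^{p/2}|^2\mathbf{1}_{B_R\cap\{u>h\}}\rangle^{1/2}$, which is exactly the quantity Proposition~\ref{c_prop} controls. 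The price is that the left-hand side becomes $(k-h)^{p/2}|B_R\cap\{u>k\}|^{(d-1)/d}$ rather than $(k-h)|\cdot|^{(d-1)/d}$; but for the dyadic levels $k=k_i$, $h=k_{i-1}$ one has $k-h=\tfrac12(M(2R)-h)$, so $(k-h)^{p/2}$ cancels against the $(M(2R)-h)^{p/2}$ produced by Caccioppoli, and the summation over disjoint annuli then goes through exactly as you describe. With that substitution your argument becomes the paper's proof.
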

\begin{proof}
1.~For $h \in ]k_0,k[$, set $w:=(u-h)^{\frac{p}{2}}$ if $h<u<k$, set $w:=(k-h)^{\frac{p}{2}}$ if $u \geq k$, and $w:=0$ if $u \leq h.$ Note that $w=0$ in $B_R \setminus (B_R \cap \{u>k_0\})$. The measure of this set is greater than $\gamma |B_R|$, so the Sobolev embedding theorem applies and yields
\begin{align*}
(k-h)^{\frac{p}{2}}|B_R \cap \{u>k\}|^{\frac{d-1}{d}} & \leq c_1 \langle w^{\frac{d}{d-1}}\mathbf{1}_{B_R}\rangle^{\frac{d-1}{d}}  \leq c_2\langle |\nabla w| \mathbf{1}_\Delta \rangle \\
& \leq c_2 |\Delta|^{\frac{1}{2}}\langle |\nabla (u-h)^{\frac{p}{2}}|^2 \mathbf{1}_{B_R \cap \{u>h\}}\rangle^{\frac{1}{2}},
\end{align*}
where $$\Delta:=B_R \cap \{u>h\} \setminus (B_R \cap \{u>k\}).$$ 
Now, it follows from Proposition \ref{c_prop} that
\begin{align*}
\langle |\nabla (u-h)^{\frac{p}{2}}|^2 \mathbf{1}_{B_R \cap \{u>h\}}\rangle & \leq \frac{C_3}{R^2}\langle (u-h)^p \mathbf{1}_{B_{2R} \cap \{u>h\}}\rangle + C_4 |B_{2R} \cap \{u>h\}| \\
& \leq C_3 R^{d-2} (M(2R)-h)^p + C_5 R^d.
\end{align*}
For $h \leq k_n$ we have $M(2R)-h \geq M(2R)-k_n \geq CR^{\frac{2}{p}}$, where we have used \eqref{hyp_n}.
Therefore, summarizing what was written above, we have
$$
(k-h)^{\frac{p}{2}}|B_R \cap \{u>k\}|^{\frac{d-1}{d}} \leq c|\Delta|^{\frac{1}{2}}R^{\frac{d-2}{2}}(M(2R)-h)^\frac{p}{2}.
$$

2.~Select $k=k_i:=M(2R)-2^{-i-1}{\rm osc\,}(u,2R)$, $h=k_{i-1}$. Then
$$
M(2R)-h=2^{-i}{\rm osc\,}(u,2R), \quad |k-h|=2^{-i-1}{\rm osc\,}(u,2R),
$$
so
$$
|B_R \cap \{u>k_n\}|^{\frac{2(d-1)}{d}} \leq |B_R \cap \{u>k_i\}|^{\frac{2(d-1)}{d}} \leq C |\Delta_i| R^{d-2},
$$
where $\Delta_i:=B_R \cap \{u>k_i\} \setminus (B_R \cap \{u>k_{i-1}\})$.
Summing up in $i$ from $1$ to $n$, we obtain 
$$
n |B_R \cap \{u>k_n\}|^{\frac{2(d-1)}{d}} \leq C R^{d-2} |B_R \cap \{u>k_0\}| \leq C' R^{2(d-1)},
$$
and the claimed inequality follows.
\end{proof}

\subsubsection*{Proof of Theorem \ref{thm1}, completed} Fix $k_0$ by $2k_0=M(2R)+m(2R)$. Without loss of generality, $|B_R \cap \{u>k_0\}| \leq \frac{1}{2}|B_R|$ (otherwise we replace $u$ by $-u$). Set $k_n:=M(2R)-2^{-n-1}{\rm osc\,}(u,2R)>k_0$. By Proposition \ref{prop71},
\begin{align*}
\sup_{B_\frac{R}{2}}(u-k_n) & \leq  C_1 \bigl(\frac{1}{|B_R|}\langle (u-k_n)^p\mathbf{1}_{B_R \cap \{u>k_n\}} \rangle \bigr)^{\frac{1}{p}} \biggl(\frac{|B_R \cap \{u>k_n\}|}{|B_R|}\biggr)^\frac{\alpha}{p} + C_2 R^{\frac{2}{p}} \\& \leq C_1 \sup_{B_R} (u-k_n) \biggl(\frac{|B_R \cap \{u>k_n\}|}{|B_R|}\biggr)^{\frac{1+\alpha}{p}} + C_2 R^{\frac{2}{p}}
\end{align*}
We now apply Proposition \ref{osc_prop} (with, say, $C=1$). Fix $n$ by 
$$
 c n^{-\frac{d}{2(d-1)}} \leq \bigg(\frac{1}{2C_1}\bigg)^{\frac{p}{1+\alpha}}.
$$ Then, if ${\rm osc\,}(u,2R) \geq 2^{n+1} R^{\frac{2}{p}}$, we obtain from Proposition \ref{osc_prop}
$$
M\left(\frac{R}{2}\right)-k_n \leq \frac{1}{2}(M(2R)-k_n) + C_2R^{\frac{2}{p}}, 
$$
so, 
$$
M\left(\frac{R}{2}\right) \leq M(2R) - \frac{1}{2^{n+1}}{\rm osc\,}(u,2R)+\frac{1}{2}\frac{1}{2^{n+1}}{\rm osc\,}(u,2R) +  C_2R^{\frac{2}{p}},
$$
$$
M\left(\frac{R}{2}\right) - m\left(\frac{R}{2}\right) \leq M(2R) - m(2R) - \frac{1}{2}\frac{1}{2^{n+1}}{\rm osc\,}(u,2R) +  C_2R^{\frac{2}{p}}.
$$
Hence, since ${\rm osc\,}(u,2R)=M(2R)-m(2R)$,
\begin{equation}
\label{osc_1}
{\rm osc\,}\biggl(u,\frac{R}{2}\biggr) \leq \biggl(1-\frac{1}{2^{n+2}} \biggr){\rm osc\,}(u,2R)+C_2R^{\frac{2}{p}}.
\end{equation}
If ${\rm osc\,}(u,2R) \leq 2^{n+1}R^{\frac{2}{p}}$, then, clearly,
\begin{equation}
\label{osc_2}
{\rm osc\,}\left(u,\frac{R}{2}\right) \leq \biggl(1-\frac{1}{2^{n+2}} \biggr){\rm osc\,}(u,2R)+ \frac{1}{2} R^{\frac{2}{p}}.
\end{equation}
This yields the sought H\"{o}lder continuity of $u$ via Lemma \ref{lem2} with $\tau=\frac{1}{4}$, $\delta=\log_\tau(1-2^{-n-2})$ and $0<\beta<\frac{2}{p} \wedge \delta$. (Note that the second inequality in the conditions of Lemma \ref{lem2} holds if $q=1$ and $\varphi$ is non-decreasing, which is our case.)  \hfill \qed

\bigskip

\section{Proof of Theorem \ref{thm2}}

If $b$ satisfies \eqref{first_cond}, then we fix throughout the proof $p>\frac{2}{2-\sqrt{\delta}}$, $p \geq 2$. If $b$ satisfies \eqref{second_cond}, then we fix $p>\frac{2}{4-\delta_+}$, $p \geq 2$, $p' \leq 1+\gamma$,  where, recall $p':=\frac{p}{p-1}$.

\medskip

We are going to modify some parts of the proof of Theorem \ref{thm1}. But there are some important differences:

\begin{enumerate}

\item To prove Theorem \ref{thm2}, we need to obtain an $L^\infty$ bound on solution $u$ of nonhomogeneous equation \eqref{eq7}, i.e.\,estimate \eqref{unif_est_u}. At the first sight, establishing an $L^\infty$ bound seems to be easier than what we did in Theorem \ref{thm1}, i.e.\,proved H\"{o}lder continuity of solution. However, equation \eqref{eq7} is more sophisticated than the elliptic equation in Theorem \ref{thm1}: its right-hand side contains a posteriori unbounded function $|\mathsf{h}|$. (To add more details: in the proof of Theorem \ref{markov_thm} we will need to take vector field $\mathsf{h}=b_n$ and $\mathsf{h}=b_n-b_m$, but, crucially, the sought  $L^\infty$ bound on solution $u$ should not depend on $n$ or $m$. In other words, $L^\infty$ bound on solution $u$ should not depend on the boundedness or smoothness of $b$ and $\mathsf{h}$.)

\medskip

\item Since there is now a posteriori unbounded factor $|\mathsf{h}|$ in the right-hand side of \eqref{eq7}, we will need a different Caccioppoli's inequality, i.e.\,the one in Proposition \ref{c_prop2}.

\medskip

\item Although the sought $L^\infty$ bound on solution $u$ of \eqref{eq7} is a global bound on $\mathbb R^d$, we will still have to argue locally, i.e.\,work with cutoff functions. There are important reasons for this, see explanation Section \ref{about_sect}.
\end{enumerate}

\begin{proposition}[Caccioppoli's inequality \textnumero 2]
\label{c_prop2}
 Fix $R_0 \leq 1$. 
For all $0<r<R \leq R_0$ and every $k \geq 0$, the positive part $v:=(u-k)_+$ of $u-k$ satisfies
\begin{align*}
\lambda \|v^{\frac{p}{2}}\mathbf{1}_{B_{r}}\|_2^2 + \|(\nabla v^{\frac{p}{2}}) \mathbf{1}_{B_{r}}\|_2^2 & \leq \frac{K_1}{(R-r)^{2}} \|v^{\frac{p}{2}} \mathbf{1}_{B_{R}}\|_2^2 \\
&+ K_2\|\big(\mathbf{1}_{|\mathsf{h}|>1} + |\mathsf{h}|^\frac{p}{2}\mathbf{1}_{|\mathsf{h}| \leq 1}\big)|f|^{\frac{p}{2}} \mathbf{1}_{u>k} \mathbf{1}_{B_{R}}\|_2^2
\end{align*}
for generic constants $K_1$, $K_2$ (so, independent of $r$, $R$ and $k$).
\end{proposition}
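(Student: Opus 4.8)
The plan is to mimic the structure of the proof of Proposition \ref{c_prop} but to keep track of the extra $\lambda\|v^{p/2}\mathbf 1_{B_r}\|_2^2$ term on the left and to absorb the unbounded right-hand side $|\mathsf h f|$ using Young's inequality in a way that exposes only $|\mathsf h|$ raised to power at most $p/2$ where $|\mathsf h|\le 1$, and only an indicator $\mathbf 1_{|\mathsf h|>1}$ where $|\mathsf h|>1$. First I would put equation \eqref{eq7} in the form $(\lambda-\Delta + b\cdot\nabla)(u-k)=|\mathsf hf|-\lambda k$ (for $k\ge0$ the constant $-\lambda k\le0$, which only helps), test against $v^{p-1}\eta$ with the same family of cutoff functions $\eta=\eta_{r_1,r_2}$ satisfying \eqref{e1}--\eqref{e3} as in the proof of Proposition \ref{c_prop}, and integrate. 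This produces, after the quadratic inequality on the cross term $\frac2p\langle\nabla v^{p/2},v^{p/2}\nabla\eta\rangle$, an inequality of the shape
\begin{align*}
\lambda\langle v^p\eta\rangle + \Big(\tfrac{4(p-1)}{p}-\tfrac{4}{p}\epsilon\Big)\langle|\nabla v^{\frac p2}|^2\eta\rangle
&\le \tfrac{p}{4\epsilon}\big\langle v^p\tfrac{|\nabla\eta|^2}{\eta}\big\rangle
+ \langle bv^{\frac p2},v^{\frac p2}\nabla\eta\rangle + \langle{\rm div\,}b,v^p\eta\rangle \\
&\quad + p\langle|\mathsf hf|,v^{p-1}\eta\rangle
=: I_1+I_2+I_3+I_4,
\end{align*}
where the $\langle b\cdot\nabla v^{p/2},v^{p/2}\eta\rangle$ term has again been integrated by parts (valid since $b$ is smooth here). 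Terms $I_1$, $I_2$, $I_3$ are estimated exactly as in the proof of Proposition \ref{c_prop}: $I_1$ by \eqref{e1}; $I_2$ by the multiplicative form-boundedness $b\in\mathbf{MF}$ together with \eqref{e2}, \eqref{e3}, yielding a term $\le\frac{C_1}{r_2-r_1}\|(\nabla v^{p/2})\mathbf 1_{B_{r_2}}\|_2\|v^{p/2}\mathbf 1_{B_{r_2}}\|_2 + C_1(1+(r_2-r_1)^{-2})\|v^{p/2}\mathbf 1_{B_{r_2}}\|_2^2$; and $I_3$ by $({\rm div\,}b_+)^{1/2}\in\mathbf F_{\delta_+}$ with $\delta_+<4$, producing $\delta_+(1+\epsilon_1)\|(\nabla v^{p/2})\sqrt\eta\|_2^2$ plus a lower-order cutoff term — here the constraint $p>\frac{4}{4-\delta_+}$ guarantees, after choosing $\epsilon,\epsilon_1$ small, that $\frac{4(p-1)}{p}-\frac4p\epsilon-\delta_+(1+\epsilon_1)>0$ so this $I_3$ gradient term can be absorbed into the left-hand side. (In the case $b\in\mathbf F_\delta$ one argues as in Remark \ref{first_cond_rem}, estimating $\langle b\cdot\nabla v^{p/2},v^{p/2}\eta\rangle$ directly by the quadratic inequality with $\alpha=2/\sqrt\delta$, and absorbs using $p>\frac{2}{2-\sqrt\delta}$.)

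The genuinely new term is $I_4=p\langle|\mathsf hf|,v^{p-1}\eta\rangle$. Here I would split the integration region according to whether $|\mathsf h|\le1$ or $|\mathsf h|>1$. On $\{|\mathsf h|\le1\}$, apply Young's inequality $|\mathsf hf|\,v^{p-1}\le \frac{\varepsilon_2^{p'}}{p'}v^p + \frac{1}{p\varepsilon_2^p}|\mathsf h|^p|f|^p$ and note $|\mathsf h|^p\le|\mathsf h|^{p}$ — but I only want power $p/2$ inside the final $\|\cdot\|_2^2$, so instead use $|\mathsf h|^p|f|^p=(|\mathsf h|^{p/2}|f|^{p/2})^2$, i.e. the term is exactly $\frac{1}{p\varepsilon_2^p}\big\||\mathsf h|^{p/2}|f|^{p/2}\mathbf 1_{|\mathsf h|\le1}\mathbf 1_{v>0}\big\|_2^2$. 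On $\{|\mathsf h|>1\}$, one cannot afford any positive power of $|\mathsf h|$, so instead I would first absorb the factor $|\mathsf h|$ into $v$ via $|\mathsf h|v^{p-1}\le |\mathsf h|v^{p}\mathbf 1_{v>1} + |\mathsf h|v^{p-1}\mathbf 1_{v\le1}$... — actually the clean route is: since $v=(u-k)_+\le\|u\|_\infty\le\lambda^{-1}\|f\|_\infty\cdot(\text{const})$ is a priori bounded, write $v^{p-1}\le C v^{p/2}$ on the support, reducing $I_4$ on $\{|\mathsf h|>1\}$ to $C\langle|\mathsf h||f|,v^{p/2}\mathbf 1_{|\mathsf h|>1}\rangle$, then Young's inequality against $v^{p/2}$ gives $\le C\langle v^p\rangle + C\big\|\,|\mathsf h||f|\,\mathbf 1_{|\mathsf h|>1}\big\|_?^2$; to remove the $|\mathsf h|$ one more time uses that on $\{|\mathsf h|>1\}$, $|\mathsf h f|\le |\mathsf h|\cdot|f|$ and the desired right side is $\|\mathbf 1_{|\mathsf h|>1}|f|^{p/2}\|_2^2$, so one must exploit $|\mathsf h|^{(1+\gamma)/2}\in\mathbf F_\chi$ (i.e. condition \eqref{second_cond}) together with the requirement $p'\le 1+\gamma$, which allows controlling $\langle|\mathsf h|\,\psi^2\rangle$-type expressions with $\psi=v^{p/2}|f|^{p/2}$-weights by form-boundedness. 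In the simpler case \eqref{first_cond} one has $\mathsf h\in\mathbf F_\chi$ outright and this step is a direct application of $\mathsf h$'s form-boundedness. Collecting the $I_4$ estimates, the contribution takes the form $K_2\big\|(\mathbf 1_{|\mathsf h|>1}+|\mathsf h|^{p/2}\mathbf 1_{|\mathsf h|\le1})|f|^{p/2}\mathbf 1_{u>k}\mathbf 1_{B_R}\big\|_2^2$ plus lower-order multiples of $\langle v^p\eta\rangle$ that are harmless (they can be absorbed or carried along since $\lambda\ge\lambda_0\vee1$ gives a coercive $\lambda\langle v^p\eta\rangle$ on the left, or, if $\lambda$ is not yet assumed large, into the $(R-r)^{-2}\|v^{p/2}\mathbf 1_{B_R}\|_2^2$ term using $R_0\le1$).

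Finally, after fixing $\epsilon,\epsilon_1,\varepsilon_2$ and absorbing all gradient terms on the right into the left, I arrive at the "pre-Caccioppoli" inequality for the radii $r_1<r_2$, namely $\lambda\|v^{p/2}\mathbf 1_{B_{r_1}}\|_2^2+\|(\nabla v^{p/2})\mathbf 1_{B_{r_1}}\|_2^2\le \frac{C_1}{r_2-r_1}\|(\nabla v^{p/2})\mathbf 1_{B_{r_2}}\|_2\|v^{p/2}\mathbf 1_{B_R}\|_2 + C_2(1+(r_2-r_1)^{-2})\|v^{p/2}\mathbf 1_{B_R}\|_2^2 + C_3 S^2$, where $S^2$ is the $|f|$-term. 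Dividing by $\|v^{p/2}\mathbf 1_{B_R}\|_2^2$ and running exactly the same iteration as in the proof of Proposition \ref{c_prop} (the $y_n$-iteration over a dyadic sequence of radii interpolating $r$ and $R$, using Lemma \ref{dg_lemma}-free nested-square-root bookkeeping) converts the "gradient times function" cross term into the clean $\frac{K_1}{(R-r)^2}$ form and yields the asserted inequality with generic constants. The main obstacle I anticipate is precisely the handling of $I_4$ on the set $\{|\mathsf h|>1\}$: one must control $\langle|\mathsf h f|,v^{p-1}\eta\rangle$ there without any positive power of $|\mathsf h|$ surviving in the final estimate, which is exactly why the extra hypothesis $|\mathsf h|^{(1+\gamma)/2}\in\mathbf F_\chi$ with $p'\le1+\gamma$ is imposed, and the bookkeeping that shows this hypothesis suffices (rather than just absorbing into gradient terms that are already committed elsewhere) is the delicate part.
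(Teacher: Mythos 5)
Your overall framework is the right one and matches the paper for the terms $I_1$--$I_3$, for the $\{|\mathsf h|\le 1\}$ part of $I_4$, and for the final dyadic iteration. However, your treatment of $I_4$ on $\{|\mathsf h|>1\}$ has a genuine gap, and it is exactly the point you yourself flag as delicate. First, the reduction $v^{p-1}\le C v^{p/2}$ via $\|u\|_\infty\le\lambda^{-1}\||\mathsf h f|\|_\infty$ introduces a constant depending on $\|\mathsf h\|_\infty$; in the intended application $\mathsf h=b_n$ or $\mathsf h=b_n-b_m$, so this constant is not generic and the whole point of the proposition (constants independent of the boundedness of $\mathsf h$) is lost. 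Second, even granting that step, your Young's inequality leaves a term of the form $\|\,|\mathsf h||f|\mathbf 1_{|\mathsf h|>1}\|^2$, i.e.\ a positive power of $|\mathsf h|$ survives on the right-hand side, and you do not show how $|\mathsf h|^{(1+\gamma)/2}\in\mathbf F_\chi$ removes it: form-boundedness bounds $\langle|\mathsf h|^{1+\gamma}\varphi^2\rangle$ by $\|\nabla\varphi\|_2^2$-terms, so applying it to a weight built from $|f|$ would produce gradients of $f$, not the claimed right-hand side $\|\mathbf 1_{|\mathsf h|>1}|f|^{p/2}\mathbf 1_{u>k}\mathbf 1_{B_R}\|_2^2$.

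The correct move is to pair $|\mathsf h|$ with $v^{p-1}$ rather than with $|f|$ when applying Young's inequality on $\{|\mathsf h|>1\}$: write $\langle|\mathsf h|\mathbf 1_{|\mathsf h|>1}|f|,v^{p-1}\eta\rangle\le\frac{\varepsilon_2^{p'}}{p'}\langle|\mathsf h|^{p'}\mathbf 1_{|\mathsf h|>1}v^{p}\eta\rangle+\frac{1}{p\varepsilon_2^{p}}\langle\mathbf 1_{|\mathsf h|>1}|f|^{p}\eta\rangle$, using $(p-1)p'=p$. The second term is already in the desired clean form. In the first term one uses $|\mathsf h|^{p'}\le|\mathsf h|^{1+\gamma}$ on $\{|\mathsf h|>1\}$ (this is where $p'\le 1+\gamma$ enters) and applies $|\mathsf h|^{(1+\gamma)/2}\in\mathbf F_\chi$ to the test function $v^{p/2}\sqrt\eta$, yielding $\chi\langle|(\nabla v^{p/2})\sqrt\eta+\tfrac12 v^{p/2}\nabla\eta/\sqrt\eta|^2\rangle+c_\chi\langle v^p\eta\rangle$; the gradient piece carries the small prefactor $\varepsilon_2^{p'}$ and is absorbed into the left-hand side, while the cutoff pieces join the $(r_2-r_1)^{-2}$ terms. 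With this substitution your argument closes and is otherwise identical to the paper's proof.
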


\begin{remark}
Comparing the Caccioppoli inequality of Proposition \ref{c_prop2} with the one in Proposition \ref{c_prop}, one notices that we kept bounded function $\mathbf{1}_{|\mathsf{h}|>1} + |\mathsf{h}|^\frac{p}{2}\mathbf{1}_{|\mathsf{h}| \leq 1}$ in the right-hand side. We will use this function as follows. In the proof of Theorem \ref{markov_thm} we will consider, in particular, $\mathsf{h}=b_n-b_m$. We will need to show that the corresponding solution $u$ goes to zero locally uniformly on $\mathbb R^d$ as $n,m \rightarrow \infty$, which will be possible to do using \eqref{unif_est_u} precisely because we kept $\mathbf{1}_{|\mathsf{h}|>1} + |\mathsf{h}|^\frac{p}{2}\mathbf{1}_{|\mathsf{h}| \leq 1}$.
\end{remark}

\begin{proof} We extend the proof of Proposition \ref{c_prop} to the setting of Proposition \ref{c_prop2}.  Once again, first we carry out the proof in the more difficult case when $b$ and $\mathsf{h}$ satisfy condition \eqref{second_cond}, and then attend to the case when $b$ and $\mathsf{h}$ satisfy \eqref{first_cond} in Remark \ref{first_cond_rem2}.

Let $\{\eta=\eta_{r_1,r_2}\}_{0<r_1<r_2<R}$ be a family of $[0,1]$-valued smooth cut-off functions satisfying \eqref{e1}-\eqref{e3}.

From equation \eqref{eq7} we obtain, since both $\lambda$ and $k$ are non-negative,
$$
(\lambda-\Delta + b \cdot \nabla)(u-k) \leq |\mathsf{h}f|.
$$
After multiplying by $v^{p-1}\eta \geq 0$ and integrating, we obtain
\begin{align*}
\lambda \langle v^p\eta\rangle  + \frac{4(p-1)}{p^2}\langle \nabla v^{\frac{p}{2}},(\nabla v^{\frac{p}{2}})\eta\rangle & +  \frac{2}{p}\langle \nabla v^{\frac{p}{2}},v^{\frac{p}{2}}\nabla \eta \rangle \\
&  + \frac{2}{p}\langle b \cdot \nabla v^{\frac{p}{2}},v^{\frac{p}{2}}\eta\rangle \leq \langle |\mathsf{h}f|,v^{p-1}\eta\rangle.
\end{align*}
Then, applying quadratic inequality (fix some $\epsilon>0$), we have
\begin{align}
p\lambda \langle v^p\eta\rangle & + \left(\frac{4(p-1)}{p}-\frac{4}{p}\epsilon\right)\langle |\nabla v^{\frac{p}{2}}|^2\eta\rangle  \leq \frac{p}{4\epsilon}\big\langle v^p\frac{|\nabla \eta|^2}{\eta} \big\rangle  - 2\langle b \cdot \nabla v^{\frac{p}{2}},v^{\frac{p}{2}}\eta\rangle + p\langle |\mathsf{h}f|,v^{p-1}\eta\rangle  \label{i12__}  \\
& (\text{we are integrating by parts}) \notag \\
& \leq \frac{p}{4 \epsilon}\big\langle v^p\frac{|\nabla \eta|^2}{\eta} \big\rangle  + \langle bv^{\frac{p}{2}},v^{\frac{p}{2}}\nabla \eta\rangle + \langle {\rm div\,}b,v^p\eta\rangle + p\langle |\mathsf{h}f|,v^{p-1}\eta\rangle \notag =:I_1 + I_2 + I_3+I_4. \notag
\end{align}
Terms $I_1$-$I_3$ are estimated in the same way as in the proof of Proposition \ref{c_prop}. Term $I_4$ is different, so we argue as follows:
\begin{align*}
\frac{1}{p}I_4 & \leq \langle (|\mathsf{h}|\mathbf{1}_{|\mathsf{h}|>1} + |\mathsf{h}|\mathbf{1}_{|\mathsf{h}|\leq 1})|f|,v^{p-1}\eta \rangle \\
& (\text{we open the brackets and apply Young's inequality}) \\
& \leq \frac{\varepsilon_2^{p'}}{p'} \langle |\mathsf{h}|^{p'}\mathbf{1}_{|\mathsf{h}|>1}v^p \eta\rangle + \frac{1}{p\varepsilon_2^p} \langle \mathbf{1}_{|\mathsf{h}|>1}|f|^p \eta \rangle \qquad (\varepsilon_2>0)\\
& + \frac{\varepsilon_2^{p'}}{p'} \langle v^p \eta\rangle + \frac{1}{p\varepsilon_2^p} \langle |\mathsf{h}|^p\mathbf{1}_{|\mathsf{h}| \leq 1}|f|^p \eta \rangle\\
& \quad \bigg(\text{using $1+\gamma \geq p'$, we apply $|\mathsf{h}|^{\frac{1+\gamma}{2}} \in \mathbf{F}_\chi$ in the first term,} \\
& \quad \text{i.e.\,}\langle |\mathsf{h}|^{p'}\mathbf{1}_{|\mathsf{h}|>1}v^p \eta\rangle \leq \langle (|\mathsf{h}|^{\frac{1+\gamma}{2}})^2(v^\frac{p}{2} \sqrt{\eta})^2\rangle \leq \chi \big\langle \big|(\nabla v^\frac{p}{2})^2 \sqrt{\eta} + v^\frac{p}{2}\frac{1}{2}\frac{\nabla \eta}{\sqrt{\eta}}\big|^2 \big\rangle + c_\chi \langle v^p \eta\rangle, \text{\,so:}\bigg) \\
& \leq  2 \frac{\varepsilon_2^{p'}}{p'} \chi \langle |\nabla v^{\frac{p}{2}}|^2\eta \rangle + \frac{1}{2}\frac{\varepsilon_2^{p'}}{p'}\chi \langle v^p \frac{|\nabla \eta|^2}{\eta}\rangle + \frac{\varepsilon_2^{p'}}{p'} (c_\chi+1) \langle v^p  \eta \rangle + \frac{1}{p\varepsilon_2^p}\big\langle \Theta|f|^p \mathbf{1}_{v>0} \eta \big\rangle,
\end{align*}
where $\Theta:=\mathbf{1}_{|\mathsf{h}|>1} + |\mathsf{h}|^p\mathbf{1}_{|\mathsf{h}| \leq 1}$.
Selecting $\varepsilon_2$ sufficiently small and applying the estimates on $I_1$-$I_4$ in \eqref{i12__}, we obtain
\begin{align}
\lambda \|v^{\frac{p}{2}}\mathbf{1}_{B_{r_1}}\|_2^2 + \||\nabla v^{\frac{p}{2}}|\mathbf{1}_{B_{r_1}}\|_2^2  & \leq \frac{C_1}{r_2-r_1}\|(\nabla v^{\frac{p}{2}})\mathbf{1}_{B_{r_2}}\|_2\| v^{\frac{p}{2}}\mathbf{1}_{B_{R}}\|_2 \notag \\
& + C_2 \biggl(1+\frac{1}{(r_2-r_1)^2} \bigg)\|v^{\frac{p}{2}}\mathbf{1}_{B_{R}}\|^2_2 + C_3\|\Theta^{\frac{1}{2}} |f|^\frac{p}{2} \mathbf{1}_{v>0} \mathbf{1}_{B_{R}}\|_2^2, \notag
\end{align}
so, dividing by $\|v^{\frac{p}{2}}\mathbf{1}_{B_{R}}\|^2_2$, we arrive at
\begin{align}
\frac{\lambda \|v^{\frac{p}{2}}\mathbf{1}_{B_{r_1}}\|_2^2 + \|(\nabla v^{\frac{p}{2}})\mathbf{1}_{B_{r_1}}\|_2^2}{\|v^{\frac{p}{2}}\mathbf{1}_{B_{R}}\|^2_2}  & \leq \frac{C_1}{r_2-r_1}\frac{\lambda \|v^{\frac{p}{2}}\mathbf{1}_{B_{r_2}}\|_2^2 + \|(\nabla v^{\frac{p}{2}})\mathbf{1}_{B_{r_2}}\|_2}{\|v^{\frac{p}{2}}\mathbf{1}_{B_{R}}\|_2} \notag \\
&+ C_2 \biggl(1+\frac{1}{(r_2-r_1)^2} \bigg) + C_3S^2, \label{proto_cacc2__}
\end{align}
where
$
S^2:=\frac{\|\Theta^{\frac{1}{2}}|f|^\frac{p}{2} \mathbf{1}_{v>0}\mathbf{1}_{B_{R}}\|_2^2}{\|v^{\frac{p}{2}}\mathbf{1}_{v>0} \mathbf{1}_{B_{R}}\|^2_2}.
$
This is the pre-Caccioppoli inequality that we will iterate.
Put
$$
a^2_n:= \frac{\lambda \|v^{\frac{p}{2}}\mathbf{1}_{B_{R-\frac{R-r}{2^{n-1}}}}\|_2^2 +  \|(\nabla v^{\frac{p}{2}})\mathbf{1}_{B_{R-\frac{R-r}{2^{n-1}}}}\|_2^2}{\|v^{\frac{p}{2}}\mathbf{1}_{v>0} \mathbf{1}_{B_{R}}\|^2_2},
$$
so the inequality \eqref{proto_cacc2__} yields
$$
 a_n^2 \notag  \leq C(R-r)^{-1}2^{n} a_{n+1} + C^2 (R-r)^{-2}2^{2n} + C^2 S^2
$$
with constant $C$ independent of $n$.
We multiply this inequality by $(R-r)^2$ and divide by $C^2 2^{2n}$. Setting $y_n:=\frac{(R-r) a_n}{C 2^{n}},$ we obtain
\begin{equation}
\label{y_int__}
y_n^2 \leq y_{n+1} + 1+(R-r)^2S^2
\end{equation}
for all $n=1,2,\dots$ 
We note that all
$a_n$'s are bounded by a non-generic constant
$
(\lambda \|v^{\frac{p}{2}}\mathbf{1}_{B}\|_2^2 + \|(\nabla v^{\frac{p}{2}})\mathbf{1}_{B_{R}}\|_2) / \|v^{\frac{p}{2}}\mathbf{1}_{B_{R}}\|_2<\infty,
$
so $\sup_n y_n<\infty$.
Therefore, we can iterate
 \eqref{y_int__} and hence estimate all $y_n$, $n=1,2,\dots$, via nested square roots $1+(R-r)^2S^2+\sqrt{1+(R-r)^2S^2 + \sqrt{\dots}}$, 
obtaining
$$y_n^2 \leq 3+2(R-r)^2 S^2, \quad n=1,2,\dots$$
Taking $n=1$, 
we arrive at 
$
a_1 \leq K_1 (R-r)^{-2}+K_2S^2
$
for appropriate constants $K_1$ and $K_2$, i.e.
$$
\frac{\lambda \|v^{\frac{p}{2}}\mathbf{1}_{B_{r}}\|_2^2 + \|(\nabla v^{\frac{p}{2}})\mathbf{1}_{B_{r}}\|_2^2}{\|v^{\frac{p}{2}}\mathbf{1}_{B_{R}}\|^2_2}  \leq K_1 (R-r)^{-2} + K_2 \frac{\|\Theta^{\frac{1}{2}}|f|^\frac{p}{2}\mathbf{1}_{v>0}  \mathbf{1}_{B_{R}}\|_2^2}{\|v^{\frac{p}{2}}\mathbf{1}_{B_{R}}\|^2_2},
$$
as claimed.

\begin{remark}
\label{first_cond_rem2}
If $b$ and $\mathsf{h}$ satisfy condition \eqref{first_cond}, then, again, we can work with cutoff functions $\eta \in C_c^\infty$,
$\eta =1$ in $B_{r_1}$, $\eta = 0$ in $\mathbb R^d \setminus B_{r_2}$, i.e.\,$|\nabla \eta| \leq c_1(r_2-r_1)^{-1}$, $|\Delta \eta| \leq c_2(r_2-r_1)^{-2}$, and we estimate the second term in the RHS of \eqref{i12__} right away using quadratic inequality:
$$
2|\langle b \cdot \nabla v^{\frac{p}{2}},v^{\frac{p}{2}}\eta\rangle| \leq \alpha \langle |\nabla v|^{\frac{p}{2}}\eta\rangle + \frac{1}{4\alpha}\langle |b|^2,v^p\eta\rangle, \quad \alpha=\frac{2}{\sqrt{\delta}}.
$$
Regarding the terms containing $\mathsf{h}$, we simply take $\gamma=1$, which transforms condition $|\mathsf{h}|^{\frac{1+\gamma}{2}} \in \mathbf{F}_\chi$, $\chi<\infty$ from \eqref{second_cond} into condition $\mathsf{h} \in \mathbf{F}_\chi$ in \eqref{first_cond}, and argue as in the estimate on $I_4$ above.
\end{remark}

This ends the proof of Proposition \ref{c_prop2}.
\end{proof}

Recall that we have fixed $1<\theta<\frac{d}{d-2}$ in the statement of the theorem.

\begin{proposition} 
\label{nonhom_prop}
There exists generic constants $K$  and $\beta \in ]0,1[$ such that, for all $\lambda \geq 1$, the positive part $u_+$ of solution $u$ of non-homogeneous equation \eqref{eq7} satisfies
\begin{equation}
\label{dg_ineq}
\sup_{B_{\frac{1}{2}}} u_+ \leq K \biggl( \langle u_+^{p\theta}\mathbf{1}_{B_1}\rangle^{\frac{1}{p\theta}} + \lambda^{-\frac{\beta}{p}}\big\langle (\mathbf{1}_{|\mathsf{h}|>1} + |\mathsf{h}|^p\mathbf{1}_{|\mathsf{h}| \leq 1})^{\theta'}|f|^{p\theta'}\mathbf{1}_{B_1}\big\rangle^{\frac{1}{p\theta'}} \biggr).  
\end{equation}
\end{proposition}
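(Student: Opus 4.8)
The plan is to run De Giorgi's scheme along the lines of the proof of Theorem \ref{thm1} — shrinking balls $B_{r_n}$ with $r_n:=\tfrac12(1+2^{-n})$, increasing truncation levels $k_n:=M(1-2^{-n})$ for a parameter $M>0$ to be fixed, $v_n:=(u-k_n)_+$ (all $k_n\ge0$, so Proposition \ref{c_prop2} applies at each level), a Giusti-style combined iterate in the spirit of Proposition \ref{prop71}, and Lemma \ref{dg_lemma} to close. The conclusion $\sup_{B_{1/2}}u_+\le M$ follows once $M$ is taken to be a fixed multiple of $\langle u_+^{p\theta}\mathbf 1_{B_1}\rangle^{1/(p\theta)}$ plus $\lambda^{-\beta/p}$ times the $f$-term in \eqref{dg_ineq}, so the whole point is to verify that exactly those two quantities govern the threshold.

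First I would set up the basic recursive inequality by combining four ingredients: (i) the Sobolev embedding $\|v_n^{p/2}\eta_n\|_{2\chi}^2\le c_S\|\nabla(v_n^{p/2}\eta_n)\|_2^2$ with $\chi:=\tfrac d{d-2}$ and cutoffs $\eta_n$ supported in the next larger ball, $|\nabla\eta_n|\lesssim2^n$; (ii) Proposition \ref{c_prop2} at level $k_n$, which bounds $\|(\nabla v_n^{p/2})\mathbf 1_{B_{\bar r_n}}\|_2^2$ (and, for free, $\lambda\langle v_n^p\mathbf 1_{B_{\bar r_n}}\rangle$) by $c2^{2n}\langle v_n^p\mathbf 1_{B_{r_n}}\rangle+K_2\langle\Theta\,|f|^p\mathbf 1_{u>k_n}\mathbf 1_{B_{r_n}}\rangle$ with $\Theta:=\mathbf 1_{|\mathsf h|>1}+|\mathsf h|^p\mathbf 1_{|\mathsf h|\le1}$; (iii) Chebyshev on the level sets, $|B_{r_n}\cap\{u>k_n\}|\le(k_n-k_{n-1})^{-p\theta}\langle v_{n-1}^{p\theta}\mathbf 1_{B_{r_{n-1}}}\rangle$; and — the new ingredient forced by the nonhomogeneity — two H\"older splittings with the conjugate pair $(\theta,\theta')$: one passing from $L^{p\chi}$ down to $L^{p\theta}$ at the cost of a positive power of the level-set measure (legitimate precisely because $1<\theta<\chi$, which is the hypothesis $1<\theta<\tfrac d{d-2}$), and one on the source,
\[
\langle\Theta\,|f|^p\mathbf 1_{u>k_n}\mathbf 1_{B_{r_n}}\rangle\le\langle\Theta^{\theta'}|f|^{p\theta'}\mathbf 1_{B_1}\rangle^{1/\theta'}\bigl|B_{r_n}\cap\{u>k_n\}\bigr|^{1/\theta},
\]
which reproduces exactly the factor $\langle(\mathbf 1_{|\mathsf h|>1}+|\mathsf h|^p\mathbf 1_{|\mathsf h|\le1})^{\theta'}|f|^{p\theta'}\mathbf 1_{B_1}\rangle^{1/(p\theta')}$ of \eqref{dg_ineq}. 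Packaging the function-integral together with a suitable power of the level-set measure (Giusti's trick, as in Proposition \ref{prop71}) turns this into a one-step recursion $z_{n+1}\le N\,b^{\,n}\,z_n^{1+\alpha}$ with $b>1$, $\alpha>0$, and $N$ equal to a negative power of $M$ times $1+(\text{the }f\text{-term})$; Lemma \ref{dg_lemma} then yields $z_n\to0$ for $M$ of the stated size.

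It remains to produce the negative power $\lambda^{-\beta/p}$, and this is where the zeroth-order term $\lambda\|v^{p/2}\mathbf 1_{B_r}\|_2^2$ on the left of Proposition \ref{c_prop2} — which has no analogue in the classical Caccioppoli inequality — is used. The idea is to split that single estimate, by weighted Young's inequality, into $\lambda^{s}\langle v_n^p\mathbf 1_{B_{\bar r_n}}\rangle^{s}\|(\nabla v_n^{p/2})\mathbf 1_{B_{\bar r_n}}\|_2^{2(1-s)}\le(\text{the Caccioppoli right-hand side})$ for a suitable $s\in\,]0,1[$, and to run the iteration with a Gagliardo--Nirenberg interpolation inequality in place of the plain Sobolev embedding (the integrability gain drops to some $q$ with $\theta<q<\chi$, still enough for $\alpha>0$); the factor $\lambda^{-s}$ then rides along with the Caccioppoli right-hand side at every step, and since $\lambda\ge1$ the geometric series it generates leaves the $f$-term — and only the $f$-term, the starting datum $z_0\sim\langle u_+^{p\theta}\mathbf 1_{B_1}\rangle$ being untouched — multiplied by a fixed $\lambda^{-\beta}$ with $\beta\in\,]0,1[$ depending only on $d$ and $\theta$. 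All constants remain generic because the only tools invoked are Proposition \ref{c_prop2}, the Sobolev/Gagliardo--Nirenberg inequality, and H\"older's and Chebyshev's inequalities.

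The step I expect to be the real obstacle is exactly this $\lambda$-bookkeeping: arranging the recursion (the choice of combined iterate, of the interpolation parameter $s$, and of the three nested radii at each stage) so that negative powers of $\lambda$ attach \emph{only} to the $f$-dependent quantities and never degrade the $\langle u_+^{p\theta}\mathbf 1_{B_1}\rangle$ term, and so that the conjugate exponents emerge as precisely $\theta$ and $\theta'$ rather than something larger; one must also, as always in this critical form-bounded setting, check that the cutoff blow-up $(R-r)^{-2}\le c\,4^{n}$ is genuinely absorbed by the geometric decay furnished by Lemma \ref{dg_lemma}, uniformly in the smoothness and boundedness of $b$ and $\mathsf h$. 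Everything else is the routine $L^p$ adaptation of the De Giorgi argument already carried out in Propositions \ref{c_prop}--\ref{osc_prop} for Theorem \ref{thm1}.
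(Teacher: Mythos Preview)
Your plan is essentially the paper's proof: De Giorgi iteration from Proposition~\ref{c_prop2}, the $(\theta,\theta')$ H\"older split on the source, an interpolation to extract a power of $\lambda$, and Lemma~\ref{dg_lemma} to close. The one place where your sketch drifts is the $\lambda$-bookkeeping, and the actual mechanism is simpler than the ``geometric series'' you describe. After your interpolation step (the paper interpolates $\|v\|_{L^{p\theta_0}}$ between $\|v\|_{L^p}$ and $\|v\|_{L^{pd/(d-2)}}$, which is your Gagliardo--Nirenberg move) one obtains $\lambda^{\beta}$ on the left, hence $\lambda^{-\beta\theta}$ multiplying \emph{both} terms on the right of the one-step recursion. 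At that point the paper just uses $\lambda\ge 1$ to \emph{drop} the factor $\lambda^{-\beta\theta}$ from the $v$-term (harmless in an upper bound) while \emph{keeping} it on the $H$-term, and then absorbs the $H$-term into the threshold by requiring $\xi^{p\theta}\ge \lambda^{-\beta\theta}H^\theta$. No accumulation of $\lambda$-powers occurs; the final choice is $\xi^{p\theta}=C\langle u_+^{p\theta}\mathbf 1_{B_1}\rangle+\lambda^{-\beta\theta}H^\theta$, which gives \eqref{dg_ineq} directly.

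A related simplification: the paper does not use a Giusti-style combined iterate (norm times level-set power) here, but the plain normalized quantity $z_m=\xi^{-p\theta}\|v_m\|_{L^{p\theta}(B^m)}^{p\theta}$; the level-set measure enters only through Chebyshev and a single H\"older step passing from $L^{p\theta_0}$ down to $L^{p\theta}$ (with exponent $1-\theta/\theta_0$), which is exactly what produces the $\alpha=1-\theta/\theta_0>0$ in Lemma~\ref{dg_lemma}. Two radii per step suffice.
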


\begin{remark}
In the proof of Proposition \ref{nonhom_prop} we iterate simultaneously over (a) balls of decreasing radius (b) super-level sets of solution $u$. To get an overview of the proof, one can first formally take $r=R=\infty$ to see that the iterations over super-level sets indeed work as intended.
\end{remark}

\begin{proof}
Proposition \ref{c_prop2} yields
\begin{align*}
\lambda \|v^{\frac{p}{2}}\mathbf{1}_{B_{r}}\|_2^2 + \|v^{\frac{p}{2}}\|^2_{W^{1,2}(B_{r})} & \leq \tilde{K}_1(R-r)^{-2}\|v\|^{p}_{L^p(B_{R})} \\
&+ K_2\|\Theta^{\frac{1}{p}}f\mathbf{1}_{u>k}\|_{L^p(B_R)}^p, \qquad v:=(u-k)_+,\;\;k \geq 0,
\end{align*}
where $\Theta:=\mathbf{1}_{|\mathsf{h}|>1} + |\mathsf{h}|^p\mathbf{1}_{|\mathsf{h}| \leq 1}$ and $\tilde{K}_1$, $K_2$ are generic constants.
By the Sobolev embedding theorem,
$$
\lambda \|v\|_{L^p(B_r)}^p + \|v\|^{p}_{L^{\frac{pd}{d-2}}(B_{r})}\leq C_1(R-r)^{-2}\|v\|^{p}_{L^p(B_{R})}+C_2\|\Theta^{\frac{1}{p}}f\mathbf{1}_{u>k}\|_{L^p(B_R)}^p.
$$
Next, we estimate the left-hand side from below using interpolation inequality:
$$
\lambda^{\beta} \|v\|_{L^q(B_r)}^p \leq \beta\lambda \|v\|_{L^p(B_r)}^p + (1-\beta)\|v\|_{L^{\frac{pd}{d-2}}(B_r)}^p, \quad 0<\beta<1, \quad \frac{1}{q}=\beta\frac{1}{p}+(1-\beta)\frac{d-2}{pd}.
$$

\begin{remark}1.~To prove the previous inequality, we put for brevity $r=\frac{pd}{d-2}$, $\mu=\lambda^{\frac{q\beta}{p}}$ and write $\mu |v|^q=(\mu^{\frac{1}{\beta q}} v)^{p\frac{\beta q}{p}} |v|^{r\frac{(1-\beta)q}{r}}$, so, denoting here the integration over $B_r$ by $\langle \cdot \rangle$, we have $$\langle \mu |v|^q\rangle \leq \langle (\mu^{\frac{1}{\beta q}} v)^p \rangle^{\frac{q\beta}{p}} \langle |v|^r\rangle^{\frac{(1-\beta)q}{r}}.$$ Hence $$\mu^{\frac{p}{q}}\|v\|_q^p \leq \langle (\mu^{\frac{1}{\beta q}} v)^p\rangle^\beta \langle |v|^r\rangle^{(1-\beta)\frac{p}{r}},$$ and it remains to apply in the right-hand side Young's inequality $ab \leq \beta a^{\frac{1}{\beta}} + (1-\beta)b^{\frac{1}{1-\beta}}$.

\medskip

2.~We could take $\beta=0$, in which case $$q=\frac{pd}{d-2}, \quad \theta_0=\frac{d}{d-2},$$ but then we lose the dependence on $\lambda$ in the second term in the right-hand side of \eqref{dg_ineq}. Strictly speaking, we do not need to keep track of the dependence on $\lambda$ in this work, but this is needed for some other applications of Theorem \ref{thm2}, e.g.\,to construct strongly continuous Feller semigroup in \cite{KiS_feller_4}. To get an overview of this proof it is worthwhile to first ignore $\lambda$ and to take $\beta=0$ and $\theta_0=\frac{d}{d-2}$.
\end{remark}

Put $\theta_0:=\frac{q}{p}$, so $1<\theta_0<\frac{d}{d-2}$. We fix $\beta \in ]0,1[$ sufficiently small so that $\theta<\theta_0$. 

\medskip

Hence, taking into account that $q=p\theta_0$,
$$
\lambda^{\beta} \|v\|_{L^{p\theta_0}(B_r)}^p \leq \tilde{C}_1(R-r)^{-2}\|v\|^{p}_{L^p(B_{R})}+\tilde{C}_2\|\Theta^{\frac{1}{p}}f\mathbf{1}_{u>k}\|_{L^p(B_R)}^p.
$$
Applying H\"{o}lder's inequality in the RHS, we obtain
\begin{equation}
\label{ineq_h_n}
\lambda^{\beta} \|v\|_{L^{p\theta_0}(B_r)}^p \leq \tilde{C}_1(R-r)^{-2}|B_{R}|^{\frac{\theta-1}{\theta}}\|v\|^{p}_{L^{p\theta}(B_{R})}+\tilde{C}_2\|\Theta^{\frac{1}{p}}f\mathbf{1}_{u>k}\|_{L^p(B_R)}^p.
\end{equation}
Set $$R_m:=\frac{1}{2}+\frac{1}{2^{m+1}}, \quad m \geq 0,$$
so $B^m \equiv B_{R_m}$ is a decreasing sequence of balls converging to the ball of radius $\frac{1}{2}$. 
By \eqref{ineq_h_n},
\begin{align}
\lambda^{ \beta} \|v\|_{L^{p\theta_0}(B^{m+1})}^p & \leq \hat{C}_1 2^{2m}
\|v\|_{L^{p\theta}(B^{m})}^{p}+\tilde{C}_2  \|\Theta^{\frac{1}{p}}f\mathbf{1}_{u>k}\|_{L^p(B^m)}^p \notag \\
& \leq \hat{C}_1 2^{2m}
\|v\|_{L^{p\theta}(B^{m})}^{p} + \tilde{C}_2 H|B^{m} \cap \{v>0\}|^{\frac{1}{\theta}},
\label{theta_d_d-2_n}
\end{align}
where
$$
H:=\langle \Theta^{\theta'}|f|^{p\theta'}\mathbf{1}_{B^0}\rangle^{\frac{1}{\theta'}} \quad (B^0=B_1, \text{\,i.e.\,ball of radius } 1)
$$

On the other hand, by H\"{o}lder's inequality,
$$
 \|v\|_{L^{p\theta}(B^{m+1})}^{p\theta} \leq  \|v\|_{L^{p\theta_0}(B^{m+1})}^{p\theta} \biggl(|B^{m} \cap \{v>0\}| \biggr)^{1-\frac{\theta}{\theta_0}}.
$$
Applying \eqref{theta_d_d-2_n} in the first multiple in the RHS, we obtain
$$
 \|v\|_{L^{p\theta}(B^{m+1})}^{p\theta} \leq \tilde{C}\lambda^{-\beta\theta}\biggl( 2^{2\theta m}\|v\|_{L^{p\theta}(B^{m})}^{p\theta}+ H^{\theta}|B^{m} \cap \{v>0\}| \biggr) \biggl(|B^{m} \cap \{v>0\}| \biggr)^{1-\frac{\theta}{\theta_0}}.
$$
Now, put $v_m:=(u-k_m)_+$ where $k_m:=\xi(1-2^{-m}) \uparrow \xi,$ where constant $\xi>0$ will be chosen later.
Then, using $2^{2\theta m} \leq 2^{p\theta m}$ and dividing by $\xi^{p\theta}$,
\begin{align*}
&\frac{1}{\xi^{p\theta}}  \|v_{m+1}\|_{L^{p\theta}(B^{m+1})}^{p\theta} \\
&\leq \tilde{C}\lambda^{-\beta\theta}\biggl( \frac{2^{p\theta m}}{\xi^{p\theta}}\|v_{m+1}\|_{L^{p\theta}(B^{m})}^{p\theta}  + \frac{1}{\xi^{p\theta}} H^{\theta}|B^{m} \cap \{u>k_{m+1}\}|\biggr) \biggl(|B^{m} \cap \{u>k_{m+1}\}| \biggr)^{1-\frac{\theta}{\theta_0}}.
\end{align*}
Using that $\lambda \geq 1$, we further obtain
\begin{align*}
&\frac{1}{\xi^{p\theta}}  \|v_{m+1}\|_{L^{p\theta}(B^{m+1})}^{p\theta} \\
&\leq \tilde{C}\biggl( \frac{2^{p\theta m}}{\xi^{p\theta}}\|v_{m+1}\|_{L^{p\theta}(B^{m})}^{p\theta}  + \frac{1}{\xi^{p\theta}} \lambda^{-\beta\theta} H^{\theta}|B^{m} \cap \{u>k_{m+1}\}|\biggr) \biggl(|B^{m} \cap \{u>k_{m+1}\}| \biggr)^{1-\frac{\theta}{\theta_0}}.
\end{align*}
From now on, we require that constant $\xi$ satisfies $\xi^p \geq \lambda^{-\beta}H$, so
\begin{align}
\label{dg_ineq7}
&\frac{1}{\xi^{p\theta}}  \|v_{m+1}\|_{L^{p\theta}(B^{m+1})}^{p\theta} \\
&\leq \tilde{C}\biggl( \frac{2^{p\theta m}}{\xi^{p\theta}}\|v_{m+1}\|_{L^{p\theta}(B^{m})}^{p\theta}  + |B^{m} \cap \{u>k_{m+1}\}|\biggr) \biggl(|B^{m} \cap \{u>k_{m+1}\}| \biggr)^{1-\frac{\theta}{\theta_0}}. \notag
\end{align}
Now, 
\begin{align*}
|B^{m} \cap \{u>k_{m+1}\}| & = \big|B^{m} \cap \big\{(\frac{u-k_m}{k_{m+1}-k_m})^{2\theta}>1\big\}\big| \\
&\leq (k_{m+1}-k_m)^{-p\theta} \langle v^{p\theta}_m \mathbf{1}_{B^{m}} \rangle  = \xi^{-p\theta}2^{p\theta(m+1)} \|v_m\|_{L^{p\theta}(B^{m})}^{p\theta},
\end{align*}
so using in \eqref{dg_ineq7} $\|v_{m+1}\|_{L^{p\theta}(B^{m})} \leq \|v_{m}\|_{L^{p\theta}(B^{m})}$ and applying the previous inequality, we obtain
$$
\frac{1}{\xi^{p\theta}} \|v_{m+1}\|_{L^{p\theta}(B^{m+1})}^{p\theta} \leq \tilde{C}  2^{p\theta m(2-\frac{\theta}{\theta_0})} \biggl(\frac{1}{\xi^{p\theta}}\|v_m\|_{L^{p\theta}(B^{m})}^{p\theta}\biggr)^{2-\frac{\theta}{\theta_0}}.
$$
Denote
$
z_m:=\frac{1}{\xi^{p\theta}}\|v_m\|_{L^{p\theta}(B^{m})}^{p\theta}.
$
Then
$$
z_{m+1} \leq \tilde{C}\gamma^m z_m^{1+\alpha}, \quad m=0,1,2,\dots, \quad \alpha:=1-\frac{\theta}{\theta_0},\;\; \gamma:=2^{p\theta (2-\frac{\theta}{\theta_0})}
$$
and $z_0 = \frac{1}{\xi^{p\theta}}\langle u_+^{p\theta}\mathbf{1}_{B^0} \rangle \leq \tilde{C}^{-\frac{1}{\alpha}}\gamma^{-\frac{1}{\alpha^2}}$ (recall: $B^0:=B_{R_0} \equiv B_1$)
provided that we fix $c$ by $$\xi^{p\theta}:=\tilde{C}^{\frac{1}{\alpha}}\gamma^{\frac{1}{\alpha^2}}\langle u_+^{p\theta}\mathbf{1}_{B^0}\rangle + \lambda^{-\beta\theta}H^\theta.$$
Hence, by Lemma \ref{dg_lemma}, $z_m \rightarrow 0$ as $m \rightarrow \infty$. It follows that
$
\sup_{B_{1/2}}u_+ \leq \xi,
$
and the claimed inequality follows. 
\end{proof}

To end the proof of Theorem \ref{thm2}, we need to estimate $\langle u_+^{p\theta}\mathbf{1}_{B_1}\rangle^{1/p\theta}$ in the RHS of \eqref{dg_ineq} in terms of $\mathsf{h}$ and $f$. We will do it by estimating $\langle u_+^{p\theta}\rho\rangle^{1/p\theta}$, where, recall,
$$
\rho(x)=(1+k|x|^{2})^{-\frac{d}{2}-1}, 
$$
and then applying inequality $\rho \geq c\mathbf{1}_{B_1}$ for appropriate constant $c=c_d$.

\begin{proposition}
\label{rho_prop}
There exist generic constants $C$, $k$ and $\lambda_0>0$ such that for all $\lambda \geq \lambda_0$,
\begin{align}
(\lambda-\lambda_0) \langle u^p \rho \rangle + \langle |\nabla u^{\frac{p}{2}}|^2 \rho \rangle \leq  C  \big\langle \big(\mathbf{1}_{|\mathsf{h}|>1} + |\mathsf{h}|^p\mathbf{1}_{|\mathsf{h}| \leq 1}\big)|f|^p\rho \big\rangle. \label{c_ineq_rho} 
\end{align}
\end{proposition}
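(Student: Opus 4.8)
The plan is to test equation \eqref{eq7} against $u^{p-1}\rho$ and integrate over $\mathbb R^d$, with $\rho$ from \eqref{rho_def} playing the role that the cutoff $\eta$ played in Proposition \ref{c_prop2}. Note first that, since $\lambda>0$ and the right-hand side of \eqref{eq7} is nonnegative, $u\ge 0$, so $u^p$ is well defined; and the weight $\rho$ decays fast enough that, with the a priori bounds on the classical solution $u$, all integrals below converge and the integrations by parts produce no boundary terms, while its property \eqref{est_rho}, $|\nabla\rho|\le(\frac d2+1)\sqrt k\,\rho$, lets us make every term generated by $\nabla\rho$ as small as needed by choosing $k$ small. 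Writing $\langle\cdot\rangle$ for the integral over $\mathbb R^d$ and integrating by parts in the drift term via $\langle b\cdot\nabla u^{p/2},u^{p/2}\rho\rangle=\frac12\langle b\cdot\nabla(u^p),\rho\rangle=-\frac12\langle{\rm div}\,b,u^p\rho\rangle-\frac12\langle b\cdot\nabla\rho,u^p\rangle$, the test identity reads
\begin{align*}
\lambda\langle u^p\rho\rangle &+\frac{4(p-1)}{p^2}\langle|\nabla u^{p/2}|^2\rho\rangle+\frac2p\langle\nabla u^{p/2},u^{p/2}\nabla\rho\rangle\\
&-\frac1p\langle{\rm div}\,b,u^p\rho\rangle-\frac1p\langle b\cdot\nabla\rho,u^p\rangle=\langle|\mathsf{h}f|,u^{p-1}\rho\rangle.
\end{align*}

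Next I would estimate the three error terms on the left. The term $\frac2p\langle\nabla u^{p/2},u^{p/2}\nabla\rho\rangle$ is bounded, via Cauchy--Schwarz and \eqref{est_rho}, by $C\sqrt k\big(\langle|\nabla u^{p/2}|^2\rho\rangle+\langle u^p\rho\rangle\big)$. For $-\frac1p\langle{\rm div}\,b,u^p\rho\rangle$ one drops $({\rm div}\,b)_-\ge0$ and applies $({\rm div}\,b_+)^{1/2}\in\mathbf F_{\delta_+}$ to $(u^{p/2}\sqrt\rho)^2$; expanding $\nabla(u^{p/2}\sqrt\rho)$ and using \eqref{est_rho} this produces $\frac{\delta_+}p(1+\epsilon_1)\langle|\nabla u^{p/2}|^2\rho\rangle$ plus terms controlled by $c_{\delta_+}\langle u^p\rho\rangle$ and a $k$-small remainder. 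The term $\frac1p\langle b\cdot\nabla\rho,u^p\rangle$ is bounded using $|\nabla\rho|\le C\sqrt k\rho$ and $b\in\mathbf{MF}_\delta$ (applied to $(u^{p/2}\sqrt\rho)^2$), hence again absorbable for $k$ small. In case \eqref{first_cond} I would not integrate by parts in the drift, bounding $\frac2p|\langle b\cdot\nabla u^{p/2},u^{p/2}\rho\rangle|\le\frac2p\epsilon'\langle|\nabla u^{p/2}|^2\rho\rangle+\frac1{2p\epsilon'}\langle|b|^2(u^{p/2}\sqrt\rho)^2\rangle$ and then using $b\in\mathbf F_\delta$.

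Finally, the right-hand side $\langle|\mathsf{h}f|,u^{p-1}\rho\rangle$ is handled exactly as the term $I_4$ in the proof of Proposition \ref{c_prop2}: split $|\mathsf{h}|=|\mathsf{h}|\mathbf 1_{|\mathsf{h}|>1}+|\mathsf{h}|\mathbf 1_{|\mathsf{h}|\le1}$, apply Young's inequality with a small parameter $\varepsilon_2$ to each piece, and on $\{|\mathsf{h}|>1\}$ use $p'\le1+\gamma$ and $|\mathsf{h}|^{(1+\gamma)/2}\in\mathbf F_\chi$ (applied to $(u^{p/2}\sqrt\rho)^2$) to bound $\langle|\mathsf{h}|^{p'}\mathbf 1_{|\mathsf{h}|>1}u^p\rho\rangle$ by $\chi(1+\epsilon_3)\langle|\nabla u^{p/2}|^2\rho\rangle$ plus a $k$- and $\varepsilon_2$-small remainder; the remaining contributions are precisely $C\big\langle(\mathbf 1_{|\mathsf{h}|>1}+|\mathsf{h}|^p\mathbf 1_{|\mathsf{h}|\le1})|f|^p\rho\big\rangle$ together with a multiple of $\langle u^p\rho\rangle$. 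Collecting everything, the total coefficient in front of $\langle|\nabla u^{p/2}|^2\rho\rangle$ coming from the right is $\frac{\delta_+}p(1+\epsilon_1)+\frac{\varepsilon_2^{p'}}{p'}\chi(1+\epsilon_3)+C\sqrt k$ (respectively $\frac2p\sqrt{\delta(1+\epsilon_1)}+C\sqrt k$ in case \eqref{first_cond}), and the crucial point is that the choice $p>\frac4{4-\delta_+}$ (respectively $p>\frac2{2-\sqrt\delta}$) makes $\frac{\delta_+}p<\frac{4(p-1)}{p^2}$ (respectively $\frac2p\sqrt\delta<\frac{4(p-1)}{p^2}$); so for $\epsilon_1,\epsilon_3,\varepsilon_2,k$ sufficiently small those terms are absorbed into $\frac{4(p-1)}{p^2}\langle|\nabla u^{p/2}|^2\rho\rangle$, the leftover $\langle u^p\rho\rangle$ terms (with coefficients $c_{\delta_+},c_\delta,c_\chi$ and $k$-dependent constants) collect into a generic $\lambda_0$, and \eqref{c_ineq_rho} follows after renaming constants. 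The one genuinely delicate point is this bookkeeping: verifying that the gain from $p$ beats exactly the loss $\delta_+$ (or $\sqrt\delta$) while the weight-generated errors are separately controllable, and that the resulting $\lambda_0$, $k$, $C$ depend only on $d$, $p$, $\theta$ and the form-bounds, not on the bounds of $b$ and $\mathsf{h}$.
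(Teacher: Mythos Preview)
Your proposal is correct and follows essentially the same route as the paper: multiply \eqref{eq7} by $u^{p-1}\rho$, integrate, handle the drift term by integration by parts (in the \eqref{second_cond} case) or by the quadratic inequality and $b\in\mathbf F_\delta$ (in the \eqref{first_cond} case), treat the right-hand side exactly as $I_4$ in Proposition~\ref{c_prop2}, and absorb all $\nabla\rho$-generated errors using \eqref{est_rho} with $k$ small. The only cosmetic difference is that the paper, rather than invoking $u\ge 0$ from the maximum principle, assumes without loss of generality that $p$ is rational with odd denominator so that $u^{p-1}$ is defined for either sign of $u$; your observation that the right-hand side $|\mathsf{h}f|\ge 0$ forces $u\ge 0$ is a cleaner way to the same end.
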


\begin{proof}
Let $b$ satisfy condition \eqref{second_cond}.
We may assume without loss of generality that $p>\frac{2}{2-\delta_+}$ is rational with odd denominator.
We multiply equation \eqref{eq7} by $u^{p-1}\rho$ and integrate to obtain
\begin{align*}
\lambda \langle u^p\rho\rangle + \frac{4(p-1)}{p^2}\langle \nabla u^{\frac{p}{2}},(\nabla u^{\frac{p}{2}})\rho\rangle +  \frac{2}{p}\langle \nabla u^{\frac{p}{2}},u^{\frac{p}{2}}\nabla \rho \rangle  + \frac{2}{p}\langle b \cdot \nabla u^{\frac{p}{2}},u^{\frac{p}{2}}\rho\rangle = \langle |\mathsf{h}|f,u^{p-1}\rho\rangle.
\end{align*}
Now we argue as in the proof of Proposition \ref{c_prop2}, but instead of the iterations we use a straightforward estimate $|\nabla \rho| \leq (\frac{d}{2}+1)\sqrt{k} \rho$ in order to get rid of $\nabla \rho$ in the previous identity. We arrive at 
\begin{align*}
p\lambda \langle u^p\rho \rangle & + \biggl(\frac{4(p-1)}{p}-\frac{4}{p}\varepsilon \biggr) \langle |\nabla u^{\frac{p}{2}}|^2 \rho\rangle \\ 
& \leq \frac{p}{4\varepsilon} (\frac{d}{2}+1)^2 k \langle v^p\rho\rangle + (\frac{d}{2}+1)\sqrt{k}\langle |b|u^p\rho\rangle + \langle {\rm div\,}b_+,u^p\rho\rangle \qquad (\varepsilon,\varepsilon'>0)\\
& + p\biggl(  2\varepsilon' \chi \langle |\nabla u^{\frac{p}{2}}|^2\rho \rangle + 2\varepsilon' \chi (\frac{d}{2}+1)^2 k \langle u^p \rho\rangle \\
& + \varepsilon' (c_\chi+1) \langle u^p  \rho \rangle  + \frac{1}{4\varepsilon'}\langle \big(\mathbf{1}_{|\mathsf{h}|>1} + |\mathsf{h}|^{p}\mathbf{1}_{|\mathsf{h}| \leq 1}\big)|f|^p \rho \rangle \biggr).
\end{align*}
The terms $\langle |b|u^p\rho\rangle$, $\langle ({\rm div\,}b)_+,u^p\rho\rangle$ are estimated by applying quadratic inequality and using condition \eqref{second_cond}. Selecting $\varepsilon$, $\varepsilon'$, $k$ sufficiently small, we arrive at the sought inequality. 

\medskip

If $b$ satisfies \eqref{first_cond}, then the proof is similar but easier (i.e.\,we do not need to integrate by parts, only to apply quadratic inequality to $\langle b \cdot \nabla u^{\frac{p}{2}},u^{\frac{p}{2}}\rho\rangle$ and use form-boundedness of $b$).
\end{proof}

\subsubsection*{Proof of Theorem \ref{thm2}, completed} By Proposition \ref{nonhom_prop}, for all $\lambda \geq 1$,
$$
\sup_{y \in B_{\frac{1}{2}}(x)} |u(y)| \leq K \biggl( \langle |u|^{p\theta}\rho_x\rangle^{\frac{1}{p\theta}} + \lambda^{-\frac{\beta}{p}}\big\langle \big(\mathbf{1}_{|\mathsf{h}|>1} + |\mathsf{h}|^{p\theta'}\mathbf{1}_{|\mathsf{h}| \leq 1}\big)|f|^{p\theta'}\rho_x\big\rangle^{\frac{1}{p\theta'}} \biggr),
$$
where $\rho_x(y):=\rho(y-x)$, and constant $C$ is generic, so
$$
\|u\|_\infty \leq K \sup_{x \in \frac{1}{2}\mathbb Z^d}\biggl( \langle |u|^{p\theta}\rho_x\rangle^{\frac{1}{p\theta}} + \lambda^{-\frac{\beta}{p}}\big\langle \big(\mathbf{1}_{|\mathsf{h}|>1} + |\mathsf{h}|^{p\theta'}\mathbf{1}_{|\mathsf{h}| \leq 1}\big)|f|^{p\theta'}\rho_x\big\rangle^{\frac{1}{p\theta'}} \biggr).
$$
Applying Proposition \ref{rho_prop} to the first term in the RHS (with $p\theta$ instead of $p$), we obtain for all $\lambda \geq \lambda_0 \vee 1$
\begin{align*}
\|u\|_\infty \leq C  \sup_{x \in \frac{1}{2}\mathbb Z^d}\biggl( & (\lambda-\lambda_0)^{-\frac{1}{p\theta}}\big\langle \big(\mathbf{1}_{|\mathsf{h}|>1} + |\mathsf{h}|^{p\theta}\mathbf{1}_{|\mathsf{h}| \leq 1}\big) |f|^{p\theta}\rho_x\big\rangle^{\frac{1}{p\theta}} \\
&+ \lambda^{-\frac{\beta}{p}}\big\langle \big(\mathbf{1}_{|\mathsf{h}|>1} + |\mathsf{h}|^{p\theta'}\mathbf{1}_{|\mathsf{h}| \leq 1}\big)|f|^{p\theta'}\rho_x\big\rangle^{\frac{1}{p\theta'}} \biggr).
\end{align*}
This ends the proof of Theorem \ref{thm2}. \hfill \qed

\bigskip

\section{Proof of Theorem \ref{markov_thm}}

(\textit{i}) By the assumption of the theorem, the Borel measurable vector field $b:\mathbb R^d \rightarrow \mathbb R^d$ satisfies either 
\begin{equation}
\tag{$\mathbb{A}_1$}
b \in \mathbf{F}_\delta \quad \text{with } \delta<4
\end{equation}
or
\begin{equation}
\tag{$\mathbb{A}_2$}
\left\{
\begin{array}{l}
b \in \mathbf{MF}_\delta \text{ for some } \delta<\infty, \\[2mm]
({\rm div\,}b)_- \in L^1+L^\infty, \\[2mm]
({\rm div\,}b)_+^{\frac{1}{2}} \in \mathbf{F}_{\delta_+} \text{ with } \delta_+<4, \\[2mm]
|b|^{\frac{1+\alpha}{2}} \in \mathbf{F}_{\chi} \quad \text{ for some $\alpha>0$ fixed arbitrarily small, and some $\chi<\infty$}.
\end{array}
\right.
\end{equation}
We define a regularization of $b$ as in Section \ref{approx_sect}:
$$b_\varepsilon:=E_\varepsilon b, \quad \varepsilon \downarrow 0,$$
where $E_\varepsilon$ is the Friedrichs mollifier.
Then, recall, $\{b_\varepsilon\}$ are bounded and smooth, preserve all form-bounds in \eqref{A_0} or in \eqref{A_1}, and converge to $b$ in $[L^2_{\loc}]^d$ or in $[L^1_{\loc}]^d$, respectively.

\medskip

Step 1.~By the classical theory, for every $\varepsilon>0$, there exist unique strong solution $Y_\varepsilon$ to SDE
\begin{equation*}
Y_\varepsilon(t)=y-\int_0^t b_{\varepsilon}(Y_\varepsilon(s))ds + \sqrt{2}B(t), \quad y \in \mathbb R^d,
\end{equation*}
where $\{B(t)\}_{t \geq 0}$ is a Brownian motion in $\mathbb R^d$ on a fixed complete probability space $(\Omega,\mathcal F,\mathcal F_t,\mathbf P)$.

\smallskip

Fix $T>0$. 

\begin{lemma}
\label{g_cond_lem}
Let vector field $\mathsf{g} \in [C_b(\mathbb R^d)]^d$ be such that:

\smallskip

1. If $b$ satisfies condition \eqref{A_1}, then
\begin{equation}
\label{g_cond}
\langle |\mathsf{g}|^{1+\alpha}\varphi,\varphi \rangle \leq \chi \|\nabla \varphi\|_2^2 + c_\chi\|\varphi\|_2^2, \quad \varphi \in W^{1,2},
\end{equation}
where constants $\chi$, $c_\chi$ are from condition \eqref{A_1}. 

\smallskip

2.  If $b$ satisfies condition \eqref{A_0}, then \eqref{g_cond} holds with $\alpha=1$, $\chi=\delta$ and $c_{\chi}=c_\delta$.

\smallskip

Fix $\gamma>0$ by  $1+\alpha=(1+\gamma)^2$.
Then
\begin{equation}
\label{tight_ineq0}
\mathbf E\int_{t_0}^{t_1} |\mathsf{g}(Y_\varepsilon(s))|ds \leq C_2(t_1-t_0)^{\frac{\gamma}{1+\gamma}},
\end{equation}
where constant $C_2$ does not depend on $\varepsilon$, $y$ or $t_0$, $t_1$ (but it depends, by Theorem \ref{thm2}, on constants $\chi$, $c_\chi$). 
\end{lemma}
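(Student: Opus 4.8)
The plan is to derive \eqref{tight_ineq0} from the global $L^\infty$ estimate of Theorem \ref{thm2}, applied to the regularized — bounded, smooth — operator $\lambda-\Delta+b_\varepsilon\cdot\nabla$, the point being that the constants produced by Theorem \ref{thm2} are \emph{generic}, hence independent of $\varepsilon$, of $y$, and of any auxiliary cutoff or mollification. First I would invoke the strong Markov property of $Y_\varepsilon$ (legitimate since $b_\varepsilon$ is bounded and smooth) to reduce to starting at time $0$: it suffices to bound $\sup_{z}\mathbf E_z\int_0^\tau|\mathsf g(Y_\varepsilon(s))|\,ds$ for $\tau:=t_1-t_0\in\,]0,T]$. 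Next, Hölder's inequality in $s$ with exponents $1+\gamma$ and $\tfrac{1+\gamma}{\gamma}$, followed by Jensen's inequality (concavity of $x\mapsto x^{1/(1+\gamma)}$), gives
$$\mathbf E_z\int_0^\tau|\mathsf g(Y_\varepsilon(s))|\,ds\le\tau^{\frac{\gamma}{1+\gamma}}\Big(\mathbf E_z\int_0^\tau|\mathsf g(Y_\varepsilon(s))|^{1+\gamma}\,ds\Big)^{\frac{1}{1+\gamma}},$$
and, for any $\lambda>0$, the elementary inequality $\int_0^\tau(\,\cdot\,)\,ds\le e^{\lambda\tau}\int_0^\infty e^{-\lambda s}(\,\cdot\,)\,ds$ turns the last expectation into $e^{\lambda\tau}v_\varepsilon(z)$, where $v_\varepsilon$ is the bounded (classical) solution of $(\lambda-\Delta+b_\varepsilon\cdot\nabla)v_\varepsilon=|\mathsf g|^{1+\gamma}$. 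Thus everything reduces to an $L^\infty$ bound on $v_\varepsilon$ with the correct decay in $\lambda$.

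To obtain this bound I would write $|\mathsf g|^{1+\gamma}=|\mathsf h|$ with $\mathsf h:=\mathsf g\,|\mathsf g|^{\gamma}\in[C_b(\mathbb R^d)]^d$; then $|\mathsf h|^{\frac{1+\gamma}{2}}=|\mathsf g|^{\frac{(1+\gamma)^2}{2}}=|\mathsf g|^{\frac{1+\alpha}{2}}\in\mathbf F_\chi$ with the same $c_\chi$ by \eqref{g_cond} (in the case \eqref{A_0} one instead has $\mathsf g\in\mathbf F_\delta$, hence $\mathsf h\in\mathbf F_{\chi'}$ for some finite $\chi'$ via $|\mathsf g|^{2(1+\gamma)}\le(1\vee\|\mathsf g\|_\infty)^{2\gamma}|\mathsf g|^2$). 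The operator $\lambda-\Delta+b_\varepsilon\cdot\nabla$ satisfies the hypotheses on $b$ in Theorem \ref{thm2} for $b=b_\varepsilon$ (using the approximation lemmas of Section \ref{approx_sect} to see that $b_\varepsilon$ preserves the relevant form-bounds, and the indicated admissible range of $p$, $\theta$). After replacing $|\mathsf h|$ by $|\mathsf h\,\zeta_R|$ with $\zeta_R\in C_c^\infty$, $0\le\zeta_R\le1$, $\zeta_R\uparrow1$ (and, to remain within the letter of Theorem \ref{thm2}, also mollifying $\mathsf h$, which by Lemma \ref{cl4} does not enlarge the form-bound of $|\mathsf h|^{(1+\gamma)/2}$), Theorem \ref{thm2} applies to the solution $v_{\varepsilon,R}$ of $(\lambda-\Delta+b_\varepsilon\cdot\nabla)v_{\varepsilon,R}=|\mathsf h\,\zeta_R|$, with $f=\zeta_R$ and all constants generic. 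Since $\mathsf h$ is bounded and $|\zeta_R|\le1$, the weight-dependent quantities on the right of \eqref{unif_est_u} are bounded by $(1+\|\mathsf g\|_\infty^{(1+\gamma)p\theta})^{1/(p\theta)}\langle\rho\rangle^{1/(p\theta)}$ and its $\theta'$-analogue, \emph{uniformly in $x$} (translation invariance of $\langle\rho_x\rangle$) and in $R$ and the mollification parameter; hence
$$\|v_{\varepsilon,R}\|_\infty\le C(\mathsf g)\,\lambda^{-c_0},\qquad c_0:=\tfrac{1}{p\theta}\wedge\tfrac{\beta}{p}>0,\quad \lambda\ge(2\lambda_0)\vee1,$$
where $C(\mathsf g)$ depends only on $d$, $\chi$, $c_\chi$, the other structural parameters and $\|\mathsf g\|_\infty$. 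Letting $R\to\infty$ (and the mollification parameter $\to0$) and using that $(\lambda-\Delta+b_\varepsilon\cdot\nabla)^{-1}$ is positivity preserving, so that $v_{\varepsilon,R}\uparrow v_\varepsilon$ pointwise, the same bound holds for $\|v_\varepsilon\|_\infty$, uniformly in $\varepsilon$.

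Finally I would combine this with the crude bound $\mathbf E_z\int_0^\tau|\mathsf g|^{1+\gamma}ds\le\|\mathsf g\|_\infty^{1+\gamma}\tau$ and optimize in $\lambda$: taking $\lambda=\tau^{-1}$ when $\tau\le(2\lambda_0)^{-1}\wedge1$ and using the crude bound otherwise yields, since $c_0<1$,
$$\mathbf E_z\int_0^\tau|\mathsf g(Y_\varepsilon(s))|^{1+\gamma}\,ds\le C\,\tau^{c_0}\qquad\text{uniformly for }\tau\in\,]0,T],$$
the $T$-dependence going into $C$. Plugging this into the Jensen--Hölder inequality from the first paragraph and using $\tau\le T$ gives $\mathbf E_z\int_0^\tau|\mathsf g(Y_\varepsilon(s))|\,ds\le C_2\,\tau^{\frac{\gamma+c_0}{1+\gamma}}\le C_2\,\tau^{\frac{\gamma}{1+\gamma}}$; the Markov reduction of the first step then upgrades this to \eqref{tight_ineq0} for arbitrary $t_0$.

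The main obstacle is essentially one of bookkeeping: ensuring that \emph{every} constant generated along the way is independent of $\varepsilon$ and of the cutoff/mollification, which is precisely what the genericity of the constants in Theorem \ref{thm2} provides, and that the $\mathsf h$-dependent coefficient $\mathbf 1_{|\mathsf h|>1}+|\mathsf h|^{p\theta}\mathbf 1_{|\mathsf h|\le1}$ in \eqref{unif_est_u} can be absorbed using only the boundedness of $\mathsf g$. The one genuinely structural step is the Jensen--Hölder passage from $|\mathsf g|$ to $|\mathsf g|^{1+\gamma}$: the original integrand $|\mathsf g|$ cannot be written as $|\mathsf h\,f|$ with $f$ bounded and $|\mathsf h|^{(1+\gamma)/2}$ form-bounded, whereas $|\mathsf g|^{1+\gamma}$ can, with $f\equiv1$ — and the exponent $1+\alpha=(1+\gamma)^2$ is calibrated exactly so that $|\mathsf h|^{(1+\gamma)/2}$ falls back onto the hypothesis \eqref{g_cond} on $\mathsf g$.
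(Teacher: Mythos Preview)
Your approach is essentially the paper's: pass via H\"older to $|\mathsf g|^{1+\gamma}$, recognize the resulting expectation as the solution of $(\lambda-\Delta+b_\varepsilon\cdot\nabla)u_\varepsilon=|\mathsf h|$ with $\mathsf h=\mathsf g|\mathsf g|^\gamma$, and feed this into Theorem~\ref{thm2}. Two points where the paper is leaner. First, the Markov reduction and the $\lambda$-optimization are unnecessary: the paper writes $\int_{t_0}^{t_1}|\mathsf g|\,ds=\int_{t_0}^{t_1}e^{\lambda s}\cdot e^{-\lambda s}|\mathsf g|\,ds$, applies H\"older with the crude bound $e^{\lambda s}\le e^{\lambda T}$ to extract $(t_1-t_0)^{\gamma/(1+\gamma)}$ directly for arbitrary $t_0$, and simply fixes $\lambda$ once so that Theorem~\ref{thm2} applies; the $T$-dependence in $C_2$ is allowed. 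Second, and more substantively: your treatment of the case \eqref{A_0} produces a form-bound $\chi'=(1\vee\|\mathsf g\|_\infty)^{2\gamma}\delta$ for $\mathsf h$, hence a $C_2$ depending on $\|\mathsf g\|_\infty$. That is harmless for fixed $\mathsf g$, but it defeats the purpose of the lemma once one takes $\mathsf g=b_\varepsilon$ right afterward, since $\|b_\varepsilon\|_\infty$ is not uniform in $\varepsilon$. The paper avoids this by observing that in \emph{both} cases $|\mathsf h|^{(1+\gamma)/2}=|\mathsf g|^{(1+\alpha)/2}\in\mathbf F_\chi$ is exactly hypothesis \eqref{g_cond}, and that the proof of Theorem~\ref{thm2} (the handling of $I_4$ in Proposition~\ref{c_prop2}) works under this condition also when $b$ satisfies \eqref{first_cond}, provided one picks $p$ with $p'\le1+\gamma$; this keeps $C_2$ depending on $\mathsf g$ only through $\chi,c_\chi$. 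Incidentally, the right-hand side of \eqref{unif_est_u} needs no $\|\mathsf g\|_\infty$ at all: $\mathbf 1_{|\mathsf h|>1}+|\mathsf h|^{p\theta}\mathbf 1_{|\mathsf h|\le1}\le1$ pointwise, so the supremum over $x$ is simply $\langle\rho\rangle^{1/(p\theta)}+\langle\rho\rangle^{1/(p\theta')}$.
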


(We will be applying \eqref{tight_ineq0} with $\mathsf{g}=b_\varepsilon$.)

\begin{proof}[Proof of Lemma \ref{g_cond_lem}]
First, let $\mathsf{g} \in [C_c(\mathbb R^d)]^d$.
 By H\"{o}lder's inequality,
\begin{align}
\mathbf E\int_{t_0}^{t_1} |\mathsf{g}(Y_\varepsilon(s))|ds & =  \mathbf E\int_{t_0}^{t_1} e^{\lambda t} e^{-\lambda t}|\mathsf{g}(Y_\varepsilon(s))|ds \notag \\
& \leq e^{\lambda T}(t_1-t_0)^{\frac{\gamma}{1+\gamma}}\bigg(\mathbf{E}\int_{0}^{\infty} e^{-(1+\gamma)\lambda t}|\mathsf{g}(Y_\varepsilon(s))|^{1+\gamma}ds\bigg)^{\frac{1}{1+\gamma}} \notag \\
& = e^{\lambda T}(t_1-t_0)^{\frac{\gamma}{1+\gamma}} u_\varepsilon(x)^{\frac{1}{1+\gamma}} \label{e_tight}
\end{align}
where $u_\varepsilon$
is the classical solution to non-homogeneous elliptic equation
$$
\bigl[(1+\gamma)\lambda -\Delta + b_\varepsilon \cdot \nabla\bigr]u_\varepsilon=|\mathsf{g}|^{1+\gamma}.
$$
Note that, in view of the results of Section \ref{approx_sect}, condition \eqref{A_1}
implies the second condition \eqref{A_1prime} on $b$ of Theorem \ref{thm1} for $b_\varepsilon$. (If $b$ satisfies condition \eqref{A_0}, then $b_\varepsilon$ satisfy the same condition in Theorem \ref{thm1}.)
Further, we take in Theorem \ref{thm2}   $\mathsf{h}:=\mathsf{g}|\mathsf{g}|^{\gamma}$ and $f=1$ in a neighbourhood of the support of $\mathsf{g}$. In view of $1+\alpha=(1+\gamma)^2$ and \eqref{g_cond}, $\mathsf{h}$ satisfies condition $|\mathsf{h}|^{\frac{1+\gamma}{2}} \in \mathbf{F}_\chi$ of Theorem \ref{thm2}.
Thus, Theorem \ref{thm2} applies and yields
\begin{align}
\|u_\varepsilon\|_\infty \leq  
C \sup_{x \in \frac{1}{2}\mathbb Z^d}\biggl( \langle \big(\mathbf{1}_{|\mathsf{g}|>1} + |\mathsf{g}|^{(1+\gamma)p\theta}\mathbf{1}_{|\mathsf{g}| \leq 1}\big)\rho_x\rangle^{\frac{1}{p\theta}} + \langle \big(\mathbf{1}_{|\mathsf{g}|>1} + |\mathsf{g}|^{(1+\gamma)p\theta'}\mathbf{1}_{|\mathsf{g}| \leq 1}\big)\rho_x\rangle^{\frac{1}{p\theta'}} \biggr), \label{key_est0}
\end{align}
where the right-hand side is finite (by our choice of $\rho$) and clearly does not depend on $\varepsilon$.
It is seen now that \eqref{tight_ineq0} follows from \eqref{e_tight}. Using Fatou's lemma, we can replace the requirement that $\mathsf{g}$ has compact support by $\mathsf{g} \in [C_b(\mathbb R^d)]^d$.
\end{proof}

Inequality \eqref{tight_ineq0} yields, upon taking $\mathsf{g}:=b_\varepsilon$,
\begin{equation}
\label{tight_ineq}
\mathbf E\int_{t_0}^{t_1} |b_\varepsilon(Y_\varepsilon(s))|ds \leq C_2(t_1-t_0)^{\frac{\gamma}{1+\gamma}}
\end{equation}
(note that $|b_\varepsilon|^{1+\gamma}$ have independent of $\varepsilon$ finite form-bound $\chi$ and constant $c_\chi$, see Lemma \ref{cl4}). This gives us the next lemma.
We will write $Y^y_{\varepsilon}$ to emphasize the dependence of solution $Y_\varepsilon$ on $y$.

\begin{lemma}
\label{tight_lem}
{\rm (\textit{i})} For every $\beta>0$,
\begin{equation}
\label{tight_est}
\sup_{\varepsilon>0} \sup_{y \in \mathbb R^d} \mathbf P \bigg[ \sup_{t \in [0,1], \sigma' \in [0,\sigma]} |Y^{y}_\varepsilon(t+\sigma') - Y^y_\varepsilon(t)|>\beta \bigg] \leq \hat{C}H(\sigma), 
\end{equation}
where constant $\hat{C}$ and function $H$ are independent of $\varepsilon$, and $H(\sigma) \downarrow 0$ as $\sigma \downarrow 0$.

\smallskip

{\rm (\textit{ii})} For every $y \in \mathbb R^d$, the family of probability measures $$\mathbb P^\varepsilon_x:=(\mathbf P \circ Y^y_\varepsilon)^{-1}, \quad \varepsilon>0,$$ is tight on the canonical space of continuous trajectories  on $[0,T]$.
\end{lemma}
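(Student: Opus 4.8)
The plan is to derive both assertions from the uniform moment-type bound \eqref{tight_ineq}, which controls the time-integral of $|b_\varepsilon(Y^y_\varepsilon(\cdot))|$ with a Hölder modulus in the length of the interval and with constants independent of $\varepsilon$ and $y$. First I would write the increment of the solution over a subinterval $[t,t+\sigma']$ as the sum of a drift part and a Brownian part,
$$
Y^y_\varepsilon(t+\sigma')-Y^y_\varepsilon(t)=-\int_t^{t+\sigma'}b_\varepsilon(Y^y_\varepsilon(s))\,ds+\sqrt{2}\big(B(t+\sigma')-B(t)\big),
$$
and estimate the two pieces separately. For the drift part, the supremum over $t\in[0,1]$ and $\sigma'\in[0,\sigma]$ of the absolute value of $\int_t^{t+\sigma'}b_\varepsilon(Y^y_\varepsilon)\,ds$ is bounded by $\int_0^{1+\sigma}|b_\varepsilon(Y^y_\varepsilon(s))|\,\mathbf 1_{\{\cdots\}}\,ds$; more efficiently, since the integrand is nonnegative, one bounds the oscillation over any window of length $\le\sigma$ by a sum of at most $\lceil 1/\sigma\rceil+1$ integrals over consecutive windows of length $\sigma$, each of which has expectation $\le C_2\sigma^{\gamma/(1+\gamma)}$ by \eqref{tight_ineq}. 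Hence $\mathbf E\big[\sup_{t,\sigma'}\big|\int_t^{t+\sigma'}b_\varepsilon(Y^y_\varepsilon)\,ds\big|\big]\le C\sigma^{\gamma/(1+\gamma)-1}\cdot\sigma^{\gamma/(1+\gamma)}$ is not quite what is wanted, so instead I would keep the cleaner bound: the full drift contribution to the oscillation over $[0,1+\sigma]$ is dominated by $\int_0^{1+\sigma}|b_\varepsilon(Y^y_\varepsilon(s))|\,ds$, whose expectation is $\le C_2(1+\sigma)^{\gamma/(1+\gamma)}\le 2C_2$ uniformly; to get a genuine modulus one partitions $[0,1]$ into $\lfloor 1/\sigma\rfloor$ blocks, notes that an oscillation over a window of length $\le\sigma$ meets at most two adjacent blocks, and applies \eqref{tight_ineq} blockwise together with a union bound and Chebyshev's inequality. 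This produces $\mathbf P[\sup_{t,\sigma'}|\int\cdots|>\beta/2]\le (2/\beta)\,C\,\sigma^{\gamma/(1+\gamma)}$, with $C$ independent of $\varepsilon,y$.

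For the Brownian part, the modulus of continuity estimate for Brownian motion is classical and completely independent of $\varepsilon$ and $y$ (translation invariance): $\mathbf P[\sup_{t\in[0,1],\sigma'\in[0,\sigma]}|B(t+\sigma')-B(t)|>\beta/(2\sqrt2)]\le H_0(\sigma)$ with $H_0(\sigma)\downarrow0$ as $\sigma\downarrow0$; one may cite Lévy's modulus-of-continuity theorem or simply a reflection-principle plus covering argument. Combining the two pieces via $\{|Y^y_\varepsilon(t+\sigma')-Y^y_\varepsilon(t)|>\beta\}\subset\{|\text{drift part}|>\beta/2\}\cup\{|\text{Brownian part}|>\beta/2\}$ and taking $\hat C H(\sigma):=(2/\beta)C\sigma^{\gamma/(1+\gamma)}+H_0(\beta/(2\sqrt2),\sigma)$ gives \eqref{tight_est}, with the right-hand side independent of $\varepsilon$ and $y$ and tending to $0$ as $\sigma\downarrow0$. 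Assertion (\textit{i}) follows.

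For assertion (\textit{ii}), fix $y$. The law $\mathbb P^\varepsilon_y=(\mathbf P\circ Y^y_\varepsilon)^{-1}$ on $C([0,T],\mathbb R^d)$ starts at the deterministic point $y$, so the family is trivially tight "at time $0$"; by the Arzelà–Ascoli-type tightness criterion on $C([0,T],\mathbb R^d)$ (Kolmogorov–Chentsov / Prokhorov), it remains to verify uniform control of the modulus of continuity, i.e. $\lim_{\sigma\downarrow0}\sup_{\varepsilon}\mathbb P^\varepsilon_y[w_{Y}(\sigma)>\beta]=0$ for every $\beta>0$, where $w_Y(\sigma)$ denotes the modulus of continuity on $[0,T]$. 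Covering $[0,T]$ by $\lceil T\rceil$ unit subintervals and applying \eqref{tight_est} on each of them (after the obvious time-shift, which is harmless since the estimate is uniform in the initial point $y$ and in $\varepsilon$) yields exactly such a bound with $\lceil T\rceil\,\hat C H(\sigma)$ in place of $\hat C H(\sigma)$. Hence $\{\mathbb P^\varepsilon_y\}_{\varepsilon>0}$ is tight. The main obstacle here is purely the drift term: one must make sure that the oscillation of the time-integral of $b_\varepsilon(Y^y_\varepsilon)$ is controlled \emph{uniformly in $\varepsilon$}, and this is precisely what \eqref{tight_ineq} — itself a consequence of the embedding Theorem \ref{thm2} with generic (i.e. $\varepsilon$-independent) constants — delivers; everything else is the standard tightness machinery.
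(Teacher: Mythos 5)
Your decomposition of the increment into drift and Brownian parts, and the reduction of tightness (\textit{ii}) to the uniform modulus estimate (\textit{i}), follow the same lines as the paper. The gap is in the one step that is actually delicate: the passage from the first-moment bound \eqref{tight_ineq} to a bound on $\mathbf P\big[\sup_{t\in[0,1],\,\sigma'\le\sigma}\int_t^{t+\sigma'}|b_\varepsilon(Y^y_\varepsilon)|\,ds>\beta/2\big]$ that vanishes as $\sigma\downarrow 0$ uniformly in $\varepsilon$. Your blockwise argument covers $[0,1]$ by $N\sim\sigma^{-1}$ blocks of length $\sigma$, bounds the oscillation over any window by twice the maximum of the block integrals $A_k$, and then applies ``a union bound and Chebyshev''. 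With only first-moment control $\mathbf E A_k\le C\sigma^{\gamma/(1+\gamma)}$ this gives $\sum_k\mathbf P[A_k>c]\le c^{-1}N\cdot C\sigma^{\gamma/(1+\gamma)}\sim C\,c^{-1}\sigma^{\gamma/(1+\gamma)-1}$, which \emph{diverges} as $\sigma\downarrow 0$ because $\gamma/(1+\gamma)<1$; the bound $(2/\beta)C\sigma^{\gamma/(1+\gamma)}$ you state has simply dropped the factor $N\sim\sigma^{-1}$ coming from the union bound. So as written the key probabilistic step does not close, and assertion (\textit{i}) is not proved.

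This loss is exactly what the paper's proof is designed to avoid. There one first upgrades \eqref{tight_ineq} to an estimate at stopping times, $\mathbf E\int_\tau^{\tau+\sigma}|b_\varepsilon(Y^y_\varepsilon)|\,ds\le C_0\sigma^{\gamma/(1+\gamma)}$ (this requires the dyadic approximation argument of \cite[Remark 1.2]{ZZ}; it is not automatic from the deterministic-time version), which yields $\mathbf E\sup_{\sigma'\in[0,\sigma]}|Y^y_\varepsilon(\tau+\sigma')-Y^y_\varepsilon(\tau)|\le H(\sigma)$ for all stopping times $\tau\le 1$. One then invokes \cite[Lemma 2.7]{ZZ2}, an Aldous-type lemma converting this stopping-time increment bound into $\mathbf E\sup_{t\in[0,1],\sigma'\in[0,\sigma]}|Y^y_\varepsilon(t+\sigma')-Y^y_\varepsilon(t)|^{1/2}\le\hat CH(\sigma)$; the power $1/2<1$ is precisely what absorbs the combinatorial cost that defeats your union bound, and Chebyshev then gives \eqref{tight_est}. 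If you want to keep your deterministic-block route, you would need more than first moments of the $A_k$: e.g.\ combine the Markov property of $Y^y_\varepsilon$ with the uniform-in-starting-point bound $\sup_z\mathbf E_z\int_0^\sigma|b_\varepsilon(Y_s)|\,ds\le C\sigma^{\gamma/(1+\gamma)}$ and Khasminskii's lemma to get exponential moments of each $A_k$; those do survive a union bound over $\sigma^{-1}$ blocks. Either way, an additional ingredient beyond ``union bound plus Chebyshev on first moments'' is indispensable. The Brownian part and the deduction of (\textit{ii}) from (\textit{i}) are fine.
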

\begin{proof}[Proof of Lemma \ref{tight_lem}]
The argument is standard. For reader's convenience, we include it below (we repeat more or less verbatim a part of \cite{KiS_sharp}). Put for brevity $T=1$.
We have, for a stopping time $0 \leq \tau \leq 1$,
\begin{equation}
\label{X_tau}
Y^y_\varepsilon(\tau+\sigma)-Y^y_\varepsilon(\tau)=\int_\tau^{\tau+\sigma} b_n(s,Y^y_\varepsilon(s))ds+\sqrt{2}(B(\tau+\sigma)-B(\tau)), \quad 0<\sigma< 1.
\end{equation}
Next, note that \eqref{tight_ineq} yields
\begin{equation}
\label{key_est2}
\mathbf{E}\int_\tau^{\tau+\sigma} |b_n(s,Y^y_\varepsilon(s))|ds \leq C_0\sigma^{\frac{\gamma}{\gamma+1}},
\end{equation}
see Remark 1.2 in \cite{ZZ} (to show that \eqref{tight_ineq} $\Rightarrow$ \eqref{key_est2},  the authors of \cite{ZZ} use a decreasing sequence of stopping times $\tau_m$ converging to $\tau$ and taking values in $S=\{k2^{-m} \mid k \in \mathbb \{0,1,2,\dots\}\}$, and note that the proof of estimate \eqref{key_est2}  with $\tau_m$ in place of $\tau$ can be reduced to applying \eqref{tight_ineq} on intervals $[t_0,t_1]:=[c,c+\sigma]$, $c \in S$.) Thus, applying \eqref{key_est2} in \eqref{X_tau}, one obtains
\begin{align*}
\mathbf E\sup_{\sigma' \in [0,\sigma]} |Y^y_\varepsilon(\tau+\sigma')-Y^y_\varepsilon(\tau)|  \leq C_0 \sigma^{\frac{\gamma}{\gamma+1}} + C_1\sigma^{\frac{1}{2}}=:H(\sigma).
\end{align*}
Now, applying \cite[Lemma 2.7]{ZZ2}, we obtain: there exists constant $\hat{C}$ independent of $\varepsilon$ such that
\begin{equation}
\label{tight2}
\sup_\varepsilon \sup_{y \in \mathbb R^d} \mathbf E \bigg[ \sup_{t \in [0,1], \sigma' \in [0,\sigma]} |Y^y_\varepsilon(t+\sigma') - Y^y_\varepsilon(t)|^{\frac{1}{2}} \bigg] \leq \hat{C} H(\sigma).
\end{equation}
Applying Chebyshev's inequality in \eqref{tight2}, since $H(\sigma) \downarrow 0$ as $\sigma \downarrow 0$, we obtain the first assertion of the lemma. The second assertion follows from the first one, see \cite[Theorem 1.3.2]{SV}.  
\end{proof}

Fix $y \in \mathbb R^d$.
Let $\mathbb P_y$ be a weak subsequential limit point of $\{\mathbb P_y^{\varepsilon}\}$,
\begin{equation}
\label{w}
\mathbb P_y^{\varepsilon_k} \rightarrow \mathbb P_y \text{ weakly} \quad \text{ for some } \varepsilon_k \downarrow 0.
\end{equation}
Let us rewrite \eqref{tight_ineq0} as 
\begin{equation*}
\mathbb E^\varepsilon_y\int_{t_0}^{t_1} |\mathsf{g}(\omega_s)|ds \leq C_2(t_1-t_0)^{\frac{\gamma}{1+\gamma}}.
\end{equation*}
Taking $\mathsf{g}:=b_{\varepsilon_m}$ and then applying \eqref{w},
we obtain
$
\mathbb E_y\int_{t_0}^{t_1} |b_{\varepsilon_m}(\omega_s)|ds \leq C_2 (t_1-t_0)^\frac{\gamma}{1+\gamma},
$
and hence, using e.g.\,Fatou's lemma,
$
\mathbb E_y\int_{t_0}^{t_1} |b(\omega_s)|ds \leq C_2 (t_1-t_0)^\frac{\gamma}{1+\gamma}<\infty.
$

\medskip

Step 2.~Let us show that, for any fixed $y \in \mathbb R^d$, any subsequential limit point $\mathbb P_y$ of $\{\mathbb P_y^{\varepsilon}\}$ (say, \eqref{w} holds) is a solution to the martingale problem for SDE \eqref{sde1}. Set
$$
M^{\varphi,\varepsilon}_t:=\varphi(\omega_t)-\varphi(\omega_0) + \int_0^t (-\Delta\varphi + b_\varepsilon \cdot \nabla \varphi)(\omega_s) ds, \quad \varphi \in C_c^2.
$$ 
It suffices to show that
$\mathbb E_y[M^\varphi_{t_1} G]=\mathbb E_y[M_{t_0}^\varphi G]$ for every $\mathcal B_{t_0}$-measurable $G \in C_b\big(C([0,T],\mathbb R^d)\big)$.
We will do this by passing to the limit in $k$ in $$\mathbb E^{\varepsilon_k}_y[M^{\varphi,\varepsilon_k}_{t_1} G]=\mathbb E^{\varepsilon_k}_y[M_{t_0}^{\varphi,{\varepsilon_k}}G].$$ 
That is, we need to prove
\begin{equation}
\label{c}
\lim_{k}\mathbb E_y^{\varepsilon_k}\int_0^t (b_{\varepsilon_k} \cdot \nabla \varphi)(\omega_s) G(\omega)ds= \mathbb E_y\int_0^t (b \cdot \nabla \varphi)(\omega_s) G(\omega)ds,
\end{equation}
\textit{Proof of \eqref{c}.} First, let us note that repeating the proof of \eqref{tight_ineq0}, but this time selecting $\mathsf{h}:=\mathsf{g}|\mathsf{g}|^\gamma$, $\mathsf{g}:=b_{\varepsilon_{m_1}}-b_{\varepsilon_{m_2}}$, $f:=|\nabla \varphi|$, we have
\begin{align*}
& \mathbb E^\varepsilon_y\int_{t_0}^{t_1} \big|b_{\varepsilon_{m_1}}(\omega_s)-b_{\varepsilon_{m_2}}(\omega_s)\big||\nabla \varphi(\omega_s)|ds \\
&\leq  C_3 \sup_{x \in \frac{1}{2}\mathbb Z^d}\biggl( \langle \big(\mathbf{1}_{|\mathsf{g}|>1} + |\mathsf{g}|^{(1+\gamma)p\theta}\mathbf{1}_{|\mathsf{g}| \leq 1}\big)|\nabla \varphi|^{p\theta}\rho_x\rangle^{\frac{1}{p\theta}} + \langle \big(\mathbf{1}_{|\mathsf{g}|>1} + |\mathsf{g}|^{(1+\gamma)p\theta'}\mathbf{1}_{|\mathsf{g}| \leq 1}\big)|\nabla \varphi|^{p\theta'}\rho_x\rangle^{\frac{1}{p\theta'}} \biggr)^{\frac{1}{1+\gamma}},
\end{align*}
Since $\varphi$ has compact support, the RHS converges to $0$ as $m_1$, $m_2 \rightarrow \infty$.
Now, it follows from the weak convergence \eqref{w} and Fatou's lemma that  
\begin{align*}
 \mathbb E_y\int_{t_0}^{t_1} \big|b(\omega_s)-b_{\varepsilon_m}(\omega_s)\big|&|\nabla \varphi(\omega_s)|ds \\
& \leq   
C_3 \sup_{x \in \frac{1}{2}\mathbb Z^d}\biggl( \langle \big(\mathbf{1}_{|b-b_{\varepsilon_m}|>1} + |b-b_{\varepsilon_m}|^{(1+\gamma)p\theta}\mathbf{1}_{|b-b_{\varepsilon_m}| \leq 1}\big)|\nabla \varphi|^{p\theta}\rho_x\rangle^{\frac{1}{p\theta}} \\
& + \langle \big(\mathbf{1}_{|b-b_{\varepsilon_m}|>1} + |b-b_{\varepsilon_m}|^{(1+\gamma)p\theta'}\mathbf{1}_{|b-b_{\varepsilon_m}| \leq 1}\big)|\nabla \varphi|^{p\theta'}\rho_x\rangle^{\frac{1}{p\theta'}} \biggr)^{\frac{1}{1+\gamma}},
\end{align*}
where the RHS converges to $0$ as $m \rightarrow \infty$. 
We are in position to prove \eqref{c}:
\begin{align*}
&\left| \mathbb E_y^{\varepsilon_{n_k}}\int_0^t (b_{\varepsilon_{n_k}} \cdot \nabla \varphi)(\omega_s) G(\omega)ds - \mathbb E_y\int_0^t (b \cdot \nabla \varphi)(\omega_s) G(\omega)ds\right| \\
& \leq \left| \mathbb E_y^{\varepsilon_{n_k}}\int_0^t |b_{\varepsilon_{n_k}}-b_{\varepsilon_m}| |\nabla \varphi|(\omega_s) |G(\omega)|ds \right| \\
& + \left| \mathbb E_y^{\varepsilon_{n_k}}\int_0^t (b_{\varepsilon_m} \cdot \nabla \varphi)(\omega_s) G(\omega)ds  - \mathbb E_y\int_0^t (b_{\varepsilon_m} \cdot \nabla \varphi)(\omega_s) G(\omega)ds\right| \\
& + \left| \mathbb E_y\int_0^t |b_{\varepsilon_m}-b| |\nabla \varphi|(\omega_s) |G(\omega)|ds \right|,
\end{align*}
where the first and the third terms in the RHS can be made arbitrarily small using the estimates above and the boundedness of $G$ by selecting $m$, and then $n_k$, sufficiently large. The second term can be made arbitrarily small in view of \eqref{w} by selecting $n_k$ even larger. Thus, \eqref{c} follows.

\medskip

Step 3.~Let us now find a subsequence $\varepsilon_k \downarrow 0$ that works for all $y \in \mathbb R^d$ and yields a strong Markov family of probability measures $\mathbb P_y$, $y \in \mathbb R^d$, solutions to the martingale problem for SDE \eqref{sde1}. 
Denote $R^\varepsilon_\lambda f:=u_\varepsilon$, where $u_\varepsilon$ is the classical solution of 
$(\lambda-\Delta + b_\varepsilon \cdot \nabla)u_\varepsilon=f$ in $\mathbb R^d$, $f \in C_c^\infty$, $\lambda \geq \lambda_0 \vee 1$; $$R^\varepsilon_\lambda f(y)=\mathbb E_{\mathbb P^\varepsilon_y}\int_0^\infty e^{-\lambda s} f(\omega_s)ds.$$
By Theorem \ref{thm2}, $u_\varepsilon$ are uniformly in $\varepsilon$ bounded on $\mathbb R^d$. By Theorem \ref{thm1} applied to $b_\varepsilon$, solutions $u_\varepsilon$ are H\"{o}lder continuous on every compact, also uniformly in $\varepsilon>0$. By the Arzel\`{a}-Ascoli theorem and a standard diagonal argument there exists a subsequence $\varepsilon_k \downarrow 0$ such that sequence $\{R^\varepsilon_\lambda f\}$ converges locally uniformly on $\mathbb R^d$, for every $f$ in a fixed dense subset of $C_b$. Let us denote the limit by $R_\lambda f$.
The latter, and the uniform in $\varepsilon$ estimate
$
\|R^\varepsilon_\lambda f\|_\infty \leq \frac{1}{\lambda}\|f\|_\infty
$ 
allow us to extend $R_\lambda f$ to all $f \in C_b$. Thus, $R_\lambda f \in C_b$, $f \in C_b$. Now, for this subsequence $\varepsilon_k \downarrow 0$, for any $y_k \rightarrow y$, any two subsequential limits $\mathbb P^1$, $\mathbb P^2$ of $\{\mathbb P_{y_k}^{\varepsilon_k}\}$ (we use \eqref{w}) have the same finite-dimensional distributions (see \cite{B} for details, if needed) and therefore coincide: $\mathbb P_y:=\mathbb P^1 = \mathbb P^2$. Hence
$
\mathbb E_{\mathbb P_y}\int_0^\infty e^{-\lambda s} f(\omega_s)ds=R_\lambda f(y).
$
By what was proved above, $\mathbb P_y$ is a martingale solution of \eqref{sde1}. A simple argument (see \cite{B}) now gives that, for every $t>0$, $y \mapsto \mathbb E_{\mathbb P_y}f(X_t)$ is a continuous function. The latter, in turn, yields that $\{\mathbb P_y\}_{y \in \mathbb R^d}$ is a strong Markov family (the proof can be found e.g.\,in \cite{B} or \cite[Sect.\,I.3]{B2}).

\medskip

This completes the proof of assertion (\textit{i}).

\medskip

(\textit{ii}) Let $b_n$ be defined by \eqref{b_n}, so that vector fields $\{b_n\}$ do not increase the form-bounds of $b$. 
 In the end of the proof of (\textit{i}) we show that there exists a subsequence $b_{n_k}$ (for brevity, $\{b_n\}$ itself) such that, for every $f \in C_c^\infty(\mathbb R^{d})$, the classical solutions $\{u_n\}$ to elliptic equations
\begin{equation*}
\big(\lambda - \Delta+ b_n \cdot \nabla \big)u_n=f
\end{equation*}
converge locally uniformly on $\mathbb R^{Nd}$ to
\begin{equation}
\label{exp}
x \mapsto \mathbb E_{\mathbb P_x}\int_0^\infty e^{-\lambda s}f(\omega_s^1,\dots,\omega_s^N) ds, \quad x \in \mathbb R^{Nd},
\end{equation}
where $\lambda$ is assumed to be sufficiently large. This yields the local H\"{o}lder continuity of $u$.
At the same time, $u_n$ are weak solutions of \eqref{eq_el} in the sense of Definitions \ref{def_w1} and \ref{def_w3}. The possibility to pass to the limit $\varepsilon \downarrow 0$ in these definitions follows from the standard compactness argument (for details, if needed, see e.g.\,\cite{KiV}).

\medskip

(\textit{iii}) The proof goes by showing that $v_n$ constitute a Cauchy sequence in $L^\infty([0,1],L^p(\mathbb R^d))$, see \cite{Ki_Osaka}, see also \cite{KiS_theory}. At the elliptic level this was done earlier in \cite{KS}  using Trotter's theorem. The proof of the ($L^p$, $L^q$) estimate is due to \cite{S}. (Strictly speaking, these papers did not consider condition \eqref{A_3}, but it is easy to modify the  proofs there to cover the case \eqref{A_3} as well.)

\medskip

(\textit{iv}) It suffices to show that, for all $\mu \geq \mu_0$, for every $f \in C_c^\infty$, 
\begin{equation}
\label{conv_R}
R_\mu^\varepsilon f \rightarrow (\mu+\Lambda_p)^{-1}f \quad \text{ in $C_\infty$ as $\varepsilon \downarrow 0$},
\end{equation}
possibly after a modification of $(\mu+\Lambda_p)^{-1}f$ on a measure zero set. The rest follows from estimates $\|R_{\mu,\varepsilon} f\|_\infty \leq \mu^{-1}\|f\|_\infty$, $\|(\mu+\Lambda_p)^{-1} f\|_\infty \leq \mu^{-1}\|f\|_\infty$ (an immediate consequence of the fact that the corresponding semigroups are $L^\infty$ contractions) using a density argument.

Let us prove \eqref{conv_R}. Put $u_\varepsilon:=R_{\mu,\varepsilon}f$, so $u_\varepsilon$ is the classical solution to $(\mu-\Delta + b\cdot\nabla)u_\varepsilon =f$ on $\mathbb R^d$. Then, by Propositions \ref{nonhom_prop} and \ref{rho_prop} (with $|\mathsf{h}|=1$), for all $\mu (\geq 1 \vee \lambda_0) + 1$
\begin{align*}
\sup_{y \in B_{\frac{1}{2}}(x)} |u_\varepsilon(y)| \leq C  \biggl( \langle |f|^{p\theta}\rho_x\rangle^{\frac{1}{p\theta}}+ \langle |f|^{p\theta'}\rho_x\rangle^{\frac{1}{p\theta'}} \biggr)
\end{align*}
for constant $C$ independent of $\varepsilon$. It is seen now that for a fixed $f \in C_c^\infty$, for a given $\varepsilon>0$, we can find $R>0$ such that
$$
\sup_{y \in \mathbb R^d \setminus \bar{B}_R(0)} |u_\varepsilon(y)|<\varepsilon.
$$
In turn, inside the closed ball $\bar{B}_R(0)$, the family of solutions $\{u_\varepsilon\}_{\varepsilon>0}$ is equicontinuous by Theorem \ref{thm1}. So, applying Arzel\`{a}-Ascoli theorem and using the convergence result for the semigroups in $L^p$ from assertion (\textit{iii}), we obtain \eqref{conv_R}.

\medskip

(\textit{v}) The proof is an application of Proposition \ref{c_prop} and Gehring's lemma:

\begin{lemma}
\label{gehring_prop}
Assume that there exist constants $K \geq 1$, $1<q<\infty$ such that, for given $0\leq g \in L^q$, $0 \leq h \in L^q \cap L^\infty$ we have 
$$
\biggl(\frac{1}{|B_R|}\langle g^q \mathbf{1}_{B_R}\rangle \biggr)^{\frac{1}{q}} \leq \frac{K}{|B_{2R}|}\langle g \mathbf{1}_{B_{2R}}\rangle+ \biggl(\frac{1}{|B_{2R}|}\langle h^q \mathbf{1}_{B_{2R}}\rangle \biggr)^{\frac{1}{q}}
$$
for all $0<R<\frac{1}{2}$. Then $g \in L^s$ for some $s>q$ and
$$
\biggl(\frac{1}{|B_R|}\langle g^s \mathbf{1}_{B_R}\rangle \biggr)^{\frac{1}{s}} \leq C_1\biggl(\frac{1}{|B_{2R}|}\langle g^q \mathbf{1}_{B_{2R}}\rangle \biggr)^{\frac{1}{q}} + C_2\biggl(\frac{1}{|B_{2R}|}\langle h^s \mathbf{1}_{B_{2R}}\rangle \biggr)^{\frac{1}{s}}.
$$
\end{lemma}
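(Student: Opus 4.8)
The statement is the classical Gehring lemma --- a reverse H\"older inequality with increasing supports upgrades $g$ to a slightly larger Lebesgue exponent --- and the plan is to prove it by the Calder\'on--Zygmund stopping-time decomposition of the super-level sets of $g$, combined with a Cavalieri/Fubini integration; this follows the presentation in \cite[Ch.\,V]{G}. First I would reduce to a fixed configuration of concentric balls: fix $B_R$ with $R<\tfrac14$, replace $g$, $h$ by $g\mathbf 1_{B_{2R}}$, $h\mathbf 1_{B_{2R}}$, and, by comparing balls with dyadic cubes up to fixed geometric constants, rephrase the hypothesis as a reverse H\"older inequality $\big(\tfrac1{|Q|}\int_Q g^q\big)^{1/q}\le K'\tfrac1{|Q^\ast|}\int_{Q^\ast}g+\big(\tfrac1{|Q^\ast|}\int_{Q^\ast}h^q\big)^{1/q}$ valid for all dyadic subcubes $Q$ of a cube $Q_0$ sitting between $B_R$ and $B_{2R}$, where $Q^\ast$ is a fixed dilate of $Q$ still contained in $B_{2R}$.

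\textbf{Stopping-time step.} For a threshold $\lambda>\lambda_0$, with $\lambda_0$ a large (dimensional, $K$-dependent) multiple of $\tfrac1{|Q_0|}\int_{Q_0}g+\big(\tfrac1{|Q_0|}\int_{Q_0}h^q\big)^{1/q}$, apply the Calder\'on--Zygmund decomposition to $g^q$ at height $\lambda^q$ inside $Q_0$: one obtains a disjoint family of maximal dyadic subcubes $\{Q_j\}$ with $\lambda^q<\tfrac1{|Q_j|}\int_{Q_j}g^q\le 2^d\lambda^q$ and $g\le\lambda$ a.e.\ off $\bigcup_jQ_j$. On each $Q_j$ the reverse H\"older inequality, together with $\tfrac1{|Q_j|}\int_{Q_j}g^q>\lambda^q$, forces either $\tfrac1{|Q_j^\ast|}\int_{Q_j^\ast}g\gtrsim\lambda$ or $\tfrac1{|Q_j^\ast|}\int_{Q_j^\ast}h^q\gtrsim\lambda^q$; splitting $g=g\mathbf 1_{\{g>c\lambda\}}+g\mathbf 1_{\{g\le c\lambda\}}$ (and likewise for $h$) and using the bounded overlap of $\{Q_j^\ast\}$, this yields $|Q_j|\lesssim\lambda^{-q}\int_{Q_j^\ast\cap\{g>c\lambda\}}g^q+\lambda^{-q}\int_{Q_j^\ast\cap\{h>c\lambda\}}h^q$. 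Summing over $j$ and recalling $\int_{\{g>\lambda\}\cap Q_0}g^q\le 2^d\lambda^q\sum_j|Q_j|$, I obtain the distributional inequality
$$
\int_{\{g>\lambda\}\cap Q_0}g^q\,dx\ \le\ C\,\lambda^{q-1}\int_{\{g>c\lambda\}\cap B_{2R}}g\,dx\ +\ C\int_{\{h>c\lambda\}\cap B_{2R}}h^q\,dx,\qquad\lambda>\lambda_0,
$$
with $C$ and $c$ depending only on $K$, $q$ and $d$.

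\textbf{Integration and absorption.} Multiplying the last inequality by $\lambda^{\varepsilon-1}$ and integrating in $\lambda$ over $(\lambda_0,\infty)$, Fubini's theorem turns the right-hand side into $C\varepsilon^{-1}$ times $\int_{B_{2R}}g^{q+\varepsilon}+\int_{B_{2R}}h^{q+\varepsilon}$ (plus explicit $\lambda_0$-terms), while the left-hand side produces $\varepsilon^{-1}\int_{Q_0}g^{q+\varepsilon}$ minus a similar $\lambda_0$-term; to make this rigorous one first runs the argument with $g\wedge T$ in place of $g$, keeping all constants independent of $T$, and lets $T\to\infty$ by monotone convergence at the end. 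Choosing $\varepsilon=\varepsilon(K,q,d)>0$ small enough that the coefficient $C\varepsilon^{-1}$ multiplying $\int g^{q+\varepsilon}$ on the right is strictly smaller than the coefficient on the left, one absorbs that term, concludes $g\in L^{q+\varepsilon}(Q_0)$, and arrives at $\tfrac1{|B_R|}\int_{B_R}g^{q+\varepsilon}\lesssim\big(\tfrac1{|B_{2R}|}\int_{B_{2R}}g^q\big)^{(q+\varepsilon)/q}+\tfrac1{|B_{2R}|}\int_{B_{2R}}h^{q+\varepsilon}$; raising to the power $1/(q+\varepsilon)$ and setting $s:=q+\varepsilon$ gives the asserted estimate. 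The main obstacle is exactly this absorption: it is what forces $\varepsilon$ to be small (so only a small gain in integrability is available) and it is why one must track, throughout the stopping-time step, that every constant depends only on $K$, $q$, $d$ --- and in particular not on $g$, $h$, or the truncation level $T$.
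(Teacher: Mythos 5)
The paper itself gives no proof of this lemma --- it is quoted as the classical Gehring lemma and used as a black box --- so there is nothing in the paper to compare against; I can only assess your argument on its own terms. Your Calder\'on--Zygmund/stopping-time route is the standard proof, and the overall architecture (reduction to dyadic cubes, CZ decomposition of $g^q$ at height $\lambda^q$, level-set inequality, integration in $\lambda$, absorption) is the right one. One remark on the hypothesis: as literally stated the reverse H\"older inequality is assumed only for balls centered at the origin, which by itself cannot yield higher integrability (place the non-$L^s$ singularity of $g$ away from the origin and take $K$ large); your tacit upgrade to the inequality on all subcubes of $Q_0$ is the intended reading, and it is also how the paper applies the lemma (on a grid of cubes), so that reduction is legitimate.

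The one step that does not close as written is the integration/absorption. After multiplying the distributional inequality by $\lambda^{\varepsilon-1}$ and applying Fubini, the $g$-term on the right is
$\int_{\lambda_0}^\infty \lambda^{\varepsilon-1}\cdot\lambda^{q-1}\int_{\{g>c\lambda\}}g\,dx\,d\lambda=\int g\bigl(\int_{\lambda_0}^{g/c}\lambda^{q+\varepsilon-2}\,d\lambda\bigr)dx\le \frac{c^{1-q-\varepsilon}}{q+\varepsilon-1}\int g^{q+\varepsilon}dx$,
i.e.\ its coefficient is of order $C/(q-1)$, \emph{bounded} as $\varepsilon\downarrow 0$ (this is precisely where $q>1$ enters); only the $h$-term acquires a factor $\varepsilon^{-1}$. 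You instead assert that the right-hand side becomes $C\varepsilon^{-1}$ times $\int g^{q+\varepsilon}+\int h^{q+\varepsilon}$ and then propose to choose $\varepsilon$ so that ``$C\varepsilon^{-1}$ on the right is strictly smaller than the coefficient on the left,'' the latter being $\varepsilon^{-1}$. With those coefficients absorption is impossible: no choice of $\varepsilon$ makes $C\varepsilon^{-1}<\varepsilon^{-1}$ when $C>1$. The correct comparison is $\varepsilon^{-1}$ on the left against $C/(q-1)$ on the right, so one takes $\varepsilon<(q-1)/(2C)$; with that correction (and the truncation $g\wedge T$ you already indicate) the argument is complete.
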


We are in position to prove assertion (\textit{v}). Without loss of generality, $f \geq 0$, so $u_n \geq 0$.

\smallskip

Step 1. Set $(u_n)_{B_{2R}}:=\frac{1}{|B_{2R}|}\langle u_n \mathbf{1}_{B_{2R}}\rangle$. Repeating the proof of Proposition \ref{c_prop} with $p=2$ for $u_n-(u_n)_{B_{2R}}$, we obtain
\begin{equation}
\label{u_gehr}
\langle |\nabla u_n|^2 \mathbf{1}_{B_{R}}\rangle \leq \frac{K_1}{|B_{2R}|^{\frac{2}{d}}} \langle(u_n-(u_n)_{B_{2R}})^2 \mathbf{1}_{B_{2R}}\rangle + K_2\langle |f-\mu u_n|^2 \mathbf{1}_{B_{2R}}\rangle, \quad 0<R<\frac{1}{2}.
\end{equation}
By the Sobolev-Poincar\'{e} inequality,
\begin{equation}
\label{sob_p}
\biggl( \frac{1}{|B_{2R}|} \langle (u_n-(u_n)_{B_{2R}})^2 \mathbf{1}_{B_{2R}}\rangle\biggr)^{\frac{1}{2}} \leq  C |B_R|^{\frac{1}{d}} \biggl( \frac{1}{|B_{2R}|}\langle |\nabla u_n|^{\frac{2d}{d+2}} \mathbf{1}_{B_{2R}}\rangle\biggr)^{\frac{d+2}{2d}},
\end{equation}
i.e.
$$
\langle (u_n-(u_n)_{B_{2R}})^2 \mathbf{1}_{B_{2R}}\rangle \leq C^2|B_R|^{\frac{2}{d}+1} \biggl( \frac{1}{|B_{2R}|}\langle |\nabla u_n|^{\frac{2d}{d+2}} \mathbf{1}_{B_{2R}}\rangle\biggr)^{\frac{d+2}{d}}.
$$
Then the condition of the Gehring lemma is verified with $g=|\nabla u_n|^{\frac{2d}{d+2}}$, $g^q=|\nabla u_n|^2$ (so $q=\frac{d+2}{d}$) and $h=c |f-\mu u_n|^{\frac{2d}{d+2}}$. Hence there exists $s>\frac{d+2}{d}$ such that
$$
\biggl(\frac{1}{|B_R|}\langle |\nabla u_n|^{s\frac{2d}{d+2}} \mathbf{1}_{B_R}\rangle \biggr)^{\frac{1}{s}} \leq C_1\biggl(\frac{1}{|B_{2R}|}  \langle |\nabla u_n|^2 \mathbf{1}_{B_{2R}}\rangle \biggr)^{\frac{d}{d+2}} + C_2\biggl(\frac{1}{|B_{2R}|}\langle |f-\mu u_n|^{s\frac{2d}{d+2}} \mathbf{1}_{B_{2R}}\rangle \biggr)^{\frac{1}{s}},
$$
where all constants are independent of $n$. 

Now, passing in both sides of the previous inequality to the cubes (inscribed in $B_R$ in the left-hand side and circumscribed over $B_{2R}$ in the right-hand side), then considering an equally spaced grid in $\mathbb R^d$ so that the smaller cubes centered at the nodes of the grid cover $\mathbb R^d$, applying the previous estimate on each cube, and then summing up, we obtain the global estimate
$$
\|\nabla u_n\|^2_{s\frac{2d}{d+2}} \leq C_3\|\nabla u_n\|^2_2 + C_4\|f-\mu u_n\|^2_{s\frac{2d}{d+2}}.
$$

\smallskip

Step 2. Let us show that $\sup_{n}\|\nabla u_n\|^2_2<\infty$. To this end, we multiply $(\mu-\Delta + b_n \cdot \nabla)u_n=f$ by $u_n$ and integrate, obtaining
$
\mu\|u_n\|_2^2 + \|\nabla u_n\|_2^2 + \langle b_n \cdot \nabla u_n,u_n\rangle=\langle f,u_n\rangle,
$
where
$$\langle b_n \cdot \nabla u_n,u_n\rangle = -\frac{1}{2}\langle {\rm div\,}b_n,u_n^2\rangle \geq -\frac{1}{2}\langle ({\rm div\,}b_n)_+,u_n^2\rangle.$$ Hence, by our form-boundedness assumption on $({\rm div\,}b_n)_+$,
\begin{equation}
\label{e}
\left(\mu-\frac{c_{\delta_+}}{2}\right)\|u_n\|_2^2 + \bigg(1-\frac{\delta_+}{2}\bigg)\|\nabla u_n\|_2^2 \leq \langle f,u_n\rangle.
\end{equation}
So, applying the quadratic inequality in the right-hand side, we arrive at $(\mu-\frac{c_{\delta_+}}{2}-\frac{1}{2})\|u_n\|_2^2 + (1-\frac{\delta_+}{2})\|\nabla u_n\|_2^2 \leq \frac{1}{2}\|f\|_2^2$. Since $\delta_+<2$, $\sup_{n}\|\nabla u_n\|_2^2<\infty$ for $\mu \geq \mu_0:=\frac{c_{\delta_+}}{2}+\frac{1}{2}$.

\medskip

Step 3.~Next, $\|u_n\|_2 \leq C\|f\|_2$ and a priori bound $\|u_n\|_\infty \leq \|f\|_\infty$ yield $\sup_n\|u_n\|_{s\frac{2d}{d+2}}<\infty$. Hence $\sup_n\|f-\mu u_n\|^2_{s\frac{2d}{d+2}}<\infty$.

\medskip

Steps 1-3 give us a gradient bound $$\sup_{n}\|\nabla u_n\|^2_{s\frac{2d}{d+2}}<\infty$$ which we are going to use at the next step. 

\medskip

Step 4. Put $h:=u_n-u_m$. Then
\begin{equation*}
\mu \|h\|_2^2 + \|\nabla h\|_2^2 + \langle b_{n} \cdot \nabla h,h\rangle + \langle (b_{n}-b_{m})\cdot\nabla u_{m},h \rangle=0.
\end{equation*}
So, 
\begin{equation}
\label{h}
\left(\mu-\frac{c_{\delta_+}}{2}-\frac{1}{2}\right)\|h\|_2^2 + (1-\frac{\delta_+}{2})\|\nabla h\|_2^2 \leq |\langle (b_{n}-b_{m})\cdot\nabla u_{m},h \rangle|.
\end{equation}
In turn, the right-hand side
$$
|\langle (b_{n}-b_{m})\cdot\nabla u_{m},h \rangle| \leq \|b_{n}-b_{m}\|_{2-\varkappa}\|\nabla u_{m}\|_{s\frac{2d}{d+2}}2\|f\|_\infty
$$
where $0<\varkappa<1$ is defined by $$2-\varkappa:=\left(s\frac{2d}{d+2}\right)'= \frac{s\frac{2d}{d+2}}{s\frac{2d}{d+2}-1}$$ (recall that $s\frac{2d}{d+2}>2$). Since $\{b_{n}\}$ converge to $b$ in $L^{2-\varkappa}$, we obtain that the RHS of \eqref{h} converges to zero as $n$, $m \rightarrow \infty$, so $\{u_{n}\}$ is a Cauchy sequence in $L^{2}$. (This yields the independence of the limit on a particular choice of $\{b_n\}$ since we can always combine two different approximations of $b$ obtaining again a Cauchy sequence of the approximating solutions.) \hfill \qed

\begin{remark} 
\label{gehr_rem}
We carried out the proof of assertion (\textit{v}) assuming that $\delta_+<2$ instead of $\delta_+<4$ as in the other assertions. In fact, to handle $\delta_+<4$ we need to consider $(\mu-\Delta + b_n\cdot \nabla)u_n=f$ in $L^p$, $p>\frac{4}{4-\delta_+}$.
However, the step where we use the Sobolev-Poincar\'{e} inequality \eqref{sob_p} in \eqref{u_gehr} is ultimately an $L^2$ argument. Hence the need for a more restrictive condition $\delta_+<2$.
\end{remark}

\bigskip

\section{Proofs of Theorem \ref{thmK1_0}(\textit{i}),(\textit{ii}) and Theorem \ref{thmK1}(\textit{i})}

Theorem \ref{thmK1_0}(\textit{i}),(\textit{ii}) and Theorem \ref{thmK1}(\textit{i}) follows right away, in view of Lemmas \ref{p_1}, \ref{p_2}, from Theorem \ref{markov_thm}(\textit{i}), (\textit{ii}) where we consider the general SDE in $\mathbb R^{Nd}$ with $Y=(X_1,\dots,X_N)$, $B=(B_1,\dots,B_N)$, $y=(x_1,\dots,x_N)$ and drift $b:\mathbb R^{Nd} \rightarrow \mathbb R^{Nd}$ defined by \eqref{b_def}. 
\hfill \qed

\bigskip

\section{Proof of Theorem \ref{thmK1}(\textit{ii})}

This follows right away from Theorem \ref{thm1}(\textit{iii}) and Lemmas \ref{p_1}, \ref{p_2}.

\bigskip

\section{Proof of Theorem \ref{thmK1_0}(\textit{iii})-(\textit{v})}

(\textit{iii}) follows from Theorem \ref{markov_thm}(\textit{iii}) and Lemmas \ref{p_1}, \ref{p_2}.

(\textit{iv}) follows from the uniqueness result in \cite{KiM_JDE}, see also \cite{Ki_Morrey}, and Lemma \ref{p_1}.

(\textit{v}) follows from the result in \cite{KiM_strong} upon applying Lemma \ref{p_1}. The assertion before, i.e.\,that Theorem \ref{thmK1_0}(\textit{i})-(\textit{iv}) is also valid for the interaction kernels of the form \eqref{K4_cond}, follows upon applying an appropriate (straightforward) modification of Lemma \ref{p_1}.

\bigskip

\section{Proof of Theorem \ref{thmK1}(\textit{iii}), (\textit{iv})}

(\textit{iii}) Since the sum of two form-bounded vector fields is again form-bounded, we only need to improve Lemma \ref{p_2} for $K(y)=\sqrt{\kappa}\frac{d-2}{2}|y|^{-2}y$ and then simply repeat the proof of Theorem \ref{thmK1_0}(\textit{i})-(\textit{iii}).
In Lemma \ref{p_2} we have three estimates \eqref{b_est2}, \eqref{div_est} \eqref{gamma_est} for $b=(b_1,\dots,b_N):\mathbb R^{Nd} \rightarrow \mathbb R^{Nd}$, where now
\begin{equation}
b_i(x):= \sqrt{\kappa}\frac{d-2}{2}\frac{1}{N}\sum_{j=1, j \neq i}^N \frac{x_i-x_j}{|x_i-x_j|^2}, \quad x=(x_1,\dots,x_n) \in \mathbb R^{Nd}, \quad 1 \leq i \leq N.
\end{equation}
We do not need to change \eqref{b_est2} and \eqref{gamma_est} since the actual values of the form-bounds there are not important for the sake of repeating the proof of Theorem \ref{thmK1}, only their finiteness matters. The form-bound $\delta_+$ in \eqref{div_est}, however, plays a crucial role.
Let us estimate it using the many-particle Hardy's inequality \eqref{multi_hardy}:
\begin{align*}
({\rm div\,}b)_+={\rm div\,}b & =\sum_{i=1}^N\frac{1}{N}\sum_{j=1,j\neq i}^N {\rm div\,}K(x_i-x_j) \\
& = \sqrt{\kappa}\frac{(d-2)^2}{N} \sum_{1 \leq i<j \leq N}\frac{1}{|x_i-x_j|^2}.
\end{align*}
Applying \eqref{multi_hardy}, we obtain that $({\rm div\,}b)^{\frac{1}{2}}_+ \in \mathbf{F}_{\delta_+}$ with $\delta_+=\sqrt{\kappa}$. Armed with this result, i.e.\,a replacement of Lemma \ref{p_2}, we repeat the proof of Theorem \ref{thmK1} (i.e.\,we apply Theorem \ref{markov_thm} where we still have $\delta_+<4$).

(\textit{iv}) 
We apply Theorem \ref{thm_desing1} from Appendix \ref{app_desing}. There $\Omega:=\mathbb R^{Nd}$ 
and $\mu$ is the Lebesgue measure on $\mathbb R^{Nd}$. The semigroup $e^{-t\Lambda}$ and thus the heat kernel $e^{-t\Lambda}(x,y)$ is from assertion (\textit{ii}). The weights $\{\varphi_s\}_{s>0}$ are defined by 
\begin{equation*}
\varphi_s(x):=\prod_{1 \leq i<j \leq N} \eta(s^{-\frac{1}{2}}|x_i-x_j|), \quad s>0. 
\end{equation*}
%where $\eta$ is defined in Theorem \ref{thm_model}(\textit{ii}). 
It is easily seen that these weights $\varphi_s$ satisfy conditions \eqref{S2} and \eqref{S3} of Theorem \ref{thm_desing1}. 
In turn, condition \eqref{S1} with $j=\frac{d}{d-2}$ and $r>2(2-\frac{N-1}{N}\sqrt{\kappa})^{-1}$ was verified in Theorem \ref{thmK1}(\textit{ii}) under hypothesis \eqref{K3_cond}, see \eqref{pq}. 
Let us verify the ``desingularizing $L^1 \rightarrow L^1$ bound'' \eqref{S4} for $0<s \leq t$:

Step 1. Set $$
\qquad \eta_s(r):=\eta(s^{-\frac{1}{2}}r), \quad r>0
$$ and put 
$$
\varphi_{s}^\varepsilon(x)\equiv \varphi^\varepsilon(x):=\prod_{1 \leq i<j \leq N} \eta_s(|x_i-x_j|_\varepsilon), \quad |x_i-x_j|_\varepsilon:=\sqrt{|x_i-x_j|^2+\varepsilon}, \quad \varepsilon>0.
$$
Define
$$
\psi^\varepsilon(x):=\prod_{1 \leq i<j \leq N} (s^{-\frac{1}{2}}|x_i-x_j|_\varepsilon)^{-\sqrt{\kappa}\frac{d-2}{2}\frac{1}{N}}.
$$
and put 
$$b_\varepsilon:=-\frac{\nabla_x \psi^\varepsilon}{\psi^\varepsilon} \quad (\text{clearly, independent of $s$}).$$ 
This is a vector field $\mathbb R^{Nd} \rightarrow \mathbb R^{Nd}$ such that
$$
b_\varepsilon \cdot \nabla_x=\sqrt{\kappa}\frac{d-2}{2}\frac{1}{N}\sum_{i=1}^N\sum_{j=1, j \neq i}^N \frac{x_i-x_j}{|x_i-x_j|_\varepsilon^2} \cdot \nabla_{x_i}.
$$
 Without loss of generality, we discuss the (minus) first component $\mathbb R^{Nd} \rightarrow \mathbb R^d$ of $b_\varepsilon$:
\begin{align*}
\frac{\nabla_{x_1} \psi^\varepsilon}{\psi^\varepsilon}& =\frac{1}{\psi^\varepsilon}\sum_{2 \leq k \leq N} \prod_{1 \leq i<j \leq N, (i,j) \neq (1,k)} |x_i-x_j|_\varepsilon^{-\sqrt{\kappa}\frac{d-2}{2}\frac{1}{N}} \nabla_{x_1}\biggl(|x_1-x_k|^{-\sqrt{\kappa}\frac{d-2}{2}\frac{1}{N}}_\varepsilon\biggr) \\
& = \sum_{2 \leq k \leq N} \frac{\nabla_{x_1}|x_1-x_k|^{-\sqrt{\kappa}\frac{d-2}{2}\frac{1}{N}}_\varepsilon}{|x_1-x_k|^{-\sqrt{\kappa}\frac{d-2}{2}\frac{1}{N}}_\varepsilon} \\
& = -\sqrt{\kappa}\frac{d-2}{2}\frac{1}{N} \sum_{2 \leq k \leq N} \frac{x_1-x_k}{|x_1-x_k|_\varepsilon^2}.
\end{align*}
In the same way,
\begin{align*}
\frac{\nabla_{x_1} \varphi^\varepsilon}{\varphi^\varepsilon}& =\sum_{2 \leq k \leq N} \frac{\nabla_{x_1}\eta_s(|x_1-x_k|_\varepsilon)}{\eta_s(|x_1-x_k|_\varepsilon)}.
\end{align*}
We now compare these quantities (this will be needed at the next step):

(a) If $|x_1-x_k|_\varepsilon \leq \sqrt{s}$ for all $2 \leq k \leq N$, then, by the definition of $\eta$, 
 $$-\frac{\nabla_{x_1} \psi^\varepsilon}{\psi^\varepsilon}+\frac{\nabla_{x_1} \varphi^\varepsilon}{\varphi^\varepsilon}=0.$$
Therefore,
\begin{equation*}
\frac{\nabla_{x_1}\varphi^\varepsilon}{\varphi^\varepsilon}\cdot\big(-\frac{\nabla_{x_1} \psi^\varepsilon}{\psi^\varepsilon}+\frac{\nabla_{x_1}\varphi^\varepsilon}{\varphi^\varepsilon}\big)=0, \quad
{\rm div}_{x_1}\big(-\frac{\nabla_{x_1} \psi^\varepsilon}{\psi^\varepsilon}+\frac{\nabla_{x_1}\varphi^\varepsilon}{\varphi^\varepsilon}\big)=0.
\end{equation*}

(b) If there exists one $k_0$ such that $|x_1-x_{k_0}|_\varepsilon \geq 2\sqrt{s}$, but for the other $k \neq k_0$ $|x_1-x_k|_\varepsilon \leq \sqrt{s}$, then, since $x_1 \mapsto \eta_s(|x_1-x_{k_0}|_\varepsilon)$ is constant and so $\nabla_{x_1}\varphi^\varepsilon=0$, we have
$$
\frac{\nabla_{x_1} \psi^\varepsilon}{\psi^\varepsilon}-\frac{\nabla_{x_1} \varphi^\varepsilon}{\varphi^\varepsilon} = -\sqrt{\kappa}\frac{d-2}{2}\frac{1}{N} \frac{x_1-x_{k_0}}{|x_1-x_{k_0}|_\varepsilon^2}.
$$
Hence
\begin{equation*}
\frac{\nabla_{x_1}\varphi^\varepsilon}{\varphi^\varepsilon}\cdot\big(-\frac{\nabla_{x_1} \psi^\varepsilon}{\psi^\varepsilon}+\frac{\nabla_{x_1}\varphi^\varepsilon}{\varphi^\varepsilon}\big)=0, \quad
\bigg|{\rm div}_{x_1}\big(-\frac{\nabla_{x_1} \psi^\varepsilon}{\psi^\varepsilon}+\frac{\nabla_{x_1}\varphi^\varepsilon}{\varphi^\varepsilon}\big) \bigg| \leq \sqrt{\kappa}\frac{(d-2)^2}{2}\frac{1}{N} 4s^{-1}.
\end{equation*}

(c) More generally, if there exist $2 \leq M \leq N-1$ indices $k_0$ such that $|x_1-x_{k_0}|_\varepsilon \geq 2\sqrt{s}$, but for the other $k \neq k_0$ $|x_1-x_k|_\varepsilon \leq \sqrt{s}$, then we have
\begin{equation*}
\frac{\nabla_{x_1}\varphi^\varepsilon}{\varphi^\varepsilon}\cdot\big(-\frac{\nabla_{x_1} \psi^\varepsilon}{\psi^\varepsilon}+\frac{\nabla_{x_1}\varphi^\varepsilon}{\varphi^\varepsilon}\big)=0, \quad
\bigg|{\rm div}_{x_1}\big(-\frac{\nabla_{x_1} \psi^\varepsilon}{\psi^\varepsilon}+\frac{\nabla_{x_1}\varphi^\varepsilon}{\varphi^\varepsilon}\big) \bigg| \leq \sqrt{\kappa}\frac{(d-2)^2}{2}\frac{M}{N} 4s^{-1}.
\end{equation*}

Over the annuli $\sqrt{s}<|x_1-x_k|_\varepsilon < 2\sqrt{s}$ we make a change of variable to finally obtain, for all possible values of $|x_1-x_k|_\varepsilon$, $2 \leq k \leq N$,
\begin{equation*}
\bigg|\frac{\nabla_{x_1}\varphi^\varepsilon}{\varphi^\varepsilon}\cdot\big(-\frac{\nabla_{x_1} \psi_\varepsilon}{\psi_\varepsilon}+\frac{\nabla_{x_1}\varphi^\varepsilon}{\varphi^\varepsilon}\big)\bigg| \leq c_1\frac{N-1}{N}s^{-1} , \quad
\bigg|{\rm div}_{x_1}\big(-\frac{\nabla_{x_1} \psi^\varepsilon}{\psi^\varepsilon}+\frac{\nabla_{x_1}\varphi^\varepsilon}{\varphi^\varepsilon}\big) \bigg| \leq c_2\frac{N-1}{N} s^{-1}
\end{equation*}
for constants $c_1$ and $c_2$ independent of $\varepsilon$ and $s$.

The same holds for the other components of $b_\varepsilon=-\frac{\nabla_x \psi_\varepsilon}{\psi_\varepsilon}$. Thus,
\begin{equation}
\label{b_div_zero5}
\bigg|\frac{\nabla_{x}\varphi^\varepsilon}{\varphi^\varepsilon}\cdot\big(b_\varepsilon + \frac{\nabla_{x} \varphi^\varepsilon}{\varphi^\varepsilon}\big)\bigg| \leq c_1\frac{N-1}{\sqrt{N}}s^{-1} , \quad
\bigg|{\rm div}\big(b_\varepsilon + \frac{\nabla_{x} \varphi^\varepsilon}{\varphi^\varepsilon}\big) \bigg| \leq c_2\frac{N-1}{\sqrt{N}} s^{-1}.
\end{equation}

Step 2.\,Define the approximating operators $\Lambda_\varepsilon:=-\Delta_x + b_\varepsilon \cdot \nabla_x$ having domain $\mathcal W^{2,1}=(1-\Delta)^{-1}L^1$. Since $\varphi^\varepsilon$, $(\varphi^\varepsilon)^{-1}$ are bounded and continuous, 
one sees right away that $\varphi^\varepsilon e^{-t \Lambda_\varepsilon}(\varphi^\varepsilon)^{-1}$ is a strongly continuous semigroup in $L^1$ whose generator coincides with $-\varphi_\varepsilon  \Lambda_\varepsilon(\varphi_\varepsilon)^{-1}$ having domain $\mathcal W^{2,1}$. This generator can be computed explicitly:
\begin{equation}
\label{gen_calc}
\varphi^\varepsilon  \Lambda^\varepsilon(\varphi^\varepsilon)^{-1} = -\Delta +\nabla\cdot (b_\varepsilon+2\frac{\nabla\varphi^\varepsilon}{\varphi^\varepsilon})+W_\varepsilon, 
\end{equation}
$$
W_\varepsilon:=-\frac{\nabla\varphi^\varepsilon}{\varphi^\varepsilon}\cdot\big(b_\varepsilon+\frac{\nabla\varphi^\varepsilon}{\varphi^\varepsilon}\big)-{\rm div}\big(b_\varepsilon+\frac{\nabla\varphi^\varepsilon}{\varphi^\varepsilon}\big).
$$
By \eqref{b_div_zero5}, potential $W_\varepsilon$ is (uniformly in $\varepsilon$) bounded: $|W_\varepsilon| \leq \frac{N-1}{\sqrt{N}}\frac{c}{s}$ for a constant $c$ independent of $\varepsilon$. 
Employing formula \eqref{gen_calc} and using the general fact that $e^{t(\Delta - \nabla \cdot \mathsf{f})}$ is an $L^1$ contraction, we obtain
\begin{equation}
\label{d_astast}
\|\varphi^\varepsilon e^{-t \Lambda^\varepsilon}(\varphi^\varepsilon)^{-1}h\|_1\leq e^{c\frac{N-1}{\sqrt{N}}\frac{t}{s}}\|h\|_1, \;\; h\in L^1.
\end{equation}
It remains to pass to the limit $\varepsilon \downarrow 0$ in \eqref{d_astast}. This is done at the next step.

\smallskip

Step 3. Define $b=-\frac{\nabla_x \psi}{\psi}$, where $\psi(x)=\prod_{1 \leq i<j \leq N} |x_i-x_j|^{-\sqrt{\kappa}\frac{d-2}{2}\frac{1}{N}}$ is a Lyapunov function of the formal adjoint of $\Lambda$ 
(i.e.\,\ref{heat_rem} holds). Then
$$
b\cdot \nabla_x=\sqrt{\kappa}\frac{d-2}{2}\frac{1}{N}\sum_{i=1}^N\sum_{j=1, j \neq i}^N \frac{x_i-x_j}{|x_i-x_j|^2} \cdot \nabla_{x_i}
$$
It is seen using e.g.\,the Monotone convergence theorem that $b_\varepsilon \rightarrow b$ in $[L^2_{\loc}]^{Nd}$. Moreover, the vector fields $b_\varepsilon$ do not increase the form-bound $\delta=\kappa \big(\frac{N-1}{N}\big)^2~(<4)$ of $b$. Therefore, by Theorem \ref{markov_thm}(\textit{iii}),
\begin{equation}
\label{conv_s}
e^{-t\Lambda_\varepsilon} \rightarrow e^{-t\Lambda} \quad \text{ in } L^r(\mathbb R^{Nd}), 
\end{equation}
where $r>\frac{2}{2-\frac{N-1}{N}\sqrt{\kappa}}$.

Now, from \eqref{d_astast} we have  $$\|\varphi_\varepsilon e^{-t \Lambda^\varepsilon}g\|_1 \leq e^{c\frac{N-1}{\sqrt{N}}\frac{t}{s}}\|\varphi^\varepsilon g\|_1, \quad g \in \varphi L^1 \cap L^\infty.$$ In view of \eqref{conv_s} and since $\varphi^\varepsilon \rightarrow \varphi$ a.e., we can use Fatou's lemma to obtain $\|\varphi e^{-t \Lambda}g\|_1 \leq e^{c\frac{t}{s}}\|\varphi g\|_1$, which yields condition ($S_4$) of Theorem \ref{thm_desing1} (recall that by our assumption $s \geq t$). 

\medskip

Thus, Theorem \ref{thm_desing1} applies and gives assertion (\textit{iv}) of Theorem \ref{thmK1}. \hfill \qed

\bigskip

\section{Proof of Theorem \ref{thm_grad}}

We will need the following result on the regularization of the vector field $b$ in Theorem \ref{thm_grad}.

\begin{lemma} 
\label{B_lem}
Assume that $b \in [W^{1,1}_{\loc}(\mathbb R^d)]^d$ has symmetric Jacobian $Db=(\nabla_k b_i)_{k,i=1}^d$ and 
the negative part $B_-$ of matrix $$B(b):=Db-\frac{{\rm div\,}b}{q}I, \quad \text{ for some $q>(d-2) \vee 2$,}$$ 
has normalized eigenvectors $e_j$ and eigenvalues $\lambda_j \geq 0$ satisfying $\sqrt{\lambda_j}e_j \in \mathbf{F}_{\nu_j}$.
Set
$
\nu:=\sum_{j=1}^d \nu_j.
$ 
Set $b_\varepsilon:=E_\varepsilon b$.
The following are true:

{\rm 1.}~ $$B(b_\varepsilon)+E_\varepsilon B_- \geq 0,$$

{\rm 2.}~
\begin{equation}
\label{B_bd}
\langle B_-h,h\rangle \leq \nu \langle |\nabla|h||^2\rangle+c_\nu\langle |h|^2\rangle,
\end{equation}
and
$$
\langle (E_\varepsilon B_-) h,h\rangle \leq \nu \langle |\nabla|h||^2\rangle+c_\nu\langle |h|^2\rangle, \quad\varepsilon>0,
$$
\noindent for all $h \in [C_c^\infty(\mathbb R^d)]^d$,
with $c_\nu:=\sum_{j=1}^d c_{\nu_j}$.
\end{lemma}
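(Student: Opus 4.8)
\textbf{Proof plan for Lemma \ref{B_lem}.}

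The plan is to exploit the specific structure of the operators: $B(b)$ is linear in $b$, and the Friedrichs mollifier $E_\varepsilon$ acts as an averaging against the probability density $\gamma_\varepsilon$, so both claims reduce to pushing pointwise matrix inequalities through an integral average and then through the spectral decomposition. First I would record that $B(b_\varepsilon) = E_\varepsilon(B(b))$: since $\nabla_k (b_\varepsilon)_i = E_\varepsilon(\nabla_k b_i)$ and ${\rm div}\, b_\varepsilon = E_\varepsilon({\rm div}\, b)$, the map $b \mapsto B(b)$ commutes with $E_\varepsilon$ entrywise. Then $B(b_\varepsilon) = E_\varepsilon(B_+) - E_\varepsilon(B_-)$ where $B(b) = B_+ - B_-$ is the decomposition into positive and negative parts. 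Because $E_\varepsilon$ is an average of $B_+ \geq 0$ against the nonnegative kernel $\gamma_\varepsilon$, we get $E_\varepsilon(B_+) \geq 0$ pointwise (the average of positive semidefinite matrices is positive semidefinite, as one sees by testing against a fixed vector $\xi$: $\xi^\top E_\varepsilon(B_+)(x) \xi = \langle \gamma_\varepsilon(x-\cdot), \xi^\top B_+(\cdot)\xi\rangle \geq 0$). Hence $B(b_\varepsilon) + E_\varepsilon B_- = E_\varepsilon B_+ \geq 0$, which is assertion 1.

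For assertion 2, the first inequality \eqref{B_bd} is essentially the hypothesis reorganized: by the spectral theorem $B_- = \sum_{j=1}^d \lambda_j e_j e_j^\top$ (the eigenvectors $e_j$ being orthonormal, with $\lambda_j \geq 0$), so for $h \in [C_c^\infty]^d$ we have $\langle B_- h, h\rangle = \sum_{j=1}^d \langle \lambda_j |e_j \cdot h|^2\rangle \leq \sum_{j=1}^d \langle \lambda_j |h|^2\rangle$. Wait — that is too crude; the point of $\sqrt{\lambda_j}e_j \in \mathbf{F}_{\nu_j}$ is that $\sqrt{\lambda_j}\, e_j$ is a vector field of form-bound $\nu_j$. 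Here I would write $\langle B_- h, h\rangle = \sum_{j=1}^d \langle |\sqrt{\lambda_j}\, e_j \cdot h|^2 \rangle \leq \sum_{j=1}^d \langle |\sqrt{\lambda_j}|\, |e_j|\, |h|, \, |\sqrt{\lambda_j}|\, |e_j|\, |h|\rangle$, i.e. bound each term by $\langle (\sqrt{\lambda_j}|e_j|)^2 |h|^2\rangle$, and then apply the form-boundedness of the scalar function $\sqrt{\lambda_j}|e_j| = |\sqrt{\lambda_j}\, e_j|$ to the test function $|h| \in W^{1,2}$ (using Kato's inequality $|\nabla |h|| \leq |\nabla h|$ if needed, or stating the bound directly in terms of $\nabla |h|$ as in the hypothesis \eqref{C_2prime}), summing the resulting $\nu_j \langle |\nabla |h||^2\rangle + c_{\nu_j}\langle |h|^2\rangle$ over $j$ to get $\nu \langle |\nabla|h||^2\rangle + c_\nu \langle |h|^2\rangle$ with $\nu = \sum_j \nu_j$, $c_\nu = \sum_j c_{\nu_j}$.

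For the mollified version, I would not diagonalize $E_\varepsilon B_-$ (its eigenstructure is opaque); instead use the averaging trick as in Section \ref{approx_sect}. Set $h_\varepsilon := \sqrt{E_\varepsilon |h|^2}$ as in the proof of Lemma \ref{fb_approx_lem}. Then $\langle (E_\varepsilon B_-) h, h\rangle = \langle B_-(y), \langle \gamma_\varepsilon(\cdot - y) h(\cdot) h(\cdot)^\top\rangle\rangle_y$, and bounding the inner matrix-valued average entrywise via Cauchy–Schwarz, each scalar term $\langle \lambda_j(y) |e_j(y) \cdot h(x)|^2 \gamma_\varepsilon(x-y)\rangle_x$ is controlled by $\langle (\sqrt{\lambda_j}|e_j|)^2(y)\, E_\varepsilon|h|^2(y)\rangle = \langle (\sqrt{\lambda_j}|e_j|)^2 h_\varepsilon^2\rangle$; now apply $\sqrt{\lambda_j}e_j \in \mathbf{F}_{\nu_j}$ to $h_\varepsilon$ and use the standard bounds $\|\nabla h_\varepsilon\|_2 \leq \|\nabla |h|\|_2$, $\|h_\varepsilon\|_2 \leq \|h\|_2$ established in \eqref{phi_eps}. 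Summing over $j$ gives the claimed bound with the same $\nu$ and $c_\nu$, uniformly in $\varepsilon$. The main obstacle is the matrix-valued averaging step for $E_\varepsilon B_-$: one must be careful that reducing to scalar form-bounded functions $|\sqrt{\lambda_j}\, e_j|$ via Cauchy–Schwarz on each off-diagonal entry does not cost a factor that ruins the constant $\nu$; handling this cleanly (e.g. by first writing $B_- = \sum_j (\sqrt{\lambda_j}e_j)(\sqrt{\lambda_j}e_j)^\top$ and averaging each rank-one piece, so that $E_\varepsilon[(\sqrt{\lambda_j}e_j)(\sqrt{\lambda_j}e_j)^\top]$ acts on $h(x)$ giving exactly $\langle \gamma_\varepsilon(x-y)|\sqrt{\lambda_j}e_j(y)\cdot h(x)|^2\rangle_y$ before any further estimate) is what makes the constants match those in part (1) of \eqref{C_2prime}.
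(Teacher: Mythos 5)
Your proposal is correct and follows essentially the same route as the paper: part 1 via $B(b_\varepsilon)=E_\varepsilon B_+ - E_\varepsilon B_-$ with $E_\varepsilon B_+\geq 0$, and part 2 via the rank-one decomposition $B_-=\sum_j\lambda_j e_je_j^\top$, Cauchy--Schwarz to reduce to the scalar quantities $\langle\lambda_j|e_j|^2|h|^2\rangle$ and $\langle\lambda_j|e_j|^2 E_\varepsilon|h|^2\rangle$, and then the form-boundedness of $\sqrt{\lambda_j}e_j$ applied to $|h|$ and to $\sqrt{E_\varepsilon|h|^2}$ together with the bounds of \eqref{phi_eps}. Your Fubini-at-the-rank-one-level treatment of $E_\varepsilon B_-$ is a minor cosmetic variant of the paper's entrywise estimate $|E_\varepsilon(h_kh_i)|\leq\sqrt{E_\varepsilon h_k^2}\,\sqrt{E_\varepsilon h_i^2}$ and yields the same constants.
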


\begin{proof} 1. We have, by definition, $B(b)=B_+-B_-$, and
$
B(b_\varepsilon)=E_\varepsilon B_+ - E_\varepsilon B_-.
$
Clearly, $E_\varepsilon B_+ \geq 0$, which yields the required.

2. We have $
B_-=\sum_{j=1}^d \lambda_j e_j e_j^{\top}.
$
Put for brevity $\lambda=\lambda_j$ and $e=e_j$. Denote the components of $e$ by $e^k$, $k=1,\dots,d$. Then
$
\langle  \lambda (ee^{\top}) h,h\rangle=\sum_{k,i=1}^d \langle h_k \sqrt{\lambda}e^k \sqrt{\lambda}e^i h_i \rangle = \langle \lambda (h \cdot e)^2\rangle  \leq \langle \lambda |h|^2|e|^2\rangle.
$
Therefore,
\begin{align*}
\langle B_- h,h\rangle & \leq \sum_{j=1}^d  \langle \lambda_j|h|^2|e_j|^2\rangle \\
& (\text{we use $\sqrt{\lambda_j}e_j \in \mathbf{F}_{\nu_j}$}) \\
& \leq \sum_{j=1}^d \nu_j \langle |\nabla |h||^2\rangle + \sum_{j=1}^d c_{\nu_j} \langle |h|^2\rangle,
\end{align*}
which gives us the first inequality in assertion 2.

Let us prove the second inequality in assertion 2. Writing again $\lambda=\lambda_j$ and $e=e_j$ and denoting the $k$-th component of $e$ by $e^k$, we have
\begin{align*}
\langle  E_\varepsilon (\lambda ee^{\top})h,h\rangle & =\sum_{k,i=1}^d \langle  E_\varepsilon (\sqrt{\lambda}e^k \sqrt{\lambda}e^i) h_k h_i \rangle =\sum_{k,i=1}^d \langle  \sqrt{\lambda}e^k \sqrt{\lambda}e^i E_\varepsilon (h_k h_i) \rangle \\
& 
\leq \sum_{k,i=1}^d \langle \sqrt{E_\varepsilon|h_k|^2} \sqrt{\lambda}|e^k| \sqrt{\lambda}|e^i| \sqrt{E_\varepsilon|h_i|^2} \rangle \\
& \leq \langle \lambda |e|^2,|h_\varepsilon|^2 \rangle, 
\end{align*}
where $h_\varepsilon$ denotes the vector field  with $k$-th component $\sqrt{E_\varepsilon|h_k|^2}$.
Hence, using the previous estimate, we obtain
\begin{align*}
\langle (E_\varepsilon B_-) h,h\rangle & = \sum_{j=1}^d \langle E_\varepsilon (\lambda_j e_je_j^{\top})h,h\rangle  \leq \sum_{j=1}^d  \langle \lambda_j |e_j|^2,|h_\varepsilon|^2 \rangle \\
& (\text{use } \sqrt{\lambda_j}e_j \in \mathbf{F}_{\nu_j}) \\
& \leq \nu \langle |\nabla |h_\varepsilon||^2\rangle + c_\nu \langle |h_\varepsilon|^2\rangle \\
& (\text{note that $|h_\varepsilon|=\sqrt{E_\varepsilon |h|^2}$ and apply \eqref{phi_eps}}) \\
& \leq \nu\langle |\nabla |h||^2\rangle + c_\nu \langle |h|^2\rangle,
\end{align*}
as needed.
\end{proof}

\subsubsection*{Proof of Theorem \ref{thm_grad} in the case drift $b$ satisfies condition \eqref{C_2}}

We start with the 
proof of assertion {\rm(\textit{ii})}.
Put 
$$w := \nabla u, \quad w_i :=\nabla_i u.$$ 
Multiplying equation $(\mu-\Delta +b \cdot \nabla )u=f$ by the test function $$\phi := - \sum_{i=1}^d \nabla_i (w_i |w|^{q-2})=- \nabla \cdot (w |w|^{q-2})$$ and integrating by parts twice in $\langle -\Delta u,\phi\rangle$, i.e.
\begin{align*}
\langle -\Delta u, - \sum_{i=1}^d \nabla_i (w_i |w|^{q-2})\rangle & = \sum_{i=1}^d \langle \nabla_i w, \nabla (w_i|w|^{q-2})\rangle  \equiv \sum_{i=1}^d \langle \nabla w_i, \nabla (w_i|w|^{q-2})\rangle \\
& = \sum_{i=1}^d \langle |\nabla w_i|^2 |w|^{q-2}\rangle + (q-2)\sum_{i=1}^d \langle \nabla w_i,w_i|w|^{q-3}\nabla |w|\rangle \\
& = \sum_{i=1}^d \langle |\nabla w_i|^2 |w|^{q-2}\rangle + (q-2) \langle \frac{1}{2}\nabla |w|^2,|w|^{q-3}\nabla |w|\rangle,
\end{align*}
 we obtain
\begin{equation}
\label{prin_ineq}
\mu \langle |w|^q\rangle + I_q + (q-2)J_q + \langle b \cdot w, \phi \rangle = \langle f, \phi\rangle,
\end{equation}
where 
$$
I_q := \sum_{i=1}^d \big\langle |\nabla w_i |^2, |w|^{q-2} \big\rangle, \quad J_q := \big\langle |\nabla |w| |^2, |w|^{q-2} \big\rangle.
$$

\noindent Step 1.~Regarding term $ \langle b \cdot w, \phi \rangle$ in \eqref{prin_ineq}, we have
\begin{align*}
\langle b \cdot w, \phi\rangle & = \langle \tilde{B} w,w|w|^{q-2}\rangle + \langle b \cdot \nabla |w|,|w|^{q-1}\rangle \qquad \tilde{B}:=(\nabla_k b_i)_{k,i=1}^d\\
& = \langle \tilde{B} w,w|w|^{q-2}\rangle - \frac{1}{q}\langle {\rm div\,}b,|w|^{q}\rangle \\
& \geq -\langle B_- w,w|w|^{q-2}\rangle.
\end{align*}
Hence, applying \eqref{B_bd}, we arrive at
\begin{align*}
\langle b \cdot w, \phi\rangle & \geq -\nu \langle \big|\nabla |w|^{\frac{q}{2}}\big|^2 \rangle - c_\nu\langle |w|^q \rangle\\
 & = -\nu \frac{q^2}{4}J_q - c_\nu\langle |w|^q \rangle,
\end{align*}
so \eqref{prin_ineq} yields
\begin{equation}
\label{main_ineq}
(\mu- c_\nu)\langle |w|^q \rangle + I_q+\biggl(q-2 - \frac{q^2}{4}\nu \biggr)J_q \leq  \langle f, \phi\rangle.
\end{equation}

\medskip

\noindent Step 2.~Let us estimate $\langle f, \phi\rangle$ in the previous inequality.  To this end, we evaluate $\phi$:
\begin{align}
\langle f, \phi\rangle=-\langle f, |w|^{q-2} \Delta u\rangle - (q-2)\langle f,|w|^{q-3} w \cdot \nabla |w| \rangle.
\label{f0_term}
\end{align}

(a) We estimate
\begin{equation}
\label{f_term}
|\langle f, |w|^{q-2} \Delta u \rangle| \leq \varepsilon_0 \langle |w|^{q-2} |\Delta u|^2\rangle + \frac{1}{4\varepsilon_0}\langle f^2,|w|^{q-2}\rangle, 
\end{equation}
where $\varepsilon_0>0$ will be chosen sufficiently small.

Let us deal with the first term in the RHS of \eqref{f_term}. Representing $|\Delta u|^2=|\nabla \cdot w|^2$ and integrating by parts twice, we obtain
\begin{align*}
\langle |w|^{q-2} |\Delta u|^2\rangle & =-\langle  \nabla |w|^{q-2}\cdot w, \Delta u \rangle   +\sum_{i=1}^{d}\langle w_i \nabla |w|^{q-2}, \nabla w_i \rangle +I_{q} \\
& \leq (q-2)\bigg [\frac{1}{4\varkappa} \langle   |w|^{q-2}|\Delta u|^{2} \rangle  +\varkappa J_{q} \bigg]
+ (q-2)\bigg(\frac{1}{2}I_{q}+ \frac{1}{2}J_{q}\bigg) + I_q.
\end{align*}
So, for any fixed $\varkappa > \frac{q-2}{4}$,
\begin{equation}
\label{E0}
\bigg(1-\frac{q-2}{4\varkappa}\bigg) \langle |w|^{q-2} |\Delta u |^{2} \rangle \leq I_{q}+(q-2) \bigg (\varkappa J_{q}+\frac{1}{2}I_{q}+ \frac{1}{2}J_{q} \bigg ).
\end{equation}

Let us handle the second term in the RHS of \eqref{f_term}:
\begin{align*}
\langle f^2,|w|^{q-2}\rangle & \leq \|f\|^2_{\frac{qd}{d+q-2}}\|w\|_{\frac{qd}{d-2}}^{q-2}\\
& \leq c_S\|f\|^2_{\frac{qd}{d+q-2}}\|\nabla |w|^{\frac{q}{2}}\|_2^{2\frac{(q-2)}{q}} = C\|f\|^2_{\frac{qd}{d+q-2}}J_q^{\frac{q-2}{q}}, \quad C=\frac{c_S q^2}{4}\\
&\leq \frac{q-2}{q}C\varepsilon^{\frac{q}{q-2}} J_q+\frac{2}{q}C\varepsilon^{-\frac{q}{2}}\|f\|_{\frac{qd}{d+q-2}}^q.
\end{align*}

\smallskip

(b) We estimate
\begin{align*}
 (q-2) |\langle -f, |w|^{q-3} w \cdot \nabla |w| \rangle| & \leq (q-2) J_q^\frac{1}{2} \langle f^2, |w|^{q-2} \rangle^\frac{1}{2} \\
& \leq (q-2)\bigl(\varepsilon_1 J_q + 4\varepsilon_1^{-1}\langle f^2, |w|^{q-2} \rangle \bigr),
\end{align*}
where we estimate the very last term in the same way as above.

Substituting the above estimates in \eqref{f0_term}, we obtain
\begin{equation}
\label{f3_term}
|\langle f, \phi\rangle| \leq c\varepsilon_0 (I_q+J_q) + \frac{c_1(\varepsilon,\varepsilon_1)}{\varepsilon_0}J_q + \frac{c_2(\varepsilon,\varepsilon_1)}{\varepsilon_0}\|f\|_{\frac{qd}{d+q-2}}^q,
\end{equation}
where $c_1(\varepsilon,\varepsilon_1)>0$ can be made as small as needed by first selecting $\varepsilon_1$ sufficiently small, and then selecting $\varepsilon$ even smaller.

\medskip

\noindent Step 3.~Now, we return to \eqref{main_ineq}. By \eqref{f3_term},
$$
\bigl(\mu - c_\nu\bigr)\langle |w|^q\rangle + (1-c\varepsilon_0)I_q + \bigl(q-2 - \frac{q^2}{4}\nu-c \varepsilon_0 - \frac{c_1(\varepsilon,\varepsilon_1)}{\varepsilon_0}\bigr)J_q \leq  \frac{c_2(\varepsilon,\varepsilon_1)}{\varepsilon_0}\|f\|_{\frac{qd}{d+q-2}}^q.
$$
By the pointwise inequality  $$|\nabla |w||^2 = \biggl|\frac{\sum_{i=1}^d w_i \nabla w_i}{|w|} \biggr|^2 \leq \biggl(\frac{\sum_{i=1}^d |w_i| |\nabla w_i|}{|w|} \biggr)^2 \leq \sum_{i=1}^d|\nabla w_i|^2,$$ we have $$J_q \leq I_q.$$
In particular, provided $\varepsilon_0$ is sufficiently small so that $1-c \varepsilon_0 \geq 0$, we have
$(1-c\varepsilon_0)I_q \geq (1-c\varepsilon_0)J_q$.
Therefore,
\begin{equation}
\label{ineq_9}
\bigl(\mu - c_\nu\bigr)\langle |w|^q\rangle + \bigl(q-1 - \frac{q^2}{4}\nu-c(\varepsilon_0,\varepsilon,\varepsilon_1) \bigr)J_q \leq  C(\varepsilon_0,\varepsilon,\varepsilon_1)\|f\|_{\frac{qd}{d+q-2}}^q,
\end{equation}
where constant $c(\varepsilon_0,\varepsilon,\varepsilon_1)$ can be made as small as needed by first selecting $\varepsilon_1$ sufficiently small, and then selecting $\varepsilon$ even smaller ($\varepsilon_0$ is already fixed). Take $\mu_0:=c_\nu$. Recalling that $J_q=\frac{4}{q^2}\|\nabla |\nabla u|^{\frac{q}{2}}\|_2^2$, we obtain the required gradient estimate from \eqref{ineq_9}.

\medskip

\noindent \textit{Proof of assertion {\rm(\textit{i})}. }  Steps 1 and 3 do not change. Step 2 now consists of estimating $\langle |\mathsf{g}|f,\phi\rangle$, which we represent as
\begin{align*}
\langle  |\mathsf{g}|f, \phi\rangle=-\langle  |\mathsf{g}|f, |w|^{q-2} \Delta u\rangle - (q-2)\langle |\mathsf{g}|f,|w|^{q-3} w \cdot \nabla |w| \rangle.
\end{align*}

($a'$) We have
$$
|\langle |\mathsf{g}|f, |w|^{q-2} \Delta u \rangle| \leq \varepsilon_0 \langle |w|^{q-2} |\Delta u|^2\rangle + \frac{1}{4\varepsilon_0}\langle |\mathsf{g}|^2f^2,|w|^{q-2}\rangle, \quad \varepsilon_0>0,
$$
where $\langle |w|^{q-2} |\Delta u|^2\rangle$ is estimates in the same way as in (a) above, and
\begin{align*}
\langle |\mathsf{g}|^2f^2,|w|^{q-2}\rangle & = \langle  |\mathsf{g}|^{2-\frac{4}{q}}|w|^{q-2},  |\mathsf{g}|^{\frac{4}{q}}f^{2}\rangle \\
&\leq \frac{q-2}{q}\varepsilon^{\frac{q}{q-2}}\langle |\mathsf{g}|^{2} |w|^{q} \rangle +\frac{2}{q}\varepsilon^{-\frac{q}{2}} \langle \rho |\mathsf{g}|^2 f^q\rangle  \\
& \text{(we are using $\mathsf{g} \in \mathbf{F}_{\delta_1}$)}\\
&\leq \frac{q-2}{q}\varepsilon^{\frac{q}{q-2}} \bigg [\delta_1 \frac{q^{2}}{4}J_{q}+ c_{\delta_1} \langle |w|^{q}\rangle\bigg ] +\frac{2}{q}\varepsilon^{-\frac{q}{2}} \big\langle  |\mathsf{g}|^2 f^q \big\rangle.
\end{align*}

($b'$) We estimate
\begin{align*}
 (q-2) |\langle |\mathsf{g}|f, |w|^{q-3} w \cdot \nabla |w| \rangle| & \leq (q-2) J_q^\frac{1}{2} \langle |\mathsf{g}^2|f^2, |w|^{q-2} \rangle^\frac{1}{2} \\
& \leq (q-2)\bigl(\varepsilon_1 J_q + 4\varepsilon_1^{-1}\langle |\mathsf{g}|^2f^2, |w|^{q-2} \rangle \bigr),
\end{align*}
where we bound $\langle |\mathsf{g}|^2f^2, |w|^{q-2} \rangle$ as in ($a'$).

Now, arguing as above, we arrive at
\begin{equation*}
\bigl(\mu - c_\nu-c_0(\varepsilon_0,\varepsilon_1,\varepsilon)\bigr)\langle |w|^q\rangle + \bigl(q-1 - \frac{q^2}{4}\nu-c(\varepsilon_0,\varepsilon,\varepsilon_1) \bigr)J_q \leq  C(\varepsilon_0,\varepsilon,\varepsilon_1)\langle  |\mathsf{g}|^2 f^q \big\rangle,
\end{equation*}
where constant $c(\varepsilon_0,\varepsilon,\varepsilon_1)$ can be made as small as needed by selecting $\varepsilon_1$ sufficiently small and then selecting $\varepsilon$ even smaller. So, taking $\mu_0:=c_\mu + c_0(\varepsilon_0,\varepsilon_1,\varepsilon)$, we obtain the required gradient estimate.

\subsubsection*{Proof of Theorem \ref{thm_grad} in the case drift $b$ satisfies condition \eqref{C_1}.} One needs to estimate term $ \langle b \cdot w, \phi \rangle$ in \eqref{prin_ineq} differently. Indeed, $b$ is no longer differentiable and hence one cannot integrate by parts. Instead, arguing as in \cite{KS}, we evaluate the test function $\phi$ as
$$
\langle b \cdot w, \phi\rangle=-\langle b \cdot w, |w|^{q-2} \Delta u\rangle - (q-2)\langle b \cdot w,|w|^{q-3} w \cdot \nabla |w| \rangle,
$$
and then re-uses the elliptic equation to express $\Delta u$ in terms of $\mu u$, $b \cdot w$ and $f$ (or $|\mathsf{g}|f$). Then we repeat the argument from \cite{KS} up to the estimates on $|\langle f,\phi\rangle|$ (assertion (\textit{ii})) and $|\langle |\mathsf{g}|f,\phi\rangle|$ (assertion (\textit{i})), which we take from Step 2 above.

\bigskip

\section{Proof of Theorem \ref{unique_thm}}

Let $b_n$ be constructed as in Lemma \ref{fb_approx_lem}, i.e.
$$
b_n=E_{\varepsilon_n}b, \quad \varepsilon_n \downarrow 0,
$$
so that $b_n$ are bounded, smooth, converge to $b$ locally in $L^2$ and, crucially,   do not increase neither form-bound $\delta$ of $b$ nor constant $c_\delta$.

A comment regarding the case when $b$ satisfies condition \eqref{C_2}. 
Below we use gradient bounds from Theorem \ref{thm_grad} for vector fields $b_n$. The proof of these gradient bounds depends on a somewhat less restrictive condition than \eqref{C_2}, i.e.\,$b \in \mathbf{F}_\delta$, $\delta<\infty$, and
\begin{equation}
\label{e_in}
\langle (E_{\varepsilon_n} B_-) h,h\rangle \leq \nu \langle |\nabla|h||^2\rangle+c_\nu\langle |h|^2\rangle, 
\end{equation}
where $B_-$ is the negative part of matrix $(\nabla_k b^i)_{k,i=1}^d -\frac{{\rm div\,}b}{q}I$. (Indeed, if $B_+$ denotes the positive part of the last matrix, we have
$$
(\nabla_k b_n^i)_{k,i=1}^d -\frac{{\rm div\,}b_n}{q}I=E_{\varepsilon_n}B_+-E_{\varepsilon_n}B_-, \quad E_{\varepsilon_n}B_\pm \geq 0,
$$
and can repeat the proof of Theorem \ref{thm_grad} for $b_n$ and $E_{\varepsilon_n} B_-$.)
By Lemma \ref{B_lem}, inequality \eqref{e_in} does hold  with constants $\nu=\sum_{j=1}^d \nu_j$ and $c_\nu=\sum_{j=1}^d c_{\nu_j}$ that are, obviously, independent of $\{\varepsilon_n\}$, and so the constants in the gradient bounds in Theorem \ref{thm_grad} for $b_n$ do not depend on $n$.

\medskip

\noindent \textit{Proof of assertion {\rm ($i$)}}. Let $\{\mathbb P_x\}_{x \in \mathbb R^d}$ be the strong Markov family of martingale solutions to \eqref{sde1} constructed in Theorem \ref{markov_thm}. Fix some $y$.
Our goal is prove the following estimate:
there exists generic $q>(d-2) \vee 2$ and $C$ such that, for all $\mathsf{g} \in \mathbf{F}_{\delta_1}$, $\delta_1<\infty$, and all $\lambda$ greater than some generic $\lambda_0$, 
\begin{equation}
\label{g_bd}
\mathbb E_{\mathbb P_y} \int_0^\infty e^{-\lambda s}|\mathsf{g}f|(\omega_s)ds \leq C\|\mathsf{g}|f|^{\frac{q}{2}}\|_2^\frac{2}{q}
\end{equation}
for all $f \in C_c$. Let $\mathsf{g}_m$ the bounded smooth regularization of $\mathsf{g}$ constructed according to Lemma \ref{fb_approx_lem}. 
Using the gradient estimate of Theorem \ref{thm_grad}(\textit{i}), after applying the Sobolev embedding theorem twice, we obtain
$$
\mathbb E_{\mathbb P_y^{n}} \int_0^\infty e^{-\lambda s}|\mathsf{g}_mf|(\omega_s)ds \leq C\|\mathsf{g}_m|f|^{\frac{q}{2}}\|_2^\frac{2}{q}, \quad n,m=1,2,\dots,
$$
where $\mathbb P_x^{n}$ is the martingale solution of the regularized SDE
$$
Y(t)=y-\int_0^t b_n(Y(s))ds + \sqrt{2}B(t), \quad t \geq 0
$$
and, by the construction of $\mathbb P_x$ in the proof of Theorem \ref{markov_thm}, $\mathbb P_x^n \rightarrow \mathbb P_x$ weakly (we pass to a subsequence of $\{b_n\}$ if necessary). Thus, we have
$$
\mathbb E_{\mathbb P_y} \int_0^\infty e^{-\lambda s}|\mathsf{g}_mf|(\omega_s)ds \leq C\|\mathsf{g}_m|f|^{\frac{q}{2}}\|_2^\frac{2}{q}, \quad m=1,2,\dots
$$
Fatou's lemma applied in $m$ now yields \eqref{g_bd} and thus ends the proof of (\textit{i}).

\medskip

\noindent \textit{Proof of assertion {\rm ($i'$)}}. Let $\{\mathbb P_y^1\}_{y \in \mathbb R^d}$, $\{\mathbb P_y^2\}_{y \in \mathbb R^d}$  be two Markov families of martingale solutions to SDE
$$
Y(t)=y-\int_0^t b(Y(s))ds + \sqrt{2}B(t), \quad t \geq 0.
$$
Fix some $y$.
By our assumption, there exists $q>(d-2) \vee 2$ such that, for all $\mathsf{g} \in \mathbf{F}_{\delta_1}$, $\delta_1<\infty$, and all $\lambda$ greater than some generic $\lambda_0$, 
\begin{equation}
\label{krylov_est0}
\mathbb E_{\mathbb P_{y}^i} \int_0^\infty e^{-\lambda s}|\mathsf{g}f|(\omega_s)ds \leq C\|\mathsf{g}|f|^{\frac{q}{2}}\|_2^\frac{2}{q}
\end{equation}
for all $f \in C_c$.
Let $v_n$ be the classical solution to equation
$$
(\lambda -\Delta + b_n\cdot \nabla)v_n=-F,
$$
where $F \in C_c(\mathbb R^d)$. We will need the weight $
\rho(x)=(1+k|x|^{2})^{-\beta}$, $k>0,
$
where constant $\beta$ is fixed greater than $\frac{d}{2}$ so that $\rho \in L^1(\mathbb R^d)$. 
By It\^{o}'s formula applied to $e^{-\lambda t}\rho v_n$, we have
$$
\mathbb E_{\mathbb P_{y}^i} [e^{-\lambda t}(\rho v_n)(\omega_t)]  =\rho (y) v_n(y) + \mathbb E_{\mathbb P_{y}^i} \int_0^t \rho e^{-\lambda s}(-\lambda + \Delta - b\cdot \nabla)v_n(\omega_s)ds - S_n,
$$
where $S_n$ is the remainder term given by
$$
S_n:=\mathbb E_{\mathbb P_{y}^i} \int_0^ t e^{-\lambda s}[-(\Delta \rho)v_n - 2 \nabla \rho \cdot \nabla v_n + b \cdot (\nabla \rho)v_n](\omega_s)ds.
$$
So,
\begin{align}
\mathbb E_{\mathbb P_{y}^i} [e^{-\lambda t}(\rho v_n)(\omega_t)] & =\rho (y) v_n(y) + \mathbb E_{\mathbb P_{y}^i} \int_0^t \rho e^{-\lambda s} F(\omega_s)ds \notag \\
& - \mathbb E_{\mathbb P_{y}^i} \int_0^t [e^{-\lambda s}\rho(b-b_n) \cdot \nabla v_n](\omega_s)ds - S_n.
\label{unique}
\end{align}

\begin{proposition}
\label{cl1}
For every $k>0$, $$\mathbb E_{\mathbb P_{y}^i} \int_0^t e^{-\lambda s}[\rho(b-b_n) \cdot \nabla v_n](\omega_s)ds \rightarrow 0$$ as $n \uparrow \infty$ uniformly in $t>0$.
\end{proposition}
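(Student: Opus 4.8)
The plan is to bound the integrand by its absolute value, so that replacing $\int_0^t$ by $\int_0^\infty$ is harmless and the claimed uniformity in $t$ becomes automatic; thus it suffices to prove
\[
A_n:=\mathbb E_{\mathbb P^i_y}\int_0^\infty e^{-\lambda s}\,\rho\,|b-b_n|\,|\nabla v_n|(\omega_s)\,ds\ \longrightarrow\ 0\qquad(n\to\infty).
\]
First I would assemble uniform-in-$n$ estimates on $v_n$. The $L^\infty$-contractivity of the resolvent (equivalently, the probabilistic representation) gives $\|v_n\|_\infty\le\lambda^{-1}\|F\|_\infty$. Since $b_n$ inherits condition \eqref{C_1}, resp.\ \eqref{C_2prime}, with $n$-independent constants — precisely Lemma \ref{fb_approx_lem} together with Lemma \ref{B_lem} — Theorem \ref{thm_grad}(ii) applies to $v_n$ and yields $\sup_n\|\nabla|\nabla v_n|^{q/2}\|_2<\infty$; by the Sobolev embedding theorem, $\sup_n\|\nabla v_n\|_{qd/(d-2)}<\infty$. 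Because $q>(d-2)\vee 2$ one has $qd/(d-2)>d$ and $qd/(d-2)\ge q+2$, so $\nabla v_n$ belongs to $\mathbf F_\sigma$ with constants independent of $n$ (Example \ref{ex1}(1)) and $|\nabla v_n|$ is bounded in $L^{q+2}_{\loc}$ uniformly in $n$.

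Second, I would record the crucial convergence of the drift difference. Put $\mathsf g_n:=\rho^{1/2}|b-b_n|$. Since $\rho\le 1$ and $b,b_n\in\mathbf F_\delta$ share the constant $c_\delta$, $\mathsf g_n\in\mathbf F$ with constants independent of $n$; moreover $\mathsf g_n\to 0$ in $L^2(\mathbb R^d)$. The latter I would prove by splitting $\mathbb R^d=B_R\cup B_R^c$: on $B_R$ it follows from $b_n\to b$ in $[L^2_{\loc}]^d$; on $B_R^c$ I would choose smooth cutoffs $\psi_R\in W^{1,2}$ with $0\le\psi_R$, $\psi_R\ge\rho^{1/2}$ on $B_R^c$ and $\|\psi_R\|_{W^{1,2}}\to 0$ as $R\to\infty$ (legitimate because $\rho\in L^1$ and $|\nabla\rho^{1/2}|\in L^2$, and because we are free to enlarge the exponent $\beta$ in $\rho$), and then
\[
\int_{B_R^c}\rho\,|b-b_n|^2\ \le\ 2\|b\psi_R\|_2^2+2\|b_n\psi_R\|_2^2\ \le\ 4\big(\delta\|\nabla\psi_R\|_2^2+c_\delta\|\psi_R\|_2^2\big),
\]
whose right-hand side tends to $0$ as $R\to\infty$, uniformly in $n$. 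Combining the two regimes gives $\|\mathsf g_n\|_2\to 0$.

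Finally, I would combine these ingredients with the Krylov-type bound \eqref{krylov_est}, which holds for $\mathbb P^i_y$ by the standing hypothesis of assertion~($i'$). Writing $\rho|b-b_n||\nabla v_n|=\mathsf g_n\cdot\big(\rho^{1/2}|\nabla v_n|\big)$ and invoking \eqref{krylov_est}, together with the uniform bounds on $\nabla v_n$ from the first step, one should dominate $A_n$ by $\|\mathsf g_n\|_2^{2/q}$ (up to $n$-independent constants) plus an $R$-dependent tail that vanishes as $R\to\infty$; letting first $n\to\infty$ and then $R\to\infty$ finishes the proof. I expect this last step to be the main obstacle. The difficulty is structural: $b-b_n$ converges to $0$ only in $L^2_{\loc}$ — it lies in $\mathbf F$, hence in the Morrey class $M_2$ and in nothing better locally — while $\nabla v_n$ is controlled only in the supercritical Lebesgue class $L^{qd/(d-2)}$, not in $L^\infty$. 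Forcing the Krylov bound to absorb the product of these two objects requires a careful bookkeeping of Lebesgue exponents; what makes the balance work is exactly the gain $q>d-2$ furnished by Theorem \ref{thm_grad} (pushing $|\nabla v_n|$ strictly above $L^d$, into $L^{q+2}_{\loc}$) and the weight $\rho$ (converting the merely local control of $b-b_n$ into a global one at spatial infinity). An equivalent implementation is the $B_R/B_R^c$ split itself: on $B_R$ use the smallness $\|b-b_n\|_{L^2(B_R)}\to 0$ against the uniform local gradient estimates (for instance via Cauchy--Schwarz against the path measure and the embedding Theorem \ref{thm2} for the $b-b_n$ factor, noting that $\mathbf 1_{|b-b_n|>1}+|b-b_n|^{p\theta}\mathbf 1_{|b-b_n|\le 1}\le 1+|b-b_n|^2$ on the relevant set), and on $B_R^c$ use the decay of $\rho$ together with the uniform weighted bounds.
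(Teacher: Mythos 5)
Your setup is right and matches the paper's: reduce to $\int_0^\infty$, note $\rho(b-b_n)\in\mathbf F_{2\delta}$ with $n$-independent constants, apply the Krylov-type bound \eqref{krylov_est0} with $\mathsf g:=\rho(b-b_n)$ and $f:=|\nabla v_n|$ to get $A_n\le K\|\rho(b-b_n)|\nabla v_n|^{q/2}\|_2^{2/q}$, and prove $\|\rho(b-b_n)\|_2\to 0$ by the $B_R/B_R^c$ split (your cutoff $\psi_R$ is a cosmetic variant of the paper's factor $(1+kR^2)^{-\beta}$). But there is a genuine gap at the decisive step, which you yourself flag as ``the main obstacle'': you never show how to extract a vanishing quantity from $\|\rho(b-b_n)|\nabla v_n|^{q/2}\|_2$. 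Your claimed bound ``$A_n\lesssim\|\mathsf g_n\|_2^{2/q}$ up to $n$-independent constants'' is not available, since $|\nabla v_n|$ is not uniformly bounded in $L^\infty$; and the alternatives you sketch do not apply here: Theorem \ref{thm2} controls a solution of an auxiliary PDE, not an expectation under $\mathbb P^i_y$, and $\mathbb P^i_y$ is an \emph{arbitrary} martingale solution satisfying the Krylov bound, so \eqref{krylov_est0} is the only handle on it.

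The missing idea is an interpolation inside the Krylov bound. For small $\theta\in]0,1[$, H\"{o}lder's inequality gives
\begin{equation*}
\|\rho(b-b_n)|\nabla v_n|^{\frac q2}\|_2\ \le\ \|\rho(b-b_n)\|_2^{\theta}\,\|\rho(b-b_n)|\nabla v_n|^{\frac{q}{2(1-\theta)}}\|_2^{1-\theta}.
\end{equation*}
The first factor tends to $0$ (your Step 2). The second factor is bounded uniformly in $n$ by testing the form-boundedness of $b-b_n\in\mathbf F_{2\delta}$ against $\varphi=|\nabla v_n|^{q/(2(1-\theta))}$, which reduces it to $\|\nabla|\nabla v_n|^{q/(2(1-\theta))}\|_2$, i.e.\ to the gradient bound of Theorem \ref{thm_grad} at the slightly larger exponent $q/(1-\theta)$; choosing $\theta$ small keeps $q/(1-\theta)$ inside the admissible range of \eqref{C_1}/\eqref{C_2}. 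This is exactly where the gain $q>(d-2)\vee 2$ being an open condition is used, and it is the mechanism your sketch is missing. (Two minor further points: the paper uses Theorem \ref{thm_grad}(\textit{i}), not only (\textit{ii}), since the relevant norm is $\|\nabla|\nabla v_n|^{q/(2(1-\theta))}\|_2$; and your side remark that $\sup_n\|\nabla v_n\|_{qd/(d-2)}<\infty$ puts $\nabla v_n$ in $\mathbf F_\sigma$ via Example \ref{ex1}(1) is unjustified, because that example requires a global $L^d$ bound, which does not follow from a global $L^{qd/(d-2)}$ bound with $qd/(d-2)>d$.)
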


\begin{proof}
We have
\begin{align*}
|\mathbb E_{\mathbb P_{y}^i} \int_0^t [e^{-\lambda s}\rho(b-b_n) \cdot \nabla v_n](\omega_s)ds| & \leq |\mathbb E_{\mathbb P_{y}^i} \int_0^\infty [e^{-\lambda s}\rho(b-b_n) \cdot \nabla v_n](\omega_s)ds| \\
& (\text{we apply \eqref{krylov_est0} with $\mathsf{g}:=\rho(b-b_n) \in \mathbf{F}_{2\delta}$}) \\
& \leq K\|\rho(b-b_n) |\nabla v_n|^{\frac{q}{2}}\|_2^{\frac{2}{q}}.
\end{align*}
In turn, for a $0<\theta<1$, we have
\begin{equation}
\label{rho_b}
\|\rho(b-b_n) |\nabla v_n|^{\frac{q}{2}}\|_2 \leq \|\rho(b-b_n) \|_2^\theta\|\rho(b-b_n) |\nabla v_n|^{\frac{q}{2(1-\theta)}}\|^{1-\theta}_2.
\end{equation}

Regarding the second multiple in the RHS of \eqref{rho_b}: we assume that $\theta$ is chosen to be sufficiently close to $0$ so that $\frac{q}{1-\theta} > (d-2) \vee 2$. Then, by $b-b_n \in \mathbf{F}_{2\delta}$, 
\begin{align*}
\|\rho(b-b_n) |\nabla v_n|^{\frac{q}{2(1-\theta)}}\|_2^2 & \leq \|(b-b_n) |\nabla v_n|^{\frac{q}{2(1-\theta)}}\|_2^2  \\
&\leq 2\delta\|\nabla|\nabla v_n|^{\frac{q}{2(1-\theta)}}\|_2^2 + 2c_\delta \||\nabla v_n|^{\frac{q}{2(1-\theta)}}\|_2^2.
\end{align*}
Hence, by the gradient estimate of Theorem \ref{thm_grad}(\textit{i}), $\sup_n\|\rho(b-b_n) |\nabla v_n|^{\frac{q}{2(1-\theta)}}\|_2^2<\infty.$

The first multiple in the RHS of \eqref{rho_b}:
\begin{align*}
\|\rho(b-b_n) \|^2_2 & \leq \langle \mathbf{1}_{B_R(0)}|b-b_n|^2\rangle + \langle (1-\mathbf{1}_{B_R(0)})\rho,\rho |b-b_n|^2\rangle  \\
& \leq \langle \mathbf{1}_{B_R(0)}|b-b_n|^2\rangle + (1+k R^2)^{-\beta} \langle \rho |b-b_n|^2 \rangle.
\end{align*}
Since $b_n \rightarrow b$ in $L^2_{\loc}$, the first integral can be made as small as needed (uniformly in $R$)  by selecting $n$ sufficiently large. In the second integral $\sup_n \langle \rho |b-b_n|^2 \rangle<\infty$, since, by $b-b_n \in \mathbf{F}_{2\delta}$,
$$
\langle \rho |b-b_n|^2 \rangle \leq 2\delta \langle (\nabla \sqrt{\rho})^2\rangle + 2c_\delta \langle \rho \rangle,
$$
so it remains to apply $|\nabla \rho| \leq \beta\sqrt{k} \rho$. At the same time,
$(1+k r^2)^{-\beta}$ can be made as small as needed by selecting $r$ sufficiently large. This completes the proof.
\end{proof}

\begin{proposition}
\label{cl2}
$S_n \rightarrow 0$ as $k \downarrow 0$ uniformly in $n$ and $t$.
\end{proposition}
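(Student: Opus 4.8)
The plan is to split $S_n$ into its three summands and bound each of them by an explicit power of $k$ times a quantity that is finite uniformly in $n$ and $t$; letting $k\downarrow 0$ then gives the claim. The two facts about the weight are $\rho\le 1$ together with the derivative bounds $|\nabla\rho|\le\beta\sqrt k\,\rho$ and $|\Delta\rho|\le c(d,\beta)\,k\,\rho$, all immediate from $\rho(x)=(1+k|x|^2)^{-\beta}$; the two facts about $v_n$ are the a priori bound $\|v_n\|_\infty\le\lambda^{-1}\|F\|_\infty$ (uniform in $n$) and a uniform-in-$n$ bound $\sup_n\|\nabla v_n\|_q<\infty$. The latter follows from the gradient estimate of Theorem~\ref{thm_grad}(\textit{ii}) applied to $b_n$ — whose constants are $n$-independent, since by Lemma~\ref{B_lem} (and the bound on $E_{\varepsilon_n}B_-$ recorded above) $b_n$ satisfies the hypotheses of Theorem~\ref{thm_grad} with $n$-independent form-bounds — together with the standard energy estimate $\sup_n\|\nabla v_n\|_2<\infty$ and interpolation between $L^2$ and $L^{qd/(d-2)}$.

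For the $\Delta\rho$-term, $|(\Delta\rho)v_n|\le c\,k\,\lambda^{-1}\|F\|_\infty$, so this term is $\le c\,k\,\lambda^{-2}\|F\|_\infty$, uniformly in $n$, $t$. For the $b\cdot\nabla\rho$-term, $|b\cdot(\nabla\rho)v_n|\le\beta\sqrt k\,\lambda^{-1}\|F\|_\infty\,|b|$, hence this term is bounded by $\beta\sqrt k\,\lambda^{-1}\|F\|_\infty\,\mathbb E_{\mathbb P_y^i}\int_0^\infty e^{-\lambda s}|b(\omega_s)|\,ds$, and the last expectation is finite because $\mathbb P_y^i$ is a martingale solution (and it does not depend on $k$, $n$, $t$). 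For the $\nabla\rho\cdot\nabla v_n$-term, $|\nabla\rho\cdot\nabla v_n|\le\beta\sqrt k\,|\nabla v_n|$, so this term is bounded by $2\beta\sqrt k\,\mathbb E_{\mathbb P_y^i}\int_0^\infty e^{-\lambda s}|\nabla v_n|(\omega_s)\,ds$; applying the Krylov-type bound \eqref{krylov_est0} with a constant unit vector field $\mathsf g$ (after approximating $|\nabla v_n|$ from below by functions in $C_c$) this is $\le C\|\nabla v_n\|_q$, hence bounded uniformly in $n$.

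Adding the three estimates gives $\sup_{n,t}|S_n|\le C(F,\lambda)(k+\sqrt k)\to 0$ as $k\downarrow 0$, which is the assertion. The only step that is not entirely routine is the $n$-uniformity of the gradient bound on $v_n$ used for the third summand; when $b$ satisfies condition \eqref{C_1} rather than \eqref{C_2} one simply invokes the corresponding case of Theorem~\ref{thm_grad}(\textit{ii}), and nothing else changes.
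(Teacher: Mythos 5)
Your architecture is the paper's: pull a power of $k$ out of $\Delta\rho$ and $\nabla\rho$ and bound the remaining expectations uniformly in $n$ and $t$, using $\|v_n\|_\infty\le\lambda^{-1}\|F\|_\infty$; the $(\Delta\rho)v_n$ term is handled correctly. The genuine gap is in the $b\cdot(\nabla\rho)v_n$ term, where you discard the weight: from $|\nabla\rho|\le\beta\sqrt{k}\,\rho$ you pass to $\beta\sqrt{k}\,|b|$ and then assert that $\mathbb E_{\mathbb P_y^i}\int_0^\infty e^{-\lambda s}|b(\omega_s)|\,ds<\infty$ ``because $\mathbb P_y^i$ is a martingale solution.'' The martingale-solution property only gives $\mathbb E\int_0^T|b(\omega_t)|\,dt<\infty$ on a finite horizon, and for the competing family $\mathbb P^2_y$ of assertion ($i'$) — about which you know nothing beyond the Krylov bound \eqref{krylov_est0} — there is no starting-point-uniform bound on $\mathbb E_{\omega_m}\int_0^1|b|\,dt$ with which to sum over unit intervals. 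Nor does \eqref{krylov_est0} help with $\mathsf g=b$ and $f\nearrow 1$: that produces $C\|b\|_2^{2/q}$, which is generically infinite for a merely form-bounded $b$ (e.g.\ $|b|\sim|x|^{-1}$ at infinity). This is precisely why the paper keeps $\rho$ inside the expectation, as in Proposition \ref{cl1}: one applies \eqref{krylov_est0} with $\mathsf g=\rho b\in\mathbf F$, for which $\|\rho b\|_2^2\le\delta\|\nabla\sqrt\rho\|_2^2+c_\delta\langle\rho\rangle<\infty$ since $\sqrt\rho\in W^{1,2}$.

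There is a second, more fixable, gap in the $\nabla\rho\cdot\nabla v_n$ term. You reduce it to $\sup_n\|\nabla v_n\|_q<\infty$ by interpolating between $\|\nabla v_n\|_2$ and $\|\nabla v_n\|_{qd/(d-2)}$. The upper endpoint is indeed supplied by Theorem \ref{thm_grad}(\textit{ii}) with $n$-independent constants, but the lower endpoint — your ``standard energy estimate'' — is not available under \eqref{C_2}: there $\delta$ is only assumed finite and nothing is assumed about $({\rm div\,}b)_+$, so neither the absorption argument (which needs $\delta<1$ and is fine under \eqref{C_1}) nor the integration-by-parts argument of Theorem \ref{markov_thm}(\textit{v}), Step 2 (which needs $({\rm div\,}b)_+^{\frac{1}{2}}\in\mathbf F_{\delta_+}$ with $\delta_+<2$) is at your disposal. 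Keeping the weight again repairs this: applying \eqref{krylov_est0} with $\mathsf g=\rho$ and estimating $\|\rho|\nabla v_n|^{\frac{q}{2}}\|_2\le\|\rho\|_d\,\||\nabla v_n|^{\frac{q}{2}}\|_{\frac{2d}{d-2}}\le c_S\|\rho\|_d\,\|\nabla|\nabla v_n|^{\frac{q}{2}}\|_2$ needs only the single gradient bound of Theorem \ref{thm_grad}, and the factor $\|\rho\|_d^{2/q}\sim k^{-1/q}$ is absorbed by the prefactor $\sqrt{k}$ because $q>2$.
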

\begin{proof}
Using $|\nabla \rho| \leq \beta\sqrt{k} \rho$, $|\Delta \rho| \leq \beta^2 k$, we have
$$
|S_n| \leq \sqrt{k} C\mathbb E_{\mathbb P_{y}^i} \int_0^ t [\rho |v_n| + 2\rho|\nabla v_n| + \rho|b||v_n| ](\omega_s) ds.
$$
Now we can argue as in the proof of the previous proposition, using additionally $\|v_n\|_\infty \leq \lambda^{-1}\|F\|_\infty$, to show that $\sup_n\mathbb E_{\mathbb P_{y}^i} \int_0^ t [\rho |v_n| + 2\rho|\nabla v_n| + \rho|b||v_n| ](\omega_s) ds<\infty$. In fact, in this case the proof is easier since none of the terms contains simultaneously $b$ and $\nabla v_n$. Selecting $k$ sufficiently small, we can make $S_n$ as small as needed.
\end{proof}

We now complete the proof of assertion {\rm ($i'$)}.
Let us note that, for every $k>0$,
$$
\mathbb E_{\mathbb P_{y}^i} [e^{-\lambda t} \rho v_n(\omega_t)] \rightarrow 0 \quad \text{ as } t \rightarrow \infty \text{ uniformly in $n$}.
$$
Indeed, $\|v_n\|_\infty \leq \lambda^{-1}\|F\|_\infty$, so
$
|\mathbb E_{\mathbb P_{y}^i} [e^{-\lambda t}\rho v_n(\omega_t)] | \leq \lambda^{-1}e^{-\lambda t},
$
which yields the required. Combining this result with Propositions \ref{cl1} and \ref{cl2}, and taking into account that, by Theorem \ref{thm1}(\textit{iv}), $\{v_n\}$ converge uniformly as $n\rightarrow \infty$ to a continuous function $v$, we obtain from \eqref{unique} upon taking $n \rightarrow \infty$ and then taking $k \downarrow 0$:
$$
0  = v(y) + \mathbb E_{\mathbb P_{y}^i} \int_0^\infty e^{-\lambda s} F(\omega_s)ds, \quad i=1,2.
$$
Taking into account the continuity of $F$ and $\omega$, and invoking the uniqueness of Laplace transform,
we obtain that $\mathbb E_{\mathbb P_{y}^1} F(\omega_t)=\mathbb E_{\mathbb P_{y}^2} F(\omega_t)$ for all $F \in C_c$, $t>0.$ We deduce from here that the one-dimensional distributions of $\mathbb P_y^1$ and $\mathbb P_y^2$ coincide. 
Since we are dealing with Markov families of probability measures, we conclude that $\mathbb P_y^1=\mathbb P_y^2$ for every $y \in \mathbb R^d$.

\bigskip

\noindent \textit{Proof of assertion {\rm ($ii$)}}. The proof follows closely the proof of (\textit{i}), but uses the gradient estimate of Theorem \ref{thm_grad}(\textit{i}) for $q>(d-2) \vee 2$ chosen closely to $(d-2) \vee 2$. In fact, this proof is easier since we no longer need to take care of extra form-bounded vector fields $\mathsf{g}$ as in (\textit{i}).

\medskip

\noindent \textit{Proof of assertion {\rm ($ii''$)}}. 
We modify the previous proof of ($i'$). By our assumption, 
\begin{equation}
\label{kr_0}
\mathbb E_{\mathbb P_{y}^i} \int_0^\infty e^{-\lambda s}|f|(\omega_s)ds \leq C\|f\|_{\frac{d}{2-\varepsilon } \wedge \frac{2}{1-\varepsilon}}, \quad \forall f \in C_c, \quad \lambda>\lambda_0.
\end{equation}
The analogue of Proposition \ref{cl1} is proved as follows. Clearly, hypothesis 
$$
(1+|x|^{-2})^{-\beta}|b|^{\frac{d}{2-\varepsilon_1} \vee \frac{2}{1-\varepsilon_1}} \in L^1, \quad \varepsilon_1 \in ]\varepsilon,1[
$$
implies that, for any $k>0$, $\rho |b|^{\frac{d}{2-\varepsilon_1} \vee \frac{2}{1-\varepsilon_1}} \in L^1$.
We have
\begin{align}
|\mathbb E_{\mathbb P_{y}^i} \int_0^t [e^{-\lambda s}\rho(b-b_n) \cdot \nabla v_n](\omega_s)ds| & \leq |\mathbb E_{\mathbb P_{y}^i} \int_0^\infty [e^{-\lambda s}\rho(b-b_n) \cdot \nabla v_n](\omega_s)ds| \notag \\
& (\text{we apply \eqref{kr_0} using Fatou's lemma}) \notag \\
& \leq K\|\rho(b-b_n) \cdot \nabla v_n\|_{r} \quad r:=\frac{d}{2-\varepsilon } \wedge \frac{2}{1-\varepsilon} \notag \\ 
& \leq K\|\rho(b-b_n)\|_{s'} \|\nabla v_n\|_{s}, \quad \frac{1}{s'}+\frac{1}{s}=\frac{1}{r}, \label{u}
\end{align}
where $s'= \frac{d}{2-\varepsilon_1} \vee \frac{2}{1-\varepsilon_1}$ and $s=\frac{q_*d}{d-2}$, where $q_*$ was defined in assertion {\rm ($ii''$)} of Theorem \ref{unique_thm} that we are proving. 
Theorem \ref{thm_grad}(\textit{ii}), which applies by our assumptions on $\delta$, $\nu$ and $q_*$ in the end of assertion {\rm ($ii''$)}, and the Sobolev embedding theorem, yield 
$$
\sup_n \|\nabla v_n\|_{\frac{q_*d}{d-2}} <\infty.
$$
Therefore,  the second multiple in the RHS of \eqref{u} is uniformly (in $n$) bounded.

In turn, for every fixed $k$, the first multiple in the RHS of \eqref{u} tends to zero as $n \rightarrow \infty$. Indeed, since $0<\rho \leq 1$, we have $$\sup_n\|\rho^{s'} b_n^{s'}\|_1 \leq \sup_n\|\rho b_n^{s'}\|_1<\infty,$$ where the finiteness is seen, after integrating by parts, from $E_{\varepsilon_n} \rho \leq C \rho$ with constant $C$ independent of $n$ (here we simply use the fact that the Friedrichs mollifier is a convolution with a function having compact support) and our hypothesis $\|\rho |b|^{s'}\|_1<\infty$. Now, we represent
\begin{align*}
\|\rho(b-b_n)\|_{s'} &= \|\mathbf{1}_{B_R(0)}(b-b_n)\|_{s'} + \|(1-\mathbf{1}_{B_R(0)})\rho(b-b_n)\|_{s'} \\
& \leq \|\mathbf{1}_{B_R(0)}(b-b_n)\|_{s'}+(1+k R^2)^{-\beta (s'-1)} (\langle \rho b^{s'}\rangle  + \langle \rho b_n^{s'}\rangle ).
\end{align*}
The second term can be made as small as needed by selecting $R$ sufficiently large (uniformly in $n$). Then, for $R$ thus fixed, the first term can be made as small as needed by selecting $n$ sufficiently large, since $b_n \rightarrow b$ in $L^{s'}_{\loc}$ by the properties of Friedrichs mollifier.

Arguing as above, we prove $\sup_n\mathbb E_{\mathbb P_{y}^i} \int_0^ t [\rho |v_n| + 2\rho|\nabla v_n| + \rho|b_n||v_n| ](\omega_s) ds<\infty$, and hence have the analogue of Proposition \ref{cl2}. 

The rest of the proof of ($ii'$) repeats the proof of ($i'$).

\bigskip

\appendix

\bigskip

\section{A desingularization theorem from \cite{KiSSz}}

\label{app_desing}

Let $X$ be a locally compact topological space, and $\mu$ a $\sigma$-finite Borel measure on $X$. In what follows, $L^r=L^r(X,\mu)$ ($1 \leq r \leq \infty$). Let $j>1$, put $j':=\frac{j}{j-1}$.

Let $\Lambda$ be the generator of a strongly continuous semigroup $e^{-t\Lambda}$ on $L^r$ for some $r>1$,
such that for some constants $c$, $j>1$, for all $t>0$,
\begin{equation}
\label{S1}
\tag{$S_1$}
\|e^{-t\Lambda}\|_{r \rightarrow \infty} \leq ct^{-\frac{j'}{r}}.
\end{equation}
We consider a family of weights $\varphi=\{\varphi_s\}_{s>0}$ in $X$ such that 
\begin{equation}
\label{S2}
\tag{$S_2$}
0 \leq \varphi_s, \frac{1}{\varphi_s} \in L^1_{\loc}(X,\mu) \quad \text{for all $s>0$},
\end{equation}
\begin{equation}
\label{S3}
\tag{$S_3$}
\inf_{s>0,x \in X} \varphi_s(x)  \geq c_0>0.
\end{equation}

\begin{theorem}
\label{thm_desing1}
Assume that conditions {\rm($S_1$)}\,-\,{\rm($S_3$)} hold and
there exists constant $c_1$, independent of $s$, such that, for all $0<t \leq s$,
\begin{equation}
\label{S4}
\tag{$S_4$}
\|\varphi_s e^{-t\Lambda}\varphi_s^{-1}f\|_{1} \leq c_1\|f\|_{1}, \quad f \in L^1 \cap L^\infty.
\end{equation}
Then, for each $t>0$, $e^{-t\Lambda}$ is an integral operator, and there is a constant $C=C(j,c_1,c_0)$ such that, up to change of $e^{-t\Lambda}(x,y)$ on a measure zero set, the weighted Nash initial estimate
\begin{equation}
\label{nie_a}
|e^{-t\Lambda}(x,y)|\leq Ct^{-j^\prime}  \varphi_t(y)
\end{equation}
is valid for $\mu$ a.e. $x,y \in X$.
\end{theorem}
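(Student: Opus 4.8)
The plan is to derive the weighted Nash initial estimate \eqref{nie_a} by combining the ultracontractivity bound \eqref{S1} with the desingularizing $L^1\to L^1$ bound \eqref{S4} via a duality-and-semigroup-property argument of the type that goes back to Nash. First I would record that \eqref{S1} together with the uniform lower bound \eqref{S3} immediately gives a weighted version of the off-diagonal smoothing: for $t>0$,
\begin{equation*}
\|\varphi_t^{-1}e^{-t\Lambda}\|_{r\to\infty}\le c_0^{-1}\|e^{-t\Lambda}\|_{r\to\infty}\le c_0^{-1}ct^{-j'/r},
\end{equation*}
since $\varphi_t^{-1}\le c_0^{-1}$ pointwise. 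The point of \eqref{S4} is to upgrade the \emph{input} side: writing the semigroup at time $2t$ as $e^{-t\Lambda}e^{-t\Lambda}$ and interpreting the right factor as mapping $L^1$-weighted data boundedly, one obtains an $L^1(\varphi_t\,d\mu)\to L^\infty$ bound on $e^{-2t\Lambda}$.

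The key steps, in order, are as follows. (1) Fix $t>0$ and write $e^{-2t\Lambda}=e^{-t\Lambda}\circ e^{-t\Lambda}$. (2) For $f\in L^1\cap L^\infty$ nonnegative, apply \eqref{S4} with $s=t$ to conclude $\|\varphi_t\,e^{-t\Lambda}\varphi_t^{-1}g\|_1\le c_1\|g\|_1$; equivalently $e^{-t\Lambda}$ maps $L^1(\varphi_t^{-1}\,d\mu)$ boundedly into $L^1(\varphi_t\,d\mu)$. Since $\varphi_t^{-1}\le c_0^{-1}$, it certainly maps $L^1(d\mu)$ into $L^1(\varphi_t\,d\mu)$ with norm $\le c_0^{-1}c_1$. (3) Now I would interpolate/bootstrap: we need to push an $L^1$ datum through $e^{-t\Lambda}$ and land in $L^\infty$. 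The clean way is to use duality. By \eqref{S1}, $e^{-t\Lambda}:L^r\to L^\infty$ is bounded; dualizing (after checking $e^{-t\Lambda}$ is the relevant adjoint-consistent object, or working directly with the integral kernel once its existence is established via Dunford--Pettis as in Theorem \ref{thmK1_0}(\textit{iii})) gives $e^{-t\Lambda}:L^1\to L^{r'}$ bounded with the same $t^{-j'/r}$ rate. Then $e^{-2t\Lambda}=e^{-t\Lambda}e^{-t\Lambda}:L^1\xrightarrow{L^1\to L^{r'}}L^{r'}\xrightarrow{?}L^\infty$ — but $L^{r'}\to L^\infty$ is not given directly, so instead I would iterate the semigroup several times, splitting $e^{-Nt\Lambda}$ into $N$ factors, using the $L^1\to L^{r'}$ step once, then climbing $L^{r'}\to L^{r''}\to\cdots\to L^\infty$ by repeated applications of \eqref{S1}-type bounds, and finally inserting one weighted factor \eqref{S4} at the very beginning to convert the plain $L^1$ norm of the input into the weighted $L^1(\varphi\,d\mu)$ norm on the output side. (4) Collecting the powers of $t$ from each step and using the semigroup identity to replace $Nt$ by a generic time yields $\|e^{-t\Lambda}\|_{L^1(\varphi_t^{-1}d\mu)\to L^\infty}\le Ct^{-j'}$, where the exponent $j'$ is forced by homogeneity (each of the $k$ smoothing steps from $L^{p}$ to $L^{q}$ contributes $t^{-j'(1/p-1/q)}$ and the telescoping sum of $1/p-1/q$ equals $1-0=1$). (5) Since $e^{-t\Lambda}$ is then simultaneously $L^1\to L^\infty$ bounded, Dunford--Pettis gives that it is an integral operator with kernel $e^{-t\Lambda}(x,y)$, and the operator-norm bound translates into the pointwise bound
\begin{equation*}
|e^{-t\Lambda}(x,y)|\le Ct^{-j'}\varphi_t(y)\qquad\text{for $\mu$-a.e. }x,y,
\end{equation*}
which is \eqref{nie_a}. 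Throughout, the constant $C$ depends only on $j$, $c_1$, $c_0$ (and the fixed $c$ in \eqref{S1}, which is absorbed), as claimed.

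The main obstacle I anticipate is bookkeeping the chain of $L^p\to L^q$ estimates so that the weight $\varphi_s$ is inserted at the right time with the right index $s$: condition \eqref{S4} only allows $0<t\le s$, so when I split $e^{-Nt\Lambda}$ I must make sure the single weighted factor is applied over a time interval of length $\le s$ with $s$ equal to the final time at which the kernel is evaluated, and that the remaining unweighted smoothing factors are applied afterward — the monotonicity/consistency of the family $\{\varphi_s\}$ is not assumed, so one cannot freely trade $\varphi_t$ for $\varphi_s$. A secondary technical point is justifying the duality step ($L^r\to L^\infty$ implies $L^1\to L^{r'}$) when $\Lambda$ is not self-adjoint; this is handled by first establishing, via \eqref{S1} and Dunford--Pettis, that each $e^{-t\Lambda}$ is an integral operator, and then estimating the kernel directly through $\sup_x\|e^{-t\Lambda}(x,\cdot)\|$ and $\sup_y\|e^{-t\Lambda}(\cdot,y)\|$, which sidesteps any appeal to an adjoint semigroup. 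Once these two points are handled carefully the rest is a routine Nash-type iteration, and I would present it as such, referring to \cite{KiSSz} for the details of the abstract mechanism.
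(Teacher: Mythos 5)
Your proposal assembles the right ingredients (combine \eqref{S1} with \eqref{S4}, obtain a weighted $L^1\to L^\infty$ bound, then read off the kernel estimate via Dunford--Pettis), but the central analytic step is not carried out, and the route you sketch for it does not work. The hypotheses supply exactly two mapping properties: a weighted $L^1\to L^1$ bound (from \eqref{S4} and \eqref{S3}: $\|e^{-t\Lambda}h\|_1\le c_0^{-1}c_1\langle\varphi_s|h|\rangle$) and the single smoothing bound $L^r\to L^\infty$ from \eqref{S1}. There is no ladder of intermediate bounds $L^{r'}\to L^{r''}\to\cdots\to L^r$ to climb, and your attempt to manufacture the first rung by duality fails: dualizing $\|e^{-t\Lambda}\|_{r\to\infty}\le ct^{-j'/r}$ yields $\|(e^{-t\Lambda})^{*}\|_{1\to r'}\le ct^{-j'/r}$, a bound on the \emph{adjoint} semigroup, and since $\Lambda$ is not self-adjoint this says nothing about $e^{-t\Lambda}:L^1\to L^{r'}$. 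Your proposed workaround (estimating $\sup_y\|e^{-t\Lambda}(\cdot,y)\|$ directly) needs precisely the information that \eqref{S1} does not provide --- Dunford--Pettis applied to \eqref{S1} controls only $\sup_x\|e^{-t\Lambda}(x,\cdot)\|_{r'}$. There are also sign errors in the weight bookkeeping: \eqref{S4} says $e^{-t\Lambda}$ is bounded on $L^1(\varphi_s\,d\mu)$, and since $\varphi_s\ge c_0$ but is \emph{unbounded above}, this embeds into $L^1(d\mu)$, not the other way around as you claim in step (2); likewise your final bound is stated as $L^1(\varphi_t^{-1}d\mu)\to L^\infty$, which would give the kernel bound $Ct^{-j'}\varphi_t(y)^{-1}$ rather than the asserted $Ct^{-j'}\varphi_t(y)$.

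The missing idea is an extrapolation (self-improvement) argument in place of the ladder. The paper proves a weighted Coulhon--Raynaud lemma: for an evolution family $U^{t,\theta}$ satisfying $\|U^{t,\theta}f\|_1\le M_1\langle\psi|f|\rangle$ and $\|U^{t,\theta}\|_{r\to\infty}\le M_2(t-\theta)^{-\nu}$, one splits the interval at its midpoint $t_\theta$, interpolates $\|U^{t_\theta,\theta}f\|_r\le\|U^{t_\theta,\theta}f\|_\infty^{\beta}\|U^{t_\theta,\theta}f\|_1^{1-\beta}$, and closes the resulting inequality $R_{2T}\le M^{1-\beta}R_T^{\beta}$ using $R_T\le R_{2T}$ to conclude $\|U^{t,\theta}f\|_\infty\le M(t-\theta)^{-\nu/(1-\beta)}\langle\psi|f|\rangle$; with $p=1$, $q=r$, $\nu=j'/r$ and $\beta=(r-1)/r$ this produces exactly the exponent $j'$. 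Applying this with $\psi=\varphi_s$ over sub-intervals of length at most $s$ (which respects the constraint $t\le s$ in \eqref{S4}) and finally taking $s=t$ yields $\|e^{-t\Lambda}f\|_\infty\le Mt^{-j'}\|\varphi_t f\|_1$, hence \eqref{nie_a}. Without this self-improvement step your chain of estimates cannot be closed from the stated hypotheses.
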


For the sake of keeping the paper self-contained, we reproduce here the proof of Theorem \ref{thm_desing1} from \cite{KiSSz}.

\begin{proof}[Proof of Theorem \ref{thm_desing1}]

1.~We will use a weighted variant of the Coulhon-Raynaud extrapolation theorem. Put $$0 \leq \psi \in L^1+L^\infty,  \quad \|f\|_{p,\sqrt{\psi}}:=\langle |f|^p \psi \rangle^{1/p}.$$ \textit{
Let $U^{t,\theta}$ be a two-parameter family of operators
\[U^{t,\theta}f = U^{t,\tau}U^{\tau,\theta}f,  \qquad f \in L^1 \cap L^\infty, \quad 0 \leq \theta < \tau < t \leq \infty.
\]
If for some $1 \leq p < q < r \leq \infty$, $\nu>0$
\begin{align*}
\| U^{t,\theta} f \|_p & \leq M_1 \| f \|_{p,\sqrt{\psi}},\\
 \| U^{t,\theta} f \|_r & \leq M_2 (t-\theta)^{-\nu} \|  f \|_q
\end{align*}
for all $(t,\theta)$ and $f \in L^1 \cap L^\infty,$ then
\begin{equation}
\label{cr}
\| U^{t,\theta} f \|_r \leq M (t-\theta)^{-\nu/(1-\beta)} \| f \|_{p,\sqrt{\psi}} ,
\end{equation}
where $\beta = \frac{r}{q}\frac{q-p}{r-p}$ and $M = 2^{\nu/(1-\beta)^2} M_1 M_2^{1/(1-\beta)}.$}
Here is the proof of \eqref{cr} for reader's convenience. Put $t_\theta:=\frac{t+\theta}{2}$. We have
\begin{align*}
\| U^{t, \theta} f \|_r & \leq M_2 (t-t_\theta)^{-\nu} \| U^{t_\theta,\theta} f \|_q \\
& \leq M_2 (t-t_\theta)^{-\nu} \| U^{t_\theta,\theta} f \|_r^\beta \;\| U^{t_\theta,\theta} f \|_p^{1-\beta} \\
& \leq M_2 M_1^{1-\beta} (t-t_\theta)^{-\nu} \| U^{t_\theta,\theta} f \|_r^\beta \;\| f \|_{p,\sqrt{\psi}}^{1-\beta},
\end{align*}
and hence
\[
(t-\theta)^{\nu/(1-\beta)} \| U^{t,\theta} f \|_r/\| f \|_{p,\sqrt{\psi}} \leq M_2 M_1^{1-\beta} 2^{\nu/(1-\beta)} \big [(t - \theta)^{\nu/(1-\beta)} \| U^{t_\theta,\theta} f \|_r\;/\| f \|_{p,\sqrt{\psi}} \big ]^\beta.
\]
Setting $R_{2 T}: = \sup_{t-\theta \in ]0,T]} \big [ (t-\theta)^{\nu/(1-\beta)} \| U^{t,\theta} f \|_r/\| f \|_{p,\sqrt{\psi}} \big ],$ we obtain from the last inequality that $R_{2 T} \leq M^{1-\beta} (R_T)^\beta.$ But $R_T \leq R_{2T}$, and so $R_{2T} \leq M.$
This gives us \eqref{cr}.

\medskip

2.~We are in position to complete the proof of Theorem \ref{thm_desing1}.
By \eqref{S4} and \eqref{S3},
\begin{align*}
\|e^{-t\Lambda}h\|_{1} &\leq c_0^{-1} \|\varphi_s e^{-t\Lambda} \varphi_s^{-1} \varphi_s h \|_1  \\
& \leq c_0^{-1}c_1 \|h\|_{1,\sqrt{\varphi_s}}, \qquad  h \in L^\infty_{\rm com}.
\end{align*}
The latter, \eqref{S1} and the Coulhon-Raynaud extrapolation theorem with $\psi:=\varphi_s$ yield
$$
\|e^{-t\Lambda}f\|_{\infty} \leq Mt^{-j'}\|\varphi_s f\|_1, \quad 0<t \leq s, \quad f \in  L^\infty_{c}.
$$
Note that ($S_1$) verifies the assumptions of the Dunford-Pettis theorem, which yields that $e^{-t\Lambda}$ is an integral operator.
Therefore, taking $s=t$ in the previous estimate, we obtain \eqref{nie_a}.
\end{proof}

\bigskip

\end{document}